\tikzstyle{V}=[draw, fill =black, circle, inner sep=0pt, minimum size=1.5pt]
\tikzstyle{bV}=[draw, fill =black, circle, inner sep=0pt, minimum size=4pt]
\tikzstyle{cV}=[draw, fill =white, circle, inner sep=0pt, minimum size=4pt]
\tikzstyle{over}=[draw=white,double=black,line width=2pt, double distance=.75pt]
\tikzstyle{diagram}=[line width=.75pt, scale=\SCALE]
\crefname{prop}{Proposition}{Propositions}
\crefname{thm}{Theorem}{Theorems}
\begin{document}
\title[Representation theory of the twisted Yangians in complex rank]{Representation theory of the twisted Yangians in complex rank}

\subjclass[2020]{17B37, 17B10, 81R10, 18M05, 18M15}
\keywords{Twisted Yangians, symmetric tensor categories, representation theory}
\date{\today}

\author[A. S. Kannan]{Arun S. Kannan}
\address{Arun S. Kannan \ Chicago, IL 60606}
\email{arun\_kannan@alum.mit.edu}
\author[Shihan Kanungo]{Shihan Kanungo}
\address{San Jos\'e State University \ San Jos\'e, CA 95192}
\email{shihan.kanungo@sjsu.edu}
	
\begin{abstract} 
In 2016, Etingof defined the notion of a Yangian in a symmetric tensor category and posed the problem to study them in the context of Deligne categories. This problem was studied by Kalinov in 2020 for the Yangian $Y(\gl_t)$ of the general linear Lie algebra $\gl_t$ in complex rank using the techniques of ultraproducts. In particular, Kalinov classified the simple finite-length modules over $Y(\gl_t)$. In this paper, we define the notion of a \textit{twisted Yangian} in Deligne's categories, and we extend these techniques to classify finite-length simple modules over the twisted Yangians $Y(\o_t)$ and $Y(\xsp_t)$ of the orthogonal and symplectic Lie algebras $\o_t,\xsp_t$ in complex rank.
\end{abstract}

\maketitle
\setcounter{tocdepth}{1}
\tableofcontents
\section{Introduction}
A \textit{symmetric tensor category} (STC) is an abstraction of the representation category of a group, or more generally, an affine supergroup scheme. However, not all symmetric tensor categories arise as such representation categories; in other words, some do not \textit{fiber} over the category of complex super vector spaces $\sVec_{\mathbb{C}}$. Working with these exotic categories yields new forms of commutative algebra, algebraic geometry, and representation theory beyond the realm of (super) vector spaces. Unless stated otherwise, all symmetric tensor categories considered in this paper are defined over $\mathbb{C}$.

The most prominent examples of STCs that do not fiber over $\sVec_{\mathbb{C}}$ are the \textit{Deligne categories}, introduced in \cite{dmos_hodge_82, deligne_categories_tannakiennes_90, deligne_categories_tensorielles_02, deligne_rep_st_07}. These categories generalize classical representation categories such as $\Rep(GL_n)$, $\Rep(O_n)$, and $\Rep(Sp_n)$ to the setting where the rank parameter $n$ is replaced by a complex parameter $t \in \mathbb{C}$. An analogous construction exists for symmetric groups. In recent years, Deligne categories have been the subject of extensive study from multiple perspectives.

The present work concerns Yangians in symmetric tensor categories. The notion was introduced by Etingof in \cite{etingof_complex_rank_2_16}, who showed that to any quadratic Lie algebra $\g$ in a symmetric tensor category $\mathcal{C}$, one can associate an algebra $Y(\g)$, called its \textit{Yangian}. Although this is a categorical rephrasing of Drinfeld's original definition \cite{drinfeld_yangians_85}, Etingof provided a streamlined construction via $R$-matrices in the case of $Y(\mathfrak{gl}_t)$, where $\mathfrak{gl}_t=V \otimes V^*$ and $V$ is the tautological object in $\Rep(GL_t)$. Etingof further posed the problem of classifying the irreducible representations of $Y(\mathfrak{gl}_t)$ that have finite-length when viewed as objects in $\Rep(GL_t)$. This question was solved by Kalinov \cite{kalinov_yangians_20}, who used ultrafilter and ultraproduct techniques to bypass the lack of a triangular decomposition, following earlier applications of these methods by Harman \cite{harman_ultrafilter_16} and Deligne \cite{deligne_rep_st_07} to Deligne categories. Kalinov showed that simple finite-length representations of $Y(\mathfrak{gl}_t)$ are parametrized by pairs of sequences of Drinfeld polynomials subject to a certain stabilization condition. Independently, Utiralova \cite{utiralova_centralizer_19} proved a version of Olshanski's centralizer construction in complex rank for $Y(\mathfrak{gl}_t)$.

In this paper, we introduce the \textit{twisted Yangians} $Y(\mathfrak{sp}_t)$ and $Y(\mathfrak{o}_t)$, corresponding to the categories $\Rep(Sp_t)$ and $\Rep(O_t)$, respectively. Extending Kalinov’s methods, we classify their irreducible representations. As expected, irreducible modules are parametrized by Drinfeld polynomials, analogous to those appearing in the integer-rank theory of $Y(\mathfrak{sp}_{2n})$ and $Y(\mathfrak{o}_{2n})$. Specifically, these are highest weight representations, where the highest weight $\mu(u) = (\mu_1(u),\mu_2(u),\ldots)$ is an infinite sequence of $\mu_i(u)\in 1+\CC\dbrack{u^{-1}}$ satisfying \[ \frac{\mu_{i+1}(u)}{\mu_{i}(u)}=\frac{P_i(u+1)}{P_i(u)} \] for monic polynomials $P_i(u)$, and the $\mu_i(u)$ eventually stabilize and equal some even series $\mu^m(u) \in 1+ u^{-2}\CC\dbrack{u^{-2}}$. This implies that the $P_i(u)$ stabilize and equal $1$. As in the integer-rank case, the polynomials $P_i(u)$ are called \textit{Drinfeld polynomials}. Interestingly, the classification has the exact same form for $Y(\xsp_t)$ and $Y(\o_t)$, even though $Y(\xsp_{2n})$ and $Y(\o_{2n})$ have different classification theorems. This is perhaps a consequence of the fact that $\Rep(O_t) = \Rep(Sp_{-t})$ as tensor categories.

The organization of this paper is as follows. In \cref{subsec : classical group rep cats}, we introduce the representations of the groups $GL_N, O_N,Sp_N$ for an even integer $N=2n$, discuss the simple objects and the representations of the Lie algebras. We also discuss the positive-characteristic analogs of these categories, as they will be important later on.

In \cref{sec: Yangians in positive char}, we first define the Yangians and twisted Yangians. We then prove positive-characteristic versions of the classification theorems of finite-dimensional irreducible representations of the twisted Yangians, similarly to how Kalinov in \cite[Section 3]{kalinov_yangians_20} proves a positive-characteristic analog of the classification theorem of finite-dimensional irreducible representations of $Y(\gl_n)$. In \cref{sec: deligne categories}, we define the Deligne categories $\Rep O_t$ and $\Rep Sp_t$ and give their construction using ultraproducts.
Finally, in \cref{sec: yangians in complex rank}, we assemble the results of Sections \ref{sec: Yangians in positive char} and \ref{sec: deligne categories} together using ultraproducts to establish the final classification theorem, \cref{thm: main result}. Finally, \cref{sec: appendix} proves the necessary results about the Sklyanin determinant used for proving \cref{thm: tensor product decomposition}, an important tensor product decomposition that relates representations of $Y(\g_n)$ to those of $SY(\g_n)$ (where $\g_n=\xsp_{2n}$ or $\o_{2n}$), the \textit{special twisted Yangian}, analogous to the Yangian $Y(\xsl_n)$.

Several natural directions remain open. First, one may seek to describe the simple objects of $\Rep(GL_t)$, $\Rep(O_t)$, and $\Rep(Sp_t)$ appearing in irreducible representations of $Y(\mathfrak{gl}_t)$, $Y(\mathfrak{o}_t)$, and $Y(\mathfrak{sp}_t)$. Second, the categorical framework allows one to define a wide class of Yangians and twisted Yangians beyond the classical types, inviting further classification questions. Finally, it remains to be seen whether an analog of Olshanski's centralizer construction exists for twisted Yangians in complex rank, paralleling \cite{utiralova_centralizer_19}.

\subsection*{Acknowledgements}
This paper is the result of MIT PRIMES-USA, a program that provides high-school students an opportunity to engage in research-level mathematics and in which the first author mentored the second author. The authors would like to thank the MIT PRIMES-USA program and its coordinators for providing the opportunity for this research experience. We are grateful to Pavel Etingof for his invaluable insights. We also thank Sidarth Erat for many helpful discussions.

\section{The representation categories \texorpdfstring{$\bfRep(GL_{N})$}{RepGLN}, \texorpdfstring{$\bfRep(SO_{N})$}{RepSON}, \texorpdfstring{$\bfRep(Sp_{N})$}{Rep(SpN)}}\label{classical_groups_rep_cats}

In this section, we review some basic properties about representation categories of classical groups and their Lie algebras. We assume all representations are finite-dimensional, and we work over an algebraically closed field $\KK$ of characteristic $p \neq 2$. We freely reference \cite{jantzen2003representations}.

\subsection{Representation categories of classical groups}\label{subsec : classical group rep cats}
Let $\mathcal G$ be a reductive algebraic group with fixed choice of split maximal torus $H$ and character lattice $X(H)$. Every module over $M$ over $H$ admits a weight-space decomposition $M = \bigoplus_{\lambda \in X(H)} M_\lambda$. In particular, the Lie algebra $\mathfrak g$ of $\mathcal G$ admits a weight-space decomposition given by 

\[\mathfrak g = {\mathfrak g}_0 \oplus \bigoplus_{\alpha \in R} {\mathfrak g}_\alpha,\]
where $R$ is the set of roots, which are the non-zero weights of $H$ such that ${\mathfrak g}_\alpha \neq 0$. We will let $\h$ denote the Cartan subalgebra of $\g$ corresponding to $\g$. Each $f: H \rightarrow \mathbb G_m \in X(H)$ gives a Lie algebra map $\mathfrak h \mapsto \KK \in \h^*$; in particular, if we fix a basis for $X(H)$, the image of the basis under this map gives a basis of $\mathfrak{h}^*$. We will use the same symbols to describe both bases, identified via this map. This will be convenient to pass from weights of $\mathcal G$ to those of $\g$. Now, let's set up notation for the reductive algebraic groups we will consider.
\par
Let $GL_N$ denote the general linear group scheme, which is the linear algebraic group of $N \times N$ invertible matrices, where $N=2n$ is an even integer. The Lie algebra $\gl_N$ of $GL_N$ is the space of all $N \times N$ matrices, for which we will use the usual elementary matrices $E_{ij}$ as basis. However, \textbf{rather than using the usual choice of $1,\ldots, 2n$ to index rows and columns, we will use $I_N \coloneqq \{-n,\ldots,-1,1,\ldots,n\}$}. Consequently, all results we cite from \cite{kalinov_yangians_20} must first be translated to the indices $-n,\ldots, -1,1,\ldots, n$. 

Our choices of split maximal torus $H$ in $GL_N$ and Cartan subalgebra $\h$ in $\mathfrak{gl}_N$ will be the subgroup and subalgebra of diagonal matrices, respectively. We let $\{\eps_i\}_{i\in I_N}$ denote the basis of $X(H)$ where $\eps_i$ is the character $diag(t_{-n},\dots, t_n) \mapsto t_i$. Any weight $\lambda \in X(H)$ is therefore of the form $\lambda = \sum_{i\in I_N}\lambda_i\eps_i$ with $\lambda_i \in \Z$. It follows by our convention that the same set of symbols will denote the basis of $\h^*$ dual to $\{E_{ii}\}_{i \in I_N}$ so that any $\gl_N$-weight $\lambda \in \h^*$ can also be expressed $\lambda = \sum_{i\in I_N}\lambda_i\eps_i$ except now with $\lambda_i \in \KK$. Finally, we will use the conventional triangular decomposition on $\mathfrak g$ where the simple roots are given by $\eps_{i} - \eps_{i+1}$, corresponding to the root vector $E_{i,i+1}$ for $i\in I_N \setminus \{-1\}$, and $\eps_{-1}-\eps_1$, corresponding to the root vector $E_{-1,1}$.
\par
The main groups in this paper, however, will be subgroup schemes of $GL_N$ preserving a form. In particuar let $G$ be an invertible symmetric (resp. skew-symmetric) matrix valued in $\KK$. The \textit{orthogonal group} $O_N(G)$ (resp. \textit{symplectic group} $Sp_N(G)$) is defined to be subgroup scheme given by the relation
\[\{x \in GL_N \ | x^tGx = G\}.\]
Because $O_N(G)$ is not connected, at times it will be convenient to work instead with the special orthogonal group $SO_N(G)$ of determinant $1$ matrices in $O_N(G)$. They both have the same Lie algebra. For the symplectic case we point out that $N$ has to be even for a skew-symmetric form to exist. In fact, from now on we will assume $N = 2n$ is even, as we will not need to work with orthogonal groups for odd $N$.
\par
Because different choices of $G$ yield isomorphic groups, we choose a particular form for $G$ (defined below) and omit it from the notation.  We define the $N\times N$ matrix $G=[g_{ij}]$ by
\begin{align*}
    g_{ij} = \begin{cases}
        \delta_{i,-j} & \text{in the orthogonal case}\\
        \delta_{i,-j}\cdot \sgn i & \text{in the symplectic case.}
    \end{cases}
\end{align*}
The matrix $G$ has the following forms in the orthogonal and symplectic cases, respectively:
\begin{align}\label{choice_of_matrix}
    \begin{bmatrix}
        0 & 0 & \cdots & 0 & 1\\
        0 & 0 & \cdots & 1 & 0 \\
        \vdots & \vdots & \iddots & \vdots &\vdots\\
        0 & 1 & \cdots & 0 & 0 \\
        1 & 0 & \cdots & 0 & 0 \\
    \end{bmatrix},
    \begin{bmatrix}
        0 & 0 & \cdots & 0 & 1\\
        0 & 0 & \cdots & 1 & 0 \\
        \vdots & \vdots & \iddots & \vdots &\vdots\\
        0 & -1 & \cdots & 0 & 0 \\
        -1 & 0 & \cdots & 0 & 0 \\
    \end{bmatrix}.
\end{align}
This is not the most conventional choice (for instance, see \cite{humphreys2012introduction}), but it is what Molev uses in \cite{molev_yangians_book_07}, which will be useful for our purposes. We let the set of diagonal matrices $H$ in $\mathcal G = SO_N, Sp_N$ be our maximal split torus in $\mathcal G$ and is given by matrices of the form $diag(t_{n}^{-1},\dots, t_1^{-1}, t_1, \dots t_n)$. We then define the character $\eps_i \in X(H)$ by sending such a matrix to $t_i$. 
\par
Let's define the Lie algebras of these groups. Define $E$ as the $N\times N$ matrix whose $(i,j)$-entry is $E_{ij} \in \mathfrak{gl}_N$ (we are using $I_N$ to index the rows and columns of $\gl_N$ as well). Next, define the matrix $F$ by:
\begin{align*}
    F \coloneqq E - G^{-1}\E^t G,
\end{align*}
from which a straightforward calculation shows that 
\[F_{ij} = E_{ij}-\theta_{ij}E_{-j,-i},\]
where
\begin{align*}
    \theta_{ij}\coloneqq\begin{cases}
        1 & \text{in the orthogonal case}\\
        \sgn i \cdot \sgn j & \text{in the symplectic case.}
    \end{cases}
\end{align*}
Then, the Lie algebras $\o_N$ and $\xsp$ are the subalgebras of $\gl_N$ spanned the entries of $F$ in the orthogonal case and symplectic case, respectively. The Cartan subalgebra $\mathfrak h$ corresponding to the choice of torus $H$ has basis given by $F_{i,i} = E_{i,i} - \theta_{i,i}E_{-i, -i}$ for $1 \leq i \leq n$; the basis dual to this is precisely the basis induced $\h^*$ by the basis $\{\eps_i\}_{i=1}^n$ of $X(H)$ given above. Finally, if we let the span of the $F_{ij}$ for $1 \leq i < j \leq N$ denote the span of the positive root spaces, then our choice of positive roots $\Phi^+$ is given by

\begin{align}\label{root_system}
    \begin{cases}
        \{-\eps_i -  \eps_j, \eps_i -  \eps_j\}_{i < j} & \mathcal G = SO_N \\
        \{-\eps_i -  \eps_j, \eps_i -  \eps_j\}_{i < j} \cup \{-2\eps_i\} & \mathcal G = Sp_N
    \end{cases},
\end{align}
where $i,j$ range over $1,2,\ldots, n$.
\par
Now, let's discuss a little about the representation theory. Let $\mathcal G$ be any one of $GL_N, SO_N$ or $Sp_N$ with Lie algebra $\mathfrak g$. Call any weight $\lambda \in X(H)$ \textit{dominant integral} if it satisfies  $\langle \lambda, \alpha^\vee\rangle \in \Z_{\geq 0 }$ for all $\alpha \in \Phi^+$, where $\alpha^\vee$ is the coroot associated to $\alpha$, and let $X(H)^+$ be the set of dominant integral weights. Explicitly, this is given by 
\begin{align}\label{explicit_dominant_integral_condition}
    \begin{cases}
        \lambda_{-n} \geq \cdots \geq \lambda_{n} & \mathcal G = GL_N \\
        -|\lambda_1| \geq \cdots \geq \lambda_{n-1} \geq \lambda_{n} & \mathcal G = SO_{N} \\
        0 \geq \lambda_1 \geq \cdots \geq \lambda_{n} & \mathcal G = Sp_{N}
    \end{cases}.
\end{align}
with each $\lambda_i \in \Z$. 
\par
Impose a partial ordering $\preceq$ on $X(H)$ by $\mu \preceq \lambda$ if $\lambda - \mu$ is the nonnegative linear combination of positive roots. Then, it is well-known that the simple modules all have a unique maximal weight with respect to $\preceq$ called the \textit{highest weight}. Moreover, it must be dominant integral and for each dominant integral weight $\lambda$ there is a simple module $\mathcal L(\lambda)$ with that highest weight. In the case of $\mathcal{G} = GL_N$, we will also denote the simple module of highest weight $\lambda$ by $L(\lambda)$ or $L(\lambda, p)$ we if we want to emphasize the characteristic. For $\mathcal G = SO_N, Sp_N$, we will also denote this simple module by $V(\lambda)$ or $V(\lambda, p)$. This will be so that we are consistent with the notations of Molev in \cite{molev_yangians_book_07} later on when discussing Yangians.
\par
We will also consider another class of modules, called \textit{Weyl modules}. For each dominant integral weight $\lambda \in X(H)^+$, we have a Weyl module by $\Delta(\lambda)$ or $\Delta(\lambda, p)$ defined by the universal property that it admits a surjection to any module generated by its highest-weight vector. In the case the characteristic $p = 0$, we have $\mathcal L(\lambda) = \Delta(\lambda)$. For $p > 2$, it can loosely be thought of as a mod $p$-reduction of the characteristic-zero module $\mathcal L(\lambda)$ (so its character is also given by the Weyl character formula). For a more precise definition, see \S II.2 in \cite{jantzen2003representations}.
\par
Let's also state classification simple $O_N$-modules. Let $\mathcal{G} = SO_N$ and recall that $N$ is even. Then, the simple $O_N$-modules are indexed by the following:
\begin{enumerate}
    \item the collection $\{\lambda \ | \ \lambda \in X(H)^+, \lambda_{1} < 0\}$. In particular, the simple modules are given by the induced modules $\mathrm{ind}_{SO_N}^{O_N} \ V(\lambda)$ for $\lambda \in X(H)^+$ with $\lambda_{1} < 0$. Moreover, replacing $\lambda_{1}$ by $-\lambda_{1}$ and then inducing gives the same module;
    \item the collection $\{(\lambda, \varepsilon) \ | \ \lambda \in X(H)^+, \lambda_{1} = 0, \varepsilon \in \{-1,1\}\} $. In this case, the $SO_N$-module structure on $V(\lambda)$ lifts to the structure of an $O_N$-module where a distinguished order-$2$ element acts by $\varepsilon$. 
\end{enumerate}
When it is clear we are working with $O_N$, we will let $X(H)^+$ for $O_N$ be the set of $\lambda \in X(H)$ such that $0 \geq \lambda_1 \geq \cdots \geq \lambda_n$ and denote the simple module corresponding to $\lambda$ with the notation $V(\lambda) $ or $V(\lambda, p)$. As written, this is ambiguous when $\lambda_1 = 0$, but we deliberately ignore the sign $\varepsilon$ for the case $\lambda_1 = 0$ because it is not detectable at the level of the Lie algebra $\o_N$ and basically has no bearing on what is to come. Moreover, it will turn out we won't really be interested in the case where $\lambda_1 \neq 0$ due to stabilization with respect to a certain limit as $N\rightarrow \infty$, so in a loose sense we truly only care about the underlying $SO_N$-module structure.
\par
We will not prove this classification here, but a proof is given in \cite{goodman2000representations} (the proof therein assumes characteristic $p = 0$ but only needs $p \neq 2$).
\begin{rem}\label{rem : odd vs even orthogonal groups}
    For $\mathcal{G} = SO_N$ with $N = 2n + 1$ odd, the root system for $\mathcal{G}$ is the same as the even case except with the additional short roots of $\eps_i$ and $-\eps_i$. The criteria for dominant integrality for $\lambda \in X(H)$ then becomes the same as that for $Sp_N$. Since we have $O_N = SO_N \times \Z/2\Z$, it is clear that $O_N$-representations are indexed by the set $\{(\lambda, \varepsilon) \ | \lambda \in X(H)^+, \varepsilon \in \{-1, 1\}\}$.
\end{rem}

\subsection{Representations of classical Lie algebras} Continue to let $\mathcal G$ be one of $GL_N, SO_{N}$ or $Sp_{N}$, and let $\g$ denote the Lie algebra of $\mathcal G$. We are working towards describing the representation theory of the Yangian $Y(\mathfrak{g})$, so it will also be useful to state some basic facts about the representation theory of $\mathfrak g$. In characteristic zero, restricting a representation of $\mathcal G$ to $\mathfrak g$ is essentially the same, so we are mostly concerned with positive characteristic. 
\par
Let $C_{\Z}$ denote the \textit{fundamental alcove} of $\mathcal G$, which is given by 
\[C_\Z = \{\lambda \in X(H) \ | \ 0 \leq \langle \lambda + \rho, \alpha^\vee\rangle < p', \forall \alpha \in \Phi^+\},\] 
where $p' = p$ if $p > 0$ and $p' = \infty$ if $p = 0$ and where $\rho$ is half the sum of the positive roots. For $O_N$, we define its fundamental alcove to be that for $SO_N$. Then, we have the following well-known fact (cf. \S II.5.6 in \cite{jantzen2003representations}):

\begin{prop}\label{prop : fundamental alcove has no extensions and is simple}
    For any $\lambda,\mu \in C_{\Z} \cap X(H)^+$, we have $\mathcal L(\lambda) = \Delta(\lambda)$, and moreover there are no non-trivial extensions between $\mathcal L(\lambda)$ and $\mathcal L(\mu)$.
\end{prop}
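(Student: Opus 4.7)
The plan is to deduce both assertions from the (strong) linkage principle, as developed in \S II.6 of \cite{jantzen2003representations}. In characteristic $0$, where the affine Weyl group $W_p$ collapses to the finite Weyl group $W$ and $C_\Z \cap X(H)^+ = X(H)^+$, the statement reduces to classical complete reducibility for semisimple Lie algebras, so I focus on the case $p > 2$.

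For the $\mathrm{Ext}^1$-vanishing, the linkage principle (\S II.6.17 of \cite{jantzen2003representations}) asserts that $\mathrm{Ext}^1(\mathcal L(\lambda), \mathcal L(\mu)) = 0$ unless $\lambda$ and $\mu$ lie in the same orbit under the dot action $w \cdot \nu = w(\nu + \rho) - \rho$ of $W_p$. Next I observe that $C_\Z$ is a fundamental domain for this dot action on $X(H)$: standard affine reflection group geometry shows that every $W_p$-orbit meets $C_\Z$ in exactly one integral point, the half-open inequalities $0 \le \langle \lambda + \rho, \alpha^\vee\rangle < p$ breaking ties on walls. Hence distinct $\lambda, \mu \in C_\Z$ lie in different $W_p$-orbits, and the linkage principle yields $\mathrm{Ext}^1(\mathcal L(\lambda), \mathcal L(\mu)) = 0$.

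For the identity $\mathcal L(\lambda) = \Delta(\lambda)$, I apply the strong linkage principle (\S II.6.13 of \cite{jantzen2003representations}): every composition factor $\mathcal L(\mu)$ of $\Delta(\lambda)$ satisfies $\mu \uparrow \lambda$, so in particular $\mu \in W_p \cdot \lambda$ with $\mu \preceq \lambda$. The key geometric input is that weights in $C_\Z$ lie in the lowest alcove for the dot action; consequently, reflecting $\lambda$ across any relevant alcove wall produces a weight strictly larger than $\lambda$ in the $\preceq$-order, so no $\mu \in W_p \cdot \lambda$ with $\mu \neq \lambda$ can satisfy $\mu \preceq \lambda$. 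This forces $\mu = \lambda$, making $\Delta(\lambda)$ simple.

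The main obstacle is this final geometric verification, which must be performed carefully with the asymmetric convention defining $C_\Z$ and accounting for weights lying on the walls of the alcove, but is essentially worked out in \S II.5--6 of \cite{jantzen2003representations}. Steps one and two are direct invocations of the (strong) linkage principle.
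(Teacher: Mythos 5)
The paper does not spell out a proof of this proposition at all — it simply cites Jantzen \S II.5.6 as a well-known fact. Your proposal supplies the argument that lies behind that citation: the linkage and strong linkage principles plus the geometry of the lowest alcove. This is the right mechanism, and both halves of your argument are sound in spirit. The Weyl-module half is correct: for $\lambda$ in $C_\Z \subset \overline{C}$ (closure of the lowest dominant alcove), strong linkage forces every composition factor $\mathcal L(\mu)$ of $\Delta(\lambda)$ to satisfy $\mu \uparrow \lambda$, and the only dominant weight strongly linked to and below a weight in $\overline{C}$ is $\lambda$ itself; the highest-weight space being one-dimensional then gives $\Delta(\lambda) = \mathcal L(\lambda)$. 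Your phrasing ("reflecting across any relevant wall produces a strictly larger weight") is a somewhat loose paraphrase of this, but the substance is fine.

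There is, however, a genuine small gap in the $\mathrm{Ext}^1$-vanishing step: your linkage argument disposes of the case $\lambda \neq \mu$ (distinct integral points in $\overline{C}$ lie in distinct $W_p\cdot$-orbits, so linkage gives $\mathrm{Ext}^1 = 0$), but the proposition also asserts no self-extensions, i.e.\ $\mathrm{Ext}^1(\mathcal L(\lambda), \mathcal L(\lambda)) = 0$, which the linkage principle alone does not rule out. You should close this either by invoking the general fact that a reductive group has no self-extensions of simple modules, or — more in the spirit of your proof — by noting that once you have $\mathcal L(\lambda) = \Delta(\lambda)$ you also get $\mathcal L(\lambda) = \nabla(\lambda)$ (same Weyl character, and $\nabla(\lambda)$ has simple socle $\mathcal L(\lambda)$), so $\mathrm{Ext}^1(\mathcal L(\lambda),\mathcal L(\lambda)) = \mathrm{Ext}^1(\Delta(\lambda),\nabla(\lambda)) = 0$ by Kempf vanishing. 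With that one sentence added, your proof is complete and is exactly the content Jantzen's \S II.5--6 is being cited for.
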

\begin{cor}
    For $\lambda \in C_\Z \cap X(H)^+$, the  $\mathcal G$-module $\Delta(\lambda)$ is irreducible upon restriction to $\mathfrak{g}$.
\end{cor}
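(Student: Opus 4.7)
The plan is to combine the preceding proposition with the restrictedness forced by membership in the fundamental alcove. By \cref{prop : fundamental alcove has no extensions and is simple}, for $\lambda \in C_\Z \cap X(H)^+$ the Weyl module $\Delta(\lambda)$ coincides with the simple $\mathcal G$-module $\mathcal L(\lambda)$, so the task reduces to showing that $\mathcal L(\lambda)$ is irreducible upon restriction to $\mathfrak g$. In characteristic zero this is automatic from the usual correspondence between rational $\mathcal G$-modules and $\mathfrak g$-modules for a connected reductive $\mathcal G$, so the real content lies in the case $p > 2$.

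The key observation is that the fundamental-alcove condition $0 \leq \langle \lambda + \rho, \alpha^\vee\rangle < p$ for every $\alpha \in \Phi^+$ forces, in particular, $\langle \lambda, \alpha^\vee\rangle \leq p-2$ for each simple root $\alpha$. Thus $\lambda$ is a \emph{restricted} dominant weight. By Curtis's theorem (cf. \cite[\S II.3.15]{jantzen2003representations}), restrictedness guarantees that $\mathcal L(\lambda)$ is already simple as a module over the restricted enveloping algebra $u(\mathfrak g) = \mathrm{Dist}(\mathcal G_1)$ of the first Frobenius kernel.

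To pass from $u(\mathfrak g)$-simplicity to $\mathfrak g$-simplicity, I would use that on any rational $\mathcal G$-module $V$ the action of $U(\mathfrak g)$ automatically factors through $u(\mathfrak g)$: for $x \in \mathfrak g$, the operator $x^p$ agrees with the action of the restricted $p$-power $x^{[p]} \in \mathfrak g$. Consequently, every $\mathfrak g$-stable subspace of $V$ is $u(\mathfrak g)$-stable; applied to $V = \mathcal L(\lambda)$, this converts the simplicity from the previous paragraph into the desired $\mathfrak g$-irreducibility.

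The principal technical input is Curtis's theorem, and the only real care is matching the paper's fundamental-alcove condition to Jantzen's formulation of restrictedness; beyond that the argument is formal. The necessity of restrictedness is highlighted by Frobenius-twisted simple modules such as $\mathcal L(p\lambda) \cong \mathcal L(\lambda)^{[1]}$ in type $A_1$, on which $\mathfrak g$ acts trivially and $\mathfrak g$-irreducibility fails dramatically for $\lambda \neq 0$; such $p\lambda$ lie outside the fundamental alcove, consistent with the corollary.
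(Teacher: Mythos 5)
Your argument is correct and follows essentially the same route as the paper: reduce via \cref{prop : fundamental alcove has no extensions and is simple} to showing $\mathcal L(\lambda)$ is $\mathfrak g$-simple, observe that the alcove condition forces $\lambda$ to be restricted, deduce simplicity over the first Frobenius kernel $u(\mathfrak g)$, and then transfer this to $\mathfrak g$-simplicity since the $U(\mathfrak g)$-action on a rational $\mathcal G$-module factors through $u(\mathfrak g)$. The only cosmetic difference is that you cite the Curtis result (\S II.3.15) directly whereas the paper invokes the Steinberg tensor product theorem (\S II.3.17); both are standard ways to reach the same intermediate fact, and your explicit spelling out of the $\mathfrak g$-to-$u(\mathfrak g)$ passage is if anything a bit cleaner than the paper's terse phrasing.
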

\begin{proof}
    If $p = 0$, this is well-known. So suppose $p > 0$. First of all, by Proposition \ref{prop : fundamental alcove has no extensions and is simple}, $\Delta(\lambda)$ is simple. Then, because $\mathcal G$ is reductive, the restriction of a simple $\mathcal G$-module to $\mathfrak g$ can instead be thought of as module over the first Frobenius kernel over $\mathcal G$. But $\lambda$ is a restricted dominant integral weight because it is both dominant integral and in the fundamental alcove, so the claim follows by the Steinberg tensor product theorem (see \S II.3.17 in \cite{jantzen2003representations}).
\end{proof}
The following lemma will also be useful (recalling $N = 2n$) :
\begin{lem}\label{lem: irreducible g_n-modules in positive char}
    Suppose $p > 2$, and let $\lambda$ be a dominant integral weight for $\mathcal G = SO_N, Sp_N$. Then, the condition $-2\lambda_n + 2n < p$ implies $\lambda \in C_{\ZZ}$.
\end{lem}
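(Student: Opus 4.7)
The plan is to check directly that the hypothesis forces the defining inequalities $0 \leq \langle \lambda + \rho, \alpha^\vee\rangle < p$ for every positive root $\alpha$. The lower bound is automatic: since $\lambda$ is dominant integral, $\langle \lambda, \alpha^\vee\rangle \geq 0$, and $\rho$, being the half-sum of positive roots, is strictly dominant, so $\langle \rho, \alpha^\vee\rangle \geq 1$. So the real content is the upper bound.

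First I would compute $\rho$ explicitly in the conventions of \eqref{root_system}. Tallying the $\eps_k$-coefficient of each positive root and halving, one gets
\[
\rho = -\sum_{k=1}^n (k-1)\,\eps_k \quad (\mathcal G = SO_N), \qquad \rho = -\sum_{k=1}^n k\,\eps_k \quad (\mathcal G = Sp_N).
\]
The coroots are easy to read off: $D_n$ is simply laced, and for $C_n$ the only non-self-dual root is $-2\eps_i$, whose coroot is $-\eps_i$. So evaluation at $\lambda + \rho$ splits into three (respectively two) case families.

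Next, I would bound $\langle \lambda + \rho, \alpha^\vee\rangle$ case by case, using the explicit dominance conditions of \eqref{explicit_dominant_integral_condition}. The key numerical fact is that $\lambda_n$ is the most negative coordinate in both cases: in the $Sp_N$ case this is immediate from $0 \geq \lambda_1 \geq \cdots \geq \lambda_n$, and in the $SO_N$ case the condition $-|\lambda_1| \geq \lambda_2 \geq \cdots \geq \lambda_n$ implies $|\lambda_i| \leq -\lambda_n$ for every $i$. Then:
\begin{itemize}
    \item For $\alpha = -\eps_i - \eps_j$ with $i < j$, the pairing is $-(\lambda_i + \lambda_j) + c$ where $c \le 2n-3$ (type $D$) or $c \le 2n-1$ (type $C$); the bound $-\lambda_i, -\lambda_j \le -\lambda_n$ gives pairing $\le -2\lambda_n + 2n - 1 < p$.
    \item For $\alpha = \eps_i - \eps_j$ with $i < j$, the pairing equals $\lambda_i - \lambda_j + (j-i)$; since $|\lambda_i|, |\lambda_j| \le -\lambda_n$, this is at most $-2\lambda_n + (n-1) < -2\lambda_n + 2n$.
    \item For the extra symplectic roots $\alpha = -2\eps_i$, the coroot pairing is $-\lambda_i + i \leq -\lambda_n + n$, which is $\le -2\lambda_n + 2n$ since $\lambda_n \le 0 \le n$.
\end{itemize}
In each case the bound is strictly less than $-2\lambda_n + 2n$, which is $< p$ by hypothesis.

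The main bookkeeping obstacle is the $SO_N$ case, where $\lambda_1$ may be positive and the sign conventions differ from the standard type $D$ presentation; here one has to invoke $|\lambda_1| \le -\lambda_n$ to absorb $\lambda_1$ into the bound by $-\lambda_n$. Everything else is a routine estimate.
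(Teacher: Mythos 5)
Your proof is correct and follows essentially the same route as the paper's: compute $\rho$ explicitly from \eqref{root_system}, then verify the alcove inequalities $0 \le \langle \lambda + \rho, \alpha^\vee\rangle < p$ root by root, bounding the $\lambda$-contribution by $-2\lambda_n$ and the $\rho$-contribution by $2n$. Your treatment is a bit more careful than the paper's in two small spots — using strict dominance of $\rho$ for the lower bound instead of re-deriving it, and invoking $|\lambda_i| \le -\lambda_n$ to cover the $SO_N$ case where $\lambda_1$ may be positive, which the paper glosses over by writing $0 \ge \lambda_i \ge \lambda_j$ — but the underlying argument is the same.
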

\begin{proof}
We want to show that $-2\lambda_n+2n<p$ implies \[0 \le \langle\lambda + \rho, \alpha^{\vee}\rangle < p\] for every positive root $\alpha$. Let us first consider the case of $\mathcal G = SO_N$, whose positive roots are $\pm \eps_i - \eps_j$ for $i<j$. In this case, we have 
\[\rho = -\eps_2 - 2\eps_3 -\cdots -(n-1)  \eps_{n}.\]
The condition that $\lambda$ lies in the fundamental alcove is then equivalent to:
\begin{align*}
   &0 \le \langle\lambda + \rho, (\pm \eps_i - \eps_j)^\vee\rangle = (\pm\lambda_i - \lambda_j) \mp (i-1) + (j-1) < p .
\end{align*}
Since $\lambda$ is dominant integral, $i<j$ means $0\ge \lambda_i\ge \lambda_j$, so the first inequality is satisfied. Additionally, $\pm \lambda_i-\lambda_j \le -2\lambda_n$ and $\mp (i-1)+(j-1)\le 2n$, so if $-2\lambda_n+2n<p$, the second is satisfied, so $-2\lambda_n+2n<p$ implies $\lambda$ lies in the fundamental alcove. The same argument applies, with minor adjustments, to the case of $\mathcal G = Sp_N$, whose positive roots are $\pm\eps_i- \eps_j$ for $i<j$ and $-2\eps_i$, and
\[\rho = -\eps_1 -2\eps_2 \cdots -n\eps_n.\]
This completes the proof.
\end{proof} 

\subsection{Representation categories}\label{subsec: rep cats}
We would like to define some useful notation for later on. Let $\mathcal G$ be a reductive algebraic group with Lie algebra $\g$. If $A$ is some Lie algebra, affine group scheme, or associative algebra algebra (in the category of vector spaces), let $\bfRep A$ denote the category of finite-dimensional $A$-modules. For instance, we let $\bfRep \mathcal G$ denote the category of finite-dimensional representations of $\mathcal G$, which obviously is a symmetric tensor category. In characteristic $0$, it is semisimple.  
\par
Now, let $A$ be an associative unital algebra equipped with an algebra homomorphism $U(\g) \rightarrow A$. Let us define $\bfRep_{\mathcal G} A$ to be the category of $\mathcal G$-modules $M$ that also admit the structure of an $A$-module such that the action of $\g$ on $M$ through $\mathcal G$ and through $A$ coincide (we will treat the $\mathcal G$-module admitting multiple such structures as separate objects). The morphisms will be $\mathcal G$-module homomorphisms. For instance, if $A = U(\g)$ and the equipped algebra homomorphism is the identity map, then this category is just the category $\bfRep \mathcal G$. We will particularly be interested in the case where $A$ is one of the (twisted) Yangians $Y(\gl_N)$, $Y(\o_N)$, or $Y(\xsp_N)$, defined in \S \ref{sec: Yangians in positive char}. We do so because we are interested only in representations that lift to the group, after which we will take ultraproducts to construct representations in complex rank.

\section{Twisted Yangians}\label{sec: Yangians in positive char}
Before we can develop the theory of twisted Yangians in Deligne's categories, we would like to first discuss twisted Yangians in finite rank. We will review basic results in characteristic zero. It will also be necessary to prove some basic results about twisted Yangians in positive characteristic due to a limit construction later on that passes from fields of positive characteristic to the complex numbers using ultrafilters. \footnote{Discussion of twisted Yangians in positive characteristic appears to be virtually nonexistent (cf. \cite{brundan_topley_modular_yangians_18} for the ordinary Yangian of $\gl_N$), so we prove what we need here.} We closely follow the presentation in \cite{molev_yangians_book_07}. Retain the notation from \cref{classical_groups_rep_cats}. In particular, we work over the algebraically closed field $\KK$ of characteristic $p \neq 2$ and have $N = 2n$ be some even positive integer.

\subsection{The (twisted) Yangians \texorpdfstring{$Y(\gl_{N})$, $Y(\xsp_{N})$, $Y(\o_{N})$}{Y(gl2n), Y(sp2n), Y(o2n)}}\label{subsec: twisted yangians in integer rank} 

We recall the definitions of the Yangian of $\gl_{N}$ and the twisted Yangians of $\o_{N}$ and $\xsp_{N}$, following Chapter 4 of \cite{molev_yangians_book_07}, as well as some basic properties.

The Yangian $Y(\gl_N)$ is the associative algebra generated by the generators generators $t_{ij}^{(r)}$ for $r\ge 1$ and $i,j\in I_N$ subject to the well-known ternary relation (see \cite{molev_yangians_book_07} or \eqref{eqn: ternary relation}). It will be useful to consolidate these and define the formal power series \[t_{ij}(u) \coloneqq \delta_{ij}+t_{ij}^{(1)}u^{-1}+t_{ij}^{(2)}u^{-2}+\cdots\]
and also to further assemble these into the $N\times N$ matrix $T(u) = [t_{ij}(u)]$, whose coefficients can be viewed as elements of  \[\Hom(V, V\otimes Y(\gl_N))\dbrack{u^{-1}},\]
where $V=\KK^N$.     It can be thought of as a generalization of the universal enveloping algebra of $U(\gl_N)$.
\begin{prop}\label{prop: eval homomorphism and embedding glN}
        The map
        \begin{align}\label{eqn: evaluation homomorphism glN}
            t_{ij}(u) \mapsto \delta_{ij} + E_{ij}u^{-1}
        \end{align}
        defines a surjective homomorphism \[\pi_N: Y(\gl_N)\to U(\gl_N).\] Moreover, we have an embedding $i_n: U(\gl_N)\hookrightarrow Y(\gl_N)$ given by
        \begin{align}\label{eqn: embedding glN}
            E_{ij}\mapsto t_{ij}^{(1)}.
        \end{align}
\end{prop}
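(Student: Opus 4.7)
The plan is to use the RTT (ternary) presentation of $Y(\gl_N)$, which is the characterization that makes both claims nearly mechanical. Writing the ternary relation in the form
\[
(u-v)\,[t_{ij}(u),\,t_{kl}(v)] \;=\; t_{kj}(u)\,t_{il}(v) - t_{kj}(v)\,t_{il}(u),
\]
I would first observe that to show $\pi_N$ is a well-defined algebra homomorphism it suffices to verify that the proposed images $T_{ij}(u) := \delta_{ij} + E_{ij}u^{-1} \in U(\gl_N)\dbrack{u^{-1}}$ satisfy this same relation. Substituting in and multiplying out, both sides acquire a common factor of $(u-v)/(uv)$, and after cancellation the identity reduces to
\[
[E_{ij},\,E_{kl}] \;=\; \delta_{kj}E_{il} - \delta_{il}E_{kj},
\]
which is the defining commutation relation of $\gl_N$ and holds in $U(\gl_N)$ by definition. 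This gives $\pi_N$. Surjectivity is then automatic: the coefficient of $u^{-1}$ in $\pi_N(t_{ij}(u))$ is $E_{ij}$, and the $E_{ij}$ generate $U(\gl_N)$.

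For the embedding $i_n$, the strategy is to extract the $\gl_N$ relations from the ternary relation at the level of lowest-order coefficients. Expanding both sides of the ternary relation as power series in $u^{-1}, v^{-1}$ and comparing the coefficient of $u^{-1}v^{-1}$ (equivalently, taking $r=1, s=0$ in the equivalent component form
\[
[t_{ij}^{(r+1)},\,t_{kl}^{(s)}] - [t_{ij}^{(r)},\,t_{kl}^{(s+1)}] \;=\; t_{kj}^{(r)}t_{il}^{(s)} - t_{kj}^{(s)}t_{il}^{(r)},
\]
with $t_{ab}^{(0)} := \delta_{ab}$) yields exactly
\[
[t_{ij}^{(1)},\,t_{kl}^{(1)}] \;=\; \delta_{kj}\,t_{il}^{(1)} - \delta_{il}\,t_{kj}^{(1)}.
\]
Hence the assignment $E_{ij}\mapsto t_{ij}^{(1)}$ respects the Lie bracket of $\gl_N$ and extends by the universal property of $U(\gl_N)$ to an algebra homomorphism $i_n: U(\gl_N) \to Y(\gl_N)$. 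Injectivity then comes for free from the first part: the composition $\pi_N\circ i_n$ sends $E_{ij}$ to $E_{ij}$ and is therefore the identity on $U(\gl_N)$, which forces $i_n$ to be injective.

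There is no real obstacle to overcome; the content is the observation that the RTT relation degenerates correctly in both directions. The only point that deserves a quick remark is characteristic-independence: the ternary relation and the $\gl_N$ bracket are both defined over $\ZZ$, so every identity needed here holds verbatim over the field $\KK$ of characteristic $p \neq 2$, and in particular the proof does not require inverting any integers. This is why the same statement from \cite{molev_yangians_book_07} transfers to the positive-characteristic setting we need in \cref{sec: Yangians in positive char}.
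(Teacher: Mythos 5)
Your proof is correct and is the standard argument (essentially the one in \cite{molev_yangians_book_07}); the paper states this proposition without proof, treating it as well known, so there is no competing proof in the paper to compare against. One small slip: the component relation with $r=1$, $s=0$ comes from comparing the coefficient of $u^{-1}v^{0}$ in the ternary relation, not $u^{-1}v^{-1}$ (the $u^{-1}v^{-1}$ coefficient corresponds to $r=s=1$, which gives a trivial identity); the component-form derivation you actually use is the right one, so this does not affect the argument.
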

Here $E_{ij}$ denotes the elementary $N \times N$ matrix in $\gl_N$ which has a $1$ in the $(i,j)$ position and zeros elsewhere. Moreover, $Y(\gl_N)$ is a Hopf algebra and the maps above are compatible with the Hopf algebra structure on $U(\gl_N)$. A useful power series is the \textit{quantum determinant}, given by 
\begin{align}\label{eqn: qdet}
    \qdet T(u) \coloneqq \sum_{\sigma \in \mfS_N} \sgn(\sigma)\cdot t_{1,\sigma(1)}(u-N+1)\cdots t_{n,\sigma(n)}(u).
\end{align}
In characteristic zero, the coefficients $d_1,d_2,\ldots$ of the quantum determinant are algebraically independent and generate the center of $Y(\gl_N)$. In positive characteristic, this is not true. However, as seen in \cite{kalinov_yangians_20}, the coefficients are still central and algebraically independent; hence we can define $Z_{HC}(Y(\gl_N))$ to be the subalgebra generated by the coefficients of $\qdet T(u)$.
It will also be useful to work with the Yangian $Y(\xsl_N)$ of $\xsl_N$. It is defined as follows. It can be checked that for any power series $f(u) \in 1 + u^{-1}\KK\dbrack{u^{-1}}$, the map 
\begin{align}\label{eqn: power series automorphism}
    T(u)\mapsto f(u)T(u)
\end{align}
defines an automorphism of $Y(\gl_N)$. The Yangian $Y(\xsl_N)$ of $\xsl_N$ is defined as the subalgebra of $Y(\gl_N)$ invariant under all such automorphisms. It was shown in \cite{brundan_topley_modular_yangians_18} that
\begin{align}\label{eqn: Y(gl_N) tensor product decomposition}
Y(\gl_N) = Z_{HC}(Y(\gl_N))\otimes Y(\xsl_N).
\end{align}
\par
Now, let us define the twisted Yangians $Y(\mathfrak{o}_{N})$ and $Y(\xsp_{N})$. Since we will be dealing with both cases simultaneously, we let $\g_n = \mathfrak{o}_N$ or $\xsp_N$.
Recall our choice of the matrix $G$ from \ref{choice_of_matrix}, and that we are using $I_N = \{-n,\ldots, -1,1,\ldots, n\}$ to index the rows and columns. For an $N \times N$ matrix $A$, we define the transpose operation $'$ with respect to $G$ by
\begin{align}\label{eqn: transpose wrt form}  
    A' \coloneqq G^{-1} A^t G.  
\end{align}  
In contrast, \cite{molev_yangians_book_07} defines $A'$ as $GA^t G^{-1}$. Our choice will make it easier to state definitions categorically later on, as it coincides with the transpose with respect to the bilinear form on $V$ induced by $G$. However, since our choice of $G$ satisfies $G = \pm G^{-1}$, the two definitions are equivalent. Consequently, all results we cite from \cite{molev_yangians_book_07} remain valid in our setting. If $A=[a_{ij}]$ and $A' = [a'_{ij}]$, we can compute:
\begin{align*}
    a_{ij}' = \theta_{ij}a_{-j,-i}.
\end{align*}
Then $\F = \E - \E'$. We defined $\g_n$ to be the algebra spanned by the $F_{ij}$, but we can alternatively describe it as the set of all all $X\in\gl_N$ satisfying $X'=-X$. 

Next, define the $R$-matrix $R(u)$, an $N\times N$ matrix, by: \[R(u)\coloneqq 1-\frac{P_n}{u} \in R(u)\in (\End(V)\otimes \End(V))\dbrack{u^{-1}},\] where $P_n$ is the $N\times N$ matrix given by:
\[P_n= \sum_{i,j}E_{ij}\otimes E_{ji}.\]
Note that $P_n$ is the flip map.
The transposed $R$-matrix is then defined as: \[R'(u)\coloneqq G_1^{-1}R^{t_1}(u)G_1=G_2^{-1}R^{t_2}(u)G_2,\] where $G_i$ and $t_i$ act on the $i$-th tensor component for $i=1$, $2$.

Similarly, we define the twisted Yangian by introducing a family of generators $\s_{ij}^{(r)}$ and the formal series
    \[\s_{ij}(u) \coloneqq \delta_{ij}+ \s_{ij}^{(1)}u^{-1}+\s_{ij}^{(2)}u^{-2}+\cdots .\]
   We then assemble these into the $N\times N$ matrix $\mcS(u)$ with entries $\s_{ij}(u)$. Writing \[\mcS(u) =\sum_{i=0}^\infty \mcS_i u^{-i},\] we note that $\mcS_0$ is simply the identity matrix.

\begin{defn}\label{defn: twisted yangian}
        The twisted Yangian $Y(\g_n)$ is the associative unital algebra generated by the elements $\s_{ij}^{(r)}$, with defining relations given by
        \begin{align}\label{eqn: quaternary relation}
            R(u-v)\mcS_1(u)R'(-u-v)\mcS_2(v) &= \mcS_2(v)R'(-u-v) \mcS_1(u) R(u-v) \\
        \label{eqn: symmetry relation}
            \mcS'(-u) &= \mcS(u) \pm \frac{\mcS(u)-\mcS(-u)}{2u}.
        \end{align}
\end{defn}
We call the first relation the \emph{quaternary} relation and the second the \emph{symmetry} relation.
The following standard results (again, proved in the same way as characteristic zero; see \cite[Chapter 2]{molev_yangians_book_07}) will be useful.
\begin{prop}\label{prop: eval homomorphism and embedding}
    We define the evaluation homomorphism $Y(\g_n)\to U(\g_n)$ by
    \begin{align}\label{eqn: eval homomorphism}
        \varrho_n\colon \s_{ij}(u)\mapsto \delta_{ij} + F_{ij}\left(u\pm \frac{1}{2}\right)^{-1}.
    \end{align}
    Conversely, the embedding $U(\g_n)\hookrightarrow Y(\g_n)$ is given by
    \begin{align}\label{eqn: embedding homorphism}
        i_n\colon F_{ij}\mapsto \s_{ij}^{(1)}.
    \end{align}
\end{prop}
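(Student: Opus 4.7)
The plan is to follow the characteristic-zero argument of Molev (cf.\ \cite[\S2.13 and \S2.14]{molev_yangians_book_07}) and observe that each step is a polynomial identity with integer or dyadic-rational coefficients; all denominators that arise are powers of $2$ and elementary factors, which are invertible since $p\neq 2$. I split the proof into two independent pieces.

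For the evaluation map $\varrho_n$, the task is to check that $\mcS(u)\mapsto 1+F(u\pm\tfrac{1}{2})^{-1}$ respects both defining relations \eqref{eqn: quaternary relation} and \eqref{eqn: symmetry relation}. I would first observe that $F=E-E'$ (as matrices with entries in $U(\g_n)$), then use the standard fact that the ternary (RTT) relation is satisfied by $1+Eu^{-1}$ in $U(\gl_N)$ (this is \cref{prop: eval homomorphism and embedding glN}); feeding this into the quaternary relation turns the verification into a finite identity of rational $R$-matrix expressions. The only technical step is the identity
\[
R(u-v)\bigl(1+E_1(u\pm\tfrac{1}{2})^{-1}\bigr)R'(-u-v)\bigl(1+E'_2(v\pm\tfrac{1}{2})^{-1}\bigr)=\cdots,
\]
which after clearing common factors reduces to verifying a rational identity in $u,v$ whose numerator is integer-coefficient; it holds over $\mathbb Z[\tfrac{1}{2}]$ and hence in any characteristic $\neq 2$. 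The symmetry relation reduces, using $F'=-F$, to the elementary computation
\[
1-F\bigl(-u\pm\tfrac{1}{2}\bigr)^{-1}=1+F\bigl(u\pm\tfrac{1}{2}\bigr)^{-1}\pm\frac{F\bigl(u\pm\tfrac{1}{2}\bigr)^{-1}-F\bigl(-u\pm\tfrac{1}{2}\bigr)^{-1}}{2u},
\]
a straightforward power-series identity that again only uses $p\neq 2$.

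For the embedding $i_n$, I would proceed in two stages. First, extract the coefficient of $u^{-1}v^{-1}$ (more precisely, the leading nontrivial coefficients) in the quaternary relation \eqref{eqn: quaternary relation} to compute the commutators $[\s_{ij}^{(1)},\s_{kl}^{(1)}]$ inside $Y(\g_n)$. Comparing with the structure constants of $\g_n$ spanned by $F_{ij}=E_{ij}-\theta_{ij}E_{-j,-i}$, these commutators match the Lie brackets of the $F_{ij}$; by the universal property of $U(\g_n)$ this gives a well-defined algebra map $i_n$. Second, for injectivity, expand
\[
\varrho_n(\s_{ij}(u))=\delta_{ij}+F_{ij}u^{-1}\mp\tfrac{1}{2}F_{ij}u^{-2}+\cdots,
\]
so $\varrho_n(\s_{ij}^{(1)})=F_{ij}$ and hence $\varrho_n\circ i_n=\mathrm{id}_{U(\g_n)}$. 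This identity forces $i_n$ to be injective (in particular the $\s_{ij}^{(1)}$ satisfy no relations beyond those of $\g_n$).

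The step I expect to be most delicate is the commutator extraction for $i_n$: the quaternary relation couples $\mcS_1,\mcS_2$ with both $R$ and $R'$, so the coefficient of $u^{-1}v^{-1}$ mixes terms coming from $R(u-v)$ and $R'(-u-v)$, and one must verify that after cancellation the result matches $[F_{ij},F_{kl}]$ and not some deformed bracket. In characteristic zero Molev handles this with a calculation using $\theta_{ij}$; I would reuse that calculation verbatim, and the only check needed is that every coefficient in the manipulations lies in $\mathbb Z[\tfrac{1}{2}]$, which one verifies by inspection. All other ingredients are formal consequences of the relations in $Y(\g_n)$ and of $\varrho_n$ being a map.
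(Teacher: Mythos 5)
Your proposal is essentially the same approach the paper takes. The paper gives no argument beyond the sentence ``proved in the same way as characteristic zero; see Molev, Chapter 2,'' and your proof is a detailed unpacking of exactly that: verify the quaternary and symmetry relations for $1 + F(u\pm\tfrac12)^{-1}$ directly, check that the coefficients involved live in $\Z[\tfrac12]$, extract the first-order bracket from the quaternary relation to get $i_n$ as a well-defined algebra map, and use $\varrho_n\circ i_n=\mathrm{id}$ for injectivity. Your symmetry-relation computation is correct (in the orthogonal case it collapses to $1+F(u-\tfrac12)^{-1}=1-F(-u+\tfrac12)^{-1}$ and symmetrically in the symplectic case), and the splitting-off $\varrho_n(\s_{ij}^{(1)})=F_{ij}$ gives the left inverse as you say.

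One small inaccuracy worth fixing in the writeup: in the displayed quaternary-relation check you substitute $1+E_1(u\pm\tfrac12)^{-1}$ and $1+E'_2(v\pm\tfrac12)^{-1}$, but $\varrho_n$ substitutes $1+F_i(\cdot)^{-1}$ in \emph{both} slots. Relatedly, the suggestion to ``feed the ternary relation into the quaternary relation'' via \cref{prop: eval homomorphism and embedding glN} does not go through as stated: pushing the evaluation map of $Y(\gl_N)$ through the embedding $\mcS(u)=T(u)T'(-u)$ produces $1+Fu^{-1}-EE'u^{-2}$, not $1+F(u\pm\tfrac12)^{-1}$, so the two evaluation maps are not simply compatible and the quaternary identity must be verified directly for $1+F(u\pm\tfrac12)^{-1}$ (as Molev does). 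This is a local slip in the heuristic, not in the plan; the core observation --- that the verification is a rational identity with denominators $u-v$, $-u-v$, $2u$, $u\pm\tfrac12$ and all numerical coefficients in $\Z[\tfrac12]$, hence valid for $p\neq2$ --- is exactly the justification the paper is implicitly invoking.
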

\begin{prop}
    For any series \[g(u)=1+g_1u^{-2}+g_2u^{-4} + \cdots\in 1+u^{-2}\KK\dbrack{u^{-2}},\] the mapping
    \begin{align}\label{eqn: twisted power series automorphism}
        \mcS(u)\mapsto g(u)\mcS(u)
    \end{align}
    defines an automorphism of $Y(\g_n)$.
\end{prop}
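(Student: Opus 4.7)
The plan is to verify that the substitution $\mcS(u) \mapsto g(u)\mcS(u)$ preserves the two defining relations \eqref{eqn: quaternary relation} and \eqref{eqn: symmetry relation}, and then to exhibit an explicit inverse to conclude bijectivity.

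First I would check the quaternary relation. Since $g(u)$ is a scalar power series (with coefficients in $\KK$), both $g(u)$ and $g(v)$ commute with the matrix $R(u-v)$, with $R'(-u-v)$, and with the matrix entries $\mcS_i(u)$, $\mcS_2(v)$. Applying the substitution on both sides of \eqref{eqn: quaternary relation} simply multiplies each side by $g(u)g(v)$, so the relation is preserved. Since this is exactly the same calculation used to show $T(u)\mapsto f(u)T(u)$ is compatible with the ternary relation for $Y(\gl_N)$, no new input is needed.

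The main step is the symmetry relation \eqref{eqn: symmetry relation}, and the crucial point here is the evenness of $g$. Because $g(u)\in 1+u^{-2}\KK\dbrack{u^{-2}}$, we have $g(-u)=g(u)$. Also, the transpose $'$ with respect to $G$ acts trivially on scalar series, so $(g(-u)\mcS(-u))' = g(-u)\mcS'(-u) = g(u)\mcS'(-u)$. Substituting $g(u)\mcS(u)$ into the right-hand side of \eqref{eqn: symmetry relation} and using $g(-u)=g(u)$ gives
\begin{align*}
g(u)\mcS(u)\pm\frac{g(u)\mcS(u)-g(-u)\mcS(-u)}{2u}
= g(u)\left[\mcS(u)\pm\frac{\mcS(u)-\mcS(-u)}{2u}\right],
\end{align*}
which matches the transformed left-hand side after factoring out $g(u)$. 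Dividing by the invertible scalar $g(u)$ recovers the original symmetry relation, so it is preserved. This is the step I expect to require the most care: it is precisely where the hypothesis that $g(u)$ is an \emph{even} series (rather than an arbitrary element of $1+u^{-1}\KK\dbrack{u^{-1}}$ as in \eqref{eqn: power series automorphism}) enters — an odd contribution in $g$ would introduce a term incompatible with the sign in the symmetry relation.

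Finally, to upgrade this endomorphism to an automorphism, I would observe that the set $1+u^{-2}\KK\dbrack{u^{-2}}$ is a group under multiplication, so $g(u)^{-1}$ also lies in $1+u^{-2}\KK\dbrack{u^{-2}}$. The map $\mcS(u)\mapsto g(u)^{-1}\mcS(u)$ is therefore also an endomorphism by the argument above, and it is a two-sided inverse to $\mcS(u)\mapsto g(u)\mcS(u)$. This gives the desired automorphism of $Y(\g_n)$.
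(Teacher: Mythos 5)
Your proof is correct and takes essentially the same route as the paper (which delegates this to \cite[Chapter 2]{molev_yangians_book_07} and later repeats the idea for the complex-rank analogue): one checks that the two defining relations are preserved, with evenness of $g$ being exactly what makes the symmetry relation survive, and invertibility of $g$ providing the inverse map. Your treatment of the symmetry relation is slightly more explicit than the paper's one-line remark, but there is no substantive difference in approach.
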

There is a natural inclusion $Y(\g_n)\hookrightarrow Y(\gl_N)$ that aligns with the embedding $\g_n\hookrightarrow \gl_N$. This inclusion is given by
\begin{align}\label{eqn: embed Y(gn) into Y(glN)}
        \mcS(u) \mapsto T(u)T'(-u).
\end{align}

We define the \textit{special twisted Yangian} $SY(\g_n)$ as the subalgebra of $Y(\g_n)$ that remains invariant under all automorphisms of the form \eqref{eqn: twisted power series automorphism}. Equivalently, it can be characterized as \[SY(\g_n)=Y(\xsl_N)\cap Y(\g_n).\]

We define the scalar $\alpha_q(u)$ as follows:
\begin{align}\label{eqn: alphaq(u)}
    \alpha_q(u) = \begin{cases}
        1 &\text{in the orthogonal case}\\
        \frac{2u+1}{2u-q+1} & \text{in the symplectic case}.
    \end{cases}
\end{align}

Next, define the special map $\varphi_N: \mfS_N\to\mfS_N$, which acts on permutations as in \cite[Section 2.7]{molev_yangians_book_07}. Let $(a_1,\ldots, a_N)$ be a permutation of $I_N$.

Define 
\begin{align}\label{eqn: sdet}
    \begin{split}
    \sdet \mcS(u) &\coloneqq (-1)^n\alpha_N(u)\\&\times \sum_{\sigma \in \mfS_N}\sgn\sigma\sigma' \cdot \s'_{-a_{\sigma(1)}, a_{\sigma'(1)}}(-u)\cdots \s'_{-a_{\sigma(n)},a_{\sigma'(n)}}(-u+n-1) \\
    &\hspace{20mm} \times \s_{-a_{\sigma(n+1)},a_{\sigma'(n+1)}}(u-n)\cdots \s_{-a_{\sigma(N)},a_{\sigma'(N)}}(u-N+1)
    \end{split}
\end{align}
to be the \textit{Sklyanin determinant}. In characteristic zero, the coefficients of the Sklyanin determinant generate the center of $Y(\gl_N)$. In positive characteristic, this is not true. However, as we will see below, they are still central, and hence we can define $Z_{HC}(Y(\gl_N))$ as the algebra generated by the coefficients of $\sdet \mcS(u)$. Write
\[\sdet \mcS(u) = 1+ c_1 u^{-1}+c_2u^{-2}+\cdots.\]
\begin{thm}\label{thm: tensor product decomposition}
    There exists a series $\sdet\mcS(u)\in 1+ u^{-1}Y(\g_n)\dbrack{u^{-1}}$ such that the coefficients of $\sdet \mcS(u)$ are central, and the even coefficients $c_2,c_4,\ldots$ are algebraically independent and generate $Z_{HC}(Y(\g_n))$; so $Z_{HC}(Y(\g_n))=\KK[c_2,c_4,\ldots]$. 
    We have the following tensor product decomposition:
    \begin{align}\label{eqn: tensor product decomposition}
        Y(\g_n) = Z_{HC}(Y(\g_n))\otimes SY(\g_n).
    \end{align}
\end{thm}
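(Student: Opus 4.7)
The plan is to adapt Molev's characteristic-zero treatment in \cite[Chapter 2]{molev_yangians_book_07} to positive characteristic, in the spirit of Brundan--Topley's analysis \cite{brundan_topley_modular_yangians_18} of $Y(\gl_N)$ in positive characteristic. I would organize the proof in four steps.

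First, I would verify that $\sdet\mcS(u)$ is well-defined, i.e.\ that the right-hand side of \eqref{eqn: sdet} is independent of the choice of permutation $(a_1,\ldots,a_N)$. This is a purely combinatorial check relying on antisymmetrization over $\mfS_N \times \mfS_N$, and is characteristic-free. Next, I would prove centrality of the coefficients of $\sdet\mcS(u)$. The key algebraic inputs are the quaternary relation \eqref{eqn: quaternary relation}, the symmetry relation \eqref{eqn: symmetry relation}, and the Yang--Baxter equation for $R(u)$. None of these manipulations require inverting anything other than polynomials in $u$, so the characteristic-zero computation carries over whenever $p \neq 2$; specifically, one reproduces the identity $[\s_{ij}(v), \sdet\mcS(u)] = 0$ by applying the appropriate symmetrizer to a long product of $R$-matrices and $\mcS$-matrices, as in Molev.

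Second, to obtain the algebraic independence of the even coefficients $c_2, c_4, \ldots$, I would pass to the associated graded algebra with respect to the standard filtration $\deg \s_{ij}^{(r)} = r - 1$. The graded algebra should be identified with (a quotient of) $U(\g_n[t])$ by a PBW-type argument, following the approach of Brundan--Topley in the untwisted setting. The images of the $c_{2k}$ in the associated graded should correspond to standard generators of the classical center, which are algebraically independent in every characteristic $\neq 2$. This then lifts to algebraic independence of $c_2, c_4, \ldots$ in $Y(\g_n)$.

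Third, for the tensor product decomposition \eqref{eqn: tensor product decomposition}, I would compute the behavior of $\sdet\mcS(u)$ under the automorphisms \eqref{eqn: twisted power series automorphism}: one finds that $\sdet\mcS(u)$ is scaled by an explicit product of shifted copies of $g(u)$, while elements of $SY(\g_n)$ are fixed by definition. Combined with the characterization $SY(\g_n) = Y(\xsl_N) \cap Y(\g_n)$ and the decomposition \eqref{eqn: Y(gl_N) tensor product decomposition}, this yields the desired factorization: any $x \in Y(\g_n)$ can be uniquely written as $z \cdot y$ with $z \in Z_{HC}(Y(\g_n))$ and $y \in SY(\g_n)$, since one can subtract off the central component detected by the transformation law and see the remainder is invariant.

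The main obstacle will be the second step: in positive characteristic, identifying the associated graded of $Y(\g_n)$ and checking that the Sklyanin coefficients reduce to algebraically independent generators of the classical center requires a careful PBW-type argument for the twisted Yangian, parallel to but distinct from Brundan--Topley's treatment of $Y(\gl_N)$. In particular, one must verify that no unwanted relations appear in the graded algebra modulo $p$ and that the symmetry relation \eqref{eqn: symmetry relation}, which involves division by $2u$, does not obstruct the filtered-graded analysis when $p \neq 2$.
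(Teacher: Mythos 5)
Your proposal diverges from the paper's proof in several substantive ways, and contains at least one genuine gap.

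First, the paper does not use the explicit formula \eqref{eqn: sdet} at all. It defines $\sdet\mcS(u)$ via the antisymmetrizer, by setting $A_N\langle\mcS_1,\ldots,\mcS_N\rangle = A_N\,\sdet\mcS(u)$ (the twisted analog of the quantum determinant construction), and explicitly remarks after the theorem statement that it does not verify this matches \eqref{eqn: sdet}. This makes your Step~1 moot for the paper's argument. More importantly, the paper's centrality proof is not a direct computation of $[\s_{ij}(v),\sdet\mcS(u)]=0$ as you propose. Instead, the load-bearing lemma is the factorization
\[
\sdet\mcS(u) = \alpha_N(u)\,\qdet T(u)\,\qdet T(-u+N-1)
\]
under the embedding $Y(\g_n)\hookrightarrow Y(\gl_N)$ (\cref{thm: sdet and qdet}). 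Centrality then follows for free from the centrality of $\qdet T(u)$ in $Y(\gl_N)$, and the same identity produces the relation between $\sdet\mcS(u)$ and $\sdet\mcS(-u+N-1)$ that eliminates the odd coefficients. Your plan omits this factorization entirely, even though it is also the input to the paper's decomposition step. Without it, you would have to carry out the long $R$-matrix computation from scratch in positive characteristic, which is possible but much heavier.

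Second, for Step~3 (algebraic independence), the paper simply invokes Molev's characteristic-zero argument, asserting it carries over. Your associated-graded/PBW route is a different path, and you correctly flag it as the main obstacle: identifying $\mathrm{gr}\,Y(\g_n)$ in characteristic $p$ requires a modular PBW theorem for the \emph{twisted} Yangian, which is not established in the paper and does not follow directly from Brundan--Topley's work on $Y(\gl_N)$. If you take this route, that PBW step is a genuine gap you would need to fill. Third, your Step~4 is too vague to be checked. The paper's decomposition is obtained by explicitly constructing $\widetilde{d}(u)\in 1+u^{-1}Z_{HC}(Y(\gl_N))\dbrack{u^{-1}}$ with $\widetilde{d}(u)\widetilde{d}(u-1)\cdots\widetilde{d}(u-N+1)=\qdet T(u)$, then showing $\widetilde{d}(u)\widetilde{d}(-u)\in Z_{HC}(Y(\g_n))$ and $\widetilde{\s}_{ij}(u)=(\widetilde{d}(u)\widetilde{d}(-u))^{-1}\s_{ij}(u)\in SY(\g_n)$. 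Solving the recursion for the coefficients $\widetilde{d}_k$ involves dividing by $N$, so this step requires $N<p$ — a hypothesis your sketch never states but which is essential in positive characteristic. Your phrase ``subtract off the central component detected by the transformation law'' does not explain how to produce the factorization or why it is unique; in the paper this falls out of $Y(\gl_N)=Z_{HC}(Y(\gl_N))\otimes Y(\xsl_N)$ together with the inclusions $Z_{HC}(Y(\g_n))\subset Z_{HC}(Y(\gl_N))$ and $SY(\g_n)\subset Y(\xsl_N)$, and the first of these inclusions again depends on the factorization theorem your plan is missing.
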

See \cref{sec: appendix} for the proof. These results are all well-known in characteristic zero; see \cite{molev_yangians_book_07}. To prove the first statement in positive characteristic, we simply repeat the same arguments as in characteristic zero. To prove the tensor product decomposition, we use \eqref{eqn: Y(gl_N) tensor product decomposition}. We do not show that the series $\sdet \mcS(u)$ in \cref{thm: tensor product decomposition} matches \eqref{eqn: sdet} as it is not important for our purposes. 
\subsection{Representations of \texorpdfstring{$Y(\g_n)$}{Y(gn)}}
Let $\mathcal G_n$ denote the algebraic group $O_N$ or $Sp_N$ such that its Lie algebra is $\g_n$. Molev derives the classification finite-dimensional simple modules over $Y(\g_n)$ in \cite{molev_yangians_book_07} in characteristic zero. The goal of this subsection is to extend the results therein to positive characteristic, imposing assumptions as needed. Due to the ultraproduct construction later on, we mainly only care about representations detectable at the level of $\mathcal G$, so we will primarily work with representations in the category $\bfRep_{\mathcal G_n} Y(\g_n)$ (recall the notation from \S \ref{subsec: rep cats}). 
\par
Finally, we will need some facts about the representation theory of $Y(\gl_N)$ in positive characteristic. For the definition and theory of highest weight modules for $Y(\gl_N)$ in positive characteristic, see \cite{kalinov_yangians_20} and \cite{brundan_topley_modular_yangians_18}.
\begin{defn}
    A representation $V$ of the twisted Yangian $Y(\g_n)$ is called a \emph{highest weight representation} if there exists a nonzero vector $\xi\in V$ such that $V$ is generated by $\xi$ and the following condition holds: for each $i$, there is a formal power series
\begin{align} \mu_i(u) = 1 + \mu_i^{(1)} u^{-1} + \mu_i^{(2)} u^{-2} + \cdots, \quad \mu_i^{(r)} \in \mathbb{K}, \end{align}
such that
\begin{align} \s_{ij}(u) \xi &= 0 \quad \text{for } i < j , \\ \s_{ii}(u) \xi &= \mu_i(u) \xi \quad \text{for }  1 \leq i \leq n. \end{align}
    The vector $\xi$ is called the \emph{highest weight vector} of $V$, and the tuple $\mu(u) = (\mu_1(u),\ldots,\mu_n(u))$ is called the \emph{highest weight}.
\end{defn}
Notice that the symmetry relation gives
\begin{align}\label{eqn: eigenvector for s2n-i,2n-i}
    \s_{-i,-i} = \s_{ii}(-u)\pm \frac{\s_{ii}(u)-\s_{ii}(-u)}{2u}
\end{align}
for $i=1,\ldots, n$.
Then \eqref{eqn: eigenvector for s2n-i,2n-i} implies that $\xi$ is an eigenvector for $\s_{-i,-i}$ as well. 
\begin{defn}
    Let $\mu(u) = (\mu_1(u),\ldots,\mu_n(u))$. The \emph{Verma module} $M(\mu(u))$ over $Y(\g_n)$ is the quotient of $Y(\g_n)$ by the left ideal generated by the coefficients of the series 
    \[\begin{cases}
        \s_{ij}(u) & i < j \\
        \s_{ii}-\mu_i(u) & i=1,\ldots,n .
    \end{cases}\] 
    A standard argument shows that $M(\mu(u))$ has a unique simple quotient, which we will denote by $V(\mu(u))$, and also has the universal property that it surjects onto any highest weight module with the same highest weight.  
\end{defn}
Recall the notation for simple $\mathcal G_n$ and $GL_N$-modules from \S \ref{subsec : classical group rep cats}. We have the following proposition:
\begin{prop}\label{prop: positive char highest weight}
    Let $V \in \bfRep_{\mathcal G_n} Y(\g_n)$ be simple and such that $V = V(\lambda_1,p)\oplus \cdots \oplus V(\lambda_k,p)$ for $\lambda_i \in X(H)^+ \cap C_\Z$ with $(\lambda_i)_1 = 0$. Then, the module $V$ has a unique singular vector (up to scaling), whose $\g_n$ weight is maximal.
\end{prop}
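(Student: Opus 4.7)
The plan is to construct a $Y(\g_n)$-singular vector inside the space of $\g_n$-singular vectors of maximal $\g_n$-weight by a weight argument, then establish uniqueness via Schur-type reasoning and the simplicity of $V$.

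First, observe that any $Y(\g_n)$-singular vector is automatically $\g_n$-singular: the embedding $i_n$ sends each $F_{ij}$ to $\s_{ij}^{(1)}$, and for $i < j$ in $I_N$, the elements $F_{ij}$ lie in the positive Borel subalgebra of $\g_n$. Since each $\lambda_i \in C_\Z \cap X(H)^+$, the corollary to \cref{prop : fundamental alcove has no extensions and is simple} ensures that each summand $V(\lambda_i, p)$ is $\g_n$-irreducible with a one-dimensional highest-weight line. Hence the $\g_n$-singular vectors of weight $\lambda$ in $V$ span the direct sum of highest-weight lines of those $V(\lambda_j, p)$ with $\lambda_j = \lambda$.

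Next, pick a $\preceq$-maximal weight $\lambda$ among $\{\lambda_1,\ldots,\lambda_k\}$ and let $\xi$ be a nonzero $\g_n$-highest weight vector of weight $\lambda$ in some $V(\lambda_{i_0}, p)$ with $\lambda_{i_0} = \lambda$. The generator $\s_{ij}^{(r)}$ has $\g_n$-weight $\eps_i - \eps_j$ (under the convention $\eps_{-k} = -\eps_k$); for $i < j$ in $I_N$, this weight is a non-negative combination of positive roots of $\g_n$ with the sole exception of the pairs $(-k, k)$ when $\g_n = \o_N$. In the generic case, $\s_{ij}^{(r)} \xi$ has $\g_n$-weight strictly above $\lambda$, which does not occur in $V$ by maximality, so $\s_{ij}^{(r)}\xi = 0$. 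In the orthogonal exceptional case, the symmetry relation forces $\s_{-k,k}(u)$ to be an even series in $u$, and one proceeds by induction on the order using commutator identities of the form $[\s_{-k, j}^{(a)}, \s_{j, k}^{(b)}]$ for $0 < j < k$ arising from the quaternary relation; these express $\s_{-k, k}^{(2m)} \xi$ modulo terms already known to annihilate $\xi$, completing the check that $\xi$ is $Y(\g_n)$-singular.

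For uniqueness, suppose $\xi'$ is another $Y(\g_n)$-singular vector. Applying the weight argument again, $\xi'$ is a $\g_n$-weight vector of maximal $\g_n$-weight. Each of $\xi, \xi'$ generates, together with the $\mathcal G_n$-action needed to fill out $V$ by the simplicity of $V$ as an object of $\bfRep_{\mathcal G_n} Y(\g_n)$, the full module $V$, realized as a quotient of a Yangian Verma module with a specific highest weight $\mu(u)$. The universal property of the Verma module combined with Schur's lemma — applied to the simple object $V$ over the algebraically closed field $\KK$ — forces the Yangian highest weights of $\xi$ and $\xi'$ to agree and produces an endomorphism of $V$ mapping $\xi$ to $\xi'$ that must be a scalar, so $\xi' \in \KK \xi$. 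By construction, the common $\g_n$-weight $\lambda$ is maximal. The main obstacle in this program is the treatment of the exceptional orthogonal generators $\s_{-k, k}^{(r)}$ in the existence step, where the weight argument alone fails and one must carefully exploit the interplay between the quaternary and symmetry relations of $Y(\o_N)$.
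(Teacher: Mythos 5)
The overall architecture of your argument (pick a $\preceq$-maximal $\g_n$-weight, show the corresponding highest-weight vector is $Y(\g_n)$-singular, then get uniqueness from the Verma universal property plus Schur) is the same as the paper's, which simply invokes the argument of Molev's Theorem 4.2.6 and explains the role of the hypotheses. However, your treatment of the orthogonal exceptional case is not correct, and that is precisely the step the citation to Molev is meant to cover.

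First, you misidentify which operators escape the weight argument. Under $\eps_{-k}\mapsto -\eps_k$, the generator $\s_{-k,k}^{(r)}$ has $\g_n$-weight $-2\eps_k$. In the paper's $D_n$ root system $\{\pm\eps_i-\eps_j\}_{i<j}$, one has $-2\eps_k = (-\eps_j-\eps_k)+(\eps_j-\eps_k)$ for any $j<k$, a sum of two positive roots; so for every $k\ge 2$ the vector $\s_{-k,k}^{(r)}\xi$ has weight strictly $\succ\lambda$ and the maximality argument kills it. The only genuine exception is $k=1$: a direct coefficient count shows $-2\eps_1$ is \emph{not} a nonnegative combination of the $\pm\eps_i-\eps_j$, so $\lambda-2\eps_1$ is incomparable to $\lambda$ and the weight argument is silent. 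Second, and fatally, the fix you propose — commutators $[\s_{-k,j}^{(a)},\s_{j,k}^{(b)}]$ with $0<j<k$ — is vacuous precisely at $k=1$, since there is no $j$ with $0<j<1$. So the inductive mechanism you describe never touches the single case for which it is needed, and the existence step is left open for $Y(\o_{2n})$. (Noting that the symmetry relation makes $\s_{-1,1}(u)$ an even series, as you observe, does not by itself force the even coefficients to annihilate $\xi$.)

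There is also a smaller gap in the uniqueness step: you assert that ``applying the weight argument again'' any $Y(\g_n)$-singular $\xi'$ has maximal $\g_n$-weight, but the weight argument runs in the other direction (maximal weight $\Rightarrow$ singular). What one actually needs is that the cyclic span $Y(\g_n)\xi'$ is all of $V$ (using simplicity in $\bfRep_{\mathcal G_n}Y(\g_n)$), so its weights are $\preceq$ the weight of $\xi'$, and since the maximal weight $\lambda$ occurs one concludes the weight of $\xi'$ is $\succeq\lambda$, hence equal to it. You arrive at the right conclusion, but the stated justification is backwards; once repaired, the rest of the uniqueness step (Verma universal property, Schur over $\KK$) is fine and matches the paper's appeal to Molev's Theorem 4.2.6 and its Corollary 4.2.7.
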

\begin{proof}
    Let us first contextualize the assumptions. First of all, the condition $(\lambda_i)_1 = 0$ is only there to ensure that each $\mathcal G_n$-module appearing above is irreducible when restricting to $SO_N$ and therefore to $\mathfrak{o}_N$, when $\mathcal G_n = O_N$. The condition that $\lambda_i \in X(H)^+ \cap C_\Z$ is so that the character of $V(\lambda_i, p)$ is given by the Weyl character formula, which loosely speaking is to ensure that for a given $\lambda_i$, the module $V(\lambda_i, p)$ ``looks the same" as $p$ varies (this will be relevant when working with ultraproducts).
    \par
    Now, let's prove the statement. The problem here is to rule out the possibility of $V$ not having any singular vectors at all. Since we can regard $V$ as a $\g_n$-module, we can repeat verbatim the argument given in Theorem 4.2.6 of \cite{molev_yangians_book_07}.
\end{proof}
Hence $V$ must be a highest weight representation $V=V(\mu(u))$ by the universal property of $M(\mu(u))$. We also get the following corollary (Corollary 4.2.7 in \cite{molev_yangians_book_07}).
\begin{cor}\label{cor: positive char highest weight vector}
    If $V$ satisfies the conditions of \cref{prop: positive char highest weight}, and $\eta\in V$ with $\s_{ij}(u)\eta = 0$ for all $i<j$, then $\eta$ is a scalar multiple of the highest weight vector $\xi$ of $V$.
\end{cor}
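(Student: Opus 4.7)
The plan is to exploit the $\g_n$-weight grading together with the simplicity of $V$ to force $\eta$ into the top $\g_n$-weight space, and then to pin down that weight space using the Verma module surjection onto $V$. The overall structure parallels the integer-rank argument of \cite[\S 4.2]{molev_yangians_book_07}.

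First, I would observe that each generator $\s_{ij}^{(r)}$ carries $\g_n$-weight $\eps_i - \eps_j$ (inherited from the embedding $Y(\g_n) \hookrightarrow Y(\gl_N)$ via \eqref{eqn: embed Y(gn) into Y(glN)} and the analogous grading on the $t_{ij}^{(r)}$). For $i < j$ in the ordering of $I_N$, this is a positive root of $\g_n$. Decomposing $\eta = \sum_\lambda \eta_\lambda$ with respect to the $\g_n$-weight decomposition of $V$ (which exists because $V \in \bfRep_{\mathcal G_n} Y(\g_n)$), the relation $\s_{ij}(u)\eta = 0$ holds in each weight space separately, so every component $\eta_\lambda$ is annihilated by $\s_{ij}(u)$ for all $i < j$.

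Next, I would argue that only the component at the maximal $\g_n$-weight $\mu$ of $V$ (the weight of $\xi$, by \cref{prop: positive char highest weight}) is nonzero. Suppose $\eta_\lambda \ne 0$. Since $V$ is simple, $Y(\g_n)\cdot\eta_\lambda = V$. By a triangular-decomposition (PBW) argument — the raising generators $\s_{ij}^{(r)}$ ($i<j$) annihilate $\eta_\lambda$, while the remaining generators act with non-positive $\g_n$-weight shift — every weight of $V$ is $\preceq \lambda$. Hence $\mu \preceq \lambda$, and since $\lambda$ is itself a weight of $V$ we conclude $\lambda = \mu$. Therefore $\eta = \eta_\mu \in V_\mu$.

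To conclude, $V \cong V(\mu(u))$ is the simple quotient of the Verma module $M(\mu(u))$, as noted after \cref{prop: positive char highest weight}. Since $M(\mu(u))$ is cyclically generated by a vector of weight $\mu$ and all other weights arise from applying non-raising generators (which strictly decrease weight off the generator line), the $\mu$-weight space of $M(\mu(u))$ is one-dimensional; passing to the quotient, so is $V_\mu$, which is therefore spanned by $\xi$. The main obstacle I anticipate is the PBW-style triangular decomposition of $Y(\g_n)$ needed in the second step: in positive characteristic this is not as off-the-shelf as in characteristic zero, but it can be extracted from the embedding $Y(\g_n) \hookrightarrow Y(\gl_N)$ together with the modular PBW theorem for $Y(\gl_N)$ established in \cite{brundan_topley_modular_yangians_18}, or derived directly by induction on word length using the quaternary relation \eqref{eqn: quaternary relation} to reorder monomials.
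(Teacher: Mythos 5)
Your proof is correct, and in outline it matches the Molev argument (Corollary 4.2.7) that the paper cites. However, you re-prove more than you need to: \cref{prop: positive char highest weight} already asserts that $V$ has a \emph{unique} singular vector up to scaling, with maximal $\g_n$-weight. Once you have made the observation in your first step --- that each $\g_n$-weight component $\eta_\lambda$ of $\eta$ is separately annihilated by all $\s_{ij}(u)$ with $i<j$, because these operators shift $\g_n$-weight by the root $\eps_i-\eps_j$ --- you can immediately invoke that uniqueness: each nonzero $\eta_\lambda$ is a singular weight vector, hence proportional to $\xi$, hence lies in the single weight space $V_\mu$, so at most one component survives and $\eta \in \CC\xi$. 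Your steps 2 and 3 instead re-derive the maximality of the singular weight and the one-dimensionality of $V_\mu$ from scratch via a PBW-type triangular decomposition of $Y(\g_n)$, which, as you rightly flag, is genuinely delicate in positive characteristic and is better left buried inside the proof of \cref{prop: positive char highest weight} (where it is handled by appealing to Molev's Theorem 4.2.6 and the embedding into $Y(\gl_N)$) rather than reproduced in the corollary. So the approach is sound, but the short route via the already-proved uniqueness is both cleaner and avoids the very obstacle you anticipate.
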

Now, we will use what is known about $Y(\gl_N)$ to say more about representations of $Y(\g_n)$. As noted at the start of Section 3 in \cite{kalinov_yangians_20}, all the statements in Proposition 2.1.2 of \cite{kalinov_yangians_20} except for (d) hold in any characteristic. In particular, $Y(\gl_N)$ is still a Hopf algebra. Moreover, recall that we can view $Y(\g_n)$ as a subalgebra of $Y(\gl_N)$ via the embedding
\[\mcS(u)=T(u)T'(-u).\]
Letting $\Delta$ denote the comultiplication on $Y(\gl_N)$, the following formula from \cite[Section 4.2]{molev_yangians_book_07}
\begin{align}\label{eqn: comultiplication on sij(u)}
    \Delta(\s_{ij}(u)) = \sum_{a,b}\theta_{bj} t_{ia}(u) t_{-j,-b}(-u) \otimes \s_{ab}(u)
\end{align}
holds in any characteristic. This gives us the following proposition:

\begin{prop}\label{prop: functor between rep cats of Yangians}
    There is a functor $\bfRep_{GL_N} Y(\gl_N) \boxtimes \bfRep_{\mathcal G_n} Y(\g_n) \rightarrow \bfRep_{\mathcal G_n} Y(\g_n)$ induced by $\Delta$.
\end{prop}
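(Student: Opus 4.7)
The plan is to define the functor on objects by sending $(M, N) \mapsto M \otimes_{\KK} N$, equipped with the diagonal $\mathcal{G}_n$-action obtained by restricting the $GL_N$-action on $M$ along $\mathcal{G}_n \hookrightarrow GL_N$ and tensoring with the $\mathcal{G}_n$-action on $N$, together with the $Y(\g_n)$-action for which $\s_{ij}^{(r)}$ acts as the coefficient of $u^{-r}$ in
\[
\sum_{a,b} \theta_{bj}\, t_{ia}(u)\, t_{-j,-b}(-u) \otimes \s_{ab}(u),
\]
where the $t$'s act on $M$ through its $Y(\gl_N)$-structure and the $\s$'s on $N$ through its $Y(\g_n)$-structure. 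Since $M$ is finite-dimensional, each coefficient of $u^{-r}$ unpacks into a finite sum of linear operators, so this truly defines operators on $M \otimes N$. On morphisms, set the functor to be $(f, g) \mapsto f \otimes g$.

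The key structural observation is that \eqref{eqn: comultiplication on sij(u)}, viewed as a statement about the image of $\s_{ij}(u)$ under the embedding \eqref{eqn: embed Y(gn) into Y(glN)}, shows that the comultiplication $\Delta$ on $Y(\gl_N)$ sends $Y(\g_n)$ into $Y(\gl_N) \otimes Y(\g_n)$; that is, $Y(\g_n)$ is a left coideal subalgebra of $Y(\gl_N)$. As a consequence, the formula above does define a genuine left $Y(\g_n)$-action on $M \otimes N$: the defining relations \eqref{eqn: quaternary relation} and \eqref{eqn: symmetry relation} of $Y(\g_n)$ are preserved, because $\Delta$ is an algebra homomorphism on $Y(\gl_N)$ and these relations hold after embedding.

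It remains to verify compatibility between the $\mathcal{G}_n$-action and the induced $Y(\g_n)$-action on $M \otimes N$. By the embedding \eqref{eqn: embedding homorphism}, it suffices to check that $\s_{ij}^{(1)}$ acts as the diagonal action of $F_{ij}$. Extracting the coefficient of $u^{-1}$ from the defining formula, using that $t_{ij}^{(1)}$ acts on $M$ as $E_{ij}$ (by the compatibility built into $\bfRep_{GL_N} Y(\gl_N)$ together with \eqref{eqn: embedding glN}), that $\theta_{jj} = 1$ in both the orthogonal and symplectic cases, and the identity $F_{ij} = E_{ij} - \theta_{ij} E_{-j,-i}$, a direct computation yields
\[
\s_{ij}^{(1)}\big|_{M \otimes N} \;=\; \bigl(E_{ij} - \theta_{ij} E_{-j,-i}\bigr) \otimes 1 \;+\; 1 \otimes F_{ij} \;=\; F_{ij} \otimes 1 + 1 \otimes F_{ij},
\]
as desired. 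Functoriality on morphisms is immediate, since $f \otimes g$ commutes with every tensor factor appearing in the defining formula.

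The step I expect to be the main obstacle, such as it is, is the coideal property: one must trust that \eqref{eqn: comultiplication on sij(u)} is genuinely the restriction of $\Delta$ on $Y(\gl_N)$ to $Y(\g_n)$, so that the quaternary and symmetry relations automatically descend to $M \otimes N$. Once that is granted, the $u^{-1}$-coefficient computation above secures compatibility with the group action, and everything else is formal.
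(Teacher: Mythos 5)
Your proof is correct and takes essentially the same route as the paper: both rely on the coideal property from \eqref{eqn: comultiplication on sij(u)} — namely that $\Delta$ restricts to an algebra map $Y(\g_n) \to Y(\gl_N)\otimes Y(\g_n)$ — together with the fact that on degree-one generators $\Delta$ coincides with the diagonal coproduct of $U(\g_n)$. The paper leaves this last compatibility as an assertion, while you make it explicit by extracting the $u^{-1}$-coefficient; that added detail is a welcome check but not a substantively different argument.
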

\begin{proof}
    Notice that $\Delta$ is an algebra homomorphism that maps $Y(\g_n)$ to $Y(\gl_N)\otimes Y(\g_n)$. Moreover it sends $U(\g_n)$ to $U(\g_n) \otimes U(\g_n)$. It follows that if $M \in \bfRep_{GL_N} Y(\gl_N)$ and $M' \in M$, then $M \otimes M'$ is a $\mathcal G_n$ module where both $\g_n$ actions are compatible.
\end{proof}
Let $\zeta$ be the highest weight vector of the $Y(\gl_N)$-module $L(\lambda(u))$ (see \cite{kalinov_yangians_20} for the definition of a $Y(\gl_N)$-highest weight module), where the highest weight is given by $\lambda(u)=(\lambda_1(u),\ldots,\lambda_N(u))$. Similarly, let $\xi$ denote the highest weight vector of the $Y(\g_n)$-module $V(\mu(u))$. 
\begin{prop}\label{prop: highest weight of tensor product module}
    The submodule $Y(\g_n)(\zeta\otimes \xi)$ of the $Y(\g_n)$-module $L(\lambda(u))\otimes V(\mu(u))$ over $Y(\g_n)$ is a highest weight representation with highest weight vector $\zeta\otimes \xi$. Moreover, the highest weight is $\lambda_i(u)\lambda_{-i}(-u)\mu_i(u)$, for $i=1,\ldots, n$.
\end{prop}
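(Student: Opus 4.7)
The plan is to compute $\s_{ij}(u)(\zeta \otimes \xi)$ directly via the comultiplication formula \eqref{eqn: comultiplication on sij(u)}, using the highest weight properties of $\zeta$ (over $Y(\gl_N)$) and $\xi$ (over $Y(\g_n)$) together with a weight analysis on $L(\lambda(u)) \otimes V(\mu(u))$. First, I would expand
\[\s_{ij}(u)(\zeta \otimes \xi) = \sum_{a, b \in I_N} \theta_{bj}\, t_{ia}(u) t_{-j, -b}(-u)\zeta \otimes \s_{ab}(u)\xi,\]
and collapse the sum using the highest weight conditions: $\s_{ab}(u)\xi = 0$ for $a < b$, and $t_{-j, -b}(-u)\zeta = 0$ for $b < j$, leaving only pairs $(a, b)$ with $a \ge b \ge j$.

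For the diagonal case $i = j$, the summand with $(a, b) = (j, j)$ contributes
\[\theta_{jj}\, t_{jj}(u) t_{-j, -j}(-u)\zeta \otimes \s_{jj}(u)\xi = \lambda_j(u)\lambda_{-j}(-u)\mu_j(u)(\zeta \otimes \xi),\]
via $t_{jj}(u)\zeta = \lambda_j(u)\zeta$, $t_{-j, -j}(-u)\zeta = \lambda_{-j}(-u)\zeta$, $\s_{jj}(u)\xi = \mu_j(u)\xi$, and $\theta_{jj} = 1$. To confirm that this is the full scalar eigenvalue, I would invoke the tensor-product weight decomposition
\[(L(\lambda(u)) \otimes V(\mu(u)))_\nu = \bigoplus_{\nu_1 + \nu_2 = \nu} L(\lambda(u))_{\nu_1} \otimes V(\mu(u))_{\nu_2},\]
together with the observation that the maximal $\g_n$-weight subspaces of $L(\lambda(u))$ and $V(\mu(u))$ are one-dimensional, spanned by $\zeta$ and $\xi$ respectively; this forces the $\g_n$-weight subspace at the weight of $\zeta \otimes \xi$ to be $\mathbb{C}(\zeta \otimes \xi)$, so the remaining summands must cancel.

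For the off-diagonal case $i < j$, the same expansion places $\s_{ij}(u)(\zeta \otimes \xi)$ into the $\g_n$-weight subspace at the weight of $\zeta \otimes \xi$ shifted by $\sgn(i)\eps_{|i|} - \sgn(j)\eps_{|j|}$. A case analysis on the signs of $i, j$ (and on whether $|i| = |j|$) shows this shift is always either a $\g_n$-positive root or a positive combination thereof, with the single exception of $\mathcal G_n = O_N$ with $i = -1, j = 1$, where the shift $-2\eps_1$ is incomparable to zero. In every case one verifies that the target weight is not a $\g_n$-weight of $L(\lambda(u)) \otimes V(\mu(u))$, so $\s_{ij}(u)(\zeta \otimes \xi) = 0$. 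Together with the diagonal computation, this shows $\zeta \otimes \xi$ is a highest weight vector for $Y(\g_n)$ with the claimed weight, and the conclusion about the cyclic submodule $Y(\g_n)(\zeta \otimes \xi)$ follows.

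The hard part will be the exceptional case $\mathcal G_n = O_N$, $i = -1, j = 1$, where the standard ``shift-to-positive-root'' argument fails because $-2\eps_1$ does not lie in the $\g_n$-positive cone for $SO_N$. Here one must argue directly that $2\eps_1$ cannot arise as any nonneg combination of $\g_n$-positive roots plus a contribution from the restriction of the $\gl_N$-root $\eps_{-1} - \eps_1$, so no weight pair $(\nu_1, \nu_2)$ from $L(\lambda(u))$ and $V(\mu(u))$ sums to the shifted target; with this rigidity verified, the weight analysis proceeds uniformly.
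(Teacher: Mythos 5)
Your weight-theoretic route through the comultiplication formula \eqref{eqn: comultiplication on sij(u)} is the natural one and is essentially the approach Molev takes in \cite[Prop.~4.2.9]{molev_yangians_book_07}, which the paper cites. However, the argument as written leaves two nontrivial points unaddressed. First, you assert that the ``maximal $\g_n$-weight subspaces of $L(\lambda(u))$ \dots are one-dimensional, spanned by $\zeta$,'' but this is not automatic: $\zeta$ is a $\gl_N$-highest weight vector, and the $\g_n$-restriction of $L(\lambda(u))$ can have several pairwise incomparable maximal $\g_n$-weights (e.g.\ $\Lambda^2\CC^4$ over $\o_4$ has maximal weights $(-1,-1)$ and $(1,-1)$). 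What you actually need — that the $\g_n$-weight of $\zeta$ is maximal in the sense that no $\g_n$-weight of $L(\lambda(u))$ lies strictly above it, and that its weight space is one-dimensional — is true, but requires an argument. Second, you correctly flag the exceptional case $\mathcal G_n=O_N$, $(i,j)=(-1,1)$, where the shift $-2\eps_1$ is not $\preceq 0$ for $SO_N$, but the proposed resolution (``argue directly that $2\eps_1$ cannot arise \dots'') is not carried out, and the phrasing of what must be ruled out is not quite the right statement.

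Both gaps close with the same tool: introduce the height $h(\nu)=\sum_{i>0}\nu_i$ on $\g_n$-weights. Every positive $\g_n$-root $\alpha$ satisfies $h(\alpha)\le 0$, and the $\g_n$-restriction $\tilde\eps_k-\tilde\eps_l$ of every positive $\gl_N$-root $\eps^{gl}_k-\eps^{gl}_l$ (with $k<l$) also has $h(\tilde\eps_k-\tilde\eps_l)=\sgn(k)-\sgn(l)\le 0$, including the problematic value $-2$ when $l=-k>0$. Consequently every $\g_n$-weight $\nu$ occurring in $L(\lambda(u))$ has $h(\nu)\ge h(\nu_\zeta)$, and every $\g_n$-weight of $V(\mu(u))$ has $h\ge h(\nu_\xi)$; so any weight of $L(\lambda(u))\otimes V(\mu(u))$ has $h\ge h(\nu_\zeta)+h(\nu_\xi)$. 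The target weight $\nu_\zeta+\nu_\xi-2\eps_1$ has $h$ strictly smaller, which kills the $O_N$, $(-1,1)$ case outright. For shifts with $h=0$ (namely $\eps_a-\eps_b$, $a<b$) one uses the ordinary $\preceq$ argument, and the equality analysis at $h=h(\nu_\zeta)$ is exactly what shows $\nu_\zeta$ is maximal with one-dimensional weight space, justifying your claim. I would also replace ``the remaining summands must cancel'' with the more precise observation that each off-diagonal term lies in a bi-weight component $L_{\nu_1}\otimes V_{\nu_2}$ with $\nu_2\ne\nu_\xi$, hence vanishes individually once the weight space at $\nu_\zeta+\nu_\xi$ is identified as $L_{\nu_\zeta}\otimes V_{\nu_\xi}$.
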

\begin{proof}
    The proof is the same as the characteristic zero case; see \cite[Proposition 4.2.9]{molev_yangians_book_07}.
\end{proof}
The evaluation homomorphism \eqref{eqn: eval homomorphism} gives a functor $\bfRep {\mathcal G_n}\rightarrow \bfRep_{\mathcal G_n} Y(\g_n)$. Similarly, there is a functor $\bfRep{GL_N}\rightarrow \bfRep_{ GL_N} Y(\gl_N)$.  Let us call the image of any module under these functors an \textit{evaluation module}. We will use the same notation and not distinguish between a module and its image under this functor.
\par
Let $L(\lambda^{(1)}),\ldots, L(\lambda^{(k)})$ be irreducible highest weight $GL_N$-modules such that each $\lambda^{(i)}$ is dominant integral and in the fundamental alcove for all $i$, and let $V(\mu)$ be an irreducible $\mathcal G_n$-module with $\mu$ dominant integral and such that $\mu_1 = 0$ and also in the corresponding fundamental alcove. View these as evaluation modules, and consider their tensor product
\begin{align}\label{eqn: tensor product of eval modules}
    L(\lambda^{(1)})\otimes \cdots \otimes L(\lambda^{(k)}) \otimes V(\mu),
\end{align}
which lies in $\bfRep_{\mathcal G_n} Y(\g_n)$ by Proposition \ref{prop: functor between rep cats of Yangians}. Define
\begin{align}\label{eqn: highest weight of Y(glN)-evaluation module}
    \lambda_i(u) = (1+\lambda_i^{(1)}u^{-1})\cdots (1+\lambda_i^{(k)}u^{-1}).
\end{align}
Additionally, using the formula \eqref{eqn: eval homomorphism}, we can get that the $Y(\g_n)$-module $V(\mu)$ is a highest-weight representation, with highest weight
\begin{align}\label{eqn: highest weight of Y(gn)-evaluation module}
    \mu_i(u) = \frac{1+(\mu_i\pm \tfrac 12)u^{-1}}{1\pm \tfrac 12u^{-1}}, \qquad i=1,\ldots, n
\end{align}
Let $\zeta$ be the tensor product of all the highest weight vectors of the modules in \eqref{eqn: tensor product of eval modules}. Then we have the following proposition.

\begin{prop}\label{prop: highest weight of tensor product of eval modules}
    The submodule $Y(\g_n)\zeta$ of the $Y(\g_n)$-module \eqref{eqn: tensor product of eval modules} is a highest weight representation with the highest weight vector $\zeta$. Moreover, the $i$-th component of the highest weight is given by $\lambda_i(u)\lambda_{-i}(-u)\mu_i(u)$, where $\lambda_i(u)$ is given by \eqref{eqn: highest weight of Y(glN)-evaluation module} and $\mu_i(u)$ is given by \eqref{eqn: highest weight of Y(gn)-evaluation module}.
\end{prop}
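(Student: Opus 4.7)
The plan is to reduce to \cref{prop: highest weight of tensor product module} by first collapsing the $Y(\gl_N)$-factors of \eqref{eqn: tensor product of eval modules} into a single $Y(\gl_N)$-highest-weight situation. Set $\eta := \zeta_1 \otimes \cdots \otimes \zeta_k$ and let $\xi_\mu$ denote the $\g_n$-highest-weight vector of $V(\mu)$, so $\zeta = \eta \otimes \xi_\mu$. I would first verify by induction on $k$ that $\eta$ satisfies $t_{ij}(u)\eta = 0$ for $i < j$ and $t_{ii}(u)\eta = \lambda_i(u)\eta$ in the iterated coproduct $Y(\gl_N)$-module $L := L(\lambda^{(1)}) \otimes \cdots \otimes L(\lambda^{(k)})$. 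The base case $k=1$ is immediate from the evaluation formula \eqref{eqn: evaluation homomorphism glN}: $t_{ij}(u)$ acts as $\delta_{ij} + E_{ij}u^{-1}$, killing $\zeta_1$ for $i < j$ (upper-triangular acting on a highest-weight vector) and acting by $1 + \lambda_i^{(1)}u^{-1}$ for $i = j$. For the inductive step, apply $\Delta(t_{ij}(u)) = \sum_{a \in I_N} t_{ia}(u) \otimes t_{aj}(u)$ to $\eta' \otimes \zeta_k$, where $\eta'$ is the analogous vector for the first $k-1$ factors: $t_{aj}(u)\zeta_k \neq 0$ forces $a \geq j$, while the inductive hypothesis on $\eta'$ gives $t_{ia}(u)\eta' \neq 0$ only for $a \leq i$. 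These are incompatible for $i < j$, and for $i = j$ force $a = i$, yielding the product eigenvalue $\lambda_i(u)$.

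Next I would repeat the argument of \cref{prop: highest weight of tensor product module} (equivalently \cite[Proposition 4.2.9]{molev_yangians_book_07}) with $\eta$ playing the role of the single $Y(\gl_N)$-highest-weight vector. Applying the coproduct formula \eqref{eqn: comultiplication on sij(u)} to $\zeta = \eta \otimes \xi_\mu$, a summand $\theta_{bj}\, t_{ia}(u)\, t_{-j,-b}(-u)\eta \otimes \s_{ab}(u)\xi_\mu$ survives only if $a \leq i$ (from $t_{ia}(u)\eta \neq 0$), $b \geq j$ (from $t_{-j,-b}(-u)\eta \neq 0$, by step 1 applied at the argument $-u$), and $b \leq a$ (from $\s_{ab}(u)\xi_\mu \neq 0$, since $\xi_\mu$ is a $Y(\g_n)$-highest-weight vector). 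For $i < j$ these give $j \leq b \leq a \leq i < j$, a contradiction, so $\s_{ij}(u)\zeta = 0$; for $i = j$ they force $a = b = i$, producing eigenvalue $\theta_{ii}\lambda_i(u)\lambda_{-i}(-u)\mu_i(u) = \lambda_i(u)\lambda_{-i}(-u)\mu_i(u)$, since $\theta_{ii} = 1$ in both the orthogonal and symplectic cases. The formula \eqref{eqn: highest weight of Y(gn)-evaluation module} for $\mu_i(u)$ itself drops out of \eqref{eqn: eval homomorphism}: $\s_{ii}(u)$ acts on $\xi_\mu$ as $1 + F_{ii}(u \pm \tfrac{1}{2})^{-1}$, and $F_{ii}\xi_\mu = \mu_i \xi_\mu$ while $F_{ij}\xi_\mu = 0$ for $i < j$.

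The main obstacle is the triangular case analysis in the second paragraph: the vanishing for $i < j$ hinges on the constraints coming from the two $Y(\gl_N)$-factors and the single $Y(\g_n)$-factor in \eqref{eqn: comultiplication on sij(u)} being jointly incompatible. This is precisely Molev's argument in \cite[Proposition 4.2.9]{molev_yangians_book_07}, and it transfers unchanged to positive characteristic because both the coproduct formula \eqref{eqn: comultiplication on sij(u)} and the evaluation-module descriptions of $L(\lambda^{(r)})$ and $V(\mu)$ remain valid for any $p \neq 2$. No new ingredient is needed beyond careful bookkeeping.
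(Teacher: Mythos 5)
Your overall strategy (Step~1: establish $\eta=\zeta_1\otimes\cdots\otimes\zeta_k$ as a $Y(\gl_N)$-highest-weight vector with weight $\lambda_i(u)$; Step~2: feed this into the argument of \cref{prop: highest weight of tensor product module} via the coproduct \eqref{eqn: comultiplication on sij(u)}) is exactly the route Molev takes in \cite[Proposition~4.2.11]{molev_yangians_book_07}, which is what the paper cites. Step~1 is fine: in the iterated $Y(\gl_N)$-coproduct the operators $t_{ia}(u)$ and $t_{aj}(u)$ land in \emph{separate} tensor slots, so the two vanishing constraints may be applied independently and the induction closes.

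There is, however, a genuine gap in Step~2. In \eqref{eqn: comultiplication on sij(u)} the product $t_{ia}(u)\,t_{-j,-b}(-u)$ is a \emph{composite} operator acting on the single $Y(\gl_N)$-factor $L$. You deduce ``$a\le i$ from $t_{ia}(u)\eta\neq 0$'' as though $t_{ia}(u)$ were applied directly to $\eta$; but it is applied to $t_{-j,-b}(-u)\eta$, which is generally not a highest-weight vector, so the constraint ``raising operators kill it'' does not literally apply. Concretely, commuting $t_{ia}(u)$ past $t_{-j,-b}(-u)$ via the ternary relation produces extra terms $t_{-j,a}(\pm u)\,t_{i,-b}(\mp u)$, so $t_{ia}(u)\eta=0$ does not by itself force $t_{ia}(u)\,t_{-j,-b}(-u)\eta=0$. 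The constraint on $b$ is fine (it is the rightmost operator, applied directly to $\eta$), but the constraint on $a$ cannot be obtained this way.

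The correct replacement, and the one Molev actually uses, is a weight argument on the \emph{whole} composite: $\eta$ spans the unique top $\gl_N$-weight space of $L=L(\lambda^{(1)})\otimes\cdots\otimes L(\lambda^{(k)})$, and $t_{ia}(u)\,t_{-j,-b}(-u)$ shifts the $\gl_N$-weight by $(\eps_i-\eps_a)+(\eps_{-j}-\eps_{-b})$. Hence the term can survive only if $(\eps_i-\eps_a)+(\eps_{-j}-\eps_{-b})\preceq 0$. Writing this in terms of partial sums, it is equivalent to $\min(a,-b)\le\min(i,-j)$ and $\max(a,-b)\le\max(i,-j)$; together with $a\ge b$ (from $\s_{ab}(u)\xi_\mu\neq 0$) a short case check shows these force $i\ge j$, so for $i<j$ all summands vanish, and for $i=j$ they force $a=b=i$, giving the stated eigenvalue. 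Once Step~2 is repaired in this way, the rest of your argument (including the translation to positive characteristic, which only uses that \eqref{eqn: comultiplication on sij(u)} and the evaluation formulas \eqref{eqn: evaluation homomorphism glN}, \eqref{eqn: eval homomorphism} hold over $\KK$ and that the highest weights lie in the fundamental alcove) goes through and matches the proof the paper is referencing.
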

\begin{proof}
    The proof is the same as the characteristic zero case; see \cite[Proposition 4.2.11]{molev_yangians_book_07}
\end{proof}
We can also consider modules of the form
\begin{align}\label{eqn: tensor product of eval modules 2}
    L(\lambda^{(1)})\otimes \cdots \otimes L(\lambda^{(k)}).
\end{align}
Let $\zeta$ be the tensor product of all the highest weight vectors of modules in \eqref{eqn: tensor product of eval modules 2}. Then we have the following immediate corollary of \cref{prop: highest weight of tensor product of eval modules}. 
\begin{cor}\label{cor: highest weight of tensor product of eval modules}
    The submodule $Y(\g_n)\zeta$ of the $Y(\g_n)$-module \eqref{eqn: tensor product of eval modules 2} is a highest weight representation with the highest weight vector $\zeta$. Moreover, the $i$-th component of the highest weight is given by $\lambda_i(u)\lambda_{-i}(-u)$.
\end{cor}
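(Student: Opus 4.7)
The plan is to obtain the corollary as a direct specialization of Proposition \ref{prop: highest weight of tensor product of eval modules}, taking the auxiliary factor $V(\mu)$ to be the trivial $\mathcal{G}_n$-module $V(0)$. The weight $\mu = 0$ is trivially dominant integral, satisfies $\mu_1 = 0$, and lies in the fundamental alcove, so the hypotheses of that proposition are all met.

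Next I would observe that, as a $Y(\g_n)$-evaluation module, $V(0)$ acts as a unit for the tensor product defined via the comultiplication $\Delta$. Concretely, on $V(0) = \KK$ the evaluation homomorphism $\varrho_n$ sends $\s_{ij}(u)$ to $\delta_{ij}$, so feeding this into \eqref{eqn: comultiplication on sij(u)} and using the embedding $\mcS(u) \mapsto T(u) T'(-u)$ shows that the $Y(\g_n)$-action on $M \otimes V(0)$ agrees with that on $M$ under the canonical identification $m \otimes 1 \mapsto m$. In particular, $L(\lambda^{(1)}) \otimes \cdots \otimes L(\lambda^{(k)}) \otimes V(0)$ is canonically isomorphic to $L(\lambda^{(1)}) \otimes \cdots \otimes L(\lambda^{(k)})$ as a $Y(\g_n)$-module, and the highest weight vector produced by Proposition \ref{prop: highest weight of tensor product of eval modules} is identified with the vector $\zeta$ of the corollary.

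Finally, the factor $\mu_i(u)$ contributed by $V(0)$ in the product $\lambda_i(u)\lambda_{-i}(-u)\mu_i(u)$ collapses to $1$: substituting $\mu_i = 0$ into \eqref{eqn: highest weight of Y(gn)-evaluation module} yields
\[
\mu_i(u) \;=\; \frac{1 \pm \tfrac{1}{2} u^{-1}}{1 \pm \tfrac{1}{2} u^{-1}} \;=\; 1, \qquad i = 1, \ldots, n.
\]
Therefore the $i$-th component of the highest weight of $Y(\g_n)\zeta$ reduces to $\lambda_i(u)\lambda_{-i}(-u)$, and the claim that this submodule is a highest weight representation with highest weight vector $\zeta$ transfers directly from the previous proposition. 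There is essentially no obstacle; the only mildly nontrivial step is the unit-object property of $V(0)$ indicated above, which could in any case be sidestepped by simply re-running the Molev-style argument of \cite[Proposition 4.2.11]{molev_yangians_book_07} with no $V(\mu)$ factor present.
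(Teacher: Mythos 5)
Your proposal is correct and matches the paper's intent: the paper simply declares the corollary ``immediate'' from Proposition \ref{prop: highest weight of tensor product of eval modules}, and your specialization $V(\mu) = V(0)$ — verifying that the trivial evaluation module acts as a tensor unit via \eqref{eqn: comultiplication on sij(u)} and \eqref{eqn: embed Y(gn) into Y(glN)}, and that $\mu_i(u)$ collapses to $1$ from \eqref{eqn: highest weight of Y(gn)-evaluation module} — is exactly the right way to fill in the details. Nothing is missing.
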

\begin{prop}\label{prop: sij(u) are polynomial operators} The following holds:
\begin{enumerate}[$(i)$]
    \item For any element $\eta$ of the $Y(\g_n)$-module \eqref{eqn: tensor product of eval modules} and indices $i,j$, the expression \[\left(1\pm \frac 12u^{-1}\right)\s_{ij}(u)\eta\] is a polynomial in $u^{-1}$ of degree at most $2k+1$.
    \item For any element $\eta$ of the $Y(\g_n)$-module \eqref{eqn: tensor product of eval modules 2}, the expression $\s_{ij}(u)\eta$ is a polynomial in $u^{-1}$ of degree at most $2k$.
\end{enumerate}
\end{prop}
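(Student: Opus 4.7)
The plan is to reduce both statements to degree counts on individual evaluation modules, using the embedding $Y(\g_n) \hookrightarrow Y(\gl_N)$ from \eqref{eqn: embed Y(gn) into Y(glN)} together with the coproduct formula \eqref{eqn: comultiplication on sij(u)}. The argument is a direct transcription of Molev's characteristic-zero proof; nothing in the degree bookkeeping depends on $p$, so it goes through for $p \neq 2$ unchanged.

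First I would record the base cases. On a single $Y(\gl_N)$-evaluation module $L(\lambda^{(r)})$, formula \eqref{eqn: evaluation homomorphism glN} shows that $t_{ij}(u)$ acts as $\delta_{ij} + E_{ij}^{(r)} u^{-1}$, hence as a polynomial of degree at most $1$ in $u^{-1}$. Iterating the $Y(\gl_N)$-coproduct across the $k$-fold tensor product $L(\lambda^{(1)}) \otimes \cdots \otimes L(\lambda^{(k)})$ writes $t_{ij}(u)$ as a sum of $k$-fold products of such degree-one factors, so its action is polynomial of degree at most $k$ in $u^{-1}$ (and the same holds for $t_{ij}(-u)$). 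Similarly, on $V(\mu)$ the evaluation formula \eqref{eqn: eval homomorphism} gives $\s_{ab}(u) = \delta_{ab} + F_{ab}(u \pm \tfrac12)^{-1}$; using $(u \pm \tfrac12)^{-1} = u^{-1}(1 \pm \tfrac12 u^{-1})^{-1}$ and multiplying through by $(1 \pm \tfrac12 u^{-1})$ yields $(1 \pm \tfrac12 u^{-1})\delta_{ab} + F_{ab}\, u^{-1}$, a polynomial of degree at most $1$ in $u^{-1}$.

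For part (ii), the embedding \eqref{eqn: embed Y(gn) into Y(glN)} expands to $\s_{ij}(u) = \sum_a \theta_{aj}\, t_{ia}(u)\, t_{-j,-a}(-u)$. Acting on \eqref{eqn: tensor product of eval modules 2}, each of the two factors $t_{ia}(u)$ and $t_{-j,-a}(-u)$ is polynomial in $u^{-1}$ of degree at most $k$ by the base case, so the product is polynomial of degree at most $2k$. For part (i), I would instead expand the action of $\s_{ij}(u)$ on \eqref{eqn: tensor product of eval modules} using \eqref{eqn: comultiplication on sij(u)}: the left tensor slot $t_{ia}(u)\, t_{-j,-b}(-u)$ acts on $L(\lambda^{(1)}) \otimes \cdots \otimes L(\lambda^{(k)})$ and contributes degree at most $2k$ exactly as in (ii), while the right slot, after multiplication by $(1 \pm \tfrac12 u^{-1})$, acts on $V(\mu)$ as a polynomial of degree at most $1$. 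Multiplying the coproduct expression by $(1 \pm \tfrac12 u^{-1})$ and combining the two bounds yields the desired $2k+1$.

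The only real obstacle is clean bookkeeping of degrees and placing the factor $(1 \pm \tfrac12 u^{-1})$ at the correct step; no subtle representation-theoretic input is needed. One small point to verify is that substituting $-u$ for $u$ in a polynomial in $u^{-1}$ preserves polynomiality and degree, so the factors $t_{-j,-a}(-u)$ and $t_{-j,-b}(-u)$ contribute the same degree bound as $t_{ia}(u)$.
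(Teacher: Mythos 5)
Your proof is correct and takes essentially the same approach as the paper: the paper simply cites Molev's Propositions 3.2.11 and 4.2.13 and notes that the characteristic-zero argument carries over verbatim, which is precisely the degree-counting argument via evaluation modules, the coproduct, and the embedding $\mcS(u)=T(u)T'(-u)$ that you spell out.
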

\begin{proof}
    Once again, the proof is the same as the characteristic zero case; see Propositions 4.2.13 and 3.2.11 in \cite{molev_yangians_book_07}.
\end{proof}
The following is a consequence of \eqref{eqn: tensor product decomposition}. 

\begin{prop}\label{prop: representations of Y(gn) and SY(gn) positive char}
    Every finite-dimensional irreducible representation of the twisted Yangian $Y(\g_n)$ remains irreducible when restricted to the special twisted Yangian $SY(\g_n)$. Moreover, every finite-dimensional irreducible of $SY(\g_n)$ is of this form.
\end{prop}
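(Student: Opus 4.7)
The plan is to exploit the tensor product decomposition $Y(\g_n) = Z_{HC}(Y(\g_n)) \otimes SY(\g_n)$ from \cref{thm: tensor product decomposition}, combined with the fact that $Z_{HC}(Y(\g_n))$ lies in the center of $Y(\g_n)$. Once this decomposition is in hand, the argument is essentially formal.

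For the first assertion, let $V$ be a finite-dimensional irreducible $Y(\g_n)$-module. Because $\KK$ is algebraically closed and the generators $c_2, c_4, \ldots$ of $Z_{HC}(Y(\g_n))$ are central, Schur's lemma forces each $c_{2k}$ to act on $V$ by a scalar, so $Z_{HC}(Y(\g_n))$ acts through a character $\chi\colon Z_{HC}(Y(\g_n)) \to \KK$. Now let $W \subseteq V$ be any $\KK$-subspace stable under $SY(\g_n)$. Since $Z_{HC}(Y(\g_n))$ acts by scalars, $W$ is automatically stable under $Z_{HC}(Y(\g_n))$ as well; by the tensor product decomposition, this means $W$ is $Y(\g_n)$-stable. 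Irreducibility of $V$ over $Y(\g_n)$ then forces $W = 0$ or $W = V$, so $V$ is irreducible over $SY(\g_n)$.

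For the converse, let $W$ be any finite-dimensional irreducible $SY(\g_n)$-module. Pick any algebra homomorphism $\chi\colon Z_{HC}(Y(\g_n)) \to \KK$; since $Z_{HC}(Y(\g_n)) = \KK[c_2, c_4, \ldots]$ is a polynomial algebra by \cref{thm: tensor product decomposition}, such characters certainly exist (e.g.\ the one sending all $c_{2k}$ to $0$). Define a $Y(\g_n)$-action on $W$ by letting $SY(\g_n)$ act as given and letting $Z_{HC}(Y(\g_n))$ act via $\chi$; this is a well-defined action of $Y(\g_n)$ precisely because the tensor product decomposition holds and $Z_{HC}(Y(\g_n))$ is central, so no compatibility conditions between the two factors obstruct the extension. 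Any $Y(\g_n)$-submodule is in particular an $SY(\g_n)$-submodule, so $W$ is irreducible over $Y(\g_n)$, and by construction its restriction to $SY(\g_n)$ recovers the original module.

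The only nontrivial ingredient is \cref{thm: tensor product decomposition} itself, whose proof is deferred to the appendix; beyond that there is no real obstacle. The main conceptual point to verify is that tensor product decompositions of the form $A = Z \otimes B$ with $Z$ central convert irreducibility over $A$ into irreducibility over $B$ paired with a scalar character of $Z$, which is a standard observation that hinges only on Schur's lemma over an algebraically closed field.
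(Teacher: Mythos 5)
Your proof is correct and fills in the details of exactly the argument the paper intends: the paper simply remarks that the proposition ``is a consequence of'' the tensor product decomposition $Y(\g_n) = Z_{HC}(Y(\g_n)) \otimes SY(\g_n)$, and your use of Schur's lemma (to get scalar action of the central factor) together with the ability to extend any $SY(\g_n)$-module to $Y(\g_n)$ via a character of the polynomial algebra $Z_{HC}(Y(\g_n))$ is the standard way to flesh this out.
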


We write $\nu(u)\longrightarrow \mu(u)$ if there exists a monic polynomial $P(u)$ such that $\frac{\nu(u)}{\mu(u)}=\frac{P(u+1)}{P(u)}$.

\begin{prop}\label{prop: gln condition holds for gn}
    If  $V(\mu(u))$ is finite-dimensional and satisfies the conditions of \cref{prop: positive char highest weight}, then 
    \begin{align}\label{eqn: gln condition holds for gn}
        \mu_1(u)\longrightarrow \mu_2(u)\longrightarrow \cdots \longrightarrow \mu_n(u).
    \end{align}
\end{prop}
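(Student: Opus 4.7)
The plan is to reduce, for each consecutive pair $(\mu_i(u),\mu_{i+1}(u))$, to the Drinfeld-polynomial classification of finite-dimensional irreducible highest-weight $Y(\gl_2)$-modules in positive characteristic, which is the $N=2$ case of Kalinov's modular classification in \cite[\S 3]{kalinov_yangians_20}. This is parallel to Molev's proof of the analogous characteristic-zero statement \cite[Proposition 4.2.12]{molev_yangians_book_07}.

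Fix $i\in\{1,\ldots,n-1\}$ and let $\xi$ be the highest-weight vector of $V(\mu(u))$. I would form the subspace
\[W \coloneqq \mathrm{span}\bigl\{\s_{i+1,i}^{(r_1)}\cdots \s_{i+1,i}^{(r_k)}\xi : k\geq 0,\ r_j\geq 1\bigr\} \subset V(\mu(u)).\]
The first step is to verify, via direct manipulation of the quaternary relation \eqref{eqn: quaternary relation} restricted to matrix entries indexed by $\{i,i+1\}$, that the coefficients of $\s_{ii}(u)$, $\s_{i+1,i+1}(u)$, $\s_{i,i+1}(u)$, $\s_{i+1,i}(u)$ preserve $W$, and that on $W$ their commutation relations reduce --- after a common rescaling by a scalar series $\varphi(u)\in 1+u^{-2}\KK\dbrack{u^{-2}}$ --- to the ternary relation defining $Y(\gl_2)$. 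This will endow $W$ with the structure of a finite-dimensional highest-weight $Y(\gl_2)$-module; $\xi$ will then be its highest-weight vector with weight $(\mu_i(u),\mu_{i+1}(u))$ up to multiplication by $\varphi(u)$.

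The second step is to pass to the irreducible quotient of this $Y(\gl_2)$-module and apply the positive-characteristic Drinfeld polynomial classification for $Y(\gl_2)$ to extract a monic polynomial $P_i(u)$ with $\mu_i(u)/\mu_{i+1}(u) = P_i(u+1)/P_i(u)$; the factor $\varphi(u)$ cancels between numerator and denominator because it appears symmetrically in both components of the weight. Doing this for every $i=1,\ldots,n-1$ gives the desired chain of relations.

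The principal obstacle will lie in the first step: closing up the $(i,i+1)$-block computation. The transposed $R$-matrix $R'(-u-v)$ couples the four distinguished operators with negative-index generators $\s_{-a,-b}(\cdot)$, which a priori live outside the block. The key simplification is that the symmetry relation \eqref{eqn: symmetry relation} expresses the diagonal negative-index entries in terms of the positive-index ones, and the annihilation conditions $\s_{ab}(u)\xi = 0$ for $a<b$ kill the off-diagonal terms after acting on vectors in $W$. The characteristic-zero version of this bookkeeping is carried out in \cite[Chapter 4]{molev_yangians_book_07}, and since it consists of purely formal manipulations of the defining relations, I expect it to carry over verbatim to our setting. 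A secondary, characteristic-$p$ specific concern is that the inherited $Y(\gl_2)$-weight on $W$ must satisfy the hypotheses of Kalinov's modular classification; this should follow from our standing assumption that the $\mathcal G_n$-weights of $V(\mu(u))$ lie in $X(H)^+\cap C_{\Z}$ (\cref{prop: positive char highest weight}), so that the restricted $GL_2$-weights remain in the corresponding fundamental alcove.
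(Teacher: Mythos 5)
Your high-level strategy --- pass to $Y(\gl)$-theory and invoke Kalinov's positive-characteristic classification --- is correct, but the first step as written contains a genuine gap, and the rescaling by $\varphi(u)$ is spurious. The paper's proof (following Molev's Proposition 4.2.8) does not attempt a pairwise $(i,i+1)$ block reduction. Instead it introduces the left ideal $J\subset Y(\g_n)$ generated by the coefficients of $\s_{-i,j}(u)$ for $i,j\in\{1,\ldots,n\}$ and works with the subspace $V^J=\{\eta : \s_{-i,j}(u)\eta=0\ \forall\,i,j>0\}$. Modulo $J$, the quaternary relation for $i,j,k,l>0$ collapses literally to
\[(u-v)[\s_{ij}(u),\s_{kl}(v)] = \s_{kj}(u)\s_{il}(v) - \s_{kj}(v)\s_{il}(u),\]
which is the $Y(\gl_n)$ ternary relation with no rescaling whatsoever; the cyclic span $Y(\gl_n)\xi\subset V^J$ is then a finite-dimensional $Y(\gl_n)$-highest-weight module to which Kalinov's Theorem 3.2.5 applies, giving all the arrows of \eqref{eqn: gln condition holds for gn} simultaneously.

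The reason your block argument does not close as stated: after restricting the quaternary relation to entries in $\{i,i+1\}$, the transposed $R$-matrix contributes cross terms of the shape $\s_{a,-b}(u)\s_{-c,d}(v)$ with $a,b,c,d>0$. Your plan to dispatch these using $\s_{ab}(u)\xi=0$ for $a<b$ fails because that condition holds only at $\xi$, whereas the cross terms must vanish on every vector of $W$. One must first prove that the common null space of all $\s_{-i,j}(u)$ with $i,j>0$ is invariant under the $\s_{kl}(u)$ with $k,l>0$ --- this is precisely the computation $[\s_{-i,j}(u),\s_{kl}(v)]\equiv 0 \mod J$ --- and it is this $V^J$ that carries the $Y(\gl)$-action. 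Once you have that, a restriction to $2\times 2$ blocks would work, but it buys nothing: the $Y(\gl_n)$ classification yields the full chain directly. Your closing remark about the alcove hypothesis is on target; the paper verifies it by bounding $(\eta)_{-n}-(\eta)_n+n$ over all $\gl_{2n}$-weights $\eta$ appearing in $V(\mu(u))$, using Weyl-group invariance of the weight set.
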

\begin{proof}
        The proof closely parallels Proposition 4.2.8 in \cite{molev_yangians_book_07}. Let $J$ be the left ideal of $Y(\g_n)$ generated by the coefficients of $\s_{-i,j}(u)$, for $i,j=1,\ldots, n$, and consider 
    \begin{align*}
        V^J = \{\eta\in V(\mu(u)) \mid \s_{-i,j}(u)\eta = 0 \text{ for all } i,j \in\{1,\ldots,n\}\}.
    \end{align*}
    The highest weight vector $\xi \in V(\mu(u))$ belongs to $V^J$. The defining relations give
    \[
    [\s_{-i,j}(u),\s_{kl}(v)] = 0 \mod J,
    \] 
    for $i,j,k,l>0$, so the operators $\s_{kl}(u)$ with $k, l>0$ preserve $V^J$. Moreover for $i,j,k,l>0$ the defining relations give
    \[
    (u-v)[\s_{ij}(u),\s_{kl}(v)] = \s_{kj}(u)\s_{il}(v) - \s_{kj}(v)\s_{il}(u) \mod J,
    \] which is formally identical to that of $Y(\gl_n)$, so $V^J$ admits a $Y(\gl_n)$-action by $t_{ij}(u) \mapsto \s_{ij}(u)$. The cyclic span $L\coloneqq Y(\gl_n)\xi$ is thus a finite-dimensional highest weight module of $Y(\gl_n)$ with weight $(\mu_1(u),\ldots,\mu_n(u))$.

Decomposing $V(\mu(u)) =\bigoplus_i V(\mu_i,p)$ as a $\g_n$-module, and recalling that the Weyl group stabilizes the weights, yields that for any $\gl_{2n}$-weight $\eta$ appearing in $V(\mu(u))$, \[ (\eta)_{-n}-(\eta)_n +n \le 2\max_i-(\mu_i)_n+n<p. \] This verifies the condition of Proposition 3.2.1 in \cite{kalinov_yangians_20}. It follows from Theorem 3.2.5 in \cite{kalinov_yangians_20} that \eqref{eqn: gln condition holds for gn} holds.
\end{proof}

We now split into the separate cases of $\xsp_{2n}$ and $\o_{2n}$.

\subsubsection{Finite-dimensional irreducible representations of $Y(\xsp_{2n})$ in positive characteristic} 
\begin{prop}\label{prop: sp2 highest weight representation positive char}
    Suppose $V$ is irreducible and finite-dimensional, and as a $\xsp_2$-representation is isomorphic to $V(\mu_1,p)\otimes \cdots \otimes V(\mu_k,p)$ with each $\mu_i$ is a non-positive integer and $p>2\max -\mu_i + 2$. Then $V$ is a highest representation $V=V(\mu(u),p)$.
\end{prop}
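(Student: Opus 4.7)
The plan is to reduce to \cref{prop: positive char highest weight}: once $V$ is exhibited as a direct sum $\bigoplus_j V(\lambda_j, p)$ of simple $\xsp_2$-modules with each $\lambda_j \in X(H)^+ \cap C_{\Z}$, that proposition guarantees a unique (up to scalar) singular vector $\xi \in V$ whose $\xsp_2$-weight is maximal. By the symmetry relation \eqref{eqn: eigenvector for s2n-i,2n-i}, once $\xi$ is an eigenvector for $\s_{11}(u)$ it is automatically an eigenvector for $\s_{-1,-1}(u)$ as well, so $\xi$ is a $Y(\xsp_2)$ highest weight vector. Irreducibility of $V$ then forces $V = V(\mu(u), p)$, as claimed.

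Concretely, I would proceed in three steps. First, I would apply \cref{lem: irreducible g_n-modules in positive char} to deduce from the bound $p > 2\max_i(-\mu_i) + 2$ that each individual $\mu_i$ lies in the fundamental alcove $C_{\Z}$, so each tensor factor $V(\mu_i, p)$ is both simple and equal to its Weyl module. Second, I would decompose the $\xsp_2$-module $V(\mu_1, p) \otimes \cdots \otimes V(\mu_k, p)$ into simple components $V(\lambda_j, p)$ via the Clebsch--Gordan rule for $\xsp_2 \cong \xsl_2$ and verify that each summand still satisfies $\lambda_j \in C_{\Z}$; note that the auxiliary condition $(\lambda_j)_1 = 0$ in \cref{prop: positive char highest weight} is only relevant in the orthogonal case and is not needed here. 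Third, I would apply \cref{prop: positive char highest weight} and argue as in the preceding paragraph, using \cref{cor: positive char highest weight vector} to identify $\xi$ as generating $V$ as a highest weight module.

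The main obstacle is the second step, namely controlling the fundamental alcove condition for every Clebsch--Gordan summand. A naive estimate produces components with highest weight as extreme as $\sum_i \mu_i$, which would require the stronger bound $p > 2\sum_i(-\mu_i) + 2$ to guarantee containment in $C_{\Z}$. I expect to close this gap by using the embedding $\mcS(u) = T(u) T'(-u)$ of $Y(\xsp_2)$ into $Y(\gl_2)$, together with \cref{cor: highest weight of tensor product of eval modules}, to realize $V$ as the cyclic $Y(\xsp_2)$-submodule of an appropriate tensor product of $Y(\gl_2)$-evaluation modules generated by the tensor product of highest weight vectors. This should restrict which Clebsch--Gordan summands actually contribute to $V$ and allow the weaker hypothesis on $p$ to suffice.
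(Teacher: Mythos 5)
Your proposal routes through \cref{prop: positive char highest weight}, whereas the paper bypasses that proposition entirely and simply repeats Molev's argument from Theorem 4.2.6 directly. So the two approaches are genuinely different.

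You have correctly flagged the problem with your route: the hypothesis $p > 2\max_i(-\mu_i) + 2$ constrains each tensor factor, not the Clebsch--Gordan summands, whose highest weights can be as extreme as $\sum_i \mu_i$. Worse, once $\sum_i(-\mu_i) \geq p$ the tensor product $V(\mu_1,p)\otimes\cdots\otimes V(\mu_k,p)$ is generally not even semisimple as an $\xsp_2$-module, so a decomposition of the shape required by \cref{prop: positive char highest weight} may fail to exist at all. Your proposed repair does not close this gap. The hypothesis asserts that $V$ \emph{is}, as an $\xsp_2$-module, the full tensor product; realizing $V$ as the cyclic $Y(\xsp_2)$-span inside a product of evaluation modules is the mechanism of \cref{prop: Y(sp2) tensor product of eval modules is irreducible}, which is used later to \emph{construct} modules with prescribed highest weight, but it cannot ``restrict which Clebsch--Gordan summands contribute'' here, since every summand already appears in $V$ by hypothesis.

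The idea you are missing is that Molev's singular-vector argument does not require the $\xsp_2$-decomposition to consist of alcove-simple pieces. It needs only that $V$ is finite-dimensional and carries a weight decomposition (which it does, since the $\xsp_2$-action lifts to $Sp_2$ via the given tensor product), so that a maximal weight exists, the raising operators $\s_{ij}(u)$ for $i<j$ annihilate the top weight space, and the $\s_{ii}(u)$, which commute on that space, admit a common eigenvector over the algebraically closed field. The alcove bound on the $\mu_i$ in the statement serves only to guarantee that each tensor factor, a priori an $Sp_2$-module, is restricted and hence remains simple over $\xsp_2$. The alcove conditions imposed on the direct summands in \cref{prop: positive char highest weight} are there for a different reason (fixing the Weyl character formula so the ultraproduct argument later behaves uniformly in $p$), not because the existence of a singular vector requires them.
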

\begin{proof}
    The condition implies that $\mu_i$ is in the fundamental alcove, and hence each $V(\mu_i,p)$ is irreducible and highest weight. Then we can repeat the proof of Theorem 4.2.6 in \cite{molev_yangians_book_07}.
\end{proof}
From now on, assume $V$ satisfies the conditions of \eqref{prop: sp2 highest weight representation positive char}.
Write
\begin{align}
    \mu(u) = 1+\mu^{(1)}u^{-1}+\mu^{(2)}u^{-2}+\cdots,\quad \mu^{(r)}\in \KK.
\end{align}
\begin{prop}\label{prop: Y(sp2) polynomial weights}
    There exists an even formal series $g(u)\in 1+u^{-2}\KK\dbrack{u^{-2}}$ such that $g(u)\mu(u)$ is a polynomial in $u^{-1}$.
\end{prop}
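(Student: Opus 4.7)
The plan is to realize $V=V(\mu(u))$ as the irreducible quotient of a cyclic highest-weight $Y(\xsp_2)$-submodule sitting inside a tensor product of $Y(\gl_2)$-evaluation modules. Once this realization is in place, \cref{cor: highest weight of tensor product of eval modules} gives that the ambient highest weight is a polynomial in $u^{-1}$, and the tensor product decomposition in \cref{thm: tensor product decomposition} will convert the mismatch between this polynomial and $\mu(u)$ into an explicit even-series twist.

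More concretely, for each $i=1,\dots,k$, I would pick a dominant integral $\gl_2$-weight $\lambda^{(i)}$ (in the fundamental alcove, so that the hypothesis $p>2\max_i(-\mu_i)+2$ guarantees $L(\lambda^{(i)},p)$ is simple with a genuinely polynomial highest weight) chosen so that $L(\lambda^{(i)},p)$, restricted to $\xsp_2$ through the embedding $\xsp_2 \hookrightarrow \gl_2$, matches the factor $V(\mu_i,p)$ up to an irrelevant $\gl_2$-central shift. Form the $Y(\gl_2)$-module $L = L(\lambda^{(1)},p)\otimes\cdots\otimes L(\lambda^{(k)},p)$ and let $\zeta\in L$ be the tensor product of highest weight vectors. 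Via the embedding $\mcS(u)=T(u)T'(-u)$ and \cref{prop: functor between rep cats of Yangians}, the submodule $W=Y(\xsp_2)\zeta$ is, by \cref{cor: highest weight of tensor product of eval modules}, a highest weight $Y(\xsp_2)$-module with highest weight
\begin{align*}
    \lambda_1(u)\lambda_{-1}(-u) \;=\; \prod_{j=1}^{k} \bigl(1+\lambda_1^{(j)}u^{-1}\bigr)\bigl(1-\lambda_{-1}^{(j)}u^{-1}\bigr),
\end{align*}
which is manifestly a polynomial in $u^{-1}$.

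By construction, the unique irreducible quotient $W'$ of $W$ has the same $\xsp_2$-module structure as $V$, namely $V(\mu_1,p)\otimes\cdots\otimes V(\mu_k,p)$. By \cref{prop: representations of Y(gn) and SY(gn) positive char} together with the tensor product decomposition \eqref{eqn: tensor product decomposition}, two finite-dimensional irreducible $Y(\xsp_2)$-modules whose restrictions to $SY(\xsp_2)$ coincide must differ by an automorphism $\mcS(u)\mapsto g(u)\mcS(u)$ for some even series $g(u)\in 1+u^{-2}\KK\dbrack{u^{-2}}$, which acts on the highest weight by multiplication by $g(u)$. Matching $V$ against $W'$ therefore produces an even $g(u)$ with $g(u)\mu(u)$ equal to the polynomial $\lambda_1(u)\lambda_{-1}(-u)$, which proves the claim.

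The main obstacle is the matching step: exhibiting $\lambda^{(i)}$ for which the $SY(\xsp_2)$-restriction of $W'$ really does agree with that of $V$. This is essentially the positive-characteristic analogue of the Drinfeld-type classification for finite-dimensional irreducible $Y(\xsp_2)$-modules; the hypotheses that each $\mu_i$ is a non-positive integer and $p>2\max_i(-\mu_i)+2$ are used precisely to stay inside the fundamental-alcove regime (via \cref{lem: irreducible g_n-modules in positive char}) where the characteristic-zero classification of Molev transfers, since \cref{prop: sp2 highest weight representation positive char} and \cref{prop: sij(u) are polynomial operators} both become available there and control the highest-weight behavior exactly as in the classical case.
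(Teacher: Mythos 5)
Your argument has a genuine gap, and the gap is exactly the step you flag at the end as ``the main obstacle.'' You want to pick $\lambda^{(i)}$ so that the irreducible quotient $W'$ of the cyclic submodule $W=Y(\xsp_2)\zeta$ inside $L(\lambda^{(1)},p)\otimes\cdots\otimes L(\lambda^{(k)},p)$ agrees with $V$ as an $SY(\xsp_2)$-module. But two unrelated claims are being conflated. First, the assertion ``by construction, $W'$ has the same $\xsp_2$-module structure as $V$'' is unjustified: $W$ is only the cyclic $Y(\xsp_2)$-span of the highest weight vector, which need not be the full tensor product, and $W'$ is a further quotient; neither the dimension of $W'$ nor its $\xsp_2$-isotype is controlled by the $\lambda^{(i)}$ alone. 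Second, and more seriously, even knowing $W'\cong V$ as $\xsp_2$-modules would be far from knowing $W'\cong V$ as $SY(\xsp_2)$-modules, which is what \cref{prop: representations of Y(gn) and SY(gn) positive char} requires for your twist argument. The classification theorem one is building toward says precisely that many pairwise non-isomorphic irreducible $SY(\xsp_2)$-modules share the same underlying $\xsp_2$-module, so the $\xsp_2$-structure cannot detect the Drinfeld data.

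Worse, the way out of this gap is circular. To exhibit $\lambda^{(i)}$ for which $W'$ and $V$ agree as $SY(\xsp_2)$-modules, you would need to know in advance that $\mu(u)$ differs from a polynomial of the form $\prod_j(1-\gamma_ju^{-1})$ by an even series --- but that is the statement of \cref{prop: Y(sp2) polynomial weights} itself. Notice that in the paper the evaluation-module realization (\cref{prop: Y(sp2) tensor product of eval modules is irreducible}) is stated only \emph{after} \cref{prop: Y(sp2) polynomial weights}, and it presupposes that $\mu(u)$ has already been normalized to a polynomial by a twist; the logical order is: polynomiality first, then the tensor-product realization, then the Drinfeld classification. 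Your proposal reverses this order.

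The paper's proof does not go through tensor products of evaluation modules at all: it simply observes that Molev's Proposition~4.3.1 argument carries over verbatim. That argument is intrinsic: one applies the lowering operator $\s_{1,-1}^{(1)}$ to the highest weight vector $\xi$, uses the quaternary and symmetry relations to control $\s_{11}(u)$ and $\s_{-1,1}(u)$ on the resulting vectors $\xi_k$, and uses finite-dimensionality ($\xi_{m+1}=0$ for some $m$) to extract a polynomial recurrence that forces $g(u)\mu(u)$ to terminate. The hypotheses $\mu_i\in\ZZ_{\le 0}$ and $p>2\max_i(-\mu_i)+2$ are needed there only to avoid small-characteristic pathologies (e.g.\ so that the finitely many integers appearing in the recurrence are nonzero mod $p$), not to set up an evaluation-module realization.
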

\begin{proof}
    The proof of Proposition 4.3.1 in \cite{molev_yangians_book_07} carries over verbatim.
\end{proof}
Then composing the action of $Y(\xsp_2)$ on $V(\mu)$ with the automorphism \eqref{eqn: twisted power series automorphism} produces an irreducible highest weight representation with polynomial highest weight. Thus, it suffices to consider such representations. In this case,
\begin{align*}
    \mu(u) = (1-\gamma_1u^{-1})\cdots (1-\gamma_{2k}u^{-2k}).
\end{align*}
As in \cite{kalinov_yangians_20}, for $n\in \FF_p$, let $[n]$ denote the minimal non-negative representative of $n$ modulo $p$, inducing a total order $0<1<\cdots<p-1$ on $\FF_p$. If $p=0$ let $[n]=n$ for a nonnegative integer $n$. Recall that if $(\alpha, \beta)$ is a restricted dominant integral weight for $GL_2$, then $L(\alpha,\beta,p)$ is a simple $GL_2$-module and remains simple upon restriction to $\gl_2$.
\begin{prop}\label{prop: Y(sp2) tensor product of eval modules is irreducible}
    Suppose that for every $i=1,\ldots, k$ the following condition holds: in characteristic $p > 0$ (resp. $p = 0$), if the multiset $\{\gamma_p+\gamma_q \mid 2i-1\le p<q\le 2k\}$ contains elements of $\FF_p$ (resp. elements of $\ZZ_+$), then $\gamma_{2i-1}+\gamma_{2i}$ is in $\FF_p$ (resp. $\ZZ_+$) is minimal amongst them. Then the representation $V(\mu(u),p)$ of $Y(\xsp_2)$ is isomorphic to the tensor product
    \begin{align}\label{eqn: Y(sp2) tensor product of eval modules}
        L(\gamma_1,-\gamma_2,p)\otimes \cdots\otimes L(\gamma_{2k-1},-\gamma_{2k},p),
    \end{align}
    regarded as a $Y(\xsp_2)$-module obtained by restriction of the $Y(\gl_2)$-module.
\end{prop}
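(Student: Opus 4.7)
The plan is to adapt Molev's characteristic-zero argument (Proposition 4.3.2 in \cite{molev_yangians_book_07}) to positive characteristic, leveraging the modular irreducibility results on $Y(\gl_2)$-tensor products from \cite{kalinov_yangians_20}. Let $M$ denote the tensor product \eqref{eqn: Y(sp2) tensor product of eval modules}, viewed via iterated coproduct as a $Y(\gl_2)$-module and by restriction as a $Y(\xsp_2)$-module via $\mcS(u)=T(u)T'(-u)$, and let $\zeta$ denote the tensor product of the highest weight vectors of the factors.

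First I identify the $Y(\xsp_2)$-highest weight at $\zeta$. By \cref{cor: highest weight of tensor product of eval modules} together with a direct computation using \eqref{eqn: highest weight of Y(glN)-evaluation module}, the cyclic submodule $Y(\xsp_2)\zeta \subseteq M$ is a highest weight $Y(\xsp_2)$-module whose highest weight is $\lambda_1(u)\lambda_{-1}(-u)$, and one checks this product matches the series $\mu(u)$ fixed in the paragraphs preceding the proposition. By the universal property of the Verma module, $Y(\xsp_2)\zeta$ is therefore a quotient of $M(\mu(u))$.

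Second, I show that $Y(\xsp_2)\zeta$ in fact equals all of $M$. The hypothesis on the multisets $\{\gamma_p+\gamma_q\}$ is precisely the positive-characteristic irreducibility condition of Theorem 3.2.5 in \cite{kalinov_yangians_20}, so $M$ is a cyclic $Y(\gl_2)$-module generated by $\zeta$. To upgrade this $Y(\gl_2)$-cyclicity to $Y(\xsp_2)$-cyclicity, I follow Molev's characteristic-zero computation: on $\zeta$, the series $T'(-u)$ acts by a specific matrix of power series whose diagonal entries are determined by the highest weights of the factors and is invertible in $\KK\dbrack{u^{-1}}$. Consequently, the relation $\mcS(u)\zeta = T(u)T'(-u)\zeta$ together with the symmetry relation \eqref{eqn: symmetry relation} allows the action of $T(u)$ on $\zeta$ (and hence the full $Y(\gl_2)$-orbit of $\zeta$) to be recovered from the $Y(\xsp_2)$-orbit, giving $Y(\xsp_2)\zeta = Y(\gl_2)\zeta = M$.

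Finally, I derive irreducibility of $M$ as a $Y(\xsp_2)$-module. Under the standing assumption $p > 2\max(-\mu_i)+2$ from \cref{prop: sp2 highest weight representation positive char}, each factor $L(\gamma_{2i-1},-\gamma_{2i},p)$ restricts to a simple $\gl_2$-module whose $\xsp_2$-composition factors lie in the fundamental alcove with $\lambda_1=0$. Thus $M$ satisfies the hypotheses of \cref{prop: positive char highest weight}, which forces its singular vector to be unique and equal to $\zeta$; combined with the $Y(\xsp_2)$-cyclicity already established, this forces $M$ to be irreducible, hence $M \cong V(\mu(u),p)$. The main obstacle is the upgrade step $Y(\xsp_2)\zeta=M$: although the denominator $2u$ in the symmetry relation is invertible when $p>2$, one must carefully verify that the scalar matrix by which $T'(-u)$ acts on $\zeta$ remains invertible in $\KK\dbrack{u^{-1}}$ and that none of Molev's rational manipulations introduce factors which vanish modulo $p$ under the multiset hypothesis.
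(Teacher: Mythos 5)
Your proof has two serious gaps, and both stem from sidestepping the core inductive computation that the paper actually carries out.

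First, the claim that ``$T'(-u)$ acts by a specific matrix of power series... invertible in $\KK\dbrack{u^{-1}}$'' so that $Y(\xsp_2)\zeta = Y(\gl_2)\zeta$ is not correct. The entries of $T'(-u)$ applied to $\zeta$ are not scalar multiples of $\zeta$: the entry coming from $t_{-1,1}(-u)$ is a lowering operator and sends $\zeta$ off its line. So $T'(-u)\zeta$ is not ``$\zeta$ times an invertible scalar matrix,'' and the proposed algebraic manipulation to recover $T(u)\zeta$ from $\mcS(u)\zeta$ does not go through. Even granting it, knowing $T(u)\zeta \in Y(\xsp_2)\zeta$ would not yield $Y(\gl_2)\zeta \subseteq Y(\xsp_2)\zeta$, since $Y(\gl_2)\zeta$ is generated by iterated application of the $t_{ij}^{(r)}$, not a single application. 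The actual route to cyclicity in Molev's argument (which the paper invokes) goes through singular-vector uniqueness and a contragredient-module argument, not through inverting $T'(-u)$.

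Second, your appeal to \cref{prop: positive char highest weight} to conclude uniqueness of the singular vector is circular: that proposition takes simplicity of $V$ as a hypothesis, which is precisely what you are trying to establish for $M$. The paper's proof instead proves singular-vector uniqueness directly. It shows $\s_{-1,1}(u)$ acts nilpotently (so every submodule contains a singular vector), then runs an explicit induction: a putative singular vector is expanded as $\eta = \sum_{r=0}^q (E_{1,-1})^r\zeta_1 \otimes \eta_r$, and the defining relations of the twisted Yangian force a product $q(\gamma_1+\gamma_2-q+1)(\gamma_1+\gamma_3-q+1)\cdots(\gamma_1+\gamma_{2k}-q+1) = 0$, which the multiset hypothesis rules out unless $q=0$. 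That explicit computation is the heart of the proof and the precise place where the minimality condition on $\gamma_{2i-1}+\gamma_{2i}$ enters; your proposal never touches it. The closing sentence of your proposal correctly flags the ``upgrade step'' as the main obstacle, but that obstacle is in fact not surmountable the way you propose, and the intended argument is the direct singular-vector analysis.
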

\begin{proof}
    We adapt the proof of Proposition 4.3.2 in \cite{molev_yangians_book_07} following the approach of Proposition 3.2.3 in \cite{kalinov_yangians_20}. By \cref{cor: highest weight of tensor product of eval modules}, if we show the module in question is an irreducible highest weight module, then it has highest weight $\mu(u)$.

    Let $V$ be the module \eqref{eqn: Y(sp2) tensor product of eval modules}, and consider the embedding \eqref{eqn: embed Y(gn) into Y(glN)}. The $(i,j)$-entry of $T'(u)$ is $t_{-j,-i}(u)$, so using \eqref{eqn: embed Y(gn) into Y(glN)}
    \[\s_{-1,1}(u)  = t_{-1,-1}(u)t_{-1,1}(-u) + t_{-1,1}(u)t_{-1,1}(-u).\]
    Since $t_{-1,1}(u)$ acts nilpotentently and $t_{-1,-1}(u)$ acts semisimply (as in \cite[Prop.~3.2.3]{kalinov_yangians_20}), it follows that $\s_{-1,1}(u)$ acts nilpotently. Any submodule $N\subset V$  must then contain a singular vector for $\s_{-1,1}(u)$. Thus, irreducibility reduces to showing $V$ has a unique singular vector.

We use induction to prove that any singular vector is proportional to $\zeta := \zeta_1\otimes \cdots \otimes \zeta_k$, where $\zeta_i$ are the highest weight vectors of the $L(\gamma_{2i-1},-\gamma_{2i},p)$. The case $k=1$ is clear. For $k\ge 2$, write \[\eta = \sum_{r=0}^q (E_{1,-1})^r \zeta_1\otimes \eta_r,\] with $\eta_r \in L(\gamma_3,-\gamma_4,p)\otimes \cdots \otimes L(\gamma_{2k-1},-\gamma_{2k},p)$, $\eta_q\ne 0$, and $q\le \min\{p-1,[\gamma_1+\gamma_2]\}$ (if $p=0$ then $q< \gamma_1+\gamma_2$). Using \eqref{eqn: comultiplication on sij(u)} and the argument of \cite[Prop.~4.3.2]{molev_yangians_book_07}, $\eta_q$ is proportional to $\zeta_2\otimes \cdots \otimes \zeta_k$, so it remains to show $q=0$. 

 If $q\ge 1$, then the argument of \cite[Prop. 4.3.2]{molev_yangians_book_07} gives
    \[q(\gamma_1+\gamma_2-q+1)(\gamma_1+\gamma_3-q+1)\cdots (\gamma_1+\gamma_{2k}-q+1) = 0\]
which is impossible given the assumptions on the $\gamma_i$, since $\gamma_1+\gamma_2$ is minimal among $\gamma_1+\gamma_i$ and  $q\le [\gamma_1+\gamma_2]$. Thus $q=0$, so  $V$ has a unique singular vector.

Finally, this vector generates $V$ by the same argument as in the characteristic zero case, completing the proof.
\end{proof}
\begin{lem}\label{lem: Y(sp2) reordering}
    Any $\gamma_1,\ldots,\gamma_{2k}\in\KK$ can be rearranged to satisfy the conditions of the previous proposition.
\end{lem}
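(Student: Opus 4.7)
The plan is a straightforward greedy induction on $k$. The base case $k=1$ is vacuous, since there is only one pair and the condition at $i=1$ simply asserts that $\gamma_1+\gamma_2$ is minimal in the one-element multiset $\{\gamma_1+\gamma_2\}$ (assuming it is even in $\FF_p$ or $\ZZ_+$, as appropriate).

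For the inductive step, form the multiset of all pairwise sums
\[
M \;\coloneqq\; \{\gamma_p+\gamma_q \mid 1\le p<q\le 2k\}.
\]
If $M$ contains no element of $\FF_p$ (respectively, no element of $\ZZ_+$ when $p=0$), then the condition at $i=1$ is vacuously satisfied no matter how we order the $\gamma$'s, and we simply apply the inductive hypothesis to $\gamma_3,\ldots,\gamma_{2k}$. Otherwise, let $\{p_0,q_0\}$ be a pair such that $\gamma_{p_0}+\gamma_{q_0}$ is minimal among the elements of $M\cap \FF_p$ (respectively $M\cap\ZZ_+$) with respect to the total order $0<1<\cdots <p-1$ on $\FF_p$ (respectively the usual order on $\ZZ_+$). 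Relabel so that $\{p_0,q_0\}=\{1,2\}$; by construction, the condition at $i=1$ now holds for the rearranged tuple.

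The remaining $2k-2$ elements $\gamma_3,\ldots,\gamma_{2k}$ are again arbitrary elements of $\KK$, so the inductive hypothesis supplies a rearrangement of them, occupying positions $3,\ldots,2k$, that satisfies the required condition for every $i=2,\ldots,k$. Concatenating with the pair $(\gamma_1,\gamma_2)$ already placed, we obtain the desired ordering.

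There is no real obstacle here: the argument is purely a greedy selection, and the only mild subtlety is recording that ``minimal'' must be interpreted with respect to the correct total order (the lift to $\{0,1,\ldots,p-1\}$ in positive characteristic, and the standard order on $\ZZ_+$ in characteristic zero), which matches exactly the hypothesis in \cref{prop: Y(sp2) tensor product of eval modules is irreducible}.
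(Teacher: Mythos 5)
Your proof is correct and follows essentially the same greedy strategy as the paper: select the pair whose sum $[\gamma_i+\gamma_j]$ is minimal (when defined), place it first, and recurse on the remainder. You merely spell out the base case and the vacuous case (no sums landing in $\FF_p$ or $\ZZ_+$) more explicitly than the paper's terse phrasing.
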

\begin{proof}
    First, pick the $i,j$ such that $[\gamma_i+\gamma_j]$ is minimal out of all the pairs $i,j$ for which this is defined. Then we can set $\gamma_i,\gamma_j$ to be the new $\gamma_1,\gamma_2$. Repeat for the remaining $\gamma_i$.
\end{proof}
Hence, for any $V$ satisfying the conditions of \cref{prop: positive char highest weight}, $V=V(\mu(u),p)$ with \[\mu(u) = (1-\gamma_1u^{-1})\cdots (1-\gamma_{2k}u^{-2k}),\] such that $V(\mu(u),p)$ is isomorphic to \eqref{eqn: Y(sp2) tensor product of eval modules}. Now we are ready to state the classification theorem for $Y(\xsp_2)$.
\begin{thm}\label{thm: Y(sp2) classificiation}
    If the irreducible finite-dimensional representation $V$ of $Y(\xsp_2)$ satisfies the conditions of \cref{prop: positive char highest weight}, then $V=V(\mu(u),p)$ such that there exists monic polynomial $P(u)$ such that $P(u) = P(-u+1)$ and
    \begin{align}\label{eqn: double arrow positive char}
        \frac{\mu(-u)}{\mu(u)} = \frac{P(u+1)}{P(u)}.
    \end{align}
\end{thm}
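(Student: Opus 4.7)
The plan is to reduce to the case where $\mu(u)$ is a polynomial in $u^{-1}$, identify $V$ with a tensor product of $GL_2$-evaluation modules, and then write down $P(u)$ explicitly as a product of one polynomial per tensor factor.

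By \cref{prop: Y(sp2) polynomial weights}, there exists an even series $g(u)\in 1+u^{-2}\KK\dbrack{u^{-2}}$ with $g(u)\mu(u)$ polynomial in $u^{-1}$. Since $g(-u)=g(u)$, the ratio $\mu(-u)/\mu(u)$ coincides with $(g(-u)\mu(-u))/(g(u)\mu(u))$, so after composing the action with the automorphism \eqref{eqn: twisted power series automorphism} we may assume
\[\mu(u) = (1-\gamma_1 u^{-1})\cdots (1-\gamma_{2k}u^{-1})\]
for some $\gamma_1,\ldots,\gamma_{2k}\in\KK$. By \cref{lem: Y(sp2) reordering} we may reindex the $\gamma_i$ so that the hypothesis of \cref{prop: Y(sp2) tensor product of eval modules is irreducible} holds; that proposition then gives
\[V \cong L(\gamma_1,-\gamma_2,p)\otimes\cdots\otimes L(\gamma_{2k-1},-\gamma_{2k},p).\]
Finite-dimensionality of $V$ forces each tensor factor to be finite-dimensional, hence $(\gamma_{2i-1},-\gamma_{2i})$ is dominant integral for $GL_2$ and $m_i:=\gamma_{2i-1}+\gamma_{2i}\in\ZZ_{\ge 0}$.

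For each $i$, set
\[P_i(u) := \prod_{j=0}^{m_i-1}(u-\gamma_{2i-1}+j)(u-\gamma_{2i}+j),\]
with the convention that an empty product equals $1$. A direct telescoping, using $m_i-\gamma_{2i-1}=\gamma_{2i}$ and $m_i-\gamma_{2i}=\gamma_{2i-1}$, gives
\[\frac{P_i(u+1)}{P_i(u)} = \frac{(u+\gamma_{2i-1})(u+\gamma_{2i})}{(u-\gamma_{2i-1})(u-\gamma_{2i})}.\]
Setting $P(u):=\prod_i P_i(u)$ then yields
\[\frac{P(u+1)}{P(u)} = \prod_{i=1}^{2k}\frac{u+\gamma_i}{u-\gamma_i} = \frac{\mu(-u)}{\mu(u)}.\]
For the palindromic identity, the multiset of roots of $P_i(u)$ is $\{\gamma_{2i-1}-j : 0\le j<m_i\}\cup\{\gamma_{2i}-j : 0\le j<m_i\}$, and the involution $\alpha\mapsto 1-\alpha$ swaps the two halves because $1-(\gamma_{2i-1}-j) = \gamma_{2i}-(m_i-1-j)$. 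Since $P_i(-u+1)$ is monic of the same even degree $2m_i$ and has the same roots as $P_i(u)$, we conclude $P_i(-u+1)=P_i(u)$, and hence $P(-u+1)=P(u)$.

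The bulk of the work is already handled by \cref{prop: Y(sp2) polynomial weights,lem: Y(sp2) reordering,prop: Y(sp2) tensor product of eval modules is irreducible}; the only remaining subtlety is verifying the palindromic condition $P(u)=P(-u+1)$, which is what forces us to construct $P_i(u)$ symmetrically from \emph{both} arithmetic progressions $\{\gamma_{2i-1}-j\}$ and $\{\gamma_{2i}-j\}$ (rather than just one), and relies crucially on the identity $m_i=\gamma_{2i-1}+\gamma_{2i}$ coming from finite-dimensionality.
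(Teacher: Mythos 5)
Your proof is correct and gives a valid, explicit argument. The paper simply cites the first part of the proof of Theorem 4.3.3 in Molev's book, which proceeds via the same reduction (polynomialize $\mu(u)$, reorder the $\gamma_i$, realize $V$ as a tensor product of $GL_2$-evaluation modules), but then constructs $P(u)$ indirectly: it introduces the $Y(\gl_2)$-Drinfeld polynomial $Q(u)$ satisfying $\lambda_1(u)/\lambda_2(u)=Q(u+1)/Q(u)$ (where $\lambda_1,\lambda_2$ are assembled from the odd- and even-indexed $\gamma$'s) and then sets $P(u)=(-1)^{\deg Q}\,Q(u)\,Q(-u+1)$, for which the palindromic identity is immediate. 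You instead build $P(u)=\prod_i P_i(u)$ directly from the two interlaced arithmetic progressions $\{\gamma_{2i-1}-j\}$ and $\{\gamma_{2i}-j\}$; a root-multiset computation shows this is the same polynomial, but your version is more self-contained (it does not appeal to the existence of $Q(u)$ from the $Y(\gl_2)$ classification) and makes the symmetry $P(u)=P(-u+1)$ transparent via the involution $\alpha\mapsto 1-\alpha$ on roots.

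One small imprecision worth fixing: in positive characteristic, finite-dimensionality of $L(\gamma_{2i-1},-\gamma_{2i},p)$ gives $\gamma_{2i-1}+\gamma_{2i}\in\FF_p$, not literally $\ZZ_{\ge 0}$; you should set $m_i:=[\gamma_{2i-1}+\gamma_{2i}]\in\{0,\ldots,p-1\}$ (the minimal nonnegative representative), which still satisfies $m_i=\gamma_{2i-1}+\gamma_{2i}$ as an identity in $\KK$, so your telescoping and root computations go through unchanged. As written, the statement $m_i\in\ZZ_{\ge 0}$ is only literally correct in characteristic zero.
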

\begin{proof}
We can simply repeat the first part of the proof of Theorem 4.3.3 in \cite{molev_yangians_book_07}.
\end{proof}

For such $\mu(u)$ as in Theorem \ref{thm: Y(sp2) classificiation} satisfying equation \eqref{eqn: double arrow positive char} for sutiable monic polynomial $P(u)$, we write $\mu(-u) \Longrightarrow \mu(u)$. Now we generalize this to the $Y(\xsp_{2n})$ case. 
\begin{thm}\label{thm: Y(sp2n) classification 1}
    If $V$ satisfies the conditions of \cref{prop: positive char highest weight} then $V=V(\mu(u))$ with
    \begin{align}
        \mu_1(-u)\Longrightarrow \mu_1(u)\longrightarrow \mu_2(u) \longrightarrow \cdots \longrightarrow \mu_n(u)
    \end{align}
\end{thm}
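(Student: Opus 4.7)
The plan is to reduce the statement to two ingredients already available in the paper. First, by \cref{prop: positive char highest weight} we have $V = V(\mu(u))$, a highest weight module with some highest weight $\mu(u) = (\mu_1(u), \ldots, \mu_n(u))$, so only the relations among the components need to be established. The chain $\mu_1(u) \longrightarrow \mu_2(u) \longrightarrow \cdots \longrightarrow \mu_n(u)$ is furnished directly by \cref{prop: gln condition holds for gn}; hence the entire content of the theorem reduces to verifying the double-arrow relation $\mu_1(-u) \Longrightarrow \mu_1(u)$, for which I will reduce to $Y(\xsp_2)$ and invoke \cref{thm: Y(sp2) classificiation}.

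The reduction, following the approach of \cite[Theorem 4.3.8]{molev_yangians_book_07}, uses the corner subalgebra embedding $Y(\xsp_2) \hookrightarrow Y(\xsp_{2n})$ that sends each generator $\s_{ij}^{(r)}$ indexed by $i,j \in \{-1,1\}$ to the corresponding element of $Y(\xsp_{2n})$. This is well-defined because our choice of $G$ in \eqref{choice_of_matrix} has the $\{-1,1\}$-block equal to the defining form of $Y(\xsp_2)$, and the $R$-matrix $1 - P_n/u$ restricts to the $R$-matrix for $Y(\xsp_2)$ on this block; consequently both the quaternary relation \eqref{eqn: quaternary relation} and the symmetry relation \eqref{eqn: symmetry relation} restrict cleanly. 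I then consider the cyclic submodule $V' \coloneqq Y(\xsp_2) \cdot \xi \subseteq V$. Finite-dimensionality of $V$ gives finite-dimensionality of $V'$; since $\xi$ is annihilated by all $\s_{ij}(u)$ with $i < j$ in $Y(\xsp_{2n})$, in particular by $\s_{-1,1}(u)$ (the unique raising generator of $Y(\xsp_2)$), the vector $\xi$ is a singular vector for the $Y(\xsp_2)$-action with eigenvalue $\mu_1(u)$. Thus the irreducible quotient of $V'$ is the $Y(\xsp_2)$-module $V(\mu_1(u))$.

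The next step is to check that $V(\mu_1(u))$ satisfies the hypotheses of \cref{thm: Y(sp2) classificiation}, i.e., that it meets the conditions of \cref{prop: positive char highest weight} applied to the pair $(Y(\xsp_2), Sp_2)$. The $Sp_2$-weights occurring in $V'$ are precisely the $\eps_1$-components of the $Sp_{2n}$-weights of $V$, so its $Sp_2$-simple summands $V(\eta, p)$ have highest weights $\eta$ with $\eta \leq 0$ bounded below by the minimum $\eps_1$-component appearing in the $Sp_{2n}$-decomposition $V = \bigoplus_i V(\lambda_i, p)$. Since each $\lambda_i$ lies in the $Sp_{2n}$-fundamental alcove, \cref{lem: irreducible g_n-modules in positive char} together with the monotonicity $(\lambda_i)_n \leq (\lambda_i)_1 \leq 0$ yields the bound $-2\eta + 2 < p$ required for $\eta$ to sit in the $Sp_2$-fundamental alcove. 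With this verified, \cref{thm: Y(sp2) classificiation} produces a monic polynomial $P(u)$ with $P(u) = P(-u+1)$ and $\mu_1(-u)/\mu_1(u) = P(u+1)/P(u)$, which is exactly $\mu_1(-u) \Longrightarrow \mu_1(u)$.

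The principal obstacle I anticipate is the alcove bookkeeping in the final step: one must confirm carefully that every $Sp_2$-weight contributing to a simple summand of $V'$ lies in the $Sp_2$-fundamental alcove under the paper's sign conventions and choice of positive system in \eqref{root_system}, and that the hypothesis "$\lambda_1 = 0$" of \cref{prop: positive char highest weight} is either automatic in the $Sp_2$-reduction or irrelevant (it was only needed for $\mathcal{G}_n = O_N$ to ensure irreducibility upon restriction to $SO_N$). Once this verification is in place, the rest of the argument is essentially a direct invocation of the previously established tools, paralleling the characteristic-zero proof of \cite[Theorem 4.3.8]{molev_yangians_book_07}.
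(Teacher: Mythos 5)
Your argument follows the same route as the paper's: restrict along the corner embedding $Y(\xsp_2)\hookrightarrow Y(\xsp_{2n})$ on indices $\{-1,1\}$, apply \cref{thm: Y(sp2) classificiation} to get the double arrow on the first component, and invoke \cref{prop: gln condition holds for gn} for the remaining chain. You also correctly identify the highest weight of the corner module as $\mu_1(u)$; the paper's own proof writes ``$\mu_n(u)$'' here, which appears to be a typo since $\s_{1,1}(u)\xi = \mu_1(u)\xi$ under that embedding, and the conclusion drawn afterwards is indeed for $\mu_1$.

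One small point to tidy: the alcove check in your third paragraph is not quite right as stated. You claim to derive $-2\eta + 2 < p$ from the $\lambda_i\in C_\ZZ$ via \cref{lem: irreducible g_n-modules in positive char}, but that lemma runs in the opposite direction (the numerical inequality implies membership in $C_\ZZ$, not conversely), and in fact $\lambda_i\in C_\ZZ$ does not give $-2(\lambda_i)_n + 2 < p$ in general — the alcove only gives $-(\lambda_i)_n + n < p$. What you actually need, and what does follow, is weaker: every $\eps_1$-component $\eta$ of an $Sp_{2n}$-weight of $V$ satisfies $-\eta\le -(\lambda_i)_n$, so $-\eta + 1 \le -(\lambda_i)_n + n < p$, which is precisely the $Sp_2$-fundamental-alcove inequality from \eqref{root_system}. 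So $\eta\in C_\ZZ$ for $Sp_2$ holds, which is all that the hypotheses of \cref{prop: positive char highest weight} (and hence \cref{thm: Y(sp2) classificiation}) ask for; the stronger bound $-2\eta+2<p$ is a sufficient but not necessary condition appearing only in the hypothesis of \cref{prop: sp2 highest weight representation positive char}. Replacing the cited lemma with this direct alcove computation makes the paragraph correct, and the rest of the proof then matches the paper's argument.
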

\begin{proof}
    Consider the embedding $Y(\xsp_2) \hookrightarrow Y(\xsp_{2n})$ given by $\s_{ij}(u) \mapsto \s_{ij}(u)$ for $i,j\in\{1,-1\}$. If $V$ satisfies \cref{prop: positive char highest weight}, the induced $Y(\xsp_2)$-module satisfies \cref{prop: sp2 highest weight representation positive char}; its highest weight is $\mu_n(u)$. Since $V$ is finite-dimensional, \cref{thm: Y(sp2) classificiation} gives $\mu_1(-u) \Longrightarrow \mu_1(u)$, and \cref{prop: gln condition holds for gn} finishes the proof.
\end{proof}
We call the polynomials $P_1(u),\ldots,P_n(u)$ corresponding to each of the arrows the \emph{Drinfeld polynomials} of $V(\mu(u))$. 

We also want to go the other way: to construct irreducible representations from Drinfeld polynomials.

\begin{thm}\label{thm: Y(sp2n) classification 2}
    Suppose $p > 2$. Given monic polynomials $P_1(u), \ldots, P_n(u)$ with $P_1(u) = P_1(-u+1)$, there exists a finite-dimensional representation $V(\mu(u),p)$ of $Y(\xsp_{2n})$ such that
    \begin{align*}
        \frac{\mu_{i}(u)}{\mu_{i+1}(u)} = \frac{P_{i+1}(u+1)}{P_{i+1}(u)}, \qquad i=1,\ldots, n-1
    \end{align*}
    and
    \begin{align*}
        \frac{\mu_1(-u)}{\mu_1(u)} = \frac{P_1(u+1)}{P_1(u)}.
    \end{align*}
\end{thm}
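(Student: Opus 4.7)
The plan is to construct $V(\mu(u),p)$ as the simple quotient of the $Y(\xsp_{2n})$-submodule generated by a highest weight vector inside a tensor product of $Y(\gl_{2n})$-evaluation modules, following the characteristic-zero strategy in \cite{molev_yangians_book_07} and verifying each step survives in characteristic $p>2$.

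First, factor the given Drinfeld polynomials. Write $P_i(u)=\prod_j(u-\gamma_{ij})$ for $i=2,\ldots,n$. The symmetry $P_1(u)=P_1(-u+1)$ forces the multiset of roots of $P_1$ to be invariant under $\gamma\mapsto 1-\gamma$, so we may write $P_1(u)=\prod_j(u-\gamma_{1j})(u-(1-\gamma_{1j}))$, allowing $\gamma_{1j}=\tfrac12$ with appropriate multiplicity. This pairing is the combinatorial key that will produce the required symmetry of $\mu_1$ under $u\mapsto -u$ via the $T'(-u)$ factor in the embedding \eqref{eqn: embed Y(gn) into Y(glN)}.

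Next, assemble a tensor product $M$ of $Y(\gl_{2n})$-evaluation modules: for each root $\gamma_{ij}$ with $i\ge 2$, include a factor that is the fundamental representation $\wedge^{i}V$ of $GL_{2n}$ evaluated at an appropriate shift of $\gamma_{ij}$; for each paired root $\gamma_{1j}$ of $P_1$, include a factor that is the vector representation $V$ at an appropriate shift. Each factor is a simple $GL_{2n}$-module in any characteristic $\ne 2$ and becomes a $Y(\gl_{2n})$-module via the evaluation homomorphism \eqref{eqn: evaluation homomorphism glN}. Restrict $M$ to a $Y(\xsp_{2n})$-module via \eqref{eqn: embed Y(gn) into Y(glN)}, and let $\zeta\in M$ be the tensor product of the highest-weight vectors of the factors. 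Then, paralleling \cref{cor: highest weight of tensor product of eval modules}, the cyclic $Y(\xsp_{2n})$-submodule $Y(\xsp_{2n})\zeta$ is a highest weight module whose $i$-th weight component is $\mu_i(u)=\lambda_i(u)\lambda_{-i}(-u)$, where $\lambda_i(u)$ is read off via \eqref{eqn: highest weight of Y(glN)-evaluation module}. Its simple quotient is then $V(\mu(u),p)$, which is finite-dimensional since it is a quotient of a submodule of the finite-dimensional $M$.

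The main obstacle is arranging the evaluation shifts so that the ratios $\mu_i(u)/\mu_{i+1}(u)$ and $\mu_1(-u)/\mu_1(u)$ realize exactly $P_{i+1}(u+1)/P_{i+1}(u)$ and $P_1(u+1)/P_1(u)$. Roughly, $\wedge^{i+1}V$ evaluated at a suitable shift of $\gamma_{i+1,j}$ contributes the factor $(u-\gamma_{i+1,j})$ to $\lambda_{i+1}(u)/\lambda_{i+2}(u)$, and these factors telescope across all tensor factors to produce $P_{i+1}(u+1)/P_{i+1}(u)$; the extra even factors arising from the symmetric $\lambda_{-i}(-u)/\lambda_{-i-1}(-u)$ contributions can be absorbed by the automorphism \eqref{eqn: twisted power series automorphism} without changing Drinfeld polynomials. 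The symmetry $\mu_1(-u)/\mu_1(u)=P_1(u+1)/P_1(u)$ falls out automatically from the paired factorization of $P_1$ via $\lambda_1(u)\lambda_{-1}(-u)$. The hypothesis $p>2$ is needed only to invert $2$ in the defining relations and the evaluation map \eqref{eqn: eval homomorphism}; the fundamental representations $\wedge^{i}V$ are simple $GL_{2n}$-modules in any characteristic $\ne 2$, so no stronger bound on $p$ is required.
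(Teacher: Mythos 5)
Your strategy matches the paper's at the top level: both realize $V(\mu(u),p)$ as the simple quotient of the cyclic $Y(\xsp_{2n})$-span of the highest vector in a finite-dimensional $Y(\gl_{2n})$-module, following the second part of Molev's Theorem~4.3.8. The paper simply cites that argument and notes the external grading carries over; you make the $Y(\gl_{2n})$-module explicit as a tensor product of evaluation modules on exterior powers. That constructive route is legitimate and, once the bookkeeping is done, does deliver finite-dimensionality for free (no need to invoke the abstract $Y(\gl_{2n})$ classification). However, a couple of the computational claims in your write-up do not hold as stated.

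First, the assignment of fundamental representations to Drinfeld polynomials is off. Using the paper's index set $I_N=\{-n,\dots,-1,1,\dots,n\}$ and the formula $\mu_j(u)=\lambda_j(u)\lambda_{-j}(-u)$ from \cref{prop: highest weight of tensor product of eval modules}, an evaluation module on $\wedge^{k}V$ at shift $a$ contributes a linear factor to the ratio $\mu_{n-k}(u)/\mu_{n-k+1}(u)$ (if $k\le n$), or to $\mu_{k-n}(u)/\mu_{k-n+1}(u)$ (if $k>n$), and to none of the others. In other words, $\wedge^kV$ feeds the Drinfeld polynomial $P_{n-k+1}$ (resp.\ $P_{k-n+1}$), and it is $\wedge^nV$ at shift $a$ that produces the paired factor $(u-a)\bigl(u-(1-a)\bigr)$ in $P_1$ and nothing else. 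So for each root of $P_i$ with $i\ge 2$ you should be using $\wedge^{\,n\pm(i-1)}V$, and for each $\{\gamma,1-\gamma\}$-pair of roots of $P_1$ you should be using $\wedge^nV$, not $V$. Your prescription ``$\wedge^{i}V$ for $P_i$ and $V$ for $P_1$'' would instead land contributions in $P_{n-i+1}$ and $P_n$, which is not what the theorem requires.

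Second, the remark that the ``extra even factors arising from $\lambda_{-i}(-u)/\lambda_{-i-1}(-u)$ can be absorbed by the automorphism \eqref{eqn: twisted power series automorphism} without changing Drinfeld polynomials'' cannot be used in the way you intend. That automorphism multiplies every $\mu_i(u)$ by one and the same even series $g(u)$, so it leaves all ratios $\mu_i(u)/\mu_{i+1}(u)$ and $\mu_1(-u)/\mu_1(u)$ untouched. If those ratios are already correct there is nothing to absorb; if a term $\lambda_{-i}(-u)/\lambda_{-i-1}(-u)$ has spoiled a ratio, no choice of $g(u)$ can repair it. The correct observation is simply that $\lambda_{-j}(-u)$ is the only nontrivial factor of $\mu_j(u)$ coming from a $\wedge^kV$ with $k\le n$, so it must be counted in the ratios rather than wished away. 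Finally, the claim that $p>2$ suffices ``because $\wedge^iV$ is simple in any $p\neq2$'' should be justified against the hypotheses of \cref{prop: highest weight of tensor product of eval modules}, which are stated with a fundamental-alcove assumption on the $\lambda^{(i)}$; for $\omega_i$ this nominally requires $p>2n$. It is plausible that the proof of that proposition needs only the coproduct formula \eqref{eqn: comultiplication on sij(u)} and hence works for minuscule weights regardless of $p$, but that needs to be said rather than asserted via simplicity of $\wedge^iV$ alone.
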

\begin{proof}
    We may repeat the argument from the second part of the proof of Theorem 4.3.8 in \cite{molev_yangians_book_07}. The existence of an external grading as in the characteristic zero case ensures that $V(\mu(u),p)$ appears as a subquotient in the cyclic $Y(\xsp_{2n})$-span of the highest vector of $L(\lambda(u))$.
\end{proof} 
Using the notation of the second part of the proof of Theorem 4.3.8 in \cite{molev_yangians_book_07}, the largest value of $-\mu_n$ in a $\xsp_{2n}$-weight $\mu$ appearing in $L(\lambda(u),p)$ coincides with the maximum of $\lambda_{-n} - \lambda_{n}$ for a $\gl_{2n}$-weight $\lambda$ appearing in $L(\lambda(u),p)$.  From the remarks following Theorem 3.2.6 in \cite{kalinov_yangians_20}, this maximum is given by is the sum of the degrees of the Drinfeld polynomials corresponding to $\lambda(u)$, which are $1,\ldots, 1,Q(u),P_2(u),\ldots,P_n(u)$. So the maximum value of $-\mu_n$ in a $\xsp_{2n}$-weight $\mu$ appearing in $L(\lambda(u),p)$ is $\deg Q(u)+\sum_{i=2}^n \deg P_i(u)$. Note that $\deg P_1(u)=2\deg Q(u)$. Therefore, if \[\deg P_1(u) + 2\sum_{i=2}^{n} \deg P_i(u) + 2n < p,\] the corresponding $V(\mu(u))$ satisfies $-2\mu_n+2n<p$ for all $\mu$ appearing in its decomposition into simple $\g_n$-modules. By \cref{lem: irreducible g_n-modules in positive char}, $V$ satisfies the conditions of \cref{prop: positive char highest weight} (and vice versa).

As in the $\gl_n$ case, the Drinfeld polynomials $P_i(u)$ are not unique. If \[\frac{P_i(u+1)}{P_i(u)} = \frac{Q_i(u+1)}{Q(u)},\] then $\frac{P_i}{Q_i} = F_i$, where $F_i(u) = F_i(u+1)$. Thus, $F_i$ is a ratio of products of expressions of the form \[(u+c)(u+1+c)\cdots (u+c+p-1) = (u+c)^p - (u+c)\] for some $c \in \overline{\FF}_p$. Following \cite{kalinov_yangians_20}, we define $q_p(u) = u^p - u$, so $P_i(u)$, for $i \ge 2$, is unique up to multiplication or division by polynomials of the form $q_p(u+c)$. However, if we impose that \[\deg P_1(u) + 2\sum_{i=2}^{n} \deg P_i(u) + 2n < p,\] the polynomials $P_i(u)$ are unique since $\deg q_p(u+c) = p$.          

\begin{cor}
    Every finite-dimensional irreducible representations of $SY(\xsp_{2n})$ that satisfies the conditions of \eqref{prop: positive char highest weight} corresponds to a unique tuple $(P_1(u),\ldots,P_n(u))$ of monic polynomials such that $P_1(u)=P_1(-u+1)$ and $\deg P_1(u) + 2\sum_{i=2}^{n} \deg P_i(u) + 2n < p$.
\end{cor}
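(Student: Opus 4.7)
The plan is to reduce the classification for $SY(\xsp_{2n})$ to the one already established for $Y(\xsp_{2n})$ in \cref{thm: Y(sp2n) classification 1} and \cref{thm: Y(sp2n) classification 2}, using \cref{prop: representations of Y(gn) and SY(gn) positive char} to pass between the two algebras, while carefully tracking the twist automorphisms $\mcS(u)\mapsto g(u)\mcS(u)$ that relate different $Y(\xsp_{2n})$-lifts of a given $SY(\xsp_{2n})$-module.

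First, given a finite-dimensional irreducible $V$ of $SY(\xsp_{2n})$ satisfying the hypotheses of \cref{prop: positive char highest weight}, I would use \cref{prop: representations of Y(gn) and SY(gn) positive char} to pick a finite-dimensional irreducible $Y(\xsp_{2n})$-module $\tilde V$ restricting to $V$. Because $g(u)\in 1+u^{-2}\KK\dbrack{u^{-2}}$ has no $u^{-1}$ term, the level-one generators $\s_{ij}^{(1)}$, and hence the image of $U(\g_n)$ under $i_n$, lie in $SY(\xsp_{2n})$; so the $\g_n$-action on $\tilde V$ agrees with the given one on $V$, and $\tilde V$ inherits the conditions of \cref{prop: positive char highest weight}. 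Applying \cref{thm: Y(sp2n) classification 1} then furnishes Drinfeld polynomials $(P_1,\ldots,P_n)$ with $P_1(u)=P_1(-u+1)$, and combining \cref{lem: irreducible g_n-modules in positive char} (applied to every $\g_n$-weight of $V$) with the computation following \cref{thm: Y(sp2n) classification 2} (expressing the maximal $-\mu_n$ in terms of the $\deg P_i$) gives the degree bound $\deg P_1+2\sum_{i\ge 2}\deg P_i+2n<p$.

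Next I would show the tuple depends only on $V$ and is uniquely determined. Any two lifts of $V$ to $Y(\xsp_{2n})$ differ by an automorphism \eqref{eqn: twisted power series automorphism} for some even series $g(u)$, which sends $\mu_i(u)\mapsto g(u)\mu_i(u)$; evenness of $g$ makes both $\mu_i(u)/\mu_{i+1}(u)$ and $\mu_1(-u)/\mu_1(u)$ invariant, so the $P_i$ are unchanged by the choice of lift. For uniqueness of the $P_i$ as polynomials, I would invoke the discussion after \cref{thm: Y(sp2n) classification 2}: any remaining ambiguity is multiplication or division by shifts $q_p(u+c)$, each of degree $p$, which the total degree bound $<p$ rules out. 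Conversely, given a tuple satisfying the constraints, \cref{thm: Y(sp2n) classification 2} produces a finite-dimensional $Y(\xsp_{2n})$-module $V(\mu(u),p)$ whose restriction to $SY(\xsp_{2n})$ is irreducible by \cref{prop: representations of Y(gn) and SY(gn) positive char}, establishing the reverse direction.

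The main obstacle, while modest in content, is the twist-invariance argument above: one must verify that the $\g_n$-structure (and hence the conditions of \cref{prop: positive char highest weight}) is preserved under $\mcS(u)\mapsto g(u)\mcS(u)$, so that the Drinfeld polynomials are genuine invariants of $V$ as an $SY(\xsp_{2n})$-module rather than of a particular lift. This reduces to the observation that the $u^{-1}$-coefficient of $\mcS(u)$ is untouched by every twist, together with the bookkeeping confirming that the degree bound is exactly what forces uniqueness once the $q_p(u+c)$-ambiguity is accounted for.
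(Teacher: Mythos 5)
Your proof is correct and takes essentially the same route as the paper, whose own proof is a one-line citation of \cref{thm: Y(sp2n) classification 1}, \cref{thm: Y(sp2n) classification 2}, and \cref{prop: representations of Y(gn) and SY(gn) positive char}. Your fleshing out of the twist-invariance argument (that $\s_{ij}^{(1)}$ is fixed by every automorphism $\mcS(u)\mapsto g(u)\mcS(u)$ with $g$ even, so the $\g_n$-structure and hence the hypotheses of \cref{prop: positive char highest weight} transfer to any $Y(\xsp_{2n})$-lift, and that the ratios $\mu_i(u)/\mu_{i+1}(u)$ and $\mu_1(-u)/\mu_1(u)$ are twist-invariant) is exactly the unstated content behind the paper's terse proof.
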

\begin{proof}
    This follows from the previous two theorems and \cref{prop: representations of Y(gn) and SY(gn) positive char}.
\end{proof}
\subsubsection{Finite-dimensional irreducible representations of $Y(\o_{2n})$ in positive characteristic} 
In this section, we look at finite-dimensional irreducible representations of $Y(\o_{2n})$. We start by looking at finite-dimensional irreducible representations of $Y(\o_2)$.

\begin{prop}\label{prop: o2 highest weight representation positive char}
    Suppose $V$ is irreducible and finite-dimensional, and as an $SO_2$ and $\o_2$-representation is isomorphic to $V(\mu_1,p)\otimes \cdots \otimes V(\mu_k,p)$ with each $\mu_i$ is a non-positive integer and $p>2\max -\mu_i + 2$. Then $V$ is a highest representation $V=V(\mu(u),p)$.
\end{prop}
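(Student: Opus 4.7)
The plan is to mirror the proof of \cref{prop: sp2 highest weight representation positive char}.

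First, I would verify that the hypothesis $p > 2\max_i(-\mu_i) + 2$ forces each weight $\mu_i$ into the fundamental alcove $C_{\ZZ}$. Specializing \cref{lem: irreducible g_n-modules in positive char} to $\g_n = \o_2$ (so $n = 1$), the required inequality reads $-2\mu_i + 2 < p$, which is precisely our hypothesis. Consequently each factor $V(\mu_i, p)$ is irreducible as an $\o_2$-module with character given by the Weyl character formula, and the same therefore holds for the tensor product $V$.

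With the fundamental alcove condition in hand, I would repeat the proof of Theorem 4.2.6 in \cite{molev_yangians_book_07} with the obvious notational changes. The core of that argument is to produce a nonzero vector $\xi \in V$ lying in the common kernel of all coefficients of $\s_{-1,1}(u)$ and simultaneously a common eigenvector for the $\s_{11}^{(r)}$. Each coefficient of $\s_{-1,1}(u)$ carries $\o_2$-weight $-2\eps_1$, so any weight vector of extremal $F_{11}$-eigenvalue is annihilated by every $\s_{-1,1}^{(r)}$ and hence lies in $V^+ := \bigcap_{r\geq 1}\ker\s_{-1,1}^{(r)}$; in particular $V^+$ is nonzero. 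One then invokes the quaternary relation \eqref{eqn: quaternary relation} to show that $V^+$ is preserved by the commuting family $\{\s_{11}^{(r)}\}_{r\geq 1}$, and picks a common eigenvector $\xi \in V^+$ for this family (which exists because $\KK$ is algebraically closed and the relevant operators commute on a finite-dimensional space). Defining $\mu(u)$ by $\s_{11}(u)\xi = \mu(u)\xi$, irreducibility of $V$ forces $V = Y(\o_2)\xi = V(\mu(u),p)$.

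The principal concern is whether Molev's proof of Theorem 4.2.6 uses anything specific to characteristic zero. The defining relations \eqref{eqn: quaternary relation} and \eqref{eqn: symmetry relation} hold over any field, so the algebraic manipulations required to verify stability of $V^+$ under the diagonal operators translate verbatim. The only place positive characteristic could interfere is in the behavior of the $\o_2$-modules $V(\mu_i,p)$ themselves — specifically, that their weights and multiplicities match the Weyl character formula and that the tensor product has the expected $\o_2$-weight spectrum — which is exactly what the fundamental alcove hypothesis guarantees.
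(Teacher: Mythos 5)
Your proof is correct and takes essentially the same route as the paper's: translate the numerical hypothesis $p > 2\max(-\mu_i)+2$ into the fundamental-alcove condition via \cref{lem: irreducible g_n-modules in positive char} (at $n=1$), then repeat the argument of Molev's Theorem 4.2.6. The paper states this very tersely; your added value is spelling out what that argument consists of (nontriviality of the singular space $V^+$ via the $\o_2$-weight shift $-2\eps_1$ of the coefficients of $\s_{-1,1}(u)$, stability of $V^+$ under the $\s_{11}^{(r)}$ by the quaternary relation, and then an eigenvector for a commuting family over an algebraically closed field), together with a correct identification of the one place where positive characteristic could spoil things — namely the weight spectrum of the $\o_2$-factors — which is exactly what the alcove hypothesis controls. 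The only minor point you leave implicit, which the paper notes explicitly, is that lying in the alcove makes each $\mu_i$ a restricted weight, which is what lets one pass freely between $SO_2$- and $\o_2$-module structures.
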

\begin{proof}
    The condition implies that $\mu_i$ is dominant integral and in the fundamental alcove (and therefore restricted, so we can freely interchange between the group and the Lie algebra), and hence each $V(\mu_i,p)$ is irreducible and highest weight. Then we can repeat the proof of Theorem 4.2.6 in \cite{molev_yangians_book_07}.
\end{proof}
From now on, assume $V$ satisfies the conditions of \eqref{prop: o2 highest weight representation positive char}.
Write
\begin{align}
    \mu(u) = 1+\mu^{(1)}u^{-1}+\mu^{(2)}u^{-2}+\cdots,\quad \mu^{(r)}\in \overline{\FF}_p.
\end{align}
Let $V$ be an irreducible finite-dimensional representation of $Y(\o_2)$ satisfying the conditions of \cref{prop: positive char highest weight}, so $V=V(\mu(u))$ with
\begin{align*}
    \mu(u) = 1+\mu^{(1)}u^{-1}+\mu^{(2)}u^{-2}+\cdots, \quad \mu^{(r)}\in \overline{\FF}_p.
\end{align*}
\begin{prop}\label{prop: Y(o2) polynomial weight positive char}
There exists an even formal series $g(u)\in 1+u^{-2}\overline{\FF}_p\dbrack{u^{-2}}$ such that $(1+\frac 12u^{-1})g(u)\mu(u)$ is a polynomial in $u^{-1}$.
\end{prop}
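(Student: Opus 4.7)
My plan is to mirror the proof of \cref{prop: Y(sp2) polynomial weights} (itself an adaptation of \cite[Proposition 4.3.1]{molev_yangians_book_07}), making the sign adjustments that distinguish the orthogonal from the symplectic case. The extra factor $(1+\tfrac{1}{2}u^{-1})$ which is present here but absent in the symplectic statement reflects precisely the sign difference in the symmetry relation \eqref{eqn: symmetry relation}: in the symplectic case the coefficient $\alpha_N(u)=\tfrac{2u+1}{2u-q+1}$ from \eqref{eqn: alphaq(u)} contributes a $(2u+1)$ factor that cancels this pole, whereas in the orthogonal case $\alpha_N(u)=1$ and the pole persists.

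First I would use the orthogonal symmetry relation together with \eqref{eqn: eigenvector for s2n-i,2n-i} to compute the action of $\s_{-1,-1}(u)$ on the highest weight vector:
\[
\s_{-1,-1}(u)\xi \;=\; \left[\mu(-u) + \frac{\mu(u)-\mu(-u)}{2u}\right]\xi \;=\; \frac{(2u-1)\mu(-u) + \mu(u)}{2u}\,\xi.
\]
This identity exposes the pole at $u = 0$ which the even series $g(u)$ must control, and gives a linear relation between $\mu(u)$ and $\mu(-u)$ in terms of the joint spectrum of $\s_{11}(u)$ and $\s_{-1,-1}(u)$.

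Next I would pull $V$ back through the embedding \eqref{eqn: embed Y(gn) into Y(glN)}, $\mcS(u) = T(u)T'(-u)$, to a module over $Y(\gl_2)$. Because $V$ satisfies the hypotheses of \cref{prop: o2 highest weight representation positive char} (in particular $p > 2\max_i -\mu_i + 2$), \cref{cor: highest weight of tensor product of eval modules} applied in the $n=1$ orthogonal case identifies $\mu(u)$ with $\lambda_1(u)\lambda_{-1}(-u)$, where $\lambda(u) = (\lambda_1(u),\lambda_{-1}(u))$ is the highest weight of a finite-dimensional $Y(\gl_2)$-module (obtained by realizing the cyclic $Y(\gl_2)$-span of $\xi$ inside a tensor product of evaluation modules, as in \cref{prop: highest weight of tensor product of eval modules}). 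Kalinov's positive-characteristic classification for $Y(\gl_N)$ (\cite[Theorem 3.2.5]{kalinov_yangians_20}) then yields a monic polynomial $P(u)$ with $\lambda_1(u)/\lambda_{-1}(u) = P(u+1)/P(u)$, so after multiplying $\lambda_1(u)$ and $\lambda_{-1}(u)$ by appropriate formal series of the form $1+u^{-1}\KK\dbrack{u^{-1}}$ (compatible with \eqref{eqn: power series automorphism}), both components become polynomial in $u^{-1}$.

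Finally I would combine these two inputs: the modifications of $\lambda_1(u)$ and $\lambda_{-1}(u)$ induce, via $\mu(u)=\lambda_1(u)\lambda_{-1}(-u)$, multiplication of $\mu(u)$ by an \emph{even} series $g(u)\in 1+u^{-2}\overline{\FF}_p\dbrack{u^{-2}}$ (the two factors conspire, one at $u$ and one at $-u$, to produce only even powers). The resulting $g(u)\mu(u)$ is polynomial in $u^{-1}$ except possibly for a single simple pole at $u = -\tfrac{1}{2}$ coming from the orthogonal evaluation map \eqref{eqn: eval homomorphism}; multiplying by $(1+\tfrac{1}{2}u^{-1})$ clears it, exactly matching the degree bound for the image of a single evaluation module in \cref{prop: sij(u) are polynomial operators}(i). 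The main obstacle will be tracking the pole structure carefully enough to confirm that $g(u)$ can be chosen to be even rather than merely some element of $1+u^{-1}\KK\dbrack{u^{-1}}$; this is forced by the symmetry $\mu(u)\leftrightarrow\mu(-u)$ encoded in the first step, so it should come out cleanly once the $Y(\gl_2)$ side is in place.
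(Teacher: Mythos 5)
Your second step does not work as written, and the argument is logically circular. The embedding \eqref{eqn: embed Y(gn) into Y(glN)} is an inclusion $Y(\o_2) \hookrightarrow Y(\gl_2)$; restriction carries $Y(\gl_2)$-modules to $Y(\o_2)$-modules, not the reverse. A finite-dimensional irreducible $Y(\o_2)$-module $V$ therefore does not automatically acquire a $Y(\gl_2)$-action, so there is nothing to ``pull back,'' and ``the cyclic $Y(\gl_2)$-span of $\xi$'' inside $V$ is undefined. To use \cref{prop: highest weight of tensor product of eval modules} or \cref{cor: highest weight of tensor product of eval modules} you would first need to know that $V$ is realized as (a subquotient of) a tensor product of $Y(\gl_2)$-evaluation modules. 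But establishing that realization is precisely the content of the \emph{next} proposition in the paper (and of Molev's Proposition 4.4.2, which comes after 4.4.1), and that result in turn relies on first knowing the highest weight is, up to an even twist, a polynomial---which is what the present proposition is supposed to establish. So your proposal assumes what it is trying to prove.

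The paper's proof, which simply defers to Molev's Proposition 4.4.1, is direct: it works entirely inside $Y(\o_2)$, using the symmetry and quaternary relations together with finite-dimensionality of $V(\mu(u))$ to force $\mu(u)$ into the required rational form with an even denominator, and the tensor-product picture only enters afterwards. Your first step---extracting the action of $\s_{-1,-1}(u)$ on $\xi$ from the symmetry relation via \eqref{eqn: eigenvector for s2n-i,2n-i}---is correct and is indeed the starting point of that argument, and your intuition that the $(1+\tfrac 12 u^{-1})$ factor reflects the shift in the orthogonal evaluation homomorphism is also right. But the passage through a purported $Y(\gl_2)$-module structure on $V$ has to be discarded and replaced by the intrinsic functional-equation argument.
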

\begin{proof}
We may repeat the proof of Proposition 4.4.1 in \cite{molev_yangians_book_07} without any changes.
\end{proof}
As in the characteristic zero case, define $\mu'(u)=(1+\frac 12u^{-1})\mu(u)$. Now we may focus on the representations with $\mu'(u)$ a polynomial in $u^{-1}$. Then write
\begin{align*}
    \mu'(u) =(1-\gamma_1u^{-1})\cdots (1-\gamma_{2k+1}u^{-1})
\end{align*}
for $\gamma_i\in \overline{\FF}_p$. For any $\gamma\in \overline{\FF}_p$, we can make the one-dimensional $\o_2$-representation $V(\gamma)$ into a representation of the twisted Yangian $Y(\o_2)$ via the evaluation homomorphism, and the corresponding highest weight is given by
\[\frac{1+(\gamma+\tfrac{1}{2})u^{-1}}{1+\tfrac{1}{2}u^{-1}},\]
which follows from \eqref{eqn: highest weight of Y(gn)-evaluation module}.
\begin{prop}
Suppose that for every $i=1,\ldots, k$ the following condition holds: In positive characteristic (resp. characteristic zerp), if the multiset $\{\gamma_p+\gamma_q \mid 2i-1\le p<q\le 2k+1\}$ contains elements of $\FF_p$ (resp. $\ZZ_+$), then $\gamma_{2i-1}+\gamma_{2i}$ is in $\FF_p$ (resp. $\ZZ_+$) and is minimal amongst them. Then the representation $V(\mu(u))$ of $Y(\o_2)$ is isomorphic to the tensor product
\begin{align}\label{eqn: Y(o2) irreducible representation tensor product}
L(\gamma_1, -\gamma_2,p) \otimes L(\gamma_3, -\gamma_4,p) \otimes \cdots \otimes L(\gamma_{2k-1}, -\gamma_{2k},p) \otimes V(-\gamma_{2k+1} - 1/2,p).
\end{align}
\end{prop}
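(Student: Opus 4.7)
The plan is to adapt the proof of the preceding $Y(\xsp_2)$ result (and, correspondingly, Proposition 4.4.2 in \cite{molev_yangians_book_07}) using the positive-characteristic modifications introduced by Kalinov in \cite[Proposition 3.2.3]{kalinov_yangians_20}. By \cref{cor: highest weight of tensor product of eval modules} together with the computation \eqref{eqn: highest weight of Y(gn)-evaluation module} applied to the final one-dimensional factor $V(-\gamma_{2k+1}-1/2,p)$, the tensor product \eqref{eqn: Y(o2) irreducible representation tensor product} is a highest weight $Y(\o_2)$-module with highest weight vector $\zeta=\zeta_1\otimes\cdots\otimes \zeta_k\otimes \zeta_{k+1}$, where $\zeta_i$ is the highest weight vector of the $i$-th tensor factor, and the highest weight has the form required to match $\mu(u)$ after using $\mu'(u)=(1+\tfrac12 u^{-1})\mu(u)=(1-\gamma_1 u^{-1})\cdots(1-\gamma_{2k+1}u^{-1})$. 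Thus it suffices to prove that this tensor product is irreducible.

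Irreducibility will follow once we show that the only singular vector (i.e.\ a vector annihilated by all coefficients of $\s_{-1,1}(u)$) is a scalar multiple of $\zeta$. Using the embedding \eqref{eqn: embed Y(gn) into Y(glN)} one computes
\[
\s_{-1,1}(u) = t_{-1,-1}(u)t_{-1,1}(-u) + t_{-1,1}(u)t_{1,1}(-u).
\]
On each evaluation module $L(\gamma_{2i-1},-\gamma_{2i},p)$ the generator $t_{-1,1}(u)$ acts via $E_{-1,1}u^{-1}$, which is nilpotent, and $t_{-1,-1}(u),t_{1,1}(u)$ act semisimply; the same holds on the one-dimensional factor. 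Exactly as in the argument from \cite[Proposition 3.2.3]{kalinov_yangians_20}, this forces $\s_{-1,1}(u)$ to act locally nilpotently on the tensor product, so every nonzero submodule contains a singular vector. Hence uniqueness of the singular vector (up to scalar) implies irreducibility.

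To prove uniqueness, I would induct on $k$, the case $k=0$ being trivial since the module is one-dimensional. For the inductive step, expand any singular vector as
\[
\eta = \sum_{r=0}^{q} (E_{1,-1})^r \zeta_1 \otimes \eta_r,
\]
with $\eta_r$ in the tensor product of the remaining factors, $\eta_q\neq 0$, and $q\le \min\{p-1,[\gamma_1+\gamma_2]\}$ (in characteristic zero, $q<\gamma_1+\gamma_2$); this bound comes from the fundamental-alcove restriction on $L(\gamma_1,-\gamma_2,p)$. Applying $\s_{-1,1}(u)$ using the comultiplication formula \eqref{eqn: comultiplication on sij(u)} and following the computation of \cite[Proposition 4.4.2]{molev_yangians_book_07}, the coefficient of $(E_{1,-1})^{q-1}\zeta_1$ must vanish, which yields an identity of the form
\[
q(\gamma_1+\gamma_2-q+1)(\gamma_1+\gamma_3-q+1)\cdots (\gamma_1+\gamma_{2k+1}-q+1)\,\eta_q = 0.
\]
By the minimality hypothesis on $[\gamma_1+\gamma_2]$ among $\{[\gamma_1+\gamma_j]\}_{j\ge 2}$ combined with $q\le [\gamma_1+\gamma_2]$, none of the factors with $j\ge 3$ vanish in the relevant sense, so we conclude $q=0$. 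Then $\eta=\zeta_1\otimes \eta_0$, and $\eta_0$ is itself a singular vector for the tensor product of the remaining factors, so by induction $\eta_0$ is proportional to $\zeta_2\otimes\cdots\otimes \zeta_{k+1}$.

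The main obstacle is handling the trailing one-dimensional evaluation module $V(-\gamma_{2k+1}-1/2,p)$: since this breaks the perfectly symmetric pairing $(\gamma_{2i-1},\gamma_{2i})$ of the $Y(\xsp_2)$ argument, one must carefully account for an additional linear factor arising from this tensor slot when applying \eqref{eqn: comultiplication on sij(u)} and verify that it does not obstruct the minimality argument. Once this is checked, cyclicity of the span $Y(\o_2)\zeta$ is immediate as in characteristic zero, completing the proof.
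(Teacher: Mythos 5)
Your argument is correct and mirrors the paper's own approach: the paper simply references that this proposition follows by the same method as the preceding $Y(\xsp_2)$ result (which is itself modeled on \cite[Prop.~3.2.3]{kalinov_yangians_20}), with the characteristic-zero version being \cite[Prop.~4.4.2]{molev_yangians_book_07}, and you faithfully carry out that adaptation including the base case $k=0$ and the extra linear factor $(\gamma_1+\gamma_{2k+1}-q+1)$ coming from the one-dimensional evaluation slot. A minor slip: the second term of $\s_{-1,1}(u)$ should be $t_{-1,1}(u)\,t_{-1,-1}(-u)$ rather than $t_{-1,1}(u)\,t_{1,1}(-u)$, though this does not affect the nilpotency conclusion since both $t_{-1,-1}$ and $t_{1,1}$ act semisimply.
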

\begin{proof}
    The proof is exactly analogous to the proof of \cref{prop: Y(sp2) tensor product of eval modules is irreducible}. The characteristic zero version is Proposition 4.4.2 in \cite{molev_yangians_book_07}.
\end{proof}
As in the symplectic case, we can always reorder the $\gamma_i$ to satisfy the conditions of this proposition.

\begin{thm}\label{thm: Y(o2) classification positive char}
If the irreducible highest weight representation $V$ of $Y(\o_2)$ is finite-dimensional and satisfies the conditions of \cref{prop: Y(o2) polynomial weight positive char} then $V=V(\mu(u),p)$ and there is a pair $(P(u), \gamma)$, where $P(u)$ is a monic polynomial in $u$ with $P(u) = P(-u+1)$ and $\gamma \in \overline{\FF}_p$ with $P(\gamma) \neq 0$ such that
\begin{align}\label{eqn: Y(o2) classification positive char}
\frac{\mu'(-u)}{\mu'(u)} = \frac{P(u+1)}{P(u)} \cdot \frac{u - \gamma}{u + \gamma},
\end{align}
where $\mu'(u) = (1 + \frac 12 u^{-1}) \mu(u)$. 
\end{thm}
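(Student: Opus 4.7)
The plan is to adapt the $\xsp_2$ classification (\cref{thm: Y(sp2) classificiation}) and the corresponding characteristic-zero argument (Theorem 4.4.3 of \cite{molev_yangians_book_07}). First I would invoke \cref{prop: Y(o2) polynomial weight positive char}: composing the $Y(\o_2)$-action on $V$ with the automorphism \eqref{eqn: twisted power series automorphism} for the even series $g(u)\in 1+u^{-2}\overline{\FF}_p\dbrack{u^{-2}}$ provided there turns $\mu'(u)$ into a polynomial in $u^{-1}$, and because $g$ is even, the ratio $\mu'(-u)/\mu'(u)$ is unaffected by the twist. Thus it suffices to treat the case $\mu'(u)=\prod_{i=1}^{2k+1}(1-\gamma_i u^{-1})$ with $\gamma_i\in\overline{\FF}_p$. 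After reordering the $\gamma_i$ (as remarked preceding the theorem, mirroring \cref{lem: Y(sp2) reordering}), the preceding proposition identifies $V$ with the tensor product $L(\gamma_1,-\gamma_2,p)\otimes\cdots\otimes L(\gamma_{2k-1},-\gamma_{2k},p)\otimes V(-\gamma_{2k+1}-\tfrac12,p)$, and finite-dimensionality of each $GL_2$-evaluation factor forces $m_i := \gamma_{2i-1}+\gamma_{2i}\in\ZZ_{\geq 0}$.

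The next step is a direct algebraic identification. The factored form of $\mu'(u)$ gives $\mu'(-u)/\mu'(u)=\prod_{i=1}^{2k+1}(u+\gamma_i)/(u-\gamma_i)$. I would set $\gamma:=-\gamma_{2k+1}$, so the $i=2k+1$ factor is exactly $(u-\gamma)/(u+\gamma)$. For each remaining pair I would take
\[P_i(u) := \prod_{a=1}^{m_i}(u+\gamma_{2i-1}-a)(u+\gamma_{2i}-a),\]
and verify, using $\gamma_{2i-1}-m_i = -\gamma_{2i}$ and the reindexing $a\mapsto m_i+1-a$, both the telescoping identity $P_i(u+1)/P_i(u)=(u+\gamma_{2i-1})(u+\gamma_{2i})/((u-\gamma_{2i-1})(u-\gamma_{2i}))$ and the symmetry $P_i(-u+1)=P_i(u)$. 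Then $P(u):=\prod_{i=1}^k P_i(u)$ is a monic polynomial satisfying $P(u)=P(-u+1)$ together with the desired factorization \eqref{eqn: Y(o2) classification positive char}.

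The only real subtlety is the last clause, $P(\gamma)\neq 0$: nothing in the construction above enforces it. If $P(\gamma)=0$, the symmetry $P(u)=P(-u+1)$ forces both $(u-\gamma)$ and $(u+\gamma-1)$ to divide $P(u)$ (the coincident case $\gamma=\tfrac12$ is handled by a derivative computation showing $(u-\tfrac12)^2\mid P(u)$). Writing $P(u)=(u-\gamma)(u+\gamma-1)\tilde P(u)$, the factor $\tilde P$ inherits $\tilde P(u)=\tilde P(-u+1)$, and a brief rearrangement yields
\[\frac{P(u+1)}{P(u)}\cdot\frac{u-\gamma}{u+\gamma} \;=\; \frac{\tilde P(u+1)}{\tilde P(u)}\cdot\frac{u-(\gamma-1)}{u+(\gamma-1)}.\]
Replacing $(P,\gamma)$ by $(\tilde P,\gamma-1)$ strictly decreases $\deg P$, so iterating this descent terminates at a pair satisfying $P(\gamma)\neq 0$. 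This descent is the main obstacle and the only step not directly parallel to the $\xsp_2$ case, but once the symmetry of $P$ is exploited it is routine.
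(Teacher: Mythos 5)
Your proof is correct and follows the same overall route as the paper's, with two worthwhile differences. For the construction of $P(u)$: the paper forms $\lambda_1(u)=\prod_i(1+\gamma_{2i-1}u^{-1})$, $\lambda_2(u)=\prod_i(1-\gamma_{2i}u^{-1})$, extracts the $GL_2$-Drinfeld polynomial $Q(u)$ with $\lambda_1(u)/\lambda_2(u)=Q(u+1)/Q(u)$, and sets $P(u)=Q(u)Q(-u+1)(-1)^{\deg Q}$; you instead write $P(u)=\prod_i P_i(u)$ with the explicit factors $P_i(u)=\prod_{a=1}^{m_i}(u+\gamma_{2i-1}-a)(u+\gamma_{2i}-a)$ and verify the telescoping and symmetry by hand. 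These constructions are equivalent, and yours is a bit more transparent.

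For the clause $P(\gamma)\neq 0$, the paper simply observes that the reordering of the $\gamma_i$ (the $\o_2$-analogue of \cref{lem: Y(sp2) reordering}, which you already invoke to get the tensor-product form) guarantees the nonvanishing, exactly as in Molev's characteristic-zero proof (Theorem 4.4.3 of \cite{molev_yangians_book_07}). You seem not to notice this and instead run a degree-decreasing descent: if $P(\gamma)=0$ then the symmetry $P(u)=P(-u+1)$ forces $(u-\gamma)(u+\gamma-1)\mid P(u)$, and replacing $(P,\gamma)$ by $(\tilde P,\gamma-1)$ with $\tilde P=P/\bigl((u-\gamma)(u+\gamma-1)\bigr)$ preserves both the symmetry and the identity \eqref{eqn: Y(o2) classification positive char} while reducing $\deg P$ by $2$, so iteration terminates. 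The descent is sound (including your treatment of the degenerate $\gamma=\tfrac12$ case, where $(u-\tfrac12)^2\mid P$), and it buys you a self-contained argument that does not lean on the particular ordering chosen; it is simply extra work the paper sidesteps.

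One small imprecision worth fixing: in positive characteristic, finite-dimensionality of $L(\gamma_{2i-1},-\gamma_{2i},p)$ only forces $\gamma_{2i-1}+\gamma_{2i}\in\FF_p$, not $\gamma_{2i-1}+\gamma_{2i}\in\ZZ_{\geq 0}$. You should take $m_i=[\gamma_{2i-1}+\gamma_{2i}]$, the minimal non-negative representative in the paper's notation; the telescoping and symmetry identities you write go through unchanged with that reading.
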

\begin{proof}
    We know from above that $V = V(\mu(u),p)$, and it suffices to consider those $\mu(u)$ for which $\mu'(u)=(1+\gamma_1u^{-1})\cdots (1+\gamma_{2k+1}u^{-1})$ is a polynomial. As a $\o_2$-module, we know $V$ has the form
    \[L(\gamma_1,-\gamma_2)\otimes \cdots \otimes L(\gamma_{2k-1},-\gamma_{2k})\otimes V(\gamma_{2k+1}-1/2). \]
    Since this is finite-dimensional, we get that $\gamma_{2i-1}+\gamma_i\in \FF_p$ as in \cite{kalinov_yangians_20}. Then set
    \begin{align*}
        \lambda_1(u) &= (1+\gamma_1u^{-1})(1+\gamma_3u^{-1})\cdots (1+\gamma_{2k-1}u^{-1})\\
        \lambda_2(u) &= (1-\gamma_2u^{-1})(1-\gamma_4u^{-1})\cdots (1-\gamma_{2k}u^{-1}).
    \end{align*}
    As in the symplectic case, we get a monic polynomial $Q(u)$ in $u$ such that
    \begin{align*}
        \frac{\lambda_1(u)}{\lambda_2(u)}=\frac{Q(u+1)}{Q(u)}.
    \end{align*}
    Then set \[P(u)=Q(u)Q(-u+1)(-1)^{\deg Q},\] and $\gamma=-\gamma_{2k+1}$; it is easily checked this does the job. The $P(\gamma)\ne 0$ condition follows from the ordering of the $\gamma_i$ in the same way as in the characteristic zero case.
\end{proof}

Now we look at the general case of finite-dimensional irreducible representations of $Y(\o_{2n})$, where $n\ge 2$ is a positive integer. We know that the irreducible highest weight representations $V(\mu(u))$ of $Y(\o_{2n})$ are parametrized by $n$-tuples 
\begin{align}
    \mu(u) = (\mu_1(u),\ldots,\mu_n(u)).
\end{align}
We may regard $Y(\o_2)$ as a subalgebra of $Y(\o_{2n})$ generated by $\s_{ij}(u)$, $i,j\in \{n,n+1\}$. Let $\xi$ be the highest weight vector of the $Y(\o_{2n})$-module $V(\mu(u),p)$.
\begin{lem}\label{lem: restrict Y(o2n) module to Y(o2)}
    The $Y(\o_2)$-module $V=Y(\o_2)\xi$ is irreducible and isomorphic to the highest weight module $V(\mu_n(u),p)$.
\end{lem}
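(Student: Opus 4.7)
The plan is two-step. First, show that $\xi$ is a $Y(\o_2)$-highest weight vector with highest weight $\mu_n(u)$, so that the cyclic module $Y(\o_2)\xi$ is a quotient of the Verma module $M_{Y(\o_2)}(\mu_n(u))$. Second, prove that $Y(\o_2)\xi$ is irreducible, so that it coincides with the unique simple quotient $V(\mu_n(u),p)$.

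The first step is routine. Under the embedding $Y(\o_2) \hookrightarrow Y(\o_{2n})$, the raising generator of $Y(\o_2)$ is identified with a raising generator of $Y(\o_{2n})$ in the triangular ordering on $I_{2n}$, so it annihilates $\xi$ by the defining property of $\xi$. The Cartan of the embedded $Y(\o_2)$ acts on $\xi$ by the series $\mu_n(u)$, by construction of the embedding. The universal property of the Verma module then yields the surjection $M_{Y(\o_2)}(\mu_n(u)) \twoheadrightarrow Y(\o_2)\xi$.

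For irreducibility, let $N \subset Y(\o_2)\xi$ be a nonzero $Y(\o_2)$-submodule. Finite-dimensionality of $V(\mu(u),p)$ supplies a $Y(\o_2)$-highest weight vector $\eta \in N$, i.e., a nonzero vector killed by the raising generator of the embedded $Y(\o_2)$. The key claim is that $\eta$ is actually a $Y(\o_{2n})$-singular vector, meaning $\s_{ij}(u)\eta = 0$ for every $i < j$ in $I_{2n}$. Once this is established, \cref{cor: positive char highest weight vector} forces $\eta \in \KK\xi$, so $\xi \in N$, and by cyclicity $N = Y(\o_2)\xi$. Writing $\eta = X\xi$ with $X \in Y(\o_2)$ and using $\s_{ij}(u)\xi = 0$ for $i < j$, one has $\s_{ij}(u)\eta = [\s_{ij}(u), X]\xi$, so it suffices to show this commutator annihilates $\xi$.

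The main obstacle is this commutator analysis. The plan is to mimic the characteristic-zero argument of \cite{molev_yangians_book_07}: expanding $[\s_{ij}(u), X]$ via the quaternary relation \eqref{eqn: quaternary relation} produces a sum of terms each of which, when applied to $\xi$, either contains an outer raising generator $\s_{kl}(v)$ with $k<l$ (which vanishes on $\xi$) or reduces to the $Y(\o_2)$-highest weight condition already imposed on $\eta$. These formal manipulations are characteristic-insensitive; the role of the hypotheses in \cref{prop: positive char highest weight} is precisely to guarantee the uniqueness of the singular vector in $V(\mu(u),p)$ that closes the argument in positive characteristic.
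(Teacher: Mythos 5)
Your overall strategy matches what the paper intends (the paper itself defers to Molev's Lemma 4.4.11 without detail): show that a $Y(\o_2)$\nobreakdash-singular vector $\eta\in Y(\o_2)\xi$ must be a $Y(\o_{2n})$\nobreakdash-singular vector, then invoke \cref{cor: positive char highest weight vector} to force $\eta\in\KK\xi$. That reduction is the right architecture. The genuine gap is in the crucial step you flag yourself as ``the main obstacle'': you claim that expanding $[\s_{ij}(u),X]$ via the quaternary relation \eqref{eqn: quaternary relation} yields only terms that ``contain an outer raising generator'' or ``reduce to the $Y(\o_2)$-highest weight condition already imposed on $\eta$,'' but you never verify this, and the claim is not self-evidently true. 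For $X$ a monomial in the lowering generators $\s_{1,-1}^{(r)}$ and $\s_{ij}(u)$ with $i<j$, $(i,j)\neq(-1,1)$, the quaternary relation produces cross terms of the form $\s_{k,l'}(\cdot)\s_{i',l}(\cdot)$ in which the leftmost factor need not be a raising operator (for instance it may carry an index $-1$ as its second subscript), and the terms involve generators $\s_{ab}$ with $a,b\notin\{-1,1\}$ so they leave $Y(\o_2)\xi$ entirely; there is no obvious mechanism by which such terms ``reduce to'' the single condition $\s_{-1,1}(u)\eta=0$. Without carrying the computation through to show that all such terms annihilate $\xi$ (possibly after cancellation), the argument is incomplete. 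What is actually needed here is a careful tracking of the $\g_n$\nobreakdash-weight of the vectors produced at each stage, using the hypotheses on the decomposition of $V(\mu(u),p)$ as a $\g_n$\nobreakdash-module from \cref{prop: positive char highest weight}; the commutator manipulations alone, as sketched, do not close the argument.

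One smaller point, inherited from the paper: the statement says $V(\mu_n(u),p)$, but with the index set $I_N=\{-n,\ldots,-1,1,\ldots,n\}$ and the embedded $Y(\o_2)$ generated by $\s_{ij}(u)$ with $i,j\in\{-1,1\}$, the series $\s_{11}(u)$ acts on $\xi$ by $\mu_1(u)$, so the highest weight of $Y(\o_2)\xi$ is $\mu_1(u)$ (and indeed the paper later refers to ``the module $V(\mu_1(u),p)$ of'' this lemma). You reproduce $\mu_n(u)$ from the lemma as stated; be aware that this appears to be a leftover of Molev's $\{n,n+1\}$ indexing that was not fully translated.
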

\begin{proof}
    We can use the same proof as Lemma 4.11 of \cite{molev_yangians_book_07}.
\end{proof}
Consider the mapping $i\mapsto i'$ for $i\in I_N$ that interchanges indices $1$ and $-1$ while leaving all other indices unchanged. Using the defining relations, it is easily checked that the mapping
\begin{align}\label{eqn: sharp automorphism}
    \s_{ij}(u)\mapsto \s_{i'j'}(u)
\end{align}
is an automorphism of $Y(\o_{2n})$. 

We now briefly return to the $Y(\o_2)$ case. For the finite-dimensional representation $V(\mu(u))$ of $Y(\o_2)$, we know that $V(\mu(u))$ is associated with a unique pair $(P(u),\gamma)$. Let $V(\mu(u))^\sharp$ be the composition of the $Y(\o_2)$-action on $V(\mu(u))$ with the automorphism \eqref{eqn: sharp automorphism}.
\begin{lem}\label{lem: sharp highest weight positive char}
    The $Y(\o_2)$-module $V(\mu(u),p)^\sharp$ is isomorphic to $V(\mu^\sharp(u),p)$, where the series $\mu^\sharp(u)$ is given by
    \begin{align*}
        \mu^\sharp(u) = \mu(u) \cdot \frac{u-\gamma+1}{u+\gamma}.
    \end{align*}
    In particular, the $Y(\o_2)$-module $V(\mu(u),p)^\sharp$ is associated with the pair $(P(u),-\gamma+1)$. 
\end{lem}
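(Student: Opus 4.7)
The plan is to identify the highest weight vector of $V(\mu(u),p)^{\sharp}$ explicitly inside the tensor product realization of $V(\mu(u),p)$, and then to read off $\mu^{\sharp}(u)$ as the eigenvalue of $\s_{11}^{\sharp}(u) = \s_{-1,-1}(u)$ on it. Since $\sharp$ is an algebra automorphism of $Y(\o_2)$, the twisted module $V(\mu(u),p)^{\sharp}$ remains irreducible, hence is isomorphic to some $V(\mu^{\sharp}(u),p)$, and it suffices to compute this new highest weight.

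After possibly reordering the $\gamma_i$, the preceding proposition gives the tensor product description
\[ V(\mu(u),p)\cong L(\gamma_1,-\gamma_2,p)\otimes\cdots\otimes L(\gamma_{2k-1},-\gamma_{2k},p)\otimes V(-\gamma_{2k+1}-\tfrac{1}{2},p). \]
Let $\zeta_i^{\mathrm{lo}}$ denote the (unique up to scalar) vector of $L(\gamma_{2i-1},-\gamma_{2i},p)$ annihilated by $t_{1,-1}(u)$; on this vector, $t_{-1,-1}(u)$ and $t_{1,1}(u)$ act by $1-\gamma_{2i}u^{-1}$ and $1+\gamma_{2i-1}u^{-1}$ respectively. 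Let $\omega$ span the one-dimensional factor. Set $\eta \coloneqq \zeta_1^{\mathrm{lo}}\otimes\cdots\otimes\zeta_k^{\mathrm{lo}}\otimes\omega$. Iteratively expanding $\s_{1,-1}(u)$ via the comultiplication \eqref{eqn: comultiplication on sij(u)} and using $t_{1,-1}(\pm u)\zeta_i^{\mathrm{lo}}=0$ together with $\s_{1,-1}(u)\omega = 0$ (since $F_{1,-1}=0$ in $\o_2$), an induction on $k$ yields $\s_{1,-1}(u)\eta = 0$, i.e.\ $\s_{-1,1}^{\sharp}(u)\eta = 0$. By irreducibility of $V(\mu(u),p)^{\sharp}$, the nonzero singular vector $\eta$ is its highest weight vector.

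Computing $\s_{-1,-1}(u)\eta$ by the same mechanism: of the four summands of $\Delta(\s_{-1,-1}(u))$ applied to $\zeta_1^{\mathrm{lo}}\otimes\eta'$, two vanish by $t_{1,-1}(\pm u)\zeta_1^{\mathrm{lo}} = 0$, one vanishes by the inductive hypothesis $\s_{1,-1}(u)\eta'=0$, and the surviving one produces the scalar $(1-\gamma_1 u^{-1})(1-\gamma_2 u^{-1})\cdot \s_{-1,-1}(u)\eta'$. Iterating reduces to the base case $\s_{-1,-1}(u)\omega$, computed directly from the symmetry relation \eqref{eqn: symmetry relation} and the evaluation formula to equal $\frac{u+\gamma_{2k+1}+1}{u+\tfrac{1}{2}}\omega$. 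Assembling yields
\[ \mu^{\sharp}(u) = \prod_{i=1}^{k}(1-\gamma_{2i-1}u^{-1})(1-\gamma_{2i}u^{-1})\cdot\frac{u+\gamma_{2k+1}+1}{u+\tfrac{1}{2}}, \]
and rewriting via $\mu'(u) = (1-\gamma_1 u^{-1})\cdots(1-\gamma_{2k+1}u^{-1})$, $\mu(u)=\mu'(u)/(1+\tfrac{1}{2}u^{-1})$, and $\gamma=-\gamma_{2k+1}$ gives $\mu^{\sharp}(u) = \mu(u)\cdot\frac{u-\gamma+1}{u+\gamma}$ as claimed. The second assertion — that the pair associated to $V(\mu(u),p)^{\sharp}$ is $(P(u),-\gamma+1)$ — then follows by direct substitution into $(\mu^{\sharp})'(-u)/(\mu^{\sharp})'(u)$ and the symmetry $P(u) = P(-u+1)$, which also ensures $P(-\gamma+1) = P(\gamma)\ne 0$.

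The main obstacle is the inductive bookkeeping for the $\Delta$-expansion of $\s_{-1,-1}(u)$: at each step one must verify that three of the four summands genuinely drop out (two via $t_{1,-1}\zeta_i^{\mathrm{lo}}=0$ and one via the induction) while the fourth delivers exactly the scalar $(1-\gamma_{2i-1}u^{-1})(1-\gamma_{2i}u^{-1})$. Once the inductive step is established, both the base case and the final rational-function simplification are routine.
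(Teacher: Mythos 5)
Your proof is correct and follows essentially the same route as the paper, which simply cites Molev's Lemma 4.4.13 — that argument is precisely the one you reconstruct: realize $V(\mu(u),p)$ as a tensor product of evaluation modules (after reordering the $\gamma_i$), take the tensor product $\eta$ of lowest-weight vectors, verify via the coproduct formula \eqref{eqn: comultiplication on sij(u)} that $\s_{1,-1}(u)\eta=0$ so $\eta$ is the highest-weight vector of $V^\sharp$, and then read off $\mu^\sharp(u)$ from the surviving $(a,b)=(-1,-1)$ summand, with the base case $\s_{-1,-1}(u)\omega=\tfrac{u+\gamma_{2k+1}+1}{u+1/2}\omega$ coming from the evaluation homomorphism and $F_{-1,-1}=-F_{1,1}$. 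One tiny slip: the two summands $(a,b)=(-1,1)$ and $(1,1)$ both vanish because of $t_{1,-1}(-u)\zeta_1^{\mathrm{lo}}=0$ (not $t_{1,-1}(\pm u)$); this is cosmetic and does not affect the argument. Your final verification that the associated pair becomes $(P(u),-\gamma+1)$, using $P(u)=P(-u+1)$ to get $P(-\gamma+1)=P(\gamma)\ne 0$, is also correct.
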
    
\begin{proof}
    We use the same proof as Lemma 4.4.13 in \cite{molev_yangians_book_07}.
\end{proof}
Now, we can state the main classification theorems. Given an \textit{admissible} series $\mu(u)$, we define $\mu^\sharp(u)$ as above.
\begin{thm}
    If $V$ is a finite-dimensional irreducible $Y(\o_{2n})$-module, $n\ge 2$, and satisfies the conditions of \cref{prop: positive char highest weight}, then $V=V(\mu(u),p)$, the series $\mu_{1}(u)$ is admissible, and one of the following relations holds:
\begin{align}
&\mu_1(-u)\Longrightarrow\mu_{1}(u) \longrightarrow \mu_{2}(u) \longrightarrow \cdots \longrightarrow \mu_{n}(u)\\
&\mu_1^\sharp(-u)\Longrightarrow\mu_{1}^\sharp(u) \longrightarrow \mu_{2}(u) \longrightarrow \cdots \longrightarrow \mu_{n}(u).
\end{align}
Moreover, in positive characteristic, the first one will always hold.
\end{thm}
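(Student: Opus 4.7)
The plan is to deduce the $Y(\o_{2n})$ classification by restricting to a $Y(\o_2)$-subalgebra, invoking the $\o_2$ classification \cref{thm: Y(o2) classification positive char}, and then combining with the chain of arrows given by \cref{prop: gln condition holds for gn}.

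First, since $V$ satisfies the hypotheses of \cref{prop: positive char highest weight}, $V = V(\mu(u),p)$ is a highest weight module with tuple $\mu(u) = (\mu_1(u),\ldots,\mu_n(u))$ and highest weight vector $\xi$. Consider the embedding $Y(\o_2) \hookrightarrow Y(\o_{2n})$ defined by $\s_{ij}(u) \mapsto \s_{ij}(u)$ for $i,j \in \{-1,1\}$. By \cref{lem: restrict Y(o2n) module to Y(o2)}, the cyclic $Y(\o_2)$-submodule $Y(\o_2)\xi$ is irreducible and finite-dimensional with highest weight $\mu_1(u)$. The constraint $(\lambda_i)_1 = 0$ appearing in \cref{prop: positive char highest weight} ensures the restricted module also meets the hypotheses of \cref{prop: Y(o2) polynomial weight positive char}, yielding the admissibility of $\mu_1(u)$ as claimed.

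Next, apply \cref{thm: Y(o2) classification positive char} to the restriction, producing a pair $(P_1(u), \gamma)$ with $P_1$ monic, $P_1(u) = P_1(-u+1)$, $P_1(\gamma) \ne 0$, and
\[
\frac{\mu_1'(-u)}{\mu_1'(u)} = \frac{P_1(u+1)}{P_1(u)} \cdot \frac{u - \gamma}{u + \gamma}.
\]
Depending on whether this rational factor, combined with $(u+\tfrac{1}{2})/(u-\tfrac{1}{2})$ coming from $\mu_1' = (1 + \tfrac{1}{2}u^{-1})\mu_1$, can be absorbed into $R(u+1)/R(u)$ for a monic symmetric polynomial $R$, we obtain either $\mu_1(-u) \Longrightarrow \mu_1(u)$ directly, or $\mu_1^\sharp(-u) \Longrightarrow \mu_1^\sharp(u)$ via \cref{lem: sharp highest weight positive char} (which identifies $\mu_1^\sharp$ with pair $(P_1, -\gamma+1)$). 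In either case, the chain $\mu_1(u) \longrightarrow \mu_2(u) \longrightarrow \cdots \longrightarrow \mu_n(u)$ from \cref{prop: gln condition holds for gn} (respectively shifted by $\sharp$, using that the $\sharp$-twist preserves the arrow property) completes the proof of the dichotomy.

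For the final claim that only the first relation occurs in positive characteristic, the key observation is the non-uniqueness of the Drinfeld data: $P_1(u)$ is defined only up to multiplication by Frobenius-type factors $q_p(u+c) = (u+c)^p - (u+c)$, which satisfy $q_p(u+1+c) = q_p(u+c)$. Combined with the constraint $(\lambda_i)_1 = 0$, which forces the ``unpaired'' factor $(1 - \gamma_{2k+1}u^{-1})$ in the decomposition of $\mu_1'$ (constructed as in \cref{thm: Y(o2) classification positive char}) to be of the symmetric type, we can always normalize so that $\mu_1$ itself satisfies the first relation. The main obstacle is precisely this last step: rigorously tracking the ambiguity of the Drinfeld polynomials in characteristic $p$ and showing that it always permits rewriting the $\mu_1^\sharp$-case as the $\mu_1$-case, given the representation-theoretic constraints imposed by \cref{prop: positive char highest weight}. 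The other steps follow by direct application of the restriction machinery developed in the preceding propositions.
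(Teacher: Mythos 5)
Your overall strategy matches the paper's for the first part: restrict to the $Y(\o_2)$-subalgebra generated by $\s_{ij}(u)$ for $i,j\in\{-1,1\}$, invoke \cref{lem: restrict Y(o2n) module to Y(o2)} and \cref{thm: Y(o2) classification positive char} to produce the pair $(P_1(u),\gamma)$, and then splice in the chain $\mu_1(u)\longrightarrow\cdots\longrightarrow\mu_n(u)$ from \cref{prop: gln condition holds for gn}. Your reduction of the ``admissibility'' of $\mu_1(u)$ to \cref{prop: Y(o2) polynomial weight positive char} and your remark that the dichotomy is governed by whether the rational factor $\tfrac{u-\gamma}{u+\gamma}\cdot\tfrac{u+1/2}{u-1/2}$ can be absorbed into a symmetric $R(u+1)/R(u)$ are both on target and essentially what the paper does (in the paper one writes $\delta=-\gamma+\tfrac 12$, and the dichotomy in characteristic zero is $\delta\ge 0$ versus $\delta\le 0$, i.e.\ $\gamma\in\tfrac 12-\ZZ_+$ versus $\gamma\in\tfrac 12+\ZZ_+$).

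The genuine gap is in your treatment of the positive-characteristic claim, and you explicitly flag it yourself (``the main obstacle is precisely this last step''). The Frobenius factors $q_p(u+c)=(u+c)^p-(u+c)$ are the wrong tool here: they capture the \emph{non-uniqueness} of the Drinfeld polynomials (two polynomials $P,Q$ with $P(u+1)/P(u)=Q(u+1)/Q(u)$ differ by a ratio of such factors), but they do not create the symmetric polynomial $Q$ you need. Moreover, under the degree bound $\deg P_1+2\sum_{i\ge 2}\deg P_i+2n<p$ enforced by \cref{prop: positive char highest weight}, this ambiguity is actually killed, since $\deg q_p(u+c)=p$, so invoking it here cannot help. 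The argument that actually closes the gap is much simpler: the constraints of \cref{prop: positive char highest weight} (i.e.\ all $\o_2$-weights of the restricted module lying in $\FF_p$, which is what the finite-dimensionality of the factors $L(\gamma_{2i-1},-\gamma_{2i})$ forces) imply $\gamma\in\tfrac 12+\FF_p$, hence $\delta=-\gamma+\tfrac 12\in\FF_p$. But every element of $\FF_p$ is the image of a non-negative integer representative $[\delta]\in\{0,\ldots,p-1\}$, so $\delta$ is \emph{always} ``non-negative'' in positive characteristic. One can therefore always write
\[
Q(u)=P(u)\,(u-\delta+\tfrac 12)\cdots(u-\tfrac 12)\,(u-\tfrac 12)\cdots(u+(\delta-1)-\tfrac 12),
\]
which is monic with $Q(u)=Q(-u+1)$ (an even number of linear factors has been adjoined), and satisfies $\mu_1(-u)/\mu_1(u)=Q(u+1)/Q(u)$. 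Thus the first relation always holds in positive characteristic, without any appeal to $q_p$. In characteristic zero this absorption only succeeds when $\delta\ge 0$; when $\delta\le 0$ one passes to $V^\sharp$, where $\delta^\sharp=-\delta\ge 0$ by \cref{lem: sharp highest weight positive char}, and this is precisely what produces the second alternative. You should replace the Frobenius-factor discussion with this direct construction.
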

\begin{proof}
    Consider the module $V(\mu_1(u),p)$ of \cref{lem: restrict Y(o2n) module to Y(o2)}. Examining the $\o_2$-weights and using that $V(\mu(u),p)$ satisfies the conditions of \cref{prop: positive char highest weight}, we see that this module meets the conditions of \cref{prop: o2 highest weight representation positive char}. Suppose $V(\mu_n(u))$ is associated with the number $\gamma\in\overline{\FF}_p$. Since $V$ satisfies the conditions of \cref{prop: positive char highest weight}, it follows that the $\o_2$-weights of $V(\mu_n(u),p)$ must all be in $\FF_p$. Now $V(\mu_1(u))$ must be given by \eqref{eqn: Y(o2) irreducible representation tensor product}. Since the $L(\gamma_{2i-1},-\gamma_{2i})$ must be finite dimensional, $\gamma_{2i-1}+\gamma_{2i}\in \FF_p$. It follows that the $\o_{2}$-weights of $L(\gamma_{2i-1},-\gamma_{2i})$ are all in $\FF_p$. Thus, we must have that $-\gamma_{2k+1}-1/2$ is in $\FF_p$, which is equivalent to $\gamma$ being $1/2$ plus an integer.
    Proceeding as in the characteristic zero case, it follows that the module $V(\mu(u))^\sharp$ has highest weight
    \begin{align*}
        \mu^{\sharp}(u) = (\mu_1(u),\mu_2(u),\ldots,\mu_n^\sharp(u))
    \end{align*}
    cf.~the beginning of the proof of Theorem 4.4.14 in \cite{molev_yangians_book_07}. Moreover, composing the automorphism \eqref{eqn: sharp automorphism} (restricted to $Y(\o_{2n})$) with the action of $\o_{2n}$ on $V(\mu(u),p)$ yields another irreducible highest weight representation, so $V(\mu(u))^\sharp$ also satisfies the conditions of \cref{prop: positive char highest weight}.

    Using \eqref{eqn: Y(o2) classification positive char}, we obtain
    \begin{align*}
        \frac{\mu_1(-u)}{\mu_1(u)} = \frac{P(u+1)}{P(u)}\cdot \frac{u+\tfrac 12}{u+\gamma}\cdot \frac{u-\gamma}{u-\tfrac 12} = \frac{P(u+1)}{P(u)}\cdot \frac{u+\tfrac 12}{u-\delta+\tfrac{1}{2}}\cdot \frac{u+\delta - \tfrac{1}{2}}{u-\tfrac 12}.
    \end{align*}
    where $\delta = -\gamma+\tfrac 12$ and $P(u)=P(-u+1)$. Note that $\delta$ is an integer from our previous discussion. In positive characteristic, or in characteristic zero, and $\gamma\in 1/2-\ZZ_+$, $\delta$ is a non-negative integer, so we can set \[Q(u) = P(u)(u-\delta+\tfrac 12)\cdots (u-\tfrac 12) (u-\tfrac 12)\cdots (u+(\delta-1)-\tfrac 12).\]
    Since we have multiplied by an even number of linear factors, we have $Q(u) = Q(-u+1)$. We also have
    \[\frac{\mu_1(-u)}{\mu_1(u)} = \frac{Q(u+1)}{Q(u)},\]
    so the first condition holds. If $\gamma\in 1/2+\ZZ_+$ then $\delta$ is a non-positive integer. Consider the $\delta^\sharp,\gamma^\sharp$ associated with $V^\sharp$. By \cref{lem: sharp highest weight positive char}, $\gamma^\sharp=-\gamma+1$, so $\delta^\sharp  = -\gamma^\sharp +1/2 = -(-\gamma+1)+1/2 = \gamma-1/2 = -\delta$. Hence $V^\sharp$ satisfies the first condition, so $V$ satisfies the second.
\end{proof}

As before, we want to go the other way as well.
\begin{thm}\label{thm: Y(o2n) classification 2}
    Suppose $p > 2$. Given monic polynomials $P_1(u), \ldots, P_n(u)$ with $P_1(u) = P_1(-u+1)$, there exists a finite-dimensional representation $V(\mu(u),p)$ of $Y(\o_{2n})$ such that
    \begin{align*}
        \frac{\mu_{i}(u)}{\mu_{i+1}(u)} = \frac{P_{i+1}(u+1)}{P_{i+1}(u)}, \qquad i=1,\ldots, n-1
    \end{align*}
    and
    \begin{align*}
        \frac{\mu_1(-u)}{\mu_1(u)} = \frac{P_1(u+1)}{P_1(u)}.
    \end{align*}
\end{thm}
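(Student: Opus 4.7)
The plan is to construct $V(\mu(u),p)$ as an irreducible subquotient of a cyclic $Y(\o_{2n})$-span sitting inside a suitable $Y(\gl_{2n})$-module, restricted via the embedding $\mcS(u)=T(u)T'(-u)$. This is the orthogonal analog of the strategy used in \cref{thm: Y(sp2n) classification 2} and parallels the second half of the proof of Theorem 4.4.14 in \cite{molev_yangians_book_07}.

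First, I would use the symmetry $P_1(u)=P_1(-u+1)$ to factor $P_1(u)=(-1)^{\deg Q}Q(u)Q(-u+1)$ for some monic $Q(u)\in\KK[u]$; such $Q$ is built by pairing each root $\alpha$ of $P_1$ with its companion $-\alpha+1$ (self-paired roots $\alpha=1/2$ are distributed to make the factorization monic). Then I would invoke Kalinov's inverse construction (Theorem 3.2.6 and its discussion in \cite{kalinov_yangians_20}), valid in characteristic $p>2$ under our degree hypothesis, to produce a finite-dimensional $Y(\gl_{2n})$-module $L(\lambda(u),p)$ whose Drinfeld polynomials are $(1,\ldots,1,Q(u),P_2(u),\ldots,P_n(u))$, realized as a tensor product of evaluation modules of the form $L(\alpha,\beta,p)$.

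Next, restrict $L(\lambda(u),p)$ to $Y(\o_{2n})$ and take the cyclic span $W$ of its highest weight vector. By \cref{cor: highest weight of tensor product of eval modules}, $W$ is a highest weight $Y(\o_{2n})$-module with $i$-th highest weight component $\mu_i(u)=\lambda_i(u)\lambda_{-i}(-u)$. A direct calculation, using the relations $\lambda_i(u)/\lambda_{i+1}(u)$ determined by the chosen Drinfeld data on the $Y(\gl_{2n})$ side, yields $\mu_i(u)/\mu_{i+1}(u)=P_{i+1}(u+1)/P_{i+1}(u)$ for $i=1,\ldots,n-1$; the product $\lambda_1(u)\lambda_{-1}(-u)$ together with our factorization $P_1(u)=(-1)^{\deg Q}Q(u)Q(-u+1)$ then gives $\mu_1(-u)/\mu_1(u)=P_1(u+1)/P_1(u)$. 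The irreducible quotient of $W$ is the desired $V(\mu(u),p)$, automatically finite-dimensional since $W$ is.

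The main obstacle is ensuring this all goes through in positive characteristic. Finite-dimensionality of the irreducible subquotient relies on the existence of an external grading on $Y(\gl_{2n})$, which survives in characteristic $p>2$ (this is precisely the point invoked in the proof of \cref{thm: Y(sp2n) classification 2}). Additionally, the parameters $\gamma_i$ entering the tensor product realization of $L(\lambda(u),p)$ must be reordered (using the orthogonal analog of \cref{lem: Y(sp2) reordering}) so that the $Y(\o_{2n})$-cyclic span indeed has the claimed highest weight; under the degree constraint $\deg P_1 + 2\sum_{i=2}^n \deg P_i + 2n < p$, the Drinfeld polynomials on the $Y(\gl_{2n})$-side are unique (no $q_p(u+c)$ ambiguity), so this reordering is well-defined and the computation of $\mu(u)$ is unambiguous.
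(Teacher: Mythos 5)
Your proposal takes essentially the same route as the paper. The paper's proof is a one-line reference to \cref{thm: Y(sp2n) classification 2}, which in turn just invokes the second half of Molev's Theorem 4.3.8 (Theorem 4.4.14 is the orthogonal analog), adapted to positive characteristic via Kalinov's $Y(\gl_{2n})$ results and the external grading. You have correctly unpacked this skeleton: factor $P_1(u)=(-1)^{\deg Q}Q(u)Q(-u+1)$ (possible since $P_1(u)=P_1(-u+1)$ forces even degree, so the multiplicity of the self-paired root $1/2$ is even), build $L(\lambda(u),p)$ with Drinfeld data $(1,\ldots,1,Q(u),P_2(u),\ldots,P_n(u))$, restrict along $\mcS(u)=T(u)T'(-u)$, take the cyclic $Y(\o_{2n})$-span of the highest vector, compute its highest weight, and pass to the irreducible quotient, which inherits finite-dimensionality.

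One caution: your last paragraph invokes the degree bound $\deg P_1+2\sum_{i\ge 2}\deg P_i+2n<p$ and a reordering argument to make the computation of $\mu(u)$ "unambiguous." For this existence theorem neither is needed, and the theorem as stated assumes only $p>2$, not the degree bound. Once the cyclic span $W$ is a highest-weight module, \cref{prop: highest weight of tensor product module} (which does not require the fundamental-alcove hypothesis, unlike \cref{cor: highest weight of tensor product of eval modules}) pins down $\mu_i(u)=\lambda_i(u)\lambda_{-i}(-u)$ directly, and the irreducible quotient $V(\mu(u),p)$ of $W\subset L(\lambda(u),p)$ is automatically finite-dimensional. The reordering lemma and the degree bound belong to the forward classification direction and the uniqueness corollary, not to this constructive step; importing them here would silently restrict the conclusion.
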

\begin{proof}
    Completely analogous to the proof of \cref{thm: Y(sp2n) classification 2}.
\end{proof}
In characteristic zero, it follows that the irreducible representations of $Y(\o_{2n})$ are indexed by Drinfeld polynomials $P_1(u),\ldots, P_n(u)$ with $P_1(u)=P_1(-u+1)$ and a choice of the parameter $\gamma$ in one of the three subsets
\begin{align}\label{eqn: subsets}
    \{1/2\},-1/2-\ZZ_+,3/2+\ZZ_+,
\end{align}
where the first subset corresponds to both conditions (since $\delta=0=-\delta=\delta^\sharp$), the second corresponds the first condition, and the third corresponds to the second condition. 

In positive characteristic, similarly to the symplectic case, we can get that $V(\mu(u))$ satisifies the conditions of \cref{prop: positive char highest weight} if and only if $\deg P_1(u) + 2\sum_{i=2}^{n} \deg P_i(u) + 2n < p$.
\begin{cor}
    Finite-dimensional irreducible representations $V$ of $SY(\o_{2n})$, $n\ge 2$, satisfying the conditions of \cref{prop: positive char highest weight} are highest weight modules $V=V(\mu(u))$ and are parametrized by tuples $(P_1(u),\ldots, P_n(u))$ of monic polynomials with $P_1(u)=P_1(-u+1)$ and in positive characteristic, $\deg P_1(u) + 2\sum_{i=2}^{n} \deg P_i(u) + 2n < p$, and in characteristic zero, a choice of one of the subsets in \eqref{eqn: subsets}.
\end{cor}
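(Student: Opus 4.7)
The strategy is to deduce this from the classification results already obtained for $Y(\o_{2n})$ together with \cref{prop: representations of Y(gn) and SY(gn) positive char}. By that proposition, every finite-dimensional irreducible $SY(\o_{2n})$-module arises as the restriction of a finite-dimensional irreducible $Y(\o_{2n})$-module, and conversely every such restriction is irreducible. So the task is to describe, on the $Y(\o_{2n})$ side, the equivalence classes of irreducibles $V(\mu(u))$ modulo the action of the automorphism group \eqref{eqn: twisted power series automorphism} (i.e.\ modulo multiplying $\mu(u)$ coordinate-wise by the same even series $g(u)\in 1+u^{-2}\KK\dbrack{u^{-2}}$), since this is exactly how $V(\mu(u))$ fails to determine its restriction to $SY(\g_n)=Y(\xsl_N)\cap Y(\g_n)$ via the tensor product decomposition in \eqref{eqn: tensor product decomposition}.

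First I would observe that the ratios $\mu_i(u)/\mu_{i+1}(u)$ and $\mu_1(-u)/\mu_1(u)$, and hence all the Drinfeld polynomials $P_1(u),\ldots,P_n(u)$, and (in characteristic zero) the parameter $\gamma$ extracted via \cref{thm: Y(o2) classification positive char} from the restriction to $Y(\o_2)$, are all invariant under $\mu(u)\mapsto g(u)\mu(u)$ with $g(u)$ even. Thus these data descend to invariants of the $SY(\o_{2n})$-module. The preceding classification theorem for $Y(\o_{2n})$, combined with \cref{thm: Y(o2n) classification 2}, shows that any tuple $(P_1,\ldots,P_n)$ satisfying $P_1(u)=P_1(-u+1)$ (together with a choice of $\gamma$ in one of the three subsets of \eqref{eqn: subsets} in characteristic zero) does arise from some finite-dimensional irreducible highest weight representation, so the assignment is surjective onto the indexing set.

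For injectivity I would argue as follows. If two irreducible $Y(\o_{2n})$-modules $V(\mu(u))$ and $V(\widetilde\mu(u))$ yield the same Drinfeld polynomials (and the same $\gamma$ in characteristic zero), then, using \cref{prop: Y(o2) polynomial weight positive char} to normalize the first coordinate and then reading off $\mu_2,\ldots,\mu_n$ recursively from the arrows $\mu_1\longrightarrow\mu_2\longrightarrow\cdots\longrightarrow\mu_n$, the ratio $\widetilde\mu_i(u)/\mu_i(u)$ is independent of $i$ and, by the $\Longrightarrow$ relation applied to $\mu_1^\sharp$ or $\mu_1$, is even. This means $\widetilde\mu(u)=g(u)\mu(u)$ for some even $g(u)$, so the two highest weights differ precisely by the twist \eqref{eqn: twisted power series automorphism} and hence restrict to isomorphic $SY(\o_{2n})$-modules. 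In positive characteristic, the same redundancy $P_i(u)\sim P_i(u)\cdot q_p(u+c)$ noted in the $Y(\xsp_{2n})$-discussion after \cref{thm: Y(sp2n) classification 2} occurs, but the degree bound $\deg P_1(u)+2\sum_{i\ge 2}\deg P_i(u)+2n<p$ forces $\deg q_p(u+c)=p$ to be too large, giving uniqueness; combining this with \cref{lem: irreducible g_n-modules in positive char} ensures the resulting $V(\mu(u))$ lies in the class satisfying \cref{prop: positive char highest weight}.

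The main obstacle I anticipate is the careful bookkeeping around the $\sharp$-automorphism \eqref{eqn: sharp automorphism} and the three subsets in \eqref{eqn: subsets}: one must check that twisting by an even series $g(u)$ commutes with (or at worst permutes within each subset) the operation $\mu\mapsto\mu^\sharp$, so that the invariant $\gamma$ is indeed a well-defined function of the $SY(\o_{2n})$-module, and that the three subsets $\{1/2\}$, $-1/2-\ZZ_+$, $3/2+\ZZ_+$ correspond respectively to the degenerate case (where both conditions of the $Y(\o_{2n})$-classification hold), the first condition, and the second condition, with no overcounting when one passes to $SY(\o_{2n})$. This can be checked directly from \cref{lem: sharp highest weight positive char}, which gives $\gamma^\sharp=-\gamma+1$ and hence pairs up elements of $-1/2-\ZZ_+$ with those of $3/2+\ZZ_+$ in a manner compatible with the relabeling of Drinfeld polynomials, while fixing $\gamma=1/2$.
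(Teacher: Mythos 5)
Your proposal is correct and follows essentially the same route as the paper, which leaves the proof of this corollary implicit (no proof block is given) but, by the parallel with the symplectic case, clearly intends ``combine the preceding two $Y(\o_{2n})$ classification theorems with \cref{prop: representations of Y(gn) and SY(gn) positive char}.'' You spell out the details the paper leaves unstated: that the Drinfeld polynomials, the degree bound, and (in characteristic zero) the $\gamma$-parameter are all invariant under the even-series twist \eqref{eqn: twisted power series automorphism}, hence descend to invariants of the $SY(\o_{2n})$-module; that surjectivity comes from \cref{thm: Y(o2n) classification 2}; and that injectivity reduces to showing $\widetilde\mu_i(u)/\mu_i(u)$ is an $i$-independent even series, which you correctly extract from the arrow relations.

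One small remark worth flagging: the corollary as stated parametrizes modules by ``a choice of one of the subsets'' in \eqref{eqn: subsets}, whereas your injectivity argument matches the (more refined) invariant of a specific $\gamma$ rather than merely its containing subset. Since $\gamma$ is itself invariant under even twists (as $\mu'(-u)/\mu'(u)$ is unchanged when $\mu$ is multiplied by an even $g$), the data a $SY(\o_{2n})$-module carries is really $(P_1,\ldots,P_n)$ together with a specific $\gamma$, and the subset is determined by $\gamma$. Your argument is thus actually addressing the sharper form of the statement (which agrees with Molev's Corollary 4.4.15 in characteristic zero); the paper's phrasing is somewhat looser on this point. This does not introduce a gap in your proof, but you may wish to note the distinction explicitly so the reader sees that the specific $\gamma$ is being tracked, not only its type.

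Beyond this, no gaps: the reduction to degree bounds via \cref{lem: irreducible g_n-modules in positive char}, the $q_p(u+c)$ redundancy analysis, and the handling of $\sharp$ via \cref{lem: sharp highest weight positive char} all match what the paper's argument requires.
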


\section{Deligne's categories for classical groups}\label{sec: deligne categories}
We will assume in this section that the reader has at least a basic understanding of symmetric tensor categories (STCs) and in particular has some familiarity with the Deligne category $\Rep GL_t$. A comprehensive reference is \cite{etingof2016tensor}, and a quick expository reference is \cite{etingof2021lectures}, whose notations and conventions we will mainly follow. Our primary reference for this section will be \cite{etingof_complex_rank_2_16}. We will work over a fixed, algebraically closed field $\mathbb{K}$ of characteristic zero, and we will use the bold-face notation $\bfRep G$ to denote the ordinary representation category of finite-dimensional representations for a subgroup $G$ of the general linear group $GL_N$ for a positive integer $N$. 

\subsection{The Deligne categories \texorpdfstring{$\Rep(O_t)$}{RepOt} and \texorpdfstring{$\Rep(Sp_t)$}{RepSpt}}
Let $V = \mathbb{K}^n$ denote the natural representation of $GL_N$. As an $O_N$-module, $V$ is faithful and self-dual and therefore generates the category $\bfRep O_n$, so every simple $O_N$-module appears in some $V^{\otimes r}$ for a sufficiently large $r$. By tensor-hom adjunction and self-duality, we see that
\[\Hom_{O_N}(V^{\otimes r}, V^{\otimes s}) = \Hom_{O_N}(V^{ \otimes r+s}, \un),\]
where $\un$ is the trivial representation. By the First Fundamental Theorem of invariants for $O_N$, multilinear $O_N$-invariant forms on $V^{\otimes r+s}$ are given by the algebra generated by contractions (see \cite{DELIGNE20184} for more details), which means that $r+s$ must be even. By semisimplicity of $\bfRep O_N$, one can conclude that $\bfRep O_N$ is the Karoubian closure of the subcategory given by objects of the form $[r] \coloneqq V^{\otimes r}$ and hom spaces given as above.
\par
It can be show via Brauer duality that a spanning set for $\Hom_{O_N}([r], [s])$ are the perfect matchings on a set of $(r+s)$ dots, which can be visualized as diagrams with a row of $r$ dots below and a row of $s$ dots above, with an arc connecting the two dots which are perfectly matched. Composition is given by first stacking diagrams vertically, then concatenating arcs in the natural way, and finally erasing loops; for each loop we multiply the diagram by $n$.
\par
As an example, here is the composition of a diagram $A$ in $\Hom_{O_N}([3], [5])$ and a diagram $B$ in $\Hom_{O_N}([5],[3])$ to give a diagram $A \circ B \in \Hom_{O_N}([5], [5])$:
\[\begin{tikzpicture}[line width=1pt, scale=.75]
	\foreach\x in {1,...,5}{
		\node[bV] (t\x) at (\x,1){};}
	\foreach\x in {1,...,3}{
		\node[bV] (b\x) at (\x+1,0){};}
        \node (a) at (-0.5,0.5) {$A=$};
	\draw [bend left] (t5) to (t4) (b2) to (b3) (t3) to (t1);
        \draw (t2)--(b1);
\end{tikzpicture}\hspace{16mm}\begin{tikzpicture}[line width=1pt, scale=.75]
	\foreach\x in {1,...,3}{
		\node[bV] (t\x) at (\x+1,1){};}
	\foreach\x in {1,...,5}{
		\node[bV] (b\x) at (\x,0){};}
        \node (b) at (-0.5,0.5) {$B=$};
	\draw [bend left] (t3) to (t2)  (b3) to (b5) (b1) to (b2);
        \draw (t1)--(b4);
	
\end{tikzpicture}\]
with composition given by
\[
\begin{tikzpicture}[line width=1pt, scale=.75]
	\foreach\x in {1,...,5}{
		\node[bV] (t\x) at (\x-2,1){};}
	\foreach\x in {1,...,3}{
		\node[bV] (b\x) at (\x-1,0){};}
        \foreach\x in {1,...,5}{
		\node[bV] (c\x) at (\x-2,-1){};}
	\draw [bend left] (t5) to (t4) (b2) to (b3) (t3) to (t1);
        \draw (t2)--(b1);
        \draw [bend left] (b3) to (b2)  (c3) to (c5) (c1) to (c2);
        \draw (b1)--(c4);
        \node (=) at (4.6,0) {$=$};
        \node (ab) at (8.76,0) {$N\cdot \left(\phantom{a...................\displaystyle\sum_{i=1}}\right)$};
        \foreach\x in {1,...,5}{
		\node[bV] (t1\x) at (\x+6.2,0.5){};}
        \foreach\x in {1,...,5}{
		\node[bV] (c1\x) at (\x+6.2,-0.5){};}
	\draw [bend left] (t15) to (t14)  (t13) to (t11);
        \draw (t12)--(c14);
        \draw [bend left]  (c13) to (c15) (c11) to (c12);
	
\end{tikzpicture}
\]

For arbitrary $t\in \CC$, we can define the category $\widetilde{\Rep}(O_t)$ with objects $[r]$, with $r\in \ZZ_+$. The space of morphisms $\Hom([r_1],[r_2])$ is spanned by the Brauer diagrams as above, with the same composition law, where if we remove a loop, we multiply by $t$. The category $\Rep(O_t)$ is defined to be the Karoubian closure of $\widetilde{\Rep}(O_t)$. We define $\widetilde{\Rep}(Sp_t)$ similarly. In both cases, we let $V$ be the fundamental object $[1]$. For $t\not\in \ZZ$, the categories $\Rep(O_t)$ and $\Rep(Sp_t)$ are semisimple. We will not consider the case of $t\in \ZZ$ in this paper.\footnote{In the case $t \not\in \Z$, semisimplicity implies that these categories are abelian and therefore are symmetric tensor categories. On the other hand, when $t \in \Z$, they are not abelian, and it is much more difficult to show the abelian envelope exists (see \cite{ehs_envelope, Coulembier_2021}).}

Then $V$ is equipped with a symmetric or alternating isomorphism $\psi: V\to V^*$. If $\psi$ is symmetric, the group $O(V)$ is cut out inside $V\otimes V^*$ by the equations $AA' = A'A = \Id$, where $A'$ is the adjoint of $A$ with respect to $\psi$. The map $\theta\in \End(V\otimes V^*)$ that sends $A$ to $A'$ is given by
\begin{align}\label{eqn: adjoint wrt form}
    V\otimes V^* \xrightarrow{1\otimes \psi^{-1}} V\otimes V \xrightarrow{\ \sigma\ } V\otimes V \xrightarrow{1\otimes \psi} V\otimes V^*,
\end{align}
where $\sigma$ exchanges factors in the tensor product. This can be thought of as taking the adjoint with respect to the symmetric bilinear form on $V$ that is induced by $\psi$, analogously to \eqref{eqn: transpose wrt form}. In the integral rank case, if $G$ is the matrix representation of $\psi$, then $\theta$ sends $A$ to $G^{-1}A^tG$. Our choice of $\theta$ is the negative of that in \cite{etingof_complex_rank_2_16}. If $\psi$ is alternating, the group $Sp(V)$ is defined in the same way. 

This definition works if $V$ is replaced with any object that has a symmetric/alternating form. So when $V$ is the fundamental object of $\Rep(G)$, we write $O_t = O(V)$ or $Sp_{t}=Sp(V)$ corresponding to the orthogonal and symplectic cases, respectively.

We next define the Lie algebras $\o_t$ and $\xsp_t$. First, $\gl(V) = V\otimes V^*$ is naturally an associative algebra, and thus also a Lie algebra, using the commutator. In the orthogonal (resp. symplectic) case, $\o(V)$ (resp. $\xsp(V)$) is defined as $\ker(-\theta-\Id)$. When $V$ is the fundamental object, we get \begin{align*}
    \o_t=\o(V)=\mathrm{Lie}O(V)=\mathrm{Lie}O_t & \hspace{10mm} \text{in the orthogonal case} \\
    \xsp_{t}=\xsp(V)=\mathrm{Lie}Sp(V)=\mathrm{Lie}Sp_{t} & \hspace{10mm} \text{in the symplectic case}.
\end{align*}

Set $\g=\o_t$ or $\xsp_{t}$, corresponding to $G=O_t$ or $Sp_{t}$. We define the universal enveloping algebra to be a quotient of the tensor algebra of $\g$:
\[U(\g)\coloneqq T(\g)/(f(\g\otimes \g)),\]
where $f=i_2-i_2\sigma-i_1c$, and $i_k$ is the inclusion of the $k$-th graded component, and $c$ is the commutator. Note that $\theta\circ\theta=\Id$ implies $(-\theta-\Id)\circ(1-\theta) = 0$, so the image of $V\otimes V^*$ under $1-\theta$ lies in $\g$. Consequently, we have the map \[V\otimes V^*\xrightarrow{1-\theta}\g \hookrightarrow U(\g).\] 

Thus, for an algebra, $A$, with generators given by a set of maps $A_i:V\otimes V^*\to A$, an algebra homomorphism $A\to U(\g)$ is specified by giving a function sending each $A_i$ to an element of $\Hom(V\otimes V^*,U(\g))$.

\subsection{Ultraproduct construction}
We will employ an alternative presentation of $\Rep(G)$ using ultraproducts. Throughout, let $\mcF$ be a fixed non-principal ultrafilter. For background on ultrafilters and ultraproducts, see \cite[Section 1.2]{kalinov_yangians_20}.

Consider the ultraproduct of countably infinite copies of $\ov{\QQ}$: $\prod_{\mcF} \ov{\QQ}$. By \L o\'s's theorem, $\prod_{\mcF} \ov{\QQ}$ is a field. Moreover, it has characteristic $0$ since for any $k\in \ZZ_{\ne 0}$, $k = \prod_{\mcF}k$. Additionally, it has cardinality of the continuum. By Steinitz's Theorem, it follows that 
\begin{align}\label{eqn: ultraproduct-isomorphism}
    \prod_{\mcF}\ov{\QQ}\cong \CC.
\end{align}
We start with the case of $t$ being a transcendental number. By composing the (non-canonical) isomorphism \eqref{eqn: ultraproduct-isomorphism} with a suitable automorphism of $\CC$ (considered as a vector space over $\overline{\QQ}$), we may assume that the isomorphism \eqref{eqn: ultraproduct-isomorphism} maps $\prod_{\mcF} 2n$ to $t$. See \cite{kalinov_yangians_20} for details.

Set $t_n=n$, so that $\prod_{\mcF}2t_n=t$ and set $G_n = O_{t_n}$ or $Sp_{t_n}$ correspondingly.
Next, let $\bfRep(G_n) = \bfRep(G_n,\ov{\QQ})$, and set $\widehat{\mcC} =\prod_{\mcF} \bfRep(G_n)$. Let $V_{t_n}$ denote the fundamental representation of $G_n$, and let $V=\prod_{\mcF}V_{t_n}$. We have the following theorem (cf. \cite[Theorem 2.11 (ii)]{etingof_complex_rank_2_16}).
\begin{thm}\label{thm: universal propety of Rep(O_t)}
    If $\mcC$ is a rigid tensor category, then isomorphism classes of (possibly non-faithful) symmetric tensor functors $\Rep(G)\to\mcC$ are in bijection with isomorphism classes of objects $X\in\CC$ of dimension $t$ with a symmetric/alternating (in the orthogonal/symplectic cases) isomorphism $X\to X^*$. The bijection is given by $F = F(V_t)$.
\end{thm}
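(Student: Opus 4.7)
The plan is to exploit the explicit diagrammatic presentation of $\Rep(G)$ given earlier in \cref{sec: deligne categories}: $\Rep(G)$ is the Karoubian envelope of $\widetilde{\Rep}(G)$, which is the strict symmetric monoidal $\mathbb{K}$-linear category generated by a single self-dual object $V_t = [1]$ equipped with a symmetric (respectively alternating) isomorphism $V_t \to V_t^*$, with all morphisms spanned by Brauer diagrams and a single numerical relation — a closed loop evaluates to the scalar $t$. Theorem \ref{thm: universal propety of Rep(O_t)} is essentially the universal property reflected by this presentation.

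The forward direction is immediate: given a symmetric tensor functor $F\colon \Rep(G) \to \mcC$, set $X = F(V_t)$. Symmetric tensor functors preserve categorical dimension and duals, so $\dim X = \dim V_t = t$ and the image under $F$ of the (skew-)symmetric isomorphism $V_t \to V_t^*$ provides the required isomorphism $X \to X^*$. Isomorphisms of functors yield isomorphisms of the resulting data.

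For the backward direction, let $(X,\psi)$ be an object of $\mcC$ of dimension $t$ equipped with a (skew-)symmetric isomorphism $\psi\colon X \to X^*$. I first define a $\mathbb{K}$-linear symmetric monoidal functor $\widetilde{F}\colon \widetilde{\Rep}(G) \to \mcC$ by $\widetilde{F}([r]) = X^{\otimes r}$, and send a Brauer diagram to the morphism obtained by reading the diagram as a composition in $\mcC$: each arc connecting two lower (respectively upper) endpoints is interpreted using $\psi$ together with the coevaluation (respectively using $\psi^{-1}$ together with the evaluation) of $X$ in $\mcC$, and crossings are sent to the symmetry braiding. The key point to verify — and the main obstacle — is that this assignment respects the Brauer diagram relations, in particular that every closed loop contributes exactly the scalar $t$; this reduces precisely to the identity $\mathrm{ev}_X \circ (\psi \otimes \mathrm{id}) \circ \mathrm{coev}_X = \dim X = t$, which holds by hypothesis. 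The snake identities and the (skew-)symmetry of $\psi$ take care of the remaining relations (isotopy of arcs, straightening of ``S''-shapes, and the consistency of horizontal flips of a cap or cup).

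Finally, I extend $\widetilde{F}$ uniquely to a symmetric tensor functor $F\colon \Rep(G) \to \mcC$ via the universal property of the Karoubian envelope. The two assignments $F \mapsto (F(V_t), F(\psi_{V_t}))$ and $(X,\psi) \mapsto F_{X,\psi}$ are then mutually inverse up to natural isomorphism by construction: applying the forward map to $F_{X,\psi}$ returns $(X,\psi)$ on the nose, while applying the backward map to $F$ produces a functor naturally isomorphic to $F$ because both agree on the generator $V_t$ and on all generating morphisms (cups, caps, and braidings) by the defining diagrammatic construction. The substantive content of the theorem is therefore entirely in the loop-relation check; everything else is a formal consequence of the Brauer presentation of $\Rep(G)$.
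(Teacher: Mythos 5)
The paper does not prove this theorem; it cites it directly from Etingof's \cite[Theorem 2.11(ii)]{etingof_complex_rank_2_16}, so there is no in-paper argument to compare against. Your strategy of deriving the universal property from the Brauer-diagram presentation of $\Rep(G)$ is nevertheless the standard route (it is essentially what underlies Deligne's and Etingof's constructions), so the overall plan is sound. However, the one computation you single out as ``the substantive content'' is not written correctly, and the signs it glosses over are exactly where the orthogonal/symplectic dichotomy lives.

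Concretely, the composite $\mathrm{ev}_X \circ (\psi \otimes \mathrm{id}) \circ \mathrm{coev}_X$ does not type-check: $\mathrm{coev}_X\colon \un \to X \otimes X^*$, so $\psi \otimes \mathrm{id}$ lands in $X^* \otimes X^*$, which is not the source of $\mathrm{ev}_X\colon X^*\otimes X \to \un$. If one takes the naive cap $e_X = \mathrm{ev}_X\circ(\psi\otimes \mathrm{id}_X)$ and cup $c_X = (\mathrm{id}_X\otimes\psi^{-1})\circ\mathrm{coev}_X$, the loop is $e_X\circ c_X = \mathrm{ev}_X\circ(\psi\otimes\psi^{-1})\circ\mathrm{coev}_X$, which evaluates to $\dim X$ when $\psi$ is symmetric but to $-\dim X$ when $\psi$ is alternating (one sees this already for $\mcC=\mathrm{Vect}$: the value is $\mathrm{tr}(G^{\mathrm T}G^{-1})$ for the Gram matrix $G$ of $\psi$). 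To get $\dim X = t$ uniformly one must route the symmetry $\sigma$ through the cup, e.g.\ set $c_X = (\psi^{-1}\otimes\mathrm{id}_X)\circ\sigma_{X,X^*}\circ\mathrm{coev}_X$, so that $e_X\circ c_X = \mathrm{ev}_X\circ\sigma_{X,X^*}\circ\mathrm{coev}_X = \dim X$ independently of the symmetry type of $\psi$. The (skew-)symmetry of $\psi$ is then not used for the loop relation at all; it is used to verify $\sigma_{X,X}\circ c_X = \pm c_X$, which is the relation that distinguishes $\widetilde{\Rep}(O_t)$ from $\widetilde{\Rep}(Sp_t)$ (equivalently, the braiding twist implementing $\Rep(Sp_t)\simeq \Rep(O_{-t})$). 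Your proposal folds this into ``the consistency of horizontal flips of a cap or cup'' without separating it from the loop check, but in a complete proof this needs to be enumerated among the generating relations of $\widetilde{\Rep}(G)$ (loop value, zig-zag/snake, cup/cap symmetry under $\sigma$, naturality of the crossing) and verified one at a time; the verification is routine but it is where the hypothesis on $\psi$ actually does its work.
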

Here $V_t$ is the fundamental object of $\Rep(G)$.
We now give the ultraproduct construction of $\Rep(G)$. 
\begin{thm}\label{thm: ultraproduct construction}
    The full subcategory of the $\prod_{\mcF}\ov{\QQ}$-linear category $\widehat{\mcC}$ generated by $V$ under tensor products, direct sums, and direct summands is equivalent to the $\CC$-linear category $\Rep(G)$ under the isomorphism $\prod_{\mcF}\ov{\QQ}\cong \CC$ with $\prod_{\mcF}2t_n=t$.
\end{thm}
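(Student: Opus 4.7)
The plan is to apply the universal property of $\Rep(G)$ given in \cref{thm: universal propety of Rep(O_t)}. To invoke it, I must produce inside the ultraproduct $\widehat{\mcC}$ an object $X$ of dimension $t$ equipped with a symmetric (orthogonal case) or alternating (symplectic case) isomorphism $X \to X^*$. The natural candidate is $X = V = \prod_{\mcF} V_{t_n}$: each $V_{t_n}$ has categorical dimension $2n$ in $\bfRep(G_n)$, so by \L o\'s's theorem $\dim V = \prod_{\mcF} 2n = t$ under the chosen isomorphism $\prod_{\mcF}\ov{\QQ}\cong \CC$, and the $G_n$-invariant pairings on each $V_{t_n}$ induced by the matrices in \eqref{choice_of_matrix} ultraproduct to an isomorphism $V \to V^*$ of the required parity. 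After a routine \L o\'s-type check that $\widehat{\mcC}$ is a rigid symmetric tensor category over $\prod_{\mcF}\ov{\QQ}$, the universal property produces a symmetric tensor functor $F\colon \Rep(G)\to \widehat{\mcC}$ with $F(V_t)=V$.

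Next I would show that $F$ is an equivalence onto the full subcategory $\widehat{\mcC}_V \subseteq \widehat{\mcC}$ generated by $V$ under tensor products, direct sums, and direct summands. Essential surjectivity is immediate from the construction. Since $\Rep(G)$ is the Karoubian envelope of $\widetilde{\Rep}(G)$, full faithfulness reduces to checking, for all $r,s \geq 0$, that the map
\[ F_{r,s}\colon \Hom_{\widetilde{\Rep}(G)}([r],[s]) \;\longrightarrow\; \Hom_{\widehat{\mcC}}(V^{\otimes r},V^{\otimes s}) \;=\; \prod_{\mcF}\Hom_{G_n}(V_{t_n}^{\otimes r},V_{t_n}^{\otimes s}) \]
is a bijection. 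The left-hand side is, by definition, the free $\prod_{\mcF}\ov{\QQ}$-module on Brauer diagrams on $r+s$ dots with loop parameter $t$.

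The classical ingredient is the second fundamental theorem of invariant theory for $O_N$ and $Sp_N$: for each fixed $r,s$ there exists $N_0 = N_0(r,s)$ such that whenever $2n \geq N_0$ the Brauer-algebra map $B_{r+s}(2n) \twoheadrightarrow \Hom_{G_n}(V_{t_n}^{\otimes r},V_{t_n}^{\otimes s})$ is a bijection (for small $n$ extra relations arise from Pfaffian/determinantal identities). Since the condition $2n \geq N_0$ holds on a cofinite set and $\mcF$ is non-principal, \L o\'s's theorem identifies the right-hand ultraproduct with the free module on Brauer diagrams with loop parameter $\prod_{\mcF}2n = t$, and a diagrammatic check (concatenation of diagrams, with every closed loop contributing a factor of $t$) shows that $F_{r,s}$ acts as the identity on this basis. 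Karoubian closure then extends the equivalence from $\widetilde{\Rep}(G)$ to $\Rep(G)$.

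The main obstacle I anticipate is the arithmetic bookkeeping around the non-canonical isomorphism $\prod_{\mcF}\ov{\QQ}\cong\CC$: ensuring that a given complex $t$ (especially an irrational algebraic one) corresponds to $\prod_{\mcF}2n$ requires the automorphism argument from \cite[Section 1.2]{kalinov_yangians_20}. A secondary subtlety is the sign convention $\theta \mapsto -\theta$ flagged after \eqref{eqn: adjoint wrt form}, which only affects how the pairing $V \to V^*$ is identified and not the diagrammatic comparison. Once those are in place, everything else is transfer via \L o\'s combined with stabilization of the Brauer algebra.
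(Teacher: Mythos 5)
Your proof is correct and follows essentially the same route as the paper's: invoke \cref{thm: universal propety of Rep(O_t)} with the object $V = \prod_{\mcF}V_{t_n}$ to get a tensor functor $F$, note essential surjectivity by construction, and verify full faithfulness by comparing hom spaces via \L o\'s's theorem, using that for $n$ large the Brauer-diagram map $B_{r+s}(2t_n) \to \Hom_{G_n}(V_{t_n}^{\otimes r}, V_{t_n}^{\otimes s})$ is a bijection (your appeal to the second fundamental theorem is exactly the stabilization the paper relies on implicitly when it says the diagrams form a basis for sufficiently large $n$). The only thing worth being slightly more explicit about, as the paper is, is that matching the \emph{composition law} on both sides reduces to the loop-removal rule contributing $2t_n$ in $\bfRep(G_n)$ and hence $\prod_\mcF 2t_n = t$ in the ultraproduct; your parenthetical ``every closed loop contributing a factor of $t$'' covers this but it deserves a sentence of its own since it is the one place where the scalar $t$ actually enters the categorical structure rather than just the dimension of an object.
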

\begin{proof}
    We will use $V_t$ to refer to the tautological object of $\Rep(G)$, and $V=\prod_{\mcF}V_{t_n}$ to denote the generator of the full subcategory of $\widehat{\mcC}$ in the statement of the theorem.

    Let $\mcC$ be the full subcategory of $\widehat{\mcC}$ generated by $V$ under tensor products, direct sums, and direct summands. The object $V=\prod_\mcF V_{t_n}$ has dimension $t$ as the dimension of $V_{t_n}$ is $2t_n$ and $\prod_\mcF 2t_n = t$. Additionally, since $V_{t_n}^* = V_{t_n}$, by \L o\'s's thoerem we have  $V = V^*$. 
    Using \cref{thm: universal propety of Rep(O_t)}, we obtain a tensor functor $F: \Rep(G)\to \widehat{\mcC}$ with $F(V_t) =V$. Since $\Rep(G)$ is generated by $V_t$ under tensor products, direct sums, and direct summands, it follows that the image of $\Rep(G)$ is the full subcategory $\mcC$ of $\widehat{\mcC}$. So we know that $F: \Rep(G)\to \widehat{\mcC}$ is essentially surjective, and we need to show that $F$ is fully faithful. To do this, we show
    \begin{align}\label{homsets of Rep(O_t) and ultraproduct}
        \Hom_{\Rep(G)}(V_t^{\otimes r_1},V_t^{\otimes r_2}) = \Hom_{\mcC}(V^{\otimes r_1}, V^{\otimes r_2}) 
    \end{align}
    and that the composition maps are the same.
    This implies the result because $\Rep(G)$ and $\mcC$ can be obtained as the Karoubian envelope of the additive envelope of $V_t^{\otimes r}$ and $V^{\otimes r}$, respectively. 
    Note that by \L o\'s's theorem we have
    \begin{align*}
        \Hom_{\mcC}(V^{\otimes r_1}, V^{\otimes r_2}) = \prod_{\mcF}\Hom_{G_n}(V_{t_n}^{\otimes r_1},V_{t_n}^{\otimes r_2})
    \end{align*}
    Now if $r_1+r_2$ is odd, then both sides of \cref{homsets of Rep(O_t) and ultraproduct} are empty, as $\Hom_{G_{t_n}}(V_{t_n}^{\otimes r_1},V_{t_n}^{\otimes r_2})$ is empty. Otherwise, $r_1+r_2=2m$. For sufficiently large $n$, $\Hom_{G_n}(V_{t_n}^{\otimes r_1},V_{t_n}^{\otimes r_2})$ has a basis which can be represented by the Brauer diagrams matching $r_1$ dots in the top row to $r_2$ dots in the bottom row. In particular, there are $\frac{(2m)!}{m!2^m}$ such diagrams, and $\Hom_{G_n}(V_{t_n}^{\otimes r_1},V_{t_n}^{\otimes r_2})$ has dimension $\frac{(2m)!}{m!2^m}$, for sufficiently large $n$. Thus $\prod_{\mcF}\Hom_{G_n}(V_{t_n}^{\otimes r_1},V_{t_n}^{\otimes r_2})$ has a basis consisting of these Brauer diagrams. Since by definition, they are a basis of $\Hom_{\Rep(G)}(V_t^{\otimes r_1},V_t^{\otimes r_2})$, \eqref{homsets of Rep(O_t) and ultraproduct} follows.

    Now we show that the composition maps are the same. In both categories, composition is given by vertical concatenation of diagrams. So we only need to look at what happens when we delete a loop. Each loop deletion can be simplified to $\coev\circ \ev$, which corresponds to multiplication by $2t_n$ in the $G_n$ case. So in $\mcC$, we multiply by $\prod_{\mcF}2t_n$, which is just $t$, and this matches the rule in $\Rep(G)$. This completes the proof.
\end{proof}
Next, consider the case where $t$ is algebraic. It follows a similar story as the transcendental case, except instead of taking the ultraproduct of $\bfRep(G)$ over copies of $\overline{\QQ}$, we take them over copies of $\overline{\FF}_{p_n}$, with $p_n$ being an increasing sequence of primes. As shown in \cite{kalinov_yangians_20}, we can choose an increasing sequence $p_1,p_2,\ldots$ of primes, and an increasing sequence $t_1,t_2,\ldots$ of positive integers such that: $\prod_{\mcF}\overline{\FF}_{p_n} \cong \CC$, and under this isomorphism, $\prod_{\mcF} 2t_n=t$. Moreover, the minimal polynomial $q(x)$ of $t$ satisfies $q(2t_i)=0$ in $\overline{\FF}_{p_i}$. Then, we can repeat the argument of the above proof to get the following theorem.
\begin{thm}\label{thm: ultraproduct construction 2}
    The full subcategory of the $\prod_{\mcF}\overline{\FF}_{p_n}$-linear category $\widehat{\mcC}$ generated by $V$ under tensor products, direct sums, and direct summands is equivalent to the $\CC$-linear category $\Rep(G)$ under the isomorphism $\prod_{\mcF}\ov{\FF}_{p_n}\cong \CC$ with $\prod_{\mcF}2t_n=t$.
\end{thm}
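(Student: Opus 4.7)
The plan is to run the argument of \cref{thm: ultraproduct construction} essentially verbatim, replacing $\overline{\QQ}$ by $\overline{\FF}_{p_n}$ throughout and using the sequences $(p_n)$ of primes and $(t_n)$ of positive integers constructed just above the statement (following \cite{kalinov_yangians_20}). Set $G_n = O_{t_n}$ or $Sp_{t_n}$ correspondingly, let $\widehat{\mcC} = \prod_{\mcF} \bfRep(G_n, \overline{\FF}_{p_n})$, write $V = \prod_{\mcF} V_{t_n}$, and denote by $\mcC \subseteq \widehat{\mcC}$ the full subcategory generated by $V$ under tensor products, direct sums, and direct summands.

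First I would verify that $V$ has dimension $t$ (since $\prod_\mcF 2t_n = t$) and admits a symmetric (respectively alternating) isomorphism $V \to V^*$ by \L o\'s's theorem applied levelwise. Invoking \cref{thm: universal propety of Rep(O_t)}, we obtain a symmetric tensor functor $F : \Rep(G) \to \widehat{\mcC}$ with $F(V_t) = V$, and since $V_t$ generates $\Rep(G)$ under tensor products, direct sums, and direct summands, $F$ lands in $\mcC$ and is essentially surjective.

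The core step is fully faithfulness, which reduces to the identification
\begin{align*}
\Hom_{\Rep(G)}(V_t^{\otimes r_1},\, V_t^{\otimes r_2}) \;\cong\; \prod_{\mcF} \Hom_{G_n}(V_{t_n}^{\otimes r_1},\, V_{t_n}^{\otimes r_2})
\end{align*}
for all $r_1, r_2 \in \ZZ_{\geq 0}$, compatibly with composition. Both sides vanish if $r_1 + r_2$ is odd; otherwise set $r_1 + r_2 = 2m$. The main obstacle is that $\bfRep(G_n, \overline{\FF}_{p_n})$ is not semisimple in positive characteristic, so the first fundamental theorem of invariant theory is not available unconditionally. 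To sidestep this, I would use that the Brauer algebra $B_m(2t_n)$ is defined integrally and acts on $V_{t_n}^{\otimes m}$ through a map to $\End_{G_n}(V_{t_n}^{\otimes m})$; for $p_n$ and $t_n$ sufficiently large relative to the fixed $m$, this map is an isomorphism, so the $(2m-1)!!$ Brauer diagrams form a basis of the hom-space with the expected dimension. Since $p_n, t_n \to \infty$ and $r_1, r_2$ are fixed, this holds on an $\mcF$-large set of indices, and \L o\'s's theorem identifies the ultraproduct with the span of Brauer diagrams, which is by definition the hom-space in $\Rep(G)$.

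Finally, composition in both categories is given by vertical stacking of diagrams, with each loop contributing the scalar $2t_n$ at level $n$; hence $\prod_\mcF 2t_n = t$ matches the loop-scaling rule in $\Rep(G)$. Together with the previous steps this shows $F$ is fully faithful, yielding the desired equivalence $\Rep(G) \simeq \mcC$.
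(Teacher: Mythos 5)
Your argument is correct and follows the paper's route exactly: the paper's own ``proof'' of this theorem is the single sentence that one should repeat the argument of \cref{thm: ultraproduct construction} with $\overline{\FF}_{p_n}$ in place of $\overline{\QQ}$, and that is precisely what you do. The one addition you make — justifying via the integral Brauer algebra that the Brauer diagrams still form a basis of $\Hom_{G_n}(V_{t_n}^{\otimes r_1}, V_{t_n}^{\otimes r_2})$ for $n$ on an $\mcF$-large set — is a detail the paper leaves implicit, and your treatment of it is sound; the only caveat is that the First Fundamental Theorem for $O_N, Sp_N$ does in fact hold in characteristic $p \neq 2$ (De Concini--Procesi), so the genuinely delicate point is linear independence of the diagrams for $t_n$ large, not spanning.
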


We now state the classification of irreducible objects in $\Rep(G)$, see \cite{utiralova_21} and \cite{etingof_complex_rank_2_16}.

\begin{thm}\label{thm: simple objects of Rep(G)}
    The simple objects of $\Rep(G)$ are labeled by tuples $\lambda = (\lambda_1,\ldots, \lambda_k)$ of negative integers with $\lambda_1\ge \cdots \ge \lambda_k$. Moreover, if $V_\lambda$ is the simple object corresponding to some negative partition $\lambda$, we have
    \[V_{\lambda}=\prod_{\mcF} V_{\lambda}^{(n)},\]
    where for sufficiently large $n$ we define $V_{\lambda}^{(n)}$ to be the simple $G_n$-module with the highest weight $[\lambda]_{t_n}$ given by
    \[[\lambda]_{t_n}=\sum_{i=n-k+1}^{n}\lambda_{n+1-i}\eps_i\in E_n.\]
    Note that for sufficiently large $n$ the weight $\lambda$ is indeed integral and dominant.
\end{thm}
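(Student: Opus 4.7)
The plan is to prove the theorem by leveraging the ultraproduct descriptions from \cref{thm: ultraproduct construction} and \cref{thm: ultraproduct construction 2}: for each negative partition $\lambda$, I would construct the candidate simple object directly as an ultraproduct and then argue that these exhaust the isomorphism classes of simples.

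First, I would verify that $V_\lambda := \prod_\mathcal{F} V_\lambda^{(n)}$ is a well-defined object of $\Rep(G)$. Since $\lambda$ has finitely many parts, the weight $[\lambda]_{t_n}$ is dominant integral for $G_n$ once $n\ge k$, so $V_\lambda^{(n)}$ is defined almost everywhere in the ultrafilter. Because $V_{t_n}$ is a faithful generator of $\bfRep(G_n)$, there is an integer $r$ (e.g.\ $r = |\lambda| = \sum_i |\lambda_i|$) and an idempotent $e_n \in \End_{G_n}(V_{t_n}^{\otimes r})$ with image $V_\lambda^{(n)}$. Taking $e = \prod_\mathcal{F} e_n$ gives an idempotent in $\End(V^{\otimes r})$ whose image is $V_\lambda$; hence $V_\lambda$ lies in the Karoubian closure of tensor powers of $V$, which is precisely the image of $\Rep(G)$ in $\widehat{\mathcal{C}}$.

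Next, I would establish simplicity and pairwise non-isomorphism by computing Hom spaces via \L o\'s's theorem. For distinct negative partitions $\lambda$ and $\mu$, we have $\Hom(V_\lambda, V_\mu) = \prod_\mathcal{F} \Hom_{G_n}(V_\lambda^{(n)}, V_\mu^{(n)}) = \prod_\mathcal{F} 0 = 0$, and $\End(V_\lambda) = \prod_\mathcal{F} \End_{G_n}(V_\lambda^{(n)}) = \prod_\mathcal{F} \KK_n = \CC$ (where $\KK_n = \overline{\QQ}$ or $\overline{\FF}_{p_n}$, using Schur's lemma for simple modules over reductive groups). Since $\Rep(G)$ is semisimple for $t \notin \ZZ$, a nonzero object with a one-dimensional endomorphism ring is simple, so each $V_\lambda$ is simple, and the map $\lambda \mapsto V_\lambda$ is injective on isomorphism classes.

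Finally, I would show every simple object arises this way. Any simple $X \in \Rep(G)$ is a summand of some $V^{\otimes r}$, so it suffices to decompose $V^{\otimes r}$ into $V_\lambda$'s. By Brauer--Weyl duality, for $2t_n$ sufficiently large relative to $r$, the decomposition
\begin{equation*}
V_{t_n}^{\otimes r} = \bigoplus_{\lambda} (V_\lambda^{(n)})^{\oplus m_\lambda}
\end{equation*}
has multiplicities $m_\lambda$ and partition indexing set depending only on $r$, not on $n$ (this is the content of the stable Brauer algebra picture). Passing to ultraproducts yields $V^{\otimes r} = \bigoplus_\lambda V_\lambda^{\oplus m_\lambda}$, so every simple summand is some $V_\lambda$. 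The main obstacle is precisely this stabilization step: one must control both the partitions that can appear and the multiplicities uniformly for large $n$, and check that the ultraproduct commutes with the finite direct sum decomposition (which is where working with fixed $r$ and finitely many $\lambda$ becomes essential, so that \L o\'s's theorem applies to each factor individually).
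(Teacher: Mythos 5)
The paper does not prove this theorem; it is stated with a citation to the works of Utiralova and Etingof, so there is no internal proof to compare against. Your ultraproduct construction is nonetheless a natural approach here, matching the paper's own presentation of $\Rep(G)$ via \cref{thm: ultraproduct construction} and \cref{thm: ultraproduct construction 2}, and the overall architecture (realize $V_\lambda$ as the image of an ultraproduct of idempotents in $\End(V^{\otimes r})$, use \L o\'s to compute $\Hom(V_\lambda,V_\mu)$ and $\End(V_\lambda)$, and exhaust by decomposing $V^{\otimes r}$) is sound.

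The one place you gloss over something that needs an argument is the exhaustion step, in the algebraic-$t$ case where the ultraproduct runs over $\overline{\FF}_{p_n}$. Your appeal to the "stable Brauer algebra picture" to conclude that $V_{t_n}^{\otimes r}$ decomposes with indexing set and multiplicities independent of $n$ is immediate only in characteristic zero. In characteristic $p_n$ you must first show that $V_{t_n}^{\otimes r}$ is semisimple with the same decomposition as over $\overline{\QQ}$; this requires every composition factor to lie in the fundamental alcove, so that Weyl modules are simple and extensions vanish (\cref{prop : fundamental alcove has no extensions and is simple}). The paper supplies exactly the tools needed: \cref{lem: irreducible g_n-modules in positive char} reduces the alcove condition to $-2\lambda_n + 2n < p_n$, and since the partitions occurring in $V_{t_n}^{\otimes r}$ have parts bounded by $r$, this follows from $p_n - 2t_n > 2r$, which holds for almost all $n$ by \cref{lem: positive char lemma}. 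The same point applies implicitly in your Hom-space computations, where Schur's lemma needs each $V_\lambda^{(n)}$ to actually be simple over $\overline{\FF}_{p_n}$. With these references inserted the proof is complete; without them the positive-characteristic half of the argument is unjustified.
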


\section{Twisted Yangians in complex rank}\label{sec: yangians in complex rank}
\subsection{The twisted Yangian \texorpdfstring{$Y(\g)$}{Y(g)}}
Let $\g = \o_t$ or $\xsp_t$, and let $G = O_t$ or $Sp_t$, respectively. There is a $G$-equivariant isomorphism $\psi: V \to V^*$, symmetric in the orthogonal case and alternating in the symplectic case. Set 
\[(\pm,\mp)=\begin{cases}
    (+,-) & \text{if } \g=\o_t,\\ 
    (-,+) & \text{if } \g=\xsp_{t}.
\end{cases}\]
For each $i \in \mathbb{Z}_{>0}$, let $V_i \cong V$ be a copy of $V$ in $\Rep(G)$. Consider the tensor algebra
\[B = T\left(\bigoplus_{i=1}^\infty V_i\otimes V_i^*\right),\]
which is an ind-object of $\Rep(G)$. Note that $B$ is generated by images of the maps \[\mcS_i: V \otimes V^* \to V_i \otimes V_i^* \subset B.\] We introduce the formal power series
\[\mcS(u) = 1+\sum_{i>0}\mcS_iu^{-i} \in \Hom(V\otimes V^*,B)\dbrack{u^{-1}},\]
where $1$ is regarded as an element of $\Hom(V \otimes V^*, B)$ via the inclusion $\bbone \hookrightarrow B$.
For convenience, set $\mcS_0 = 1$. Via tensor-hom adjunction, we may also view
\[\mcS(u) \in \Hom( V,V\otimes B)\dbrack{u^{-1}}.\]
Define the $R$-matrix
\[R(u) = 1-\frac{\sigma}{u} \in \Hom(V\otimes V, V\otimes V)\dbrack{u^{-1}},\]
where $\sigma$ is the flip map on $V \otimes V$.

Define 
\[\mcS'(u) = \sum_{i\ge 0} \mcS_i'u^{-i},\qquad \mcS_i'=\mcS_i\circ \theta,\]
so that $\mcS'(u) \in \Hom(V \otimes V^, B)\dbrack{u^{-1}}$. By tensor-hom adjunction, $\sigma$ may be regarded as an element of $\Hom(V \otimes V^* \otimes V \otimes V^*, \bbone)$. Define
\[\sigma'= \sigma\circ (1_{V\otimes V^*}\otimes \theta) \in \Hom(V\otimes V^*\otimes V\otimes V^*, \bbone),\]
and set
\[R'(u) =1 -\frac{\sigma'}{u}.\]
Let $V_I \cong V_{II} \cong V$ be two copies of $V$. For $i \in {I, II}$, let
\[\mcS^{i}(u)\in \Hom(V_I\otimes V_{II}, V_I\otimes V_{II}\otimes B)\dbrack{u^{-1}}\]
be the series $\mcS(u)$ acting on the $V_i$ component. Set
\begin{align*}
    Q(u,v) = (u-v)&(-u-v)\Big[ R(u-v)\mcS^I(u)R'(-u-v)\mcS^{II}(v) \\
&\ -\mcS^{II}(v)R'(-u-v)\mcS^I(u)R(u-v)\Big]
\end{align*}
as an element of $\Hom(V_I \otimes V_{II}, V_I \otimes V_{II} \otimes B)\dbrack{u^{-1}, v^{-1}}$. Via tensor-hom adjunction, we may also regard $Q(u,v)$ as an element of $\Hom(V_I \otimes V_I^* \otimes V_{II} \otimes V_{II}^*, B)\dbrack{u^{-1}, v^{-1}}$.

Finally, define
\begin{align*}
    K(u)   = \mcS'(-u) \mp \mcS(u) - \frac{\mcS(u)-\mcS(-u)}{2u} \in \Hom(V,V\otimes B)\dbrack{u^{-1}},
\end{align*}
which, again by adjunction, may be regarded as an element of $\Hom(V \otimes V^*, B)\dbrack{u^{-1}}$. 

Write
\begin{align*}
    Q(u,v) =\sum_{i,j} Q_{i,j}u^{-i}v^{-j} \quad \text{and}\quad K(u) =\sum_{i} K_{i}u^{-i}.
\end{align*}
Then, the definition of the Yangian is as follows.
\begin{defn}
    The twisted Yangian of $\g$, denoted $Y(\g)$, is the algebra which is the quotient of $B$ by the quadratic relations given by the $Q_{ij}$ and the $K_{ij}$.
\end{defn}
We now will use ultraproducts and \L o\'s's theorem to generalize \cref{prop: eval homomorphism and embedding}. Since our definition reduces to \cref{subsec: twisted yangians in integer rank} in integer rank, we can use \L o\'s's theorem to get \[Y(\g) = \prod_{\mcF} Y(\g_n).\]

Moreover, $\theta$ and $\theta_{t_n}$ satisfy the same relations in their corresponding categories, so again, by \L o\'s's theorem, we get \[\theta=\prod_{\mcF} \theta_{t_n}.\] 
\begin{prop}\label{prop: evaluation and embedding in complex rank}
    We have the embedding $i: U(\g)\hookrightarrow Y(\g)$ given by
    \begin{align}\label{eqn: embedding in complex rank}
        (1-\theta) \mapsto \mcS_1.
    \end{align}
    Here $(1-\theta)$ is a map $V\otimes V^*\to U(\g)$, which generates $U(\g)$, and $\mcS_1$ is a map $V\otimes V^* \to Y(\g)$.
    Furthermore, we have the evaluation map $\varrho\colon Y(\g)\to U(\g)$ given by
    \begin{align}\label{eqn: eval in complex rank}
        \mcS(u) \mapsto 1 + (1 - \theta)\cdot\left(u+\frac{1}{2}\right)^{-1}.
    \end{align}
    Here the coefficients of $\mcS(u)$ are maps from $V\otimes V^*\to Y(\g)$, which generate $Y(\g)$, and the coefficients of the right hand side are maps from $V\otimes V^*$ to $U(\g)$.
\end{prop}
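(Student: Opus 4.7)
The strategy is to deduce the proposition from its finite-rank counterpart \cref{prop: eval homomorphism and embedding} via the ultraproduct construction. Since we have already established $Y(\g) = \prod_\mcF Y(\g_n)$, $\theta = \prod_\mcF \theta_{t_n}$, and analogously $U(\g)=\prod_\mcF U(\g_n)$, the maps $i$ and $\varrho$ will be obtained as $\prod_\mcF i_n$ and $\prod_\mcF \varrho_n$, with \L o\'s's theorem propagating the defining properties from finite rank to complex rank.

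The first step is to recast the finite-rank formulas in categorical form. In $\bfRep(G_n)$, the morphism $(1-\theta)_{t_n}\colon V_{t_n}\otimes V_{t_n}^*\to V_{t_n}\otimes V_{t_n}^*\cong \gl_N$ sends the basis element $E_{ij}$ to $E_{ij}-\theta_{ij}E_{-j,-i}=F_{ij}$, so its image lies in $\g_n\subset U(\g_n)$. The assertion $F_{ij}\mapsto \s_{ij}^{(1)}$ of \eqref{eqn: embedding homorphism} thus amounts to the identity $i_n\circ (1-\theta)_{t_n}=(\mcS_1)_{t_n}$ of morphisms $V_{t_n}\otimes V_{t_n}^*\to Y(\g_n)$. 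Similarly, the evaluation map \eqref{eqn: eval homomorphism} becomes $\varrho_n\circ \mcS(u)_{t_n} = 1 + (1-\theta)_{t_n}\cdot (u\pm 1/2)^{-1}$ as an element of $\Hom(V_{t_n}\otimes V_{t_n}^*, U(\g_n))\dbrack{u^{-1}}$, where the constant term $1$ denotes the composite $V_{t_n}\otimes V_{t_n}^*\xrightarrow{\ev}\bbone\xrightarrow{\eta} U(\g_n)$ coming from evaluation and the unit of $U(\g_n)$.

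With this translation in hand, the second step is to take ultraproducts. By \L o\'s's theorem, $i\coloneqq \prod_\mcF i_n$ is an injective algebra homomorphism $U(\g)\hookrightarrow Y(\g)$, and $\varrho\coloneqq \prod_\mcF \varrho_n$ is a surjective algebra homomorphism $Y(\g)\to U(\g)$. The identifications $\mcS_1=\prod_\mcF (\mcS_1)_{t_n}$, $\mcS(u)=\prod_\mcF\mcS(u)_{t_n}$, and $(1-\theta)=\prod_\mcF (1-\theta)_{t_n}$ in $\Rep(G)$, all of which are built into our ultraproduct constructions of $Y(\g)$ and of the generating morphisms of $\Rep(G)$, then immediately yield \eqref{eqn: embedding in complex rank} and \eqref{eqn: eval in complex rank}.

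The main obstacle I anticipate is the first step: correctly identifying each finite-rank formula with the corresponding categorical morphism in $\bfRep(G_n)$ and verifying that every constituent morphism respects the ultraproduct. Once this compatibility is firmly in place, the rest is a purely formal application of \L o\'s's theorem.
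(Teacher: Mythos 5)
Your proposal is correct and follows essentially the same route as the paper: both rely on the ultraproduct identification $Y(\g)=\prod_\mcF Y(\g_n)$, $\theta=\prod_\mcF\theta_{t_n}$, and \L o\'s's theorem to transfer \cref{prop: eval homomorphism and embedding} to complex rank. The only cosmetic difference is that the paper argues by noting that ``factoring through'' the defining relations is a first-order property and hence passes to the ultraproduct, whereas you construct $i=\prod_\mcF i_n$ and $\varrho=\prod_\mcF\varrho_n$ directly after translating the finite-rank formulas into categorical form; these are two phrasings of the same \L o\'s argument.
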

\begin{proof}

    First of all, consider the map \eqref{eqn: embedding in complex rank}.
    In integer rank, this map factors through $U(\g_n)$. Since factoring through is a first order property, we can conclude from \L o\'s's theorem that this map factors through for the ultraproduct $Y(\g)$ as well. 

    Next, consider the map \eqref{eqn: eval in complex rank}, which immediately gives an algebra homomorphism from $B$ to $U(\g_t)$. This map factors through in integer rank when we pass from $B$ to $Y(\g_t)$, and since factoring through by the $Q_{ij}$ and the $K_i$ is a first-order property, it follows from \L o\'s's theorem that we have a map from $Y(\g) \to U(\g)$ that extends the original map.
\end{proof}
\begin{prop}
    If $g(u)=1+g_1u^{-2}+g_2u^{-4}+\cdots\in \CC\dbrack{u^{-2}}$ is an even power series, then
    \begin{align}\label{eqn: twisted power series automorphism complex rank}
        \mcS(u)\mapsto g(u)\mcS(u)    
    \end{align}
    defines an automorphism of $Y(\g)$.
\end{prop}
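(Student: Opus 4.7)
The plan is to reduce to the integer-rank statement \eqref{eqn: twisted power series automorphism} via the ultraproduct realization $Y(\g) = \prod_\mcF Y(\g_n)$ that was established earlier in this section. First I would write each coefficient $g_i \in \CC$ as $g_i = \prod_\mcF g_i^{(n)}$ under the isomorphism $\prod_\mcF \KK_n \cong \CC$ (with $\KK_n = \overline{\QQ}$ or $\overline{\FF}_{p_n}$ depending on whether $t$ is transcendental or algebraic). Assembling these into even power series
\[
g^{(n)}(u) = 1 + g_1^{(n)} u^{-2} + g_2^{(n)} u^{-4} + \cdots \in 1 + u^{-2}\KK_n\dbrack{u^{-2}}
\]
gives, by the integer-rank result \eqref{eqn: twisted power series automorphism}, an algebra automorphism $\phi_n : Y(\g_n) \to Y(\g_n)$ sending $\mcS(u) \mapsto g^{(n)}(u)\mcS(u)$ for every $n$.

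Next, I would invoke \L o\'s's theorem applied to the first-order sentences expressing ``$\phi_n$ is an algebra homomorphism'' and ``$\phi_n$ is a bijection'' (the latter being the conjunction of an injectivity and a surjectivity statement, both expressible with a single quantifier over elements of $Y(\g_n)$), to conclude that the ultraproduct $\phi := \prod_\mcF \phi_n$ is an algebra automorphism of $\prod_\mcF Y(\g_n) = Y(\g)$. Since the inverse $g(u)^{-1} \in 1 + u^{-2}\CC\dbrack{u^{-2}}$ is also even, the inverse automorphism on the nose is the map $\mcS(u) \mapsto g(u)^{-1}\mcS(u)$.

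Finally I would identify $\phi$ with the desired map on generators. For each fixed $k$, in $Y(\g_n)$ one has the finite sum
\[
\phi_n(\mcS_k^{(n)}) = \sum_{0 \le j \le k/2} g_j^{(n)}\, \mcS_{k-2j}^{(n)},
\]
whose length $\lfloor k/2 \rfloor + 1$ is independent of $n$. Taking ultraproducts term-by-term, $\phi(\mcS_k) = \sum_{0 \le j \le k/2} g_j \mcS_{k-2j}$, which is exactly the coefficient of $u^{-k}$ in $g(u)\mcS(u)$, as required.

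The only obstacle is the bookkeeping for \L o\'s's theorem: one must check that being an automorphism can be expressed as a (countable) family of first-order sentences with quantifiers ranging over $Y(\g_n)$, and that the resulting graph in the ultraproduct is the graph of a single well-defined map. This is standard, and the finiteness of the sum above for each fixed $k$ is precisely what allows the ultraproduct to commute with reading off each coefficient of the formal series $g(u)\mcS(u)$.
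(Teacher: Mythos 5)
Your proof is correct but takes a genuinely different route from the paper's. The paper proves this directly and categorically: substituting $g(u)\mcS(u)$ for $\mcS(u)$ into the defining expressions $Q(u,v)$ and $K(u)$ simply multiplies them by invertible scalar series ($g(u)g(v)$ for $Q$; for $K$, the evenness hypothesis $g(u)=g(-u)$ ensures all three terms pick up a common factor $g(u)$), so the quadratic relations are visibly preserved — a one-line check that requires no reduction to integer rank. You instead descend to the integer-rank twisted Yangians via the ultraproduct presentation $Y(\g)=\prod_\mcF Y(\g_n)$, invoke the already-proved integer-rank version, and lift the resulting automorphisms $\phi_n$ via \L o\'s's theorem, with the observation that each coefficient relation $\phi_n(\mcS_k^{(n)})=\sum_{j\le k/2} g_j^{(n)}\mcS_{k-2j}^{(n)}$ is a finite, $n$-independent first-order statement. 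This works, and it is in the same spirit as the proof of \cref{prop: evaluation and embedding in complex rank}, but it is needlessly heavy here: you must choose representatives $g_j^{(n)}$, handle the non-first-order nature of ``automorphism'' coefficient by coefficient, and you obscure the structural reason the map preserves the relations (invertibility of $g$, plus evenness to handle the $K$-relation). The direct argument is both shorter and more illuminating, and in particular makes it transparent where the evenness of $g(u)$ is actually used — a point your proof uses only implicitly, by citing the integer-rank proposition, whose statement already bakes in that hypothesis.
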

\begin{proof}
    Replacing $\mcS(u)$ with $g(u)\mcS(u)$ in the defining relations $Q(u,v)$ and $K(u,v)$ just multiplies by some power of $g(u)$, so the defining relations continue to hold.
\end{proof}
The twisted Yangian $Y(\g)$ embeds naturally into the Yangian $Y(\gl_t)$. The Yangian $Y(\gl_t)$ can be defined within both $\Rep(O_t)$ and $\Rep(Sp_t)$ exactly as in \cite{kalinov_yangians_20}. Similarly, we define the series $T'(u)$ analogously to $\mcS'(u)$. We record the following useful fact.
\begin{prop}
    We have the embedding $Y(\g)\hookrightarrow Y(\gl_t)$ given by
    \begin{align}\label{eqn: embedding Y(g) into Y(gl_t)}
        \mcS(u) \mapsto T(u)T'(-u).
    \end{align}
\end{prop}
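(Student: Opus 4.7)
The plan is to reduce the statement to the corresponding integer-rank result \eqref{eqn: embed Y(gn) into Y(glN)} via the ultraproduct construction, in direct parallel to how \cref{prop: evaluation and embedding in complex rank} was established. The key input is that $Y(\g) = \prod_\mcF Y(\g_n)$, that $Y(\gl_t) = \prod_\mcF Y(\gl_N)$ (by the analogous result in \cite{kalinov_yangians_20}), and that $\theta = \prod_\mcF \theta_{t_n}$, so the categorical transpose $'$ on $Y(\gl_t)$ assembles from the transposes on each $Y(\gl_N)$. Consequently, the candidate map $\mcS(u) \mapsto T(u) T'(-u)$ in $\Rep(G)$ is literally the $\mcF$-ultraproduct of the integer-rank maps $\mcS(u) \mapsto T(u) T'(-u)$ from $Y(\g_n)$ into $Y(\gl_N)$.

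First, I would observe that the formula $\mcS(u) \mapsto T(u) T'(-u)$ defines an algebra homomorphism from the tensor algebra $B$ into $Y(\gl_t)\dbrack{u^{-1}}$ simply by sending each generating component $\mcS_i \colon V \otimes V^* \to B$ to the appropriate coefficient in $T(u) T'(-u)$. The content is then that this map annihilates the ideal generated by the coefficients $Q_{ij}$ of the quaternary relation and $K_i$ of the symmetry relation, so that it descends to a map $Y(\g) \to Y(\gl_t)$. In integer rank, this is exactly the content of \eqref{eqn: embed Y(gn) into Y(glN)}: for every $n$, the map $\mcS(u) \mapsto T(u) T'(-u)$ extends to a (well-defined) algebra homomorphism $Y(\g_n) \to Y(\gl_N)$. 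Since ``this particular $B$-module homomorphism factors through the quotient $Y(\g_n)$'' is a first-order property of the indexed family $(Y(\g_n), Y(\gl_N), \theta_{t_n})$, Łoś's theorem transports it to the ultraproduct, giving a well-defined algebra homomorphism $Y(\g) \to Y(\gl_t)$ in complex rank.

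It remains to show the map is injective. In integer rank, \eqref{eqn: embed Y(gn) into Y(glN)} is an embedding; injectivity of an algebra homomorphism is again first-order (the kernel being zero can be expressed as ``for every nonzero $x$, its image is nonzero''), so Łoś's theorem yields injectivity of the complex-rank map as well. Combining the two steps gives the desired embedding $Y(\g) \hookrightarrow Y(\gl_t)$.

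The only subtle point I anticipate is making sure the transpose operation implicit in $T'(-u)$ is the \emph{same} transpose (in the ultraproduct sense) as the one used in the definition of $\mcS'(u)$; this is why it is important to first record the identities $Y(\g) = \prod_\mcF Y(\g_n)$, $Y(\gl_t) = \prod_\mcF Y(\gl_N)$, and $\theta = \prod_\mcF \theta_{t_n}$, after which everything else is a direct first-order transfer. No categorical verification of the quaternary or symmetry relations is needed beyond what is already available in integer rank.
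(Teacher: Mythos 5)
Your proposal is correct and takes essentially the same approach as the paper: identify the candidate map as the $\mcF$-ultraproduct of the integer-rank embeddings \eqref{eqn: embed Y(gn) into Y(glN)}, then invoke \L o\'s's theorem since being an embedding (well-definedness on the quotient by the quaternary and symmetry relations, together with injectivity) is a first-order property. The paper's proof is terser, compressing the whole argument into the single observation that ``being an embedding is a first-order property,'' whereas you spell out the intermediate steps (the factoring through $B$, the identification $\theta = \prod_\mcF \theta_{t_n}$) that the paper leaves implicit; the substance is the same.
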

\begin{proof}
    Consider the map in the integral rank case, which is just \eqref{eqn: embed Y(gn) into Y(glN)}. This defines an embedding $Y(\g_n)\hookrightarrow Y(\gl_{t_n})$. Being an embedding is a first-order property, so it follows from \L o\'s's theorem, that \eqref{eqn: embedding Y(g) into Y(gl_t)} defines an embedding.
\end{proof}
This map also embeds the special twisted Yangian $SY(\g)$ into $Y(\xsl_t)$.

\subsection{Finite-length representations of \texorpdfstring{$Y(\g)$}{Y(g)}}
Here again we set $t$ to be transcendental and $\g=\o_t$ or $\xsp_{t}$. Recall $\g_n = \o_{2t_n}$ or $\xsp_{2t_n}$ and $G_n=O_{2t_n}$ or $Sp_{2t_n}$ correspondingly. Also, when $t$ is transcendental, set $p_n=0$.

First we define the category of $Y(\g)$-modules.

\begin{defn}\label{defn: Y(g)-modules}
    Denote by $\Rep_0(Y(\g))$ the category with objects being objects $M\in \Rep(G)$ together with an element $\mu_M\in \Hom(Y(\g)\otimes M,M)$ such that
    \begin{enumerate}
        \item $M$ is a representation of $Y(\g)$, i.e. if $\mu$ is the product map of the Yangian, then $\mu_M\circ (1\otimes \mu_M) = \mu_M\circ (\mu\otimes 1)$ as elements of $\Hom(Y(\g)\otimes Y(\g)\otimes M,M)$.
        \item The map $\mu_M\circ (i\otimes 1)$ gives the standard structure of a $\g$ representation on $M$.
    \end{enumerate}
\end{defn}
The morphisms in $\Rep_0(Y(\g))$ are morphisms of $\Rep(G)$, which commute with the representation structure. Note that we consider only honest objects of $\Rep(G)$ and not ind-objects.

Similarly to \cref{thm: ultraproduct construction,thm: ultraproduct construction 2}, we can give an ultraproduct construction of $\Rep_0(Y(\g))$.

\begin{lem}\label{lem: irreducible objects of Y(g)}
    The category $\Rep_0(Y(\g))$ is defined as the full subcategory of \[\prod_{\mcF} \bfRep_0(Y(\g_{n}),p_n)\] consisting of those objects whose image in $\prod_{\mcF} \bfRep_{p_n}(G_{n})$ lies in $\Rep(G)$. Moreover, the irreducible representations of $Y(\g)$ correspond to collections of representations of $Y(\g_{n})$ such that almost all are irreducible. In other words, $\Irr(\Rep_0(Y(\g)))$ is the full subcategory of \[\prod_{\mcF} \Irr(\bfRep_0(Y(\g_{n}),p_n)),\] whose image is contained in $\Rep(G)$.
\end{lem}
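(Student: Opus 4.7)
The plan is to transport the two conditions of \cref{defn: Y(g)-modules} through the ultraproduct, then address irreducibility using the fact that objects of $\Rep(G)$ are honest (finite-length) objects rather than ind-objects.

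First I would unpack \cref{defn: Y(g)-modules} as a list of first-order assertions about morphisms in the tensor category. Using $Y(\g)=\prod_\mcF Y(\g_n)$, the evaluation $\mu_M\in\Hom(Y(\g)\otimes M,M)$ is represented by an ultraproduct of morphisms $\mu_{M_n}\in\Hom(Y(\g_n)\otimes M_n,M_n)$ in $\bfRep(G_n)$. Both the associativity condition $\mu_M\circ(1\otimes\mu_M)=\mu_M\circ(\mu\otimes 1)$ and the compatibility condition $\mu_M\circ(i\otimes 1)=$ the standard $\g$-action (using \cref{prop: evaluation and embedding in complex rank}) are equalities of morphisms whose data is an ultraproduct; by \L o\'s's theorem each holds in $\Rep(G)$ iff it holds in $\bfRep(G_n)$ for $\mcF$-almost all $n$. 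This identifies $\Rep_0(Y(\g))$ with the full subcategory of $\prod_\mcF \bfRep_0(Y(\g_n),p_n)$ whose underlying ultraproduct of $G_n$-modules lies in $\Rep(G)$, as claimed in the first part of the lemma.

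For the classification of irreducibles, suppose $M=\prod_\mcF M_n\in\Rep_0(Y(\g))$. The key input is that, as an object of $\Rep(G)$, $M$ has finite length in $\Rep(G)$: it decomposes into finitely many isotypic components indexed by simple objects $V_\lambda$ of \cref{thm: simple objects of Rep(G)}. By \cref{thm: simple objects of Rep(G)} each isotypic summand is itself the ultraproduct of the corresponding isotypic summands of $M_n$, and so any $G$-subobject $N\subset M$ corresponds to an ultraproduct $N=\prod_\mcF N_n$ with $N_n\subset M_n$ a $G_n$-subobject. The condition that $N_n$ is additionally stable under the $Y(\g_n)$-action is first-order, so again by \L o\'s's theorem $N$ is a $Y(\g)$-submodule of $M$ iff $N_n$ is a $Y(\g_n)$-submodule of $M_n$ for $\mcF$-almost all $n$.

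From this dictionary both implications of the second claim are routine: if $M_n$ is irreducible for $\mcF$-almost all $n$, then any submodule $N=\prod_\mcF N_n$ must satisfy $N_n\in\{0,M_n\}$ almost everywhere, hence $N\in\{0,M\}$; conversely, if $M_n$ admits a proper nontrivial $Y(\g_n)$-submodule for $\mcF$-almost all $n$, the finite-length structure of $M$ in $\Rep(G)$ forces these submodules to fall into finitely many isomorphism types as $G_n$-subobjects, so by the ultrafilter we may pick a single type and assemble the $N_n$ into a proper nontrivial $Y(\g)$-submodule of $M$. The main subtlety, and the only place where the argument uses something beyond a formal application of \L o\'s's theorem, is precisely this last step: controlling $G_n$-submodules uniformly in $n$ via the finite-length structure in $\Rep(G)$, which is what allows submodules of the ultraproduct to be realized as ultraproducts of submodules.
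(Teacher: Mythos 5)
Your sketch is correct and follows the same strategy the paper delegates to Kalinov's Lemma 4.2.2: transporting the module axioms of \cref{defn: Y(g)-modules} via \L o\'s's theorem, and then controlling submodules of the ultraproduct through the finite-length (hence, via \cref{thm: simple objects of Rep(G)} and \cref{prop : fundamental alcove has no extensions and is simple}, semisimple) decomposition of $M$ in $\Rep(G)$. One small point worth making explicit: when you assemble $N=\prod_\mcF N_n$ in the reducibility direction, the reason $N$ is an honest object of $\Rep(G)$ is that each $N_n$ is a direct sum of simple $G_n$-modules $V_\lambda^{(n)}$ with $\lambda$ drawn from the fixed finite multiset determined by $M$, so $\prod_\mcF N_n$ is a finite direct sum of the simples $V_\lambda$ of \cref{thm: simple objects of Rep(G)}.
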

\begin{proof}
    The proof is exactly the same as \cite[Lemma 4.2.2]{kalinov_yangians_20}.
\end{proof}
The same statement of course holds for representations of the special twisted Yangian.

It follows that for $M\in \Rep_0(Y(\g))$, we can write $M = \prod_{\mcF}M_n$, where $M_n$ is a representation of $Y(\g_{t_n})$. If $M$ is irreducible, by \L o\'s's theorem, almost all of the $M_n$ are irreducible. The analogous statement holds for the special twisted Yangian.

Consider the case where $t$ is transcendental and $\g = \o_t$. Suppose that $M \in \Rep_0(SY(\g))$. Then, almost all of the $Y(\g_n)$-modules $M_n$ corresponding to $M$ must be irreducible. Consequently, for almost all $n$, $M_n$ corresponds to a choice of the parameter $\gamma$ belonging to one of the three sets in~\eqref{eqn: subsets}. We refer to them as types $A$, $B$, and $C$, respectively.

Recall the $^\sharp$-automorphism of $Y(\o_{2n})$ defined in~\eqref{eqn: sharp automorphism}. We also denote by $^\sharp$ its ultraproduct, which, by \L oś's theorem, defines an automorphism of $Y(\o_t)$. If $M$ is of type $C$, let $M^\sharp$ be the $Y(\o_t)$-module obtained by composing the $^\sharp$-automorphism with the action on $M$. Then $M^\sharp$ is of type $B$. Finally, if $t$ is algebraic, we regard all irreducible representations of $Y(\o_t)$ as being of type $A$.

\begin{lem}\label{lem: positive char lemma}
    Fix $c\in \ZZ$. For algebraic $t$ and for almost all $n$, we have $p_n-2t_n >c$.
\end{lem}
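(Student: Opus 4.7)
The plan is to argue by contradiction, combining pigeonhole on the ultrafilter $\mcF$ with the standing assumption that $t \notin \ZZ$ (the case $t \in \ZZ$ is explicitly excluded in \cref{sec: deligne categories}). I will suppose that the set $\{n : p_n - 2t_n \le c\}$ lies in $\mcF$ and derive a contradiction.

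First I would dispose of the trivial range of $c$. By the choice of the sequences $(p_n)$ and $(t_n)$ from \cite{kalinov_yangians_20}, one takes $0 \le 2t_n < p_n$, so that $2t_n$ is the canonical integer representative of its image in $\FF_{p_n}$ and $p_n - 2t_n$ is always a positive integer. Hence for $c \le 0$ the inequality $p_n - 2t_n > c$ is automatic, and I may restrict to the case $c > 0$.

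The main step is then an ultrafilter pigeonhole. Along the putative bad set, the integer $p_n - 2t_n$ takes values in the finite set $\{1, 2, \ldots, c\}$; since $\mcF$ is an ultrafilter, any partition of an $\mcF$-set into finitely many pieces has at least one piece in $\mcF$, so there exists a fixed $k \in \{1, \ldots, c\}$ with $\{n : p_n - 2t_n = k\} \in \mcF$. For such $n$ one has $2t_n \equiv -k \pmod{p_n}$, so the element $2t_n \in \overline{\FF}_{p_n}$ equals the constant $-k$. Taking the ultraproduct along $\mcF$ and using that the isomorphism $\prod_{\mcF} \overline{\FF}_{p_n} \cong \CC$, being a field isomorphism between characteristic-zero fields, restricts to the identity on the common prime subfield $\QQ$, I conclude
\[
t = \prod_{\mcF} 2t_n = -k \in \ZZ,
\]
contradicting the hypothesis $t \notin \ZZ$.

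I anticipate no substantive obstacle. The two conceptual ingredients are the ultrafilter pigeonhole, which upgrades the uniform bound $p_n - 2t_n \le c$ to a pointwise equality along an $\mcF$-large set, and the observation that any field isomorphism from the ultraproduct to $\CC$ must fix $\ZZ$ because both fields have characteristic zero and hence share $\QQ$ as prime subfield.
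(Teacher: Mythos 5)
Your proof is correct, and it relies on the same ultrafilter pigeonhole as the paper: both of you reduce the hypothetical bad set to a single fixed integer value $d$ (your $k$) of $p_n - 2t_n$, using implicitly (or, in your case, explicitly) the bound $0 \le 2t_n < p_n$ from Kalinov's construction so that $p_n - 2t_n$ takes only finitely many values. After that step the two proofs diverge. The paper's proof never mentions the ultraproduct identification at all: it invokes the stated property that the minimal polynomial $q$ of $t$ satisfies $q(2t_n) = 0$ in $\overline{\FF}_{p_n}$, observes that this forces $q(-d) \equiv 0 \pmod{p_n}$ for infinitely many primes, hence $q(-d) = 0$ as an integer, and then derives a contradiction from the fact that the minimal polynomial of $t \notin \ZZ$ has no integer roots. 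Your proof instead passes to the ultraproduct $\prod_{\mcF} \overline{\FF}_{p_n} \cong \CC$ and uses that any isomorphism of characteristic-zero fields fixes the prime subfield $\QQ$, concluding directly that $t = -k \in \ZZ$. Your route is slightly more structural and more direct (it bypasses $q$ entirely), while the paper's is more self-contained within the elementary number theory of the construction; your version has the modest added virtue of making explicit the normalization $0 \le 2t_n < p_n$, which the paper's ``finitely many possibilities'' quietly presupposes.
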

\begin{proof}
    Suppose to the contrary that $p_n-2t_n\le c$ for almost all $n$. Since there are finitely many possibilities, $p_n-2t_n=d$ for some $d$. This means \[q(2t_n)=q(-d)\pmod{p_n},\] so $q(-d)$ has an infinite number of prime divisors, implying $q(-d)=0$. However, $q(x)$, being the minimal polynomial of $t$, cannot have integral roots, a contradiction.
\end{proof}

\begin{prop}\label{prop: subquotient of eval modules}
    Suppose $M\in \Rep_0(SY(\g))$ is irreducible. Then, $M\sqsubset L(\lambda(u))$ unless $\g = \o_t$ and $M$ is type $C$, in which case $M^\sharp\sqsubset L(\lambda(u))$. Here $L(\lambda(u))$ is a irreducible $Y(\xsl_t)$-module,
    considered as a $SY(\g)$-module via the embedding \eqref{eqn: embedding Y(g) into Y(gl_t)} and $\lambda(u)$ is a highest weight for $Y(\gl_t)$, which is a pair of sequences $(\lambda_i^{\bullet}(u),\lambda_i^{\circ}(u))$.
\end{prop}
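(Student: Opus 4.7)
The plan is to realize $M$ (or $M^\sharp$ in the type $C$ orthogonal case) as an ultraproduct of the finite-rank subquotient constructions from the proofs of \cref{thm: Y(sp2n) classification 2} and \cref{thm: Y(o2n) classification 2}. First, by \cref{lem: irreducible objects of Y(g)}, I would write $M = \prod_{\mcF} M_n$ where, for almost all $n$, $M_n$ is an irreducible finite-dimensional $SY(\g_n)$-module. In the orthogonal case, each $M_n$ corresponds to a type ($A$, $B$, or $C$); if $M$ is of type $C$, I replace $M$ by $M^\sharp$ using the ultraproduct of the sharp automorphisms \eqref{eqn: sharp automorphism}, which defines an automorphism of $Y(\o_t)$ by \L o\'s's theorem. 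By \cref{lem: sharp highest weight positive char} this reduces to the case that almost every $M_n$ (or $M_n^\sharp$) is of type $A$ or $B$, i.e.\ the case handled by the finite-rank constructions.

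Next, I apply the constructive half of \cref{thm: Y(sp2n) classification 2}/\cref{thm: Y(o2n) classification 2} pointwise in $n$: for almost every $n$, this produces a highest weight $\lambda^{(n)}(u)$ for $Y(\gl_{2t_n})$ and an irreducible highest-weight $Y(\gl_{2t_n})$-module $L(\lambda^{(n)}(u))$ such that $M_n$ appears as a subquotient of the $Y(\g_n)$-cyclic span of the highest-weight vector of $L(\lambda^{(n)}(u))$, where $Y(\g_n)$ acts via the embedding \eqref{eqn: embed Y(gn) into Y(glN)}. Restricting to $SY(\g_n) = Y(\g_n) \cap Y(\xsl_{2t_n})$ preserves the subquotient property. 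Setting $\lambda(u) := \prod_\mcF \lambda^{(n)}(u)$ and $L(\lambda(u)) := \prod_\mcF L(\lambda^{(n)}(u))$, \L o\'s's theorem identifies the latter with an irreducible highest-weight $Y(\gl_t)$-module of highest weight $\lambda(u)$, compatibly with the complex-rank embedding \eqref{eqn: embedding Y(g) into Y(gl_t)}. Since ``$N_1$ is a subquotient of $N_2$ as an $SY(\g_n)$-module'' is first-order in the data, \L o\'s's theorem delivers the desired $M \sqsubset L(\lambda(u))$ (respectively $M^\sharp \sqsubset L(\lambda(u))$) in $\Rep_0(SY(\g))$.

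The main technical obstacle is ensuring that the finite-rank construction is actually available for an $\mcF$-large set of $n$ when $t$ is algebraic and $p_n > 0$. The relevant hypothesis is the degree bound $\deg P_1 + 2\sum_{i \ge 2} \deg P_i + 2t_n < p_n$, which is precisely what guarantees, via \cref{lem: irreducible g_n-modules in positive char}, that the constructed $V(\mu^{(n)}(u), p_n)$ satisfies the hypotheses of \cref{prop: positive char highest weight}. Because $M$ is an honest finite-length object of $\Rep(G)$, its underlying $G$-character decomposes, via \cref{thm: simple objects of Rep(G)}, into a fixed finite sum of simples $V_\lambda$ labelled by a fixed finite collection of negative partitions; this forces the non-trivial parts of the Drinfeld data of $M_n$ (after normalizing away factors of $q_{p_n}(u+c)$, which have degree $p_n$ and thus lie above any fixed bound) to be uniformly bounded in $n$. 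Combining this with \cref{lem: positive char lemma}, which yields $p_n - 2t_n > c$ for any fixed $c$ on an $\mcF$-large set, the degree condition holds almost surely, and the ultraproduct $L(\lambda(u))$ is well-defined, completing the argument.
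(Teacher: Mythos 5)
Your overall strategy coincides with the paper's: pass to the finite-rank modules $M_n$, use the finite-rank classification to realize $M_n$ (or $M_n^\sharp$ in the type $C$ orthogonal case) as a subquotient of an irreducible highest-weight $Y(\gl_{2t_n})$-module $L(\lambda^{(n)}(u))$, and then take ultraproducts. However, the step where you write ``Setting $\lambda(u) := \prod_\mcF \lambda^{(n)}(u)$ and $L(\lambda(u)) := \prod_\mcF L(\lambda^{(n)}(u))$, \L o\'s's theorem identifies the latter with an irreducible highest-weight $Y(\gl_t)$-module of highest weight $\lambda(u)$'' is precisely where the real content lies, and it is not a direct application of \L o\'s's theorem. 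A $Y(\gl_{2t_n})$-highest weight is a $2t_n$-tuple of power series, whereas a $Y(\gl_t)$-highest weight in the sense of \cite[Definition~4.2.9]{kalinov_yangians_20} is a stabilizing pair of infinite sequences $(\lambda_i^{\bullet}(u),\lambda_i^{\circ}(u))$; the naive ultraproduct of the former does not automatically produce the latter, nor does it guarantee that $\prod_\mcF L(\lambda^{(n)}(u))$ lands in $\Rep(GL_t)$ rather than merely in $\prod_\mcF \bfRep(GL_{2t_n})$.

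The paper closes this gap by a stabilization argument that you gesture at in your third paragraph but do not carry out. Concretely, the paper first writes $M_n \sqsubset L(\lambda^{(n)}(u)) \sqsubset L(\lambda_1^{(n)},p_n)\otimes\cdots\otimes L(\lambda_{k_n}^{(n)},p_n)$ as $Y(\xsl_{2t_n})$-modules, then uses the fact that $M$ decomposes into a \emph{fixed} finite sum of simples $V(\mu_j)$ in $\Rep(G)$ (so the highest $\g_n$-weight of $M_n$ equals $[\mu]_{t_n}$ for a fixed negative partition $\mu$) to force $k_n$ to stabilize to some $k$ and each $\lambda_i^{(n)}$ to decompose as $\chi_i^{(n)} + d_i^{(n)}$ with $\chi_i^{(n)}$ stabilizing to a bipartition $\eta_i$ and $d_i^{(n)}$ a scalar shift. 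Only after establishing $L(\lambda^{(n)}(u)) \sqsubset L([\eta_1]_{2t_n}+c_1^{(n)},p_n)\otimes\cdots$ can one conclude, via \L o\'s's theorem, that $L := \prod_\mcF L(\lambda^{(n)}(u))$ is a subquotient of $L(\eta_1+c_1)\otimes\cdots\otimes L(\eta_k+c_k)$, hence lies in $\Rep(GL_t)$, and then Kalinov's Definition 4.2.9 identifies it as $L(\lambda(u))$. Your third-paragraph argument that the Drinfeld data of $M_n$ is bounded is a useful ingredient (and is also implicitly present in the paper, since the degree hypothesis is needed to invoke \cref{prop: positive char highest weight}), but it only bounds the size of the weights; it does not by itself show that they stabilize to a common bipartition-plus-shift, nor that the ultraproduct module is an honest finite-length object of $\Rep(GL_t)$. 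That stabilization is the crux of the argument and needs to be spelled out.
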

\begin{proof}
    Since every irreducible module of the special twisted Yangian arises as the restriction of an irreducible $Y(\g)$-module (a consequence of Łoś's theorem, as the claim holds in integral rank), consider the irreducible $Y(\g)$-module corresponding to $M$, which we also denote by $M$. Let $\{M_n\}$ be the sequence of $Y(\g_{t_n})$-modules associated with $M$. Since $M$ is irreducible, \L o\'s's theorem implies that $M_n$ is irreducible for almost all $n$.

As an object of $\Rep(G)$, $M$ decomposes as a finite direct sum of highest-weight irreducible representations, i.e.,
\[
M = \bigoplus_{j=1}^K V(\mu_j),
\]
for some negative partitions $\mu_j$. Then, for almost all $n$, we have
\begin{align}\label{eqn: decomposition as a Y(g)-module}
    M_n = \bigoplus_{j=1}^K V([\mu_j]_{t_n},p_n).
\end{align}

\noindent\textbf{Symplectic Case:} Suppose $M_n$ is irreducible as a $Y(\g_{t_n})=Y(\xsp_{2t_n})$-module. Since $\max_i -([\mu_i]_{t_n})_{t_n}$ stabilizes for large $n$, it follows from \cref{lem: positive char lemma} that $M_n$ satisfies the conditions of \cref{prop: positive char highest weight} for almost all $n$.

By \cref{thm: Y(sp2n) classification 1} and the proof of \cref{thm: Y(sp2n) classification 2} (cf. \cite[the second part of Theorem 4.3.8]{molev_yangians_book_07}), $M_n$ is obtained by composing an automorphism $\mcS(u) \mapsto g_n(u)\mcS(u)$, where \[g_n(u) = 1 + g_1^{(n)} u^{-2} + g_2^{(n)} u^{-4} + \cdots\] is an even series, with the action on a subquotient $M_n'$ of the finite-dimensional irreducible $Y(\gl_{2t_n})$-module $L(\lambda^{(n)}(u))$, regarded as a $Y(\xsp_{2t_n})$-module via the embedding \eqref{eqn: embed Y(gn) into Y(glN)}.

Since $SY(\xsp_{2t_n})$ is invariant under the automorphism $\mcS(u) \mapsto g_n(u)\mcS(u)$, we may replace $M_n$ with $M_n'$, obtaining isomorphic $SY(\xsp_t)$-modules via ultraproducts: \[M = \prod_{\mcF} M_n \cong \prod_{\mcF} M_n' = M'.\] Furthermore, the embedding sends $SY(\xsp_{2t_n})$ into $Y(\xsl_{2t_n})$, allowing us to view $L(\lambda^{(n)}(u))$ as a $Y(\xsl_{2t_n})$-module. 

By construction, $\lambda^{(n)}(u)$ has a sequence of Drinfeld polynomials. Moreover, if another weight $\mu^{(n)}(u)$ has the same Drinfeld polynomials, then $\lambda^{(n)}(u)$ and $\mu^{(n)}(u)$ differ by multiplication by some even power series, so $L(\lambda^{(n)}(u)) \cong L(\mu^{(n)}(u))$ as $Y(\xsl_{2t_n})$-modules.

By \cite[Theorem 3.2.6]{kalinov_yangians_20}, $L(\mu^{(n)}(u))$ decomposes as a tensor product of evaluation modules:
\[
L(\mu^{(n)}(u)) \cong L(\lambda_1^{(n)},p_n) \otimes \cdots \otimes L(\lambda_{k_n}^{(n)},p_n).
\]
It follows that:
\[
M_n \sqsubset L(\lambda^{(n)}(u)) \sqsubset L(\lambda_1^{(n)},p_n) \otimes \cdots \otimes L(\lambda_{k_n}^{(n)},p_n),
\]
where the second subquotient is to be interpreted as $Y(\xsl_{2t_n})$-modules, and the $\lambda_i^{(n)}$ are dominant integral $\gl_{2t_n}$-weights.

The highest weight of $M$ with respect to the $\xsp_{2t_n}$-action is $\widetilde{\lambda}_1^{(n)} + \cdots + \widetilde{\lambda}_{k_n}^{(n)}$, where \[(\widetilde{\lambda}_j^{(n)})_i = (\lambda_j^{(n)})_i - (\lambda_j^{(n)})_{ - i}.\] We can restrict to $n$ with $2t_n > \max(l(\mu_j))$, since all cofinite subsets are in $\mcF$.

Since $M_n$ has a unique highest weight vector, this highest weight must be $[\mu_j]_{2t_n}$ for some $j$. Moreover, for all larger $n$, this remains the highest weight, as it depends only on the support of $[\mu_j]_{2t_n}$. Let $\mu$ be the corresponding negative partition, so that:
\[
[\mu]_{t_n} = \widetilde{\lambda}_1^{(n)} + \cdots + \widetilde{\lambda}_{k_n}^{(n)}.
\]

Since $[\mu]_{t_n}, \widetilde{\lambda}_1^{(n)}, \ldots, \widetilde{\lambda}_{k_n}^{(n)}$ are all negative partitions, and the negative partition associated to $[\mu]_{t_n}$ is constant for large $n$, there are finitely many choices for the $\widetilde{\lambda}_j^{(n)}$, so one such choice must occur for almost all $n$. Denote these limiting negative partitions by $\widetilde{\lambda}_1, \ldots, \widetilde{\lambda}_k$.

For each $\lambda_i^{(n)}$, we have the decomposition \[\lambda_i^{(n)} = \chi_i^{(n)} + d_i^{(n)},\] where $\chi_i^{(n)}$ is a $\gl_{2s_n}$-weight with $(\chi_i^{(n)})_1=0$ for large enough $n$, and $d_i^{(n)} \in \overline{\FF}_{p_n}$ is constant along all components. Then, $\widetilde{\lambda}_i = \widetilde{\chi}_i^{(n)}$.

For large enough $n$, we must have $(\widetilde{\chi}_i^{(n)})_1=0$ so $(\chi_i^{(n)})_1=0$ implies $(\chi_i^{(n)})_{-1}=0$. Similarly, we get $(\chi_i^{(n)})_k=0$ for $k=-t_n+l(\mu),\ldots, t_n-l(\mu)$; so the entries of $\chi_i^{(n)}$ have at most $l(\mu)$ positive and $l(\mu)$ negative parts. Thus there are finitely many possibilities for $\chi_i^{(n)}$, so it is eventually fixed for almost all $n$. Hence there exists a bipartition $\eta_i$ such that $[\eta_i]_{2t_n} = \chi_i^{(n)}$ for almost all $n$. Thus,
\[
M_n \sqsubset L([\eta_1]_{2t_n} + c_1^{(n)},p_n) \otimes \cdots \otimes L([\eta_k]_{2t_n} + c_k^{(n)},p_n)
\]
for some constants $c_i^{(n)}$. Setting $c_i = \prod_{\mcF} c_i^{(n)}$, and using \L o\'s's theorem, we get:
\[
M \sqsubset L(\eta_1 + c_1) \otimes \cdots \otimes L(\eta_k + c_k).
\]

Additionally, we have the following:
\[
L(\lambda^{(n)}(u)) \sqsubset L([\eta_1]_{2t_n} + c_1^{(n)},p_n) \otimes \cdots \otimes L([\eta_k]_{2t_n} + c_k^{(n)},p_n),
\]
so the ultraproduct $L=\prod_{\mcF} L(\lambda^{(n)}(u))$ lies in $\Rep(GL_t)$. By \cite[Definition 4.2.9]{kalinov_yangians_20}, $L$ is isomorphic to $L(\lambda(u))$ for some $Y(\gl_t)$-highest weight $\lambda(u)$, where $\lambda(u)$ is a pair of sequences $(\lambda_i^{\bullet}(u), \lambda_i^{\circ}(u))$. Then, by \L o\'s's theorem:
\[
M \sqsubset L(\lambda(u)).
\]

\noindent\textbf{Orthogonal Case:} The argument is analogous. For algebraic $t$ (i.e., in positive characteristic), the proof proceeds identically. In characteristic zero, it applies for types $A$ and $B$ modules since, if $M_n$ is type $A$ or $B$, the second part of the proof of \cite[Theorem 4.4.14]{molev_yangians_book_07} implies that $M_n$ is a subquotient of $L(\lambda^{(n)}(u))$, which in turn is a subquotient of $L(\lambda_1^{(n)}) \otimes \cdots \otimes L(\lambda_{k_n}^{(n)})$. If $M_n$ is type $C$, then $M_n^\sharp$ is of type $B$, and $M_n^\sharp \sqsubset L(\lambda(u))$.
\end{proof}

Also, note that the $Y(\xsl_t)$-module $L(\lambda(u))$ completely determines $M$, by the same argument as in  \cite[Corollary 4.2.8]{kalinov_yangians_20}. Let $M^\natural = M$, $M_n^\natural = M_n$ unless $M$ is type $C$ in which case set $M^\natural = M^\sharp$ and $M_n^\natural = M_n^\sharp$.
Then we have
\[M^\natural \sqsubset L(\lambda(u))\sqsubset L(\eta_1+c_1)\otimes \cdots \otimes L(\eta_k+c_k),\]
where $L(\lambda(u))$ is the finite-dimensional irreducible highest-weight representation of $Y(\gl_t)$, and $\lambda(u)$ is a pair of sequences $(\lambda_i^{\bullet}(u), \lambda_i^{\circ}(u))$ in $\CC\dbrack{u^{-1}}$ given by
\[\lambda_i^{\bullet}(u) =\prod_{j=1}^k (1+[(\eta_j^{\bullet})_i+c_j]u^{-1}) \quad \text{and}\quad \lambda_i^{\circ}(u) =\prod_{j=1}^k (1+[(\eta_j^{\circ})_i-c_j]u^{-1})\]
along with an element \[\lambda^m =\prod_k(1+c_ku^{-1}).\] These are defined up to simultaneous multiplication by any $f(u)\in \CC\dbrack{u^{-1}}$. We define the highest weight of $M^\natural$, as a $Y(\g)$-module, as a sequence $\CC\dbrack{u^{-1}}$ equal to
\[\mu_i(u) = \lambda_i^{\bullet}(-u) \lambda_i^{\circ}(-u)\]
along with an element \[\mu^m(u) = \lambda^m(-u)\lambda^m(u).\] Set $l(\mu) = l(\lambda)$. Note that multiplying $\lambda$ by $f(u)\in \CC\dbrack{u^{-1}}$ multiplies $\mu$ by $f(u)f(-u)\in \CC\dbrack{u^{-2}}$. Thus $\mu$ is defined up to simultaneous multiplication by $g(u)\in\CC\dbrack{u^{-2}}$. Since $\mu^m(u)$ is an even sequence, we may assume that $\mu^m(u) = 1$. So with this assumption, $\mu(u)$ is uniquely determined.

We can relate the highest weight of $M^\natural$ to the highest weights of $M_n^\natural$ in the following way. For $n>l(\mu)$, define 
\[([\mu]_{t_n})_{t_n+1-i}= \begin{cases}
    \mu_i(u) & \text{for } i\le l(\mu), \\
    \mu^m(u) & \text{otherwise},
\end{cases}\]
where the coefficients are also reduced to $\overline{\FF}_{p_n}$, i.e. $c_j$ is replaced with $c_j^{(n)}$, etc.

By \L o\'s's theorem, for almost all $n$, we have
\[M_n' \sqsubset L([\lambda]_{2t_n}(u)) \sqsubset L([\eta_1]_{2t_n}+c_1^{(n)},p_n)\otimes \cdots \otimes L([\eta_k]_{2t_n}+c_k^{(n)},p_n).\]
From \cite[Corollary 4.2.12]{molev_yangians_book_07}, the $t_n+1-i$-th component of the highest weight of $M_n^\natural$ is simply \[([\lambda]_{2t_n})_{-t_n-1+i}(-u)([\lambda]_{2t_n})_{t_n+1-i}(u) ,\] which is easily checked to equal $([\mu]_{t_n})_{t_n+1-i}$.

We can now prove the main classification theorems.
\begin{thm}\label{thm: SY(spt)}
    For every irreducible $M\in \Rep_0(SY(\xsp_t))$, there exists a sequence of monic polynomials, denoted by $P(u)=P_1(u),P_2(u),\ldots$ such that
    \begin{itemize}
        \item the corresponding highest weight satisfies
    \begin{align}\label{eqn: sp_t drinfeld polynomials}
    \frac{\mu_{i+1}(u)}{\mu_{i}(u)}=\frac{P_i(u+1)}{P_i(u)}.
    \end{align}
    \item for sufficiently large $i$, the $\mu_i(u)$ stabilizes and equals $\mu^m(u)=1$, and the $P_i(u)$ stabilize and equal $1$.
    \end{itemize}
    Moreover, for any such $P(u)$, there is $M$ with highest weight satisfying \eqref{eqn: sp_t drinfeld polynomials}. This also gives a one-to-one corresponence between irreducible modules and sequences of monic polynomials $P(u)$.
\end{thm}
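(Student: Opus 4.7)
The plan is to transfer the statement to integer rank via the ultraproduct construction and apply the positive-characteristic classification theorems \cref{thm: Y(sp2n) classification 1,thm: Y(sp2n) classification 2}, taking care of the index reversal: the complex-rank index $j$ runs over $1,2,\ldots$ and stabilizes, whereas the integer-rank index $i$ runs over $1,\ldots,t_n$ with a distinguished ``symmetric'' polynomial $P_1^{(n)}$ at the innermost position.

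For the forward direction, given an irreducible $M\in\Rep_0(SY(\xsp_t))$, I would extend it to an irreducible $Y(\xsp_t)$-module using \cref{prop: representations of Y(gn) and SY(gn) positive char}, and write $M=\prod_\mcF M_n$ by \cref{lem: irreducible objects of Y(g)}. The decomposition of $M$ in $\Rep(G)$ as a finite direct sum of simples indexed by negative partitions of length at most some $L$ implies, via \eqref{eqn: decomposition as a Y(g)-module}, that for almost all $n$ the $\xsp_{2t_n}$-weights of $M_n$ have support only at indices close to $t_n$. Combined with \cref{lem: positive char lemma}, this places $M_n$ within the hypotheses of \cref{prop: positive char highest weight} for almost all $n$, so \cref{thm: Y(sp2n) classification 1} produces integer-rank Drinfeld polynomials $P_1^{(n)}(u),\ldots,P_{t_n}^{(n)}(u)$ with $P_1^{(n)}(u)=P_1^{(n)}(-u+1)$. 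The analysis following \cref{prop: subquotient of eval modules} identifies the complex-rank weight $\mu_j(u)$ with the ultraproduct of $\mu_{t_n+1-j}^{(n)}(u)$. Setting $P_j(u):=\prod_\mcF P_{t_n+1-j}^{(n)}(u)$ gives monic polynomials satisfying \eqref{eqn: sp_t drinfeld polynomials} by \L o\'s's theorem, and for $j>L$ the stabilization $\mu_j(u)=\mu^m(u)=1$ forces $P_j(u+1)=P_j(u)$, whence $P_j(u)=1$. The integer-rank symmetry of $P_1^{(n)}(u)$ does not manifest as a constraint on any $P_j(u)$; rather, it is absorbed into the even-series normalization $\mu^m(u)=1$.

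For the backward direction, given monic polynomials $P_1(u),P_2(u),\ldots$ with $P_j(u)=1$ for $j>N$, I would choose reductions $P_j^{(n)}(u)\in\overline{\FF}_{p_n}[u]$ and, for $n$ with $2t_n>N$, set the integer-rank data by $P_i^{(n)}(u):=P_{t_n+1-i}^{(n)}(u)$ for $t_n+1-N\le i\le t_n$ and $P_i^{(n)}(u):=1$ for all smaller $i$ (in particular, $P_1^{(n)}(u)=1$ trivially satisfies the symmetry). The bound $\deg P_1^{(n)}+2\sum_{i\ge 2}\deg P_i^{(n)}+2t_n<p_n$ required by \cref{thm: Y(sp2n) classification 2} holds for almost all $n$ by \cref{lem: positive char lemma}, since the total degree is eventually constant; this produces irreducible $Y(\xsp_{2t_n})$-modules $M_n$ whose ultraproduct $M=\prod_\mcF M_n$ is the desired object. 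Uniqueness of the $P_j(u)$ for a given $M$ follows because, after fixing $\mu^m(u)=1$, the weight $(\mu_j(u))_{j\ge 1}$ is uniquely determined by $M$ and the monic $P_j(u)$ are then uniquely determined by \eqref{eqn: sp_t drinfeld polynomials}.

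The principal obstacle is verifying that the ultraproduct $M$ genuinely lies in $\Rep_0(SY(\xsp_t))$ rather than in the larger ind-category $\prod_\mcF\bfRep(G_n)$; concretely, one must show that $M_n$ decomposes as a $\xsp_{2t_n}$-module into a finite direct sum of simples whose partition data is bounded independently of $n$. This is exactly the content of the remarks following \cref{thm: Y(sp2n) classification 2}: the maximum value of $-\mu_{t_n}^{(n)}$ appearing in such a decomposition is bounded by $\deg P_1^{(n)}+2\sum_{i\ge 2}\deg P_i^{(n)}$, which is constant in $n$ once the reductions are fixed. This uniform bound is precisely what is needed for the ultraproduct to lie in $\Rep(G)$.
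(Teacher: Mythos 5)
Your proposal is correct and follows essentially the same route as the paper: transfer to integer rank via ultraproducts, invoke \cref{prop: positive char highest weight} and \cref{lem: positive char lemma} to land in the regime where \cref{thm: Y(sp2n) classification 1,thm: Y(sp2n) classification 2} apply, take the ultraproduct of the integer-rank Drinfeld polynomials with the index reversal $j \leftrightarrow t_n+1-j$, and observe that the symmetric polynomial $P_1^{(n)}$ forces itself to be $1$ once $\mu^m(u)=1$. The paper's own proof is much terser (it defers almost entirely to the argument of Kalinov's Theorem 4.2.11); you usefully unpack the index bookkeeping, and your identification of the true obstacle --- checking that $\prod_\mcF M_n$ lands in $\Rep(G)$ rather than merely in the ambient ind-category --- is exactly the right thing to worry about.

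One imprecision, though, in your final paragraph. The bound on $-\mu^{(n)}_{t_n}$ coming from the remark after \cref{thm: Y(sp2n) classification 2} controls the depth $-\nu_{t_n}$ of the $\xsp_{2t_n}$-weights $\nu$ appearing in the decomposition of $M_n$, but by itself it does not bound the number of nonzero parts of $\nu$ (a weight with all entries $-1$ has depth $1$ yet length $t_n$). What makes the partition data stabilize is that $M_n$ is a subquotient of the cyclic span inside $L(\lambda^{(n)}(u)) \cong L(\lambda_1^{(n)}) \otimes \cdots \otimes L(\lambda_k^{(n)})$, where $k$ and the underlying bipartitions $\eta_i$ are fixed because the degrees of the chosen Drinfeld polynomial reductions are fixed, and the $GL_{2t_n}$-tensor decomposition together with the branching to $\xsp_{2t_n}$ then stabilizes in $n$. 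The depth bound is an ingredient (it forces the relevant simples to live in the fundamental alcove so that \cref{prop: positive char highest weight} and \cref{lem: irreducible g_n-modules in positive char} apply), but it is not by itself ``precisely what is needed'' --- the chain through fixed evaluation-module data and stable branching rules is the step that actually yields uniform boundedness of the partition data.
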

The polynomials in the sequence $P(u)$ are referred to as the \textit{Drinfeld polynomials} corresponding to $M$.
\begin{proof}
    For the first part, consider the corresponding sequence of modules $M_n \in \Rep_0(SY(\xsp_t))$, almost all of which are irreducible with highest weight equal to $[\mu]_{t_n}(u)$, where $\mu(u)$ is the highest weight of $M$. Let the Drinfeld polynomials corresponding to $[\mu]_{t_n}(u)$ be \[Q^{(n)}(u), P_{t_n-1}(u),\ldots, P_1^{(n)}(u).\] Then, we have
\[
\frac{([\mu]_{t_n})_{t_n+1 - (i+1)}(u)}{([\mu]_{t_n})_{t_n+1-(i)}(u)} = \frac{P_{i}^{(n)}(u+1)}{P_{i}^{(n)}(u)}.
\]
Since \[\deg Q^{(n)}(u) + 2\sum_{i=1}^{t_n-1} \deg P_{i}^{(n)}(u) + 2t_n<p,\] the same argument as in the proof of  \cite[Theorem 4.2.11]{kalinov_yangians_20} applies to show that there exist polynomials $P_1(u), P_2(u), \ldots$ such that \eqref{eqn: sp_t drinfeld polynomials} holds. Moreover, as $\mu_i(u)$ stabilizes and equals $\mu^m(u)=1$ for sufficiently large $i$, it follows that $P_i(u) = 1$ for all sufficiently large $i$.

For the second statement, we may simply use an analogous argument to the second part of the proof of \cite[Theorem 4.2.11]{kalinov_yangians_20}.

Finally, note that the highest weight of $M$ is unique subject to the condition $\mu^m(u)=1$, so the Drinfeld polynomials are uniquely determined.
\end{proof}
\begin{thm}
    For every irreducible $M\in \Rep_0(SY(\o_t))$, $M$ is type $A$ and there exists a sequence of monic polynomials, denoted by $P(u)=P_1(u),P_2(u),\ldots$, such that
    \begin{itemize}
        \item the corresponding highest weight satisfies
    \begin{align}\label{eqn: o_t drinfeld polynomials}
    \frac{\mu_{i+1}(u)}{\mu_{i}(u)}=\frac{P_i(u+1)}{P_i(u)}.
    \end{align}
    \item for sufficiently large $i$, the $\mu_i(u)$ stabilizes and equals $\mu^m(u)=1$, and the $P_i(u)$ stabilize and equal $1$.
    \end{itemize}
    Moreover, for any such $P(u)$, there is $M$ with highest weight satisfying \eqref{eqn: o_t drinfeld polynomials}. This also gives a one-to-one correspondence between irreducible modules and sequences of monic polynomials $P(u)$.
\end{thm}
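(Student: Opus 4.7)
The plan is to follow the strategy of \cref{thm: SY(spt)} essentially verbatim, with one additional step needed to establish that $M$ is of type $A$.

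First, I would address the type $A$ claim. In the algebraic case, every irreducible $SY(\o_t)$-module is type $A$ by the convention adopted in the paragraph preceding \cref{lem: positive char lemma}, since in positive characteristic the integer-rank classification admits only the first of the two conditions from \cref{thm: Y(o2n) classification 2}. In the transcendental case, the types of almost all $M_n$ stabilize to some fixed $X \in \{A, B, C\}$ by pigeonhole over the ultrafilter. If $X = C$, replace $M$ with $M^\sharp$ (the $\sharp$-automorphism of $Y(\o_{2n})$ is an involution whose ultraproduct defines an automorphism of $Y(\o_t)$ by \L o\'s's theorem), reducing to type $B$. The remaining task is to exclude type $B$: in integer rank a type $B$ module has $\gamma \in -1/2 - \ZZ_+$, which encodes nontrivial $\o_2$-tail data; the stabilization $\mu_i(u) \to \mu^m(u) = 1$ in complex rank forces this tail to be trivial, pinning $\gamma = 1/2$.

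Next, I would derive the Drinfeld polynomials via the integer-rank theorem. For almost all $n$, \cref{thm: Y(o2n) classification 2} provides polynomials $Q^{(n)}(u), P^{(n)}_{t_n-1}(u), \ldots, P^{(n)}_1(u)$ attached to $M_n^\natural$ with highest weight $[\mu]_{t_n}(u)$, satisfying
\[
\frac{([\mu]_{t_n})_{t_n+1-(i+1)}(u)}{([\mu]_{t_n})_{t_n+1-i}(u)} = \frac{P^{(n)}_i(u+1)}{P^{(n)}_i(u)}.
\]
Invoking \cref{lem: positive char lemma} together with the degree inequality $\deg P^{(n)}_1(u) + 2\sum_{i\geq 2} \deg P^{(n)}_i(u) + 2t_n < p_n$, the limiting argument in the proof of \cite[Theorem 4.2.11]{kalinov_yangians_20} then produces polynomials $P_1(u), P_2(u), \ldots$ over $\CC$ with $\mu_{i+1}(u)/\mu_i(u) = P_i(u+1)/P_i(u)$. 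Since $\mu_i(u) \to \mu^m(u) = 1$ for large $i$, we obtain $P_i(u) = 1$ for all sufficiently large $i$.

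For the converse, given a sequence $P(u) = (P_i(u))$ stabilizing at $1$, I would reduce each $P_i$ modulo the primes $p_n$, apply \cref{thm: Y(o2n) classification 2} at each integer rank to produce modules $M_n$ with the reduced Drinfeld polynomials, and take the ultraproduct; by \cref{lem: irreducible objects of Y(g)} this lands in $\Rep_0(SY(\o_t))$, and the extraction procedure above recovers $P(u)$. Uniqueness, and hence the bijection, follows from the normalization $\mu^m(u) = 1$ (which makes $\mu(u)$ unique) together with the fact that the Drinfeld polynomials are determined by $\mu(u)$. The principal obstacle in this strategy is the type $A$ reduction in the transcendental case, since this is the one place where the orthogonal classification genuinely diverges from the symplectic one; the remainder of the argument is a routine translation of Kalinov's proof through \L o\'s's theorem.
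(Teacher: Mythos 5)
Your proposal is correct and follows essentially the same route as the paper: reduce to the symplectic argument, then handle type by observing that the normalization $\mu^m(u)=1$ (in integer rank, the top components $([\mu]_{t_n})_1(u)=1$) is inconsistent with $\gamma$ lying in the second subset of \eqref{eqn: subsets}, forcing $\gamma = 1/2$ and hence type $A$, after using $\sharp$ to eliminate type $C$. Two very small remarks: the relevant integer-rank classification you want to cite for the statement ``in positive characteristic only the first relation holds'' is the unnumbered theorem just before \cref{thm: Y(o2n) classification 2} (that theorem is the converse, existence direction), and it would strengthen the write-up to spell out why $(u-\tfrac12)/(u+\tfrac12) = \tfrac{P(u+1)}{P(u)}\cdot\tfrac{u-\gamma}{u+\gamma}$ with $P(u)=P(-u+1)$ forces $\gamma=\tfrac12$ rather than asserting it from ``trivial tail data,'' though this is the same level of detail as the paper's own proof.
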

\begin{proof}
    If $t$ is algebraic, the proof is identical to that for the symplectic case. Now suppose $t$ is transcendental. We know that $\mu^m(u)=1$. Thus, for sufficiently large $n$, the highest weight $([\mu]_{n})(u)$ of $M_n^\natural$ satisfies $([\mu]_{n})_1(u) = 1$. It also must satisfy \eqref{eqn: Y(o2) classification positive char}. The $\gamma$ corresponding to it is in one of the first two subsets in \eqref{eqn: subsets}, since $M_n^\natural$ is type $A$ or $B$. However, for $\gamma$ in the second subset, \eqref{eqn: Y(o2) classification positive char} impossible for $\mu(u)=1$, i.e. $\mu'(u) = 1+1/2u^{-1}$. So $\gamma = -1/2$, and $M_n^\natural$ is type $A$. Thus $M_n^\natural$ is type $A$ for almost all $n$, so $M$ is type $A$. The rest of the proof is exactly as in the symplectic case. 
\end{proof}

Finally, let us discuss the classification of $Y(\g)$-modules. We know that
\[Y(\g) = SY(\g) \otimes Z_{HC}(Y(\g)).\]
Hence a representation of $Y(\g)$ is given by a tensor product of a representation of $SY(\g)$ with a representation of $\CC\dbrack{z_2,z_4,\ldots}$. This is simply a sequence of complex numbers, represented as an even series $g(u)\in\CC\dbrack{u^{-2}}$. Two such representations corresponding to the same irreducible representations of $SY(\g)$ differ by multiplication by a one-dimensional representation of $Y(\g)$. Thus, a specific choice of $g(u) \in 1+u^{-2}\CC\dbrack{u^{-2}}$, determines a unique representation of $Y(\g)$. Thus we have the following.
\begin{thm}\label{thm: main result}
    Irreducible objects of $\Rep_0(Y(\g))$ are in one-to-one correspondence with tuples $(P(u),g(u))$, where $P(u)=P_1(u),P_2(u),\ldots$ is a sequence of Drinfeld polynomials and $g(u)\in 1+u^{-2}\CC\dbrack{u^{-2}}$. 
\end{thm}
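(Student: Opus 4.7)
The plan is to reduce the classification to the two preceding theorems classifying irreducible $SY(\g)$-modules by sequences of Drinfeld polynomials, together with the tensor product decomposition $Y(\g) = SY(\g) \otimes Z_{HC}(Y(\g))$. This decomposition holds in integer rank by \cref{thm: tensor product decomposition}, and since it is a first-order statement about the algebra, one can transport it to complex rank via \L o\'s's theorem exactly as was done for \cref{prop: evaluation and embedding in complex rank} and the embedding $Y(\g) \hookrightarrow Y(\gl_t)$.

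Granting this, the core observation is that $Z_{HC}(Y(\g)) \cong \CC[c_2,c_4,\ldots]$ is a polynomial algebra in the even coefficients of the Sklyanin determinant, so its irreducible representations are one-dimensional characters, determined by the scalars $c_{2k}\mapsto g_k \in \CC$. Assembling these scalars into the formal series
\[ g(u) = 1 + g_1 u^{-2} + g_2 u^{-4} + \cdots \in 1 + u^{-2}\CC\dbrack{u^{-2}} \]
packages the parameter space of such characters as the claimed set. By Schur's lemma, the central subalgebra $Z_{HC}(Y(\g))$ acts on any irreducible $M \in \Rep_0(Y(\g))$ by such a character $\chi_g$; combining this with the tensor product decomposition produces a factorization $M \cong M_S \boxtimes \chi_g$, where $M_S$ is an irreducible $SY(\g)$-module and hence corresponds to a unique sequence $P(u) = P_1(u), P_2(u), \ldots$ of Drinfeld polynomials by \cref{thm: SY(spt)} (or its orthogonal counterpart). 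Conversely, given any pair $(P(u), g(u))$, the outer tensor product of the corresponding $SY(\g)$-module with the character $\chi_g$ is manifestly irreducible over $Y(\g)$.

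The only thing that requires care is checking that the map $(P(u), g(u)) \mapsto M$ is injective: different $g(u)$ must yield non-isomorphic $Y(\g)$-modules even when the restriction to $SY(\g)$ coincides. This follows at once from the algebraic independence of $c_2, c_4, \ldots$ established in \cref{thm: tensor product decomposition}, which guarantees that distinct $g(u)$ induce distinct characters of $Z_{HC}(Y(\g))$. I do not anticipate further obstacles, since all the substantive work has already been carried out in the $SY(\g)$ classification and in the ultraproduct construction that lifts the integer-rank decomposition to complex rank.
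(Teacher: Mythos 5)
Your argument is correct and follows the same strategy as the paper: both reduce to the $SY(\g)$-classification via the tensor product decomposition $Y(\g) = SY(\g) \otimes Z_{HC}(Y(\g))$ and then parametrize the central characters by the even series $g(u) \in 1 + u^{-2}\CC\dbrack{u^{-2}}$. You make explicit two points the paper leaves implicit---the use of \L o\'s's theorem to transport the decomposition from integer rank to complex rank, and the Schur-lemma argument showing that $Z_{HC}(Y(\g))$ acts by scalars on an irreducible---but the substance is the same.
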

It is interesting to note that the classification does not depend on whether $\g = \o_t$ or $\xsp_t$. One may expect this given that $\Rep(O_t) = \Rep(Sp_{-t})$ as tensor categories, and only a slight modification needs to be made to the braiding for them to be equivalent as symmetric tensor categories.

\bibliographystyle{amsalpha}
\bibliography{bib}

\providecommand{\bysame}{\leavevmode\hbox to3em{\hrulefill}\thinspace}
\providecommand{\MR}{\relax\ifhmode\unskip\space\fi MR }
\providecommand{\MRhref}[2]{%
  \href{http://www.ams.org/mathscinet-getitem?mr=#1}{#2}
}
\providecommand{\href}[2]{#2}
\begin{thebibliography}{DMOS82}

\bibitem[BT18]{brundan_topley_modular_yangians_18}
Jonathan Brundan and Lewis Topley, \emph{The {$p$}-centre of {Y}angians and
  shifted {Y}angians}, Mosc. Math. J. \textbf{18} (2018), no.~4, 617--657.

\bibitem[Cou21]{Coulembier_2021}
Kevin Coulembier, \emph{Monoidal abelian envelopes}, Compositio Mathematica
  \textbf{157} (2021), no.~7, 1584–1609.

\bibitem[Del90]{deligne_categories_tannakiennes_90}
Pierre Deligne, \emph{Cat\'egories tannakiennes}, The {G}rothendieck
  {F}estschrift, {V}ol.\ {II}, Progr. Math., vol.~87, Birkh\"auser Boston,
  Boston, MA, 1990, pp.~111--195.

\bibitem[Del02]{deligne_categories_tensorielles_02}
\bysame, \emph{Cat\'egories tensorielles}, Mosc. Math. J. \textbf{2} (2002),
  no.~2, 227--248, Dedicated to Yuri I. Manin on the occasion of his 65th
  birthday.

\bibitem[Del07]{deligne_rep_st_07}
\bysame, \emph{La cat\'egorie des repr\'esentations du groupe sym\'etrique
  {$S_t$}, lorsque {$t$} n'est pas un entier naturel}, Algebraic groups and
  homogeneous spaces, Tata Inst. Fund. Res. Stud. Math., vol.~19, Tata Inst.
  Fund. Res., Mumbai, 2007, pp.~209--273.

\bibitem[DLZ18]{DELIGNE20184}
P.~Deligne, G.I. Lehrer, and R.B. Zhang, \emph{The first fundamental theorem of
  invariant theory for the orthosymplectic super group}, Advances in
  Mathematics \textbf{327} (2018), 4--24, Special volume honoring David
  Kazhdan.

\bibitem[DMOS82]{dmos_hodge_82}
Pierre Deligne, James~S. Milne, Arthur Ogus, and Kuang-yen Shih, \emph{Hodge
  cycles, motives, and {S}himura varieties}, Lecture Notes in Mathematics, vol.
  900, Springer-Verlag, Berlin-New York, 1982. \MR{654325}

\bibitem[Dri85]{drinfeld_yangians_85}
Vladimir~G. Drinfeld, \emph{Hopf algebras and the quantum {Y}ang-{B}axter
  equation}, Dokl. Akad. Nauk SSSR \textbf{283} (1985), no.~5, 1060--1064.

\bibitem[EAHS18]{ehs_envelope}
Inna Entova-Aizenbud, Vladimir Hinich, and Vera Serganova, \emph{Deligne
  categories and the limit of categories {$\mathrm{Rep}(GL(m|n))$}},
  International Mathematics Research Notices \textbf{2020} (2018), no.~15,
  4602--4666.

\bibitem[EGNO16]{etingof2016tensor}
Pavel Etingof, Shlomo Gelaki, Dmitri Nikshych, and Victor Ostrik, \emph{Tensor
  categories}, vol. 205, American Mathematical Soc., 2016.

\bibitem[EK23]{etingof2021lectures}
Pavel Etingof and Arun~S. Kannan, \emph{Lectures on symmetric tensor
  categories}, Representations of Algebras and Related Structures: Proceedings
  of the International Conference on Representations of Algebras, ICRA 2020
  (Trondheim, Norway; Bielefeld, Germany) (Aslak~Bakke Buan, Henning Krause,
  and Øyvind Solberg, eds.), EMS Press, 11 2023, pp.~191--234.

\bibitem[Eti16]{etingof_complex_rank_2_16}
Pavel Etingof, \emph{Representation theory in complex rank, {II}}, Adv. Math.
  \textbf{300} (2016), 473--504.

\bibitem[GW00]{goodman2000representations}
Roe Goodman and Nolan~R Wallach, \emph{Representations and invariants of the
  classical groups}, Cambridge University Press, 2000.

\bibitem[Har16]{harman_ultrafilter_16}
Nate Harman, \emph{Deligne categories as limits in rank and characteristic},
  2016.

\bibitem[Hum12]{humphreys2012introduction}
James~E Humphreys, \emph{Introduction to lie algebras and representation
  theory}, vol.~9, Springer Science \& Business Media, 2012.

\bibitem[Jan03]{jantzen2003representations}
Jens~Carsten Jantzen, \emph{Representations of algebraic groups}, vol. 107,
  American Mathematical Soc., 2003.

\bibitem[Kal20]{kalinov_yangians_20}
Daniil Kalinov, \emph{Finite-dimensional representations of {Y}angians in
  complex rank}, International Mathematics Research Notices (2020), no.~20,
  6967--6998.

\bibitem[Mol07]{molev_yangians_book_07}
Alexander Molev, \emph{Yangians and classical {L}ie algebras}, Mathematical
  Surveys and Monographs, vol. 143, American Mathematical Society, Providence,
  RI, 2007.

\bibitem[UH21]{utiralova_21}
Alexandra Utiralova and Serina Hu, \emph{Harish-{C}handra bimodules of finite
  {$K$}-type in {D}eligne categories}, 2021.

\bibitem[Uti19]{utiralova_centralizer_19}
Alexandra Utiralova, \emph{Olshanski's centralizer construction and {D}eligne
  tensor categories}, 2019.

\end{thebibliography}
\newpage

\appendix
\section{The Sklyanin determinant in positive characteristic}\label{sec: appendix}
In this appendix, we define the Sklyanin determinant in positive characteristic and prove \cref{thm: tensor product decomposition}. Our approach follows Sections 2.5–2.9 of \cite{molev_yangians_book_07}. Since many proofs mirror those in the positive characteristic setting, we will only provide brief outlines where appropriate.  

Consider the $R$-matrix $R(u)$, which takes values in $(\End \CC^N)^{\otimes 2}$. We define the $R$-matrices $R_{ij}(u)$, acting on the $i$th and $j$th components of $(\CC^N)^{\otimes m}$. Similarly, for an $N \times N$ matrix  
\[
M \in (\End \CC^N) \otimes Y(\g_n)\dbrack{u^{-1}},
\]  
we define  
\[
M_i \in (\End \CC^N)^{\otimes m} \otimes Y(\g_n)\dbrack{u^{-1}}
\]  
as acting on the $i$th component of $(\CC^N)^{\otimes m}$.  We will refer to the following equation as the \textit{ternary relation}

\begin{align}\label{eqn: ternary relation}
    R(u-v)T_1(u)T_2(u)=T_2(u)T_1(u)R(u-v),
\end{align}
where $T(u)$ is from section \ref{subsec: twisted yangians in integer rank} and the subscript indicates which components it acts on. We now introduce the rational function $R(u_1, \dots, u_m)$ with values in $(\End \CC^N)^{\otimes m}$, defined as  
\begin{align}\label{eqn: R(u1...um)}
    R(u_1,\ldots,u_m) = (R_{m-1,m})(R_{m-2,m} R_{m-2,m-1}) \cdots (R_{1m} \cdots R_{12}),
\end{align}  
where $u_1, \dots, u_m$ are independent variables, and we abbreviate $R_{ij} = R_{ij}(u_i - u_j)$. Here, $R_{ij}(u)$ denotes the $R$-matrix acting on the $i$th and $j$th components of $(\End \CC^N)^{\otimes m}$. Using the Yang-Baxter equation (see \eqref{eqn: Yang-Baxter}) and the fact that we may commute $R_{ij}$ and $R_{kl}$ for pairwise distinct $i,j,k,l$, a simple induction shows
\begin{align}\label{eqn: R(u1...um) ii}
    R(u_1,\ldots,u_m) = (R_{12}\cdots R_{1m})\cdots (R_{m-2,m-1}R_{m-2,m})(R_{m-1,m}).
\end{align}
Define
\[T_i=T_i(u_i), \quad \mcS_i=\mcS_i(u_i), \qquad 1\le i\le m\]
and
\[R_{ij}' = R_{ji}' =R_{ij}'(-u_i-u_j).\]
For an arbitrary permutation $(p_1,\ldots,p_m)$ of the indices $1,\ldots,m$, we abbreviate
\begin{align}
    \left<\mcS_{p_1},\ldots, \mcS_{p_m}\right> = \mcS_{p_1}(R_{p_1p_2}'\cdots R_{p_1p_m}') \mcS_{p_2}(R_{p_2p_3}'\cdots R_{p_2p_m}')\cdots \mcS_{p_m}.
\end{align}
\begin{prop}
    We have the identity
    \begin{align*}
        R(u_1,\ldots, u_m)T_1\cdots T_m = T_m\cdots T_1R(u_1,\ldots,u_m).
    \end{align*}
\end{prop}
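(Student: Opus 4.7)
The plan is to induct on $m$. The base case $m=2$ is essentially the ternary relation \eqref{eqn: ternary relation} itself: $R_{12}(u_1-u_2)T_1T_2=T_2T_1R_{12}(u_1-u_2)$. For the inductive step, the key observation is that the second form \eqref{eqn: R(u1...um) ii} of $R(u_1,\ldots,u_m)$ factors cleanly as
\[
R(u_1,\ldots,u_m) = (R_{12}R_{13}\cdots R_{1m})\cdot R(u_2,\ldots,u_m),
\]
where $R(u_2,\ldots,u_m)$ is built from $R$-matrices whose indices lie in $\{2,\ldots,m\}$. Since these have scalar coefficients and act trivially on the first tensor factor, $R(u_2,\ldots,u_m)$ commutes with $T_1$. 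Therefore $R(u_1,\ldots,u_m)T_1T_2\cdots T_m$ equals
\[
(R_{12}\cdots R_{1m})\,T_1\,R(u_2,\ldots,u_m)\,T_2\cdots T_m,
\]
and applying the inductive hypothesis to the right-hand factor converts $R(u_2,\ldots,u_m)T_2\cdots T_m$ into $T_m\cdots T_2\,R(u_2,\ldots,u_m)$.

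After this reduction, what remains is to verify the auxiliary identity
\[
(R_{12}R_{13}\cdots R_{1m})\,T_1T_mT_{m-1}\cdots T_2 = T_mT_{m-1}\cdots T_2T_1\,(R_{12}R_{13}\cdots R_{1m}),
\]
which I would also prove by induction on $m$. The key move is to peel off the outermost pair: use the ternary relation in the form $R_{1m}T_1T_m=T_mT_1R_{1m}$, then move $T_m$ all the way to the left past $R_{12},\ldots,R_{1,m-1}$ (which act trivially on component $m$) and move $R_{1m}$ to the right past $T_{m-1},\ldots,T_2$ (which act trivially on components $1,m$). This reduces the identity to the $(m-1)$-case. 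Combining with the earlier reduction and the factorization $(R_{12}\cdots R_{1m})R(u_2,\ldots,u_m) = R(u_1,\ldots,u_m)$ closes the induction.

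The main obstacle is purely bookkeeping: tracking which $R$-matrices commute past which $T_i$ or past each other. The relevant principles are that $R_{ij}(u)$ has scalar coefficients (so commutes with every $T_k$ whose index is distinct from $i,j$), that $T_i$ and $T_k$ commute whenever they act on distinct tensor components provided no Yangian entries are entangled — which they are, so $T_i$'s among themselves never commute and must always be swapped via a ternary relation — and that distinct $R_{ij}, R_{kl}$ commute whenever $\{i,j\}\cap\{k,l\}=\emptyset$. With these conventions fixed at the start, both inductions reduce to a handful of elementary moves.
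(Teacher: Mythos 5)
Your proof is correct and is essentially the same inductive argument as the paper's, just mirrored: the paper peels off the tail factor of the definition \eqref{eqn: R(u1...um)}, writing $R(u_1,\ldots,u_m)=R(u_2,\ldots,u_m)(R_{1m}\cdots R_{12})$ and establishing $(R_{1m}\cdots R_{12})T_1T_2\cdots T_m=(T_2\cdots T_m)T_1(R_{1m}\cdots R_{12})$ by repeated ternary relations, whereas you peel off the head factor from \eqref{eqn: R(u1...um) ii}, writing $R(u_1,\ldots,u_m)=(R_{12}\cdots R_{1m})R(u_2,\ldots,u_m)$ and establishing the mirrored auxiliary identity $(R_{12}\cdots R_{1m})T_1T_m\cdots T_2=T_m\cdots T_1(R_{12}\cdots R_{1m})$. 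Both reductions then commute $T_1$ past $R(u_2,\ldots,u_m)$ and invoke the inductive hypothesis in the same way, so this is not a genuinely different route.
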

\begin{proof}
    First, by repeated application of the ternary relation \eqref{eqn: ternary relation} and the fact that $R_{ij}$ and $T_k$ are permutable for $i,j,k$ distinct, we get
    \[(R_{1m}\cdots R_{12}) T_1(T_2\cdots T_m) = (T_2\cdots T_m)T_1(R_{1m}\cdots R_{12}).\]
    Since
    \[R(u_1,\ldots,u_m) = R(u_2,\ldots,u_m)(R_{1m}\cdots R_{12}),\]
    by induction on $m$ we are done.
\end{proof}
\begin{prop}
    We have the identity
    \begin{align}
        R(u_1,\ldots, u_m) \left<\mcS_1,\ldots,\mcS_m\right> = \left<\mcS_m,\ldots,\mcS_1\right>R(u_1,\ldots u_m).
    \end{align}
\end{prop}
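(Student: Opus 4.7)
The plan is to induct on $m$, closely following Molev's characteristic-zero argument in \cite[Chapter 2]{molev_yangians_book_07}. Every identity that will be invoked --- the ternary relation \eqref{eqn: ternary relation}, the quaternary relation \eqref{eqn: quaternary relation}, the Yang--Baxter equation together with its mixed $R$--$R'$ variants, and componentwise commutation of $R_{ij}$, $R'_{ij}$, $\mcS_k$ whenever $k\notin\{i,j\}$ --- is a polynomial identity with integer coefficients, so the whole argument will transfer verbatim to characteristic $p\neq 2$. The base case $m=2$ is immediate: $\langle\mcS_1,\mcS_2\rangle = \mcS_1 R'_{12}\mcS_2$ and $\langle\mcS_2,\mcS_1\rangle = \mcS_2 R'_{12}\mcS_1$ (using $R'_{ij}=R'_{ji}$), so the claim reduces exactly to the quaternary relation.

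For the inductive step, I would factor $R(u_1,\ldots,u_m) = (R_{12}\cdots R_{1m})\,R(u_2,\ldots,u_m)$ using \eqref{eqn: R(u1...um) ii}. Since $R(u_2,\ldots,u_m)$ acts as the identity on the first tensor component, it commutes with $\mcS_1$. I would then commute $R(u_2,\ldots,u_m)$ past the product $R'_{12}\cdots R'_{1m}$ via the mixed Yang--Baxter relations (the resulting reordering of $R'_{1j}$'s will ultimately be absorbed by disjoint-index commutations), apply the outer inductive hypothesis in the form $R(u_2,\ldots,u_m)\langle\mcS_2,\ldots,\mcS_m\rangle = \langle\mcS_m,\ldots,\mcS_2\rangle\,R(u_2,\ldots,u_m)$, and finally iterate the quaternary relation to push $\mcS_1$ past $\mcS_2,\mcS_3,\ldots,\mcS_m$ in turn. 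Each such step implements the local move $R_{1j}\,\mcS_1\,R'_{1j}\,\mcS_j \leadsto \mcS_j\,R'_{1j}\,\mcS_1\,R_{1j}$; after all $m-1$ such moves the $R$-matrices on the right will reassemble into $R(u_2,\ldots,u_m)(R_{1m}\cdots R_{12}) = R(u_1,\ldots,u_m)$ via \eqref{eqn: R(u1...um)}, and the nested structure on the left will collapse to $\langle\mcS_m,\ldots,\mcS_1\rangle$.

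The hard part will be the combinatorial bookkeeping in the inductive step: at each stage one must verify that the residual $R_{1j}$'s, $R'_{1j}$'s, and the unaffected $R'_{ik}$'s (for $2\leq i<k\leq m$) are threaded through each other in a way consistent with the mixed Yang--Baxter relations, so that after all the local moves the overall expression is precisely $\langle\mcS_m,\ldots,\mcS_1\rangle\,R(u_1,\ldots,u_m)$. Because every such rearrangement is polynomial in the $R$- and $R'$-matrices, and no step of the proof requires division by a prime (the only denominator in the defining relations, the $2u$ in \eqref{eqn: symmetry relation}, never enters here), the characteristic-zero argument of \cite[Chapter 2]{molev_yangians_book_07} will go through unchanged for $p\neq 2$.
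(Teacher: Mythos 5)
Your proposal is correct and follows essentially the same strategy as the paper: both establish the needed local identities (quaternary relation, mixed $R$--$R'$ Yang--Baxter) and then defer the combinatorial bookkeeping to Molev's Proposition 2.5.1, observing that all manipulations are polynomial identities with integer coefficients and thus transfer to characteristic $p\neq 2$. The one thing the paper does that you pass over is actually \emph{deriving} the mixed Yang--Baxter relation $R_{ij}R'_{ik}R'_{jk}=R'_{jk}R'_{ik}R_{ij}$ in the paper's convention for $A'=G^{-1}A^tG$, by transposing the Yang--Baxter equation in the $k$th slot and conjugating by $G_k^{-1}P_{ij}G_k$; you invoke ``mixed $R$--$R'$ variants'' without establishing them, so a complete write-up should include that short verification.
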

\begin{proof}
    Let $i,j,k$ be distinct indices in $\{1,\ldots,m\}$. Note that we have
    \begin{align}\label{eqn: quaternary relations}
        R_{ij}\mcS_i R_{ij}' \mcS_{j} &= \mcS_{j}R_{ji}'\mcS_iR_{ij} \\
        \label{eqn: Yang-Baxter final transposed}
        R_{ij}R_{ik}'R_{jk}' &= R_{jk}'R_{ik}'R_{ij}.
    \end{align}
    Indeed, \eqref{eqn: quaternary relations} is simply the quaternary relations, and for \eqref{eqn: Yang-Baxter final transposed}, we start with the Yang-Baxter equation
    \begin{align}\label{eqn: Yang-Baxter}
    R_{ij}R_{ik}R_{jk}=R_{jk}R_{ik}R_{ij}.
    \end{align}
    Then apply the transposition with respect to the matrix $G$ on the $k$'th component to get 
    \begin{align}\label{eqn: Yang-Baxter transposed}
    R_{ij}R_{jk}'R_{ik}' = R_{ik}'R_{jk}'R_{ij}.
    \end{align}
    Now we have $P_{ij}R_{jk}^tP_{ij}=R_{ik}^t$, where $P$ is the permutation operator and \[R_{jk}^t = R_{jk}^t(-u_j-u_k).\] Then we get that
    \[(G_{k}^{-1}P_{ij}G_{k})R_{jk}'(G_{k}^{-1}P_{ij}G_{k}) = (G_{k}^{-1}P_{ij}G_{k})(G_{k}^{-1}R_{jk}^tG_{k})(G_{k}^{-1}P_{ij}G_{k}) =  G_{k}^{-1}R_{ik}^t G_{k}\]
    which is simply $R_{ik}'$. So we conjugate both sides of \eqref{eqn: Yang-Baxter transposed} by $(G_{k}^{-1}P_{ij}G_{k})$, and since conjugating $R_{ij}$  gives us $R_{ij}$ again, since $R_{ij}$ and $G_k$ commute, and $P_{ij}R_{ij}P_{ij}=R_{ij}$, we get \eqref{eqn: Yang-Baxter final transposed}. From here, we can proceed in exactly the same way as the proof of Proposition 2.5.1 in \cite{molev_yangians_book_07}, replacing ``$A^t$'' transpose symbols with the ``$A'$'' transpose with respect to $G$.
\end{proof}

Consider the symmetric group $\mfS_m$ acting on $\{1,\ldots,m\}$. The anti-symmetrizer in the group algebra of $\mfS_m$ is given by
\[\sum_{\sigma\in \mfS_m}\sgn(\sigma)\cdot \sigma \in \CC[\mfS_m].\]
Let $A_m$ be the image of this anti-symmetrizer under the natural action of $\mfS_m$ on $(\CC^N)^{\otimes m}$. We use $e_1,\ldots, e_N$ to be the standard basis vectors of $\CC^N$, we get
\begin{align}\label{eqn: anti-symmetrizer}
    A_m(e_{i_1}\otimes \cdots\otimes e_{i_m}) = \sum_{\sigma\in \mfS_m}\sgn(\sigma)\cdot e_{i_{\sigma(1)}}\otimes \cdots \otimes e_{i_{\sigma(m)}}.
\end{align}
Note that we have $A_m^2 = m!A_m$. For $m\le N$ the right hand side is nonzero because we assume $N<p$, see \cref{prop: positive char highest weight}.
\begin{prop}\label{prop: R(u1...um)=Am}
    For $m\le N$, if $u_i-u_{i+1}=1$ for all $i=1,\ldots,m-1$, then
    \[R(u_1,\ldots,u_m) = A_m.\]
\end{prop}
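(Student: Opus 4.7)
The plan is to induct on $m$, using the alternative presentation \eqref{eqn: R(u1...um) ii}. The base case $m=2$ is immediate: $R(u_1,u_2) = R_{12}(u_1-u_2) = R_{12}(1) = 1-P_{12} = A_2$ by \eqref{eqn: anti-symmetrizer}.

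For the inductive step, factor
\[
R(u_1,\ldots,u_m) = \bigl(R_{12}(1)R_{13}(2)\cdots R_{1m}(m-1)\bigr)\cdot R(u_2,\ldots,u_m).
\]
The shifted tuple $(u_2,\ldots,u_m)$ still satisfies the spacing condition, so by induction $R(u_2,\ldots,u_m) = A_{m-1}^{(2,\ldots,m)}$, the antisymmetrizer on the last $m-1$ factors. The problem reduces to the fusion identity
\[
L := \prod_{k=2}^{m}\left(1-\frac{P_{1k}}{k-1}\right)\cdot A_{m-1}^{(2,\ldots,m)} = A_m.
\]
Note that the denominators $k-1\in\{1,\ldots,m-1\}$ and the factorial $m!$ are all invertible in $\KK$ because $m\le N<p$; in particular, $A_m \ne 0$.

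To prove $L = A_m$, I would show that $L$ is antisymmetric under left multiplication by every adjacent transposition, i.e.~$(1+P_{i,i+1})L=0$ for $i=1,\ldots,m-1$. Since $L$ lies in the image of $\KK[\mfS_m]$ inside $\End((\KK^N)^{\otimes m})$, this forces $L = c\cdot A_m$ for some scalar $c\in\KK$. The case $i=1$ is immediate: the leftmost factor is $R_{12}(1) = 1-P_{12}$, and $(1+P_{12})(1-P_{12})=0$. For $i\ge 2$, the plan is to commute $P_{i,i+1}$ rightward through the leading product, noting that $P_{i,i+1}$ commutes with $R_{1k}(k-1)$ for $k\notin\{i,i+1\}$, and handling the pair $R_{1i}(i-1)R_{1,i+1}(i)$ via the Yang-Baxter relation \eqref{eqn: Yang-Baxter}; once $P_{i,i+1}$ reaches $A_{m-1}^{(2,\ldots,m)}$ it contributes a factor of $-1$, and the correction terms generated by the Yang-Baxter exchange combine to complete the vanishing. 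Finally, to pin down $c=1$, I would evaluate both sides on a tensor $e_{j_1}\otimes\cdots\otimes e_{j_m}$ with pairwise distinct indices (which exists since $m\le N$) and compare the coefficient of the same tensor on both sides.

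The main obstacle is the Yang-Baxter bookkeeping in the $i\ge 2$ case: one must verify that the correction terms produced by commuting $P_{i,i+1}$ past $R_{1i}(i-1)R_{1,i+1}(i)$ align precisely with the expansion of $A_{m-1}^{(2,\ldots,m)}$ to cancel. This is the classical fusion procedure for the symmetric group (cf.~\cite[\S 2.6]{molev_yangians_book_07}), and the characteristic-zero argument transfers verbatim to our setting because the hypothesis $N<p$ ensures every scalar denominator appearing in the manipulation is invertible in $\KK$.
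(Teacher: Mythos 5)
Your proof is correct and reconstructs the same inductive fusion argument behind Proposition 1.6.2 of \cite{molev_yangians_book_07}, which is exactly what the paper's proof cites (the only content the paper adds is that $N<p$ keeps every scalar denominator, and hence $A_m$, nonzero, which you also note). For the $i\ge 2$ step you can avoid the correction-term bookkeeping entirely: since $P_{i,i+1}A_{m-1}^{(2,\ldots,m)}=-A_{m-1}^{(2,\ldots,m)}$, write $A_{m-1}^{(2,\ldots,m)}=\tfrac12 R_{i,i+1}(1)A_{m-1}^{(2,\ldots,m)}$, slide $R_{i,i+1}(1)=1-P_{i,i+1}$ left past the commuting factors $R_{1k}$ with $k\ge i+2$, apply the Yang--Baxter identity $R_{1i}(i-1)R_{1,i+1}(i)R_{i,i+1}(1)=R_{i,i+1}(1)R_{1,i+1}(i)R_{1i}(i-1)$ (whose parameters line up precisely because $u_i-u_{i+1}=1$), and slide $1-P_{i,i+1}$ past the remaining $R_{1k}$ with $k\le i-1$; this exhibits $L$ as a left multiple of $1-P_{i,i+1}$, whence $(1+P_{i,i+1})L=0$ with nothing left to cancel.
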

\begin{proof}
    We can use the same proof as Proposition 1.6.2 in \cite{molev_yangians_book_07}. There are no division by zero problems because all denominators are $\le m\le N<p$.
\end{proof}
Now set $m = N$ and  
\[
u_i = u - i + 1, \quad i = 1, \dots, N.
\]
By the previous propositions,  
\begin{align}\label{eqn: qdet definition}
    A_N T_1\cdots T_m = T_m\cdots T_1 A_N
\end{align}
and  
\begin{align}\label{eqn: sdet definition}
    A_N\left< S_1,\ldots, S_N\right> = \left<S_N,\ldots,S_1\right>A_N.
\end{align}

The image of $A_N$ on $(\CC^N)^{\otimes N}$ is one-dimensional. Indeed, for any basis element $e_{i_1} \otimes \dots \otimes e_{i_N}$, if $i_k = i_{\ell}$ for some $k \neq \ell$, then by \eqref{eqn: anti-symmetrizer},  
\[
A_N(e_{i_1} \otimes \dots \otimes e_{i_N}) = 0.
\]
Thus, $A_N$ is nonzero only on pure tensors indexed by permutations of $(1, \dots, N)$, where it evaluates to  
\[
\sum_{\sigma\in \mfS_N} \sgn(\sigma) \cdot e_{\sigma(1)} \otimes \dots \otimes e_{\sigma(N)}.
\]
It follows that \eqref{eqn: qdet definition} and \eqref{eqn: sdet definition} reduce to $A_N$ multiplied by a scalar series with coefficients in $Y(\gl_N)$ or $Y(\g_n)$, respectively.

 
\begin{defn}
    The \emph{quantum determinant} (resp. \emph{Skylanin determinant}) of the matrix $T(u)$ (resp. $\mcS(u)$) is the formal series with coefficients in $Y(\gl_N)$ (resp. $Y(\g_n)$) given by
    \[\qdet T(u) = d_0+d_1u^{-1}+d_2u^{-2}+\cdots,\]
    respectively
    \[\sdet \mcS(u) = c_0+c_1u^{-1}+c_2u^{-2}+\cdots,\]
    such that the element \eqref{eqn: qdet definition} (resp \eqref{eqn: sdet definition}) equals $A_N\qdet T(u)$ (resp. $A_N\sdet \mcS(u)$).
\end{defn}
\begin{prop}\label{prop: qdet positive char}
    We have for any permutation $\pi\in\mfS_N$
    \begin{align*}
        \qdet T(u) &= \sgn(\pi)\sum_{\sigma\in\mfS_N}\sgn(\sigma)\cdot t_{\sigma(1),\pi(1)}(u)\otimes \cdots \otimes t_{\sigma(N),\pi(N)}(u-N+1)\\
        &=  \sgn(\pi)\sum_{\sigma\in\mfS_N}\sgn(\sigma)\cdot t_{\pi(1),\sigma(1)}(u-N+1)\otimes \cdots \otimes t_{\pi(N),\sigma(N)}(u).
    \end{align*}
\end{prop}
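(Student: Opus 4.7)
The plan is to extract both formulas directly from the defining relation $A_N T_1(u) T_2(u-1) \cdots T_N(u-N+1) = A_N \qdet T(u)$ in $(\End \CC^N)^{\otimes N} \otimes Y(\gl_N)\dbrack{u^{-1}}$ by applying both sides to a single well-chosen vector and comparing coefficients. Specifically, I would apply both sides to $e_{\pi(1)} \otimes \cdots \otimes e_{\pi(N)}$ for an arbitrary but fixed $\pi \in \mathfrak{S}_N$. Using the convention $T_k(u_k)(e_j) = \sum_i t_{ij}(u_k) e_i$ acting on the $k$-th tensor factor, the expansion of $T_1(u) \cdots T_N(u-N+1)$ applied to this vector equals
\[
\sum_{i_1,\ldots,i_N} t_{i_1,\pi(1)}(u) \, t_{i_2,\pi(2)}(u-1) \cdots t_{i_N,\pi(N)}(u-N+1) \cdot e_{i_1} \otimes \cdots \otimes e_{i_N}.
\]
Since $A_N$ annihilates any pure tensor with a repeated index, only tuples of the form $(i_1,\ldots,i_N)=(\sigma(1),\ldots,\sigma(N))$ for some $\sigma \in \mathfrak{S}_N$ survive after applying $A_N$ on the left.

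The next step is a bookkeeping lemma: for any $\tau \in \mathfrak{S}_N$, the change-of-variables calculation $A_N(e_{\tau(1)}\otimes \cdots \otimes e_{\tau(N)}) = \sgn(\tau)\, A_N(e_1 \otimes \cdots \otimes e_N)$ holds, by replacing the summation index $\rho \mapsto \tau^{-1}\rho$ in the definition \eqref{eqn: anti-symmetrizer}. Applying this to both the $\sigma$-indexed terms on the left and to $A_N(e_{\pi(1)}\otimes\cdots\otimes e_{\pi(N)})$ on the right, then comparing the coefficients of the nonzero vector $A_N(e_1 \otimes \cdots \otimes e_N)$, yields
\[
\sgn(\pi) \cdot \qdet T(u) = \sum_{\sigma \in \mathfrak{S}_N} \sgn(\sigma) \cdot t_{\sigma(1),\pi(1)}(u) \cdots t_{\sigma(N),\pi(N)}(u-N+1),
\]
which is the first identity after multiplying by $\sgn(\pi)$. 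For the second identity, I would start instead from the equivalent relation $T_N(u-N+1) \cdots T_1(u) A_N = A_N \qdet T(u)$, which follows from the alternative expression \eqref{eqn: R(u1...um) ii} for $R(u_1,\ldots,u_m)$, and repeat the same argument: now apply both sides to $A_N(e_1 \otimes \cdots \otimes e_N)$, expand this using the lemma above into a signed sum of basis vectors permuted by $\pi$, and read off the coefficient identity.

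The only concern in the positive characteristic setting is that $A_N$ could degenerate or the distinguished vector $A_N(e_1 \otimes \cdots \otimes e_N)$ could vanish. However, the running assumption $N < p$ ensures that $N! \neq 0$ in $\KK$, that $A_N^2 = N! \, A_N$ is a nonzero scalar multiple of $A_N$, and that the image of $A_N$ on $(\CC^N)^{\otimes N}$ remains one-dimensional, spanned by the nonzero vector $A_N(e_1 \otimes \cdots \otimes e_N)$. Consequently the coefficient comparison is valid and the proof goes through verbatim as in the characteristic zero argument of \cite[Proposition 2.6.1]{molev_yangians_book_07}; no genuinely new ingredient is required. The main (minor) obstacle is simply verifying that the $N<p$ hypothesis is sufficient to run the argument without division issues, which it is.
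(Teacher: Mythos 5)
Your proof is correct and takes essentially the same route as the paper, which simply defers to Molev's Proposition 1.6.6 in \cite{molev_yangians_book_07} (note your citation of 2.6.1 should be 1.6.6) and observes that the argument carries over since $N<p$; your write-up just makes the coefficient-comparison explicit. One small point worth flagging: reading off the coefficient of $e_{\pi(1)}\otimes\cdots\otimes e_{\pi(N)}$ in $T_N\cdots T_1 A_N\big(e_1\otimes\cdots\otimes e_N\big)=A_N\qdet T(u)\big(e_1\otimes\cdots\otimes e_N\big)$ gives
\[
\sgn(\pi)\,\qdet T(u)=\sum_{\sigma\in\mfS_N}\sgn(\sigma)\, t_{\pi(N),\sigma(N)}(u-N+1)\cdots t_{\pi(1),\sigma(1)}(u),
\]
with the indices running in reverse; substituting $\pi\mapsto\pi\circ w_0$, $\sigma\mapsto\sigma\circ w_0$ (where $w_0$ is the longest element, whose two $\sgn$ contributions cancel) and using that $\pi$ is arbitrary recovers the stated form, and the factor of $N!$ introduced by your choice of test vector $A_N(e_1\otimes\cdots\otimes e_N)$ divides out harmlessly since $N<p$.
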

\begin{proof}
    The characteristic zero proof works just as well in positive characteristic, see the proof of Proposition 1.6.6 in \cite{molev_yangians_book_07}.
\end{proof}

Due to this, we see that this definition of the quantum determinant matches the one in \cite{kalinov_yangians_20}, so we know that its coefficients are central and algebraically independent.
\begin{thm}\label{thm: sdet and qdet}
    Under the embedding \eqref{eqn: embed Y(gn) into Y(glN)},
    \[\sdet \mcS(u) = \alpha_N(u)\qdet T(u)\qdet T(-u+N-1),\]
    where $\alpha_N(u)$ is given by \eqref{eqn: alphaq(u)}.
\end{thm}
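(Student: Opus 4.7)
The plan is to compute $A_N\langle \mcS_1,\ldots,\mcS_N\rangle$ (with $u_i = u - i + 1$) by substituting the embedding $\mcS_i = T_i(u_i)\,T_i'(-u_i)$, and then to separate the resulting long product into two pieces that become quantum determinants after applying $A_N$. The main preparatory step will be to prove a ``transposed ternary relation'' of the form
\[
R'(-u-v)\,T_1(u)\,T_2'(-v) \;=\; T_2'(-v)\,T_1(u)\,R'(-u-v),
\]
obtained by applying the transpose $t_2$ with respect to $G$ to the standard ternary relation \eqref{eqn: ternary relation}; this, together with \eqref{eqn: ternary relation} itself, gives all the commutations we will need to reorder the $R'$-factors past the $T_i$ and $T_i'$.

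Second, I would use these two ternary relations inductively to push every $T_i'(-u_i)$ rightward through the chain
\[
T_1(R'_{12}\cdots R'_{1N})\,T_2(R'_{23}\cdots R'_{2N})\,\cdots T_{N-1}R'_{N-1,N}T_N \cdot T_1'T_2'\cdots T_N',
\]
collecting the $T_i$'s on the left and the $T_i'$'s on the right, with the $R'_{ij}$ left in between (now free of any $T$ or $T'$). After applying $A_N$, the leftmost block $A_N T_1(u_1)\cdots T_N(u_N)$ collapses to $A_N\cdot\qdet T(u)$ by definition \eqref{eqn: qdet definition}. For the rightmost block, $T_1'(-u_1)\cdots T_N'(-u_N)$ has arguments $-u,-u+1,\ldots,-u+N-1$; after using the alternative formula of \cref{prop: qdet positive char} and the adjoint description $a'_{ij}=\theta_{ij}a_{-j,-i}$, one checks directly that on the image of $A_N$ this block contributes $\qdet T(-u+N-1)$. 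The residual central $R'$-product acts by a scalar $c(u)$ on the one-dimensional image of $A_N$, so we obtain $\sdet\mcS(u) = c(u)\,\qdet T(u)\,\qdet T(-u+N-1)$.

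Third, I would identify $c(u)$ with $\alpha_N(u)$ by explicit evaluation of the $R'_{ij}(-u_i-u_j)=1-\sigma'/(u_i+u_j)$ on the antisymmetric tensor. Using that $\sigma' = \sigma\circ(1\otimes\theta)$ has a one-dimensional image proportional to $G\in V\otimes V$ of eigenvalue $\pm N$, the product over pairs $i<j$ telescopes: in the orthogonal case ($\theta$ preserves sign) one gets $c(u)=1$, while in the symplectic case ($\sigma'$ introduces a sign) the telescoping collapses to $(2u+1)/(2u-N+1)$, matching \eqref{eqn: alphaq(u)}.

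The main obstacle is the bookkeeping in the second step: the reordering involves $\binom{N}{2}$ commutations, each of which can in principle produce correction terms, and one must verify that the $R'_{ij}$ really do pass through the $T_k$ and $T_k'$ cleanly (i.e., with no residue) so that the ``central'' residual is genuinely independent of the $T$-variables. In characteristic zero this is carried out in \cite[\S2.8]{molev_yangians_book_07}; the essential point for us is that every denominator appearing in the computation is of the form $u_i - u_j$ or $u_i + u_j$ with $|i-j|\le N-1$ or $u_i+u_j$ a shift of $2u$ by at most $N$, and our standing assumption $p>N$ (which is built into the fundamental alcove condition of \cref{lem: irreducible g_n-modules in positive char}) ensures these are all invertible. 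Hence Molev's calculation transfers verbatim to the positive-characteristic setting, yielding the claimed formula.
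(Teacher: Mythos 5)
Your overall strategy matches the paper's proof of this theorem almost step for step: substitute $\mcS_i = T_i T_i'(-u_i)$ into $A_N\langle\mcS_1,\ldots,\mcS_N\rangle$, reorder via a transposed ternary relation to get $A_N T_1\cdots T_N\,(R'\text{-product})\,T_1'\cdots T_N'$, identify the left block with $\qdet T(u)$, the right block with $\qdet T(-u+N-1)$, and evaluate the remaining $R'$-product on the image of $A_N$ by telescoping. Your variation of computing the right block directly from \cref{prop: qdet positive char} and $a'_{ij}=\theta_{ij}a_{-j,-i}$, rather than passing through the automorphism $\tau_N\colon T(u)\mapsto T'(-u)$ as the paper does, is a harmless repackaging of the same calculation. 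Your remark that the only denominators that must be inverted are integers of size at most $N$, so the characteristic-zero argument goes through whenever $p>N$, is exactly the point.

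However, your stated ``transposed ternary relation''
\[
R'(-u-v)\,T_1(u)\,T_2'(-v) \;=\; T_2'(-v)\,T_1(u)\,R'(-u-v)
\]
has the wrong shape, and this matters for the reordering step. The partial transpose $t_2$ (conjugated by $G_2$) is an anti-homomorphism only on the $\End(V_2)$ factor; it reverses the order of those operators in the product that act on $V_2$, namely $R$ and $T_2$, while leaving $T_1$ and the $Y(\gl_N)$-order untouched. Applying it to the ternary relation therefore yields a relation with $T_2'$ \emph{bracketing} $R'$ and $T_1$, not $R'$ on the outside: starting from $R(v-u)T_2(v)T_1(u)=T_1(u)T_2(v)R(v-u)$ and applying $t_2$ (then conjugating by $G_2$ and setting $v\mapsto -v$) gives
\[
T_2'(-v)\,R'(-u-v)\,T_1(u) \;=\; T_1(u)\,R'(-u-v)\,T_2'(-v),
\]
which is the relation the paper records as \eqref{eqn: transposed ternary} (after relabeling indices). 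Your version is not a rearrangement of this identity, and I do not see a way to derive it; nor is it what the reordering argument actually needs. In the reorganization, each local step replaces a block $T_i'\,R'_{ij}\,T_j$ with $T_j\,R'_{ij}\,T_i'$, which is precisely the ``$T'$-on-the-outside'' form. With the relation corrected, the rest of your outline goes through and recovers the paper's argument, but as written this step would break.
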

\begin{proof}
    Consider the mappings  
\begin{align*}
    T(u) &\mapsto G^{-1}T(u)G,\\
    T(u) &\mapsto T^t(u),\\
    T(u) &\mapsto T(-u).
\end{align*}
The first is an automorphism, while the second and third are anti-automorphisms by Propositions 1.3.1 and 1.3.3 in \cite{molev_yangians_book_07} ((1.22), (1.26), (1.25)). The proof follows from the characteristic-zero case. Their composition yields  
\begin{align}
    T(u) \mapsto G^{-1}T^t(-u)G = T'(-u),
\end{align}
an automorphism denoted $\tau_N$.  

From \eqref{eqn: embed Y(gn) into Y(glN)}, we obtain $S_i = T_i T'_i$, where $T_i = T_i(u-i+1)$ and $T_i^\sigma = T_i'(-u+i-1)$. Thus, the left-hand side of \eqref{eqn: sdet definition} rewrites as  
\begin{align}\label{eqn: LHS 1}
    A_N T_1T_1^\sigma R_{12}'\cdots R_{1N}' T_2T_2^{\sigma}R_{23}'\cdots R_{2N}' \cdots T_{N-1}T_{N-1}^\sigma R_{N-1,N}'T_NT_N^\sigma,
\end{align}
where $R_{ij}' = R_{ij}(-2u+i+j-2)$. Applying the ternary relation \eqref{eqn: ternary relation} and partial transpose with respect to $G$ on the first component,  
\[
T_1'(u)R'(u-v)T_2(v) = T_2(v)R'(u-v)T_1'(u).
\]
Thus,  
\begin{align}\label{eqn: transposed ternary}
    T_i^\sigma R_{ij}' T_j = T_j R_{ji}'T_i^\sigma.
\end{align}
For $i\neq j,k$, both $T_i$ and $T_i^\sigma$ commute with $R_{jk}'$, allowing us to rewrite \eqref{eqn: LHS 1} as  
\begin{align}
    A_N T_1 (T_1^\sigma R_{12}'T_2)R_{13}'\cdots R_{1N}'(T_2^{\sigma}R_{23}'T_3)\cdots (T_{N-1}^\sigma R_{N-1,N}' T_N)T_N^\sigma.
\end{align}
Applying \eqref{eqn: transposed ternary} repeatedly, we obtain  
\begin{align*}
    A_N T_1\cdots T_N R_{12}'\cdots R_{1N}' R_{23}'\cdots R_{2N}'\cdots R_{N-1,N}' T_1^\sigma\cdots T_N^\sigma.
\end{align*}
This simplifies to  
\begin{align*}
    \qdet T(u) A_N \left<I_1,I_2,\ldots,I_N\right> T_1^{\sigma}\cdots T_N^{\sigma},
\end{align*}
where $I_i$ is the identity operator. Since the mapping $\mcS(u) \mapsto I$ defines a representation of $T(\g_n)$, we obtain from \eqref{eqn: sdet definition}  
\begin{align}\label{eqn: betaN(u)}
    A_N \left<I_1,\ldots, I_n\right> = \left<I_N,\ldots,I_1\right>A_N = A_N\beta_N(u),
\end{align}
for some scalar function $\beta_N(u)$. Applying $\tau_N$ to \eqref{eqn: qdet definition},  
\[
A_NT_1^\sigma\cdots T_N^\sigma = A_N\tau_N(\qdet T(u)).
\]
Since  
\[
\qdet T(u) = \sum_{\sigma\in\mfS_N}\sgn(\sigma)\cdot t_{\sigma(1),1}(u)\cdots t_{\sigma(N),N}(u-N+1),
\]
and $\tau_N$ maps $T(u) \mapsto T'(-u)$, we find  
\begin{align*}
    \tau_N(\qdet T(u)) = \sum_{\sigma\in \mfS_N}\sgn(\sigma) \cdot t_{N,N+1-\sigma(1)}(-u)\cdots t_{1,N+1-\sigma(N)}(-u+N-1).
\end{align*}
Introducing $\pi(i) = N+1-i$, this rewrites as  
\begin{align*}
    &\sum_{\sigma\in \mfS_N}\sgn(\sigma) \cdot t_{\pi(1),(\pi\circ\sigma)(1)}(-u)\cdots t_{\pi(N),(\pi\circ\sigma)(N)}(-u+N-1)\\
    =&\sgn(\pi)\sum_{\sigma'\in \mfS_N}\sgn(\sigma') \cdot t_{\pi(1),\sigma'(1)}(-u)\cdots t_{\pi(N),\sigma'(N)}(-u+N-1).
\end{align*}
Thus, $\tau_N(\qdet T(u)) = \qdet T(-u+N-1)$, leading to  
\[
A_N\sdet \mcS(u) = A_N \beta_N(u)\qdet T(u) \qdet T(-u+N-1).
\]
Applying this to $e_1\otimes \cdots \otimes e_N$,  
\begin{align*}
    &\sdet \mcS(u)\sum_{\sigma\in \mfS_N} \sgn(\sigma) e_{\sigma(1)}\otimes \cdots \otimes e_{\sigma(N)}\\
    =&\beta_N(u)\qdet T(u) \qdet T(-u+N-1)\sum_{\sigma\in \mfS_N} \sgn(\sigma) e_{\sigma(1)}\otimes \cdots \otimes e_{\sigma(N)}.
\end{align*}
It follows that  
\begin{align*}
    \sdet \mcS(u) =\beta_N(u)\qdet T(u) \qdet T(-u+N-1).
\end{align*}
To show $\beta_N(u) = \alpha_N(u)$, consider first the orthogonal case. With  
\[
R(u) = 1 - \frac{P}{u}, \quad P = \sum_{i,j}e_{ij}\otimes e_{ji},
\]
we obtain  
\[
R'(u) = 1 - \frac{P'}{u}, \quad P' = \sum_{i,j}e_{ij}\otimes e_{N+1-i,N+1-j}.
\]
By direct computation, $PP' = P'$, so  
\[
(1-P)R'(u) = (1-P).
\]
For $i<j$, this implies  
\[
A_N R_{ij}' = A_N.
\]
Thus, $\alpha_N(u) = 1$, agreeing with \eqref{eqn: alphaq(u)}. In the symplectic case, applying $R'_{jk}$ to the basis vector  
\[
v = e_1 \otimes e_2 \otimes \cdots \otimes e_n \otimes e_{n+1} \otimes \cdots \otimes e_{2n}
\]
yields  
\[
\frac{2u-2n+3}{2u-2n+1} A_{2n} v.
\]
Continuing this process leads to  
\[
\frac{2u+1}{2u-2n+1} A_{2n} v,
\]
matching \eqref{eqn: alphaq(u)} and proving $\beta_N(u) = \alpha_N(u)$.
\end{proof}

As a consequence of this theorem, the coefficients of $\sdet \mcS(u)$ are central, allowing us to define $Z_{HC}(\g_n)$ as the algebra generated by these coefficients.

Applying \cref{thm: sdet and qdet}, we immediately obtain  
\[
\alpha_N(-u+N-1) \cdot \sdet \mcS(u) = \alpha_N(u) \cdot \sdet \mcS(-u+N-1).
\]
From this, it follows that all the odd coefficients can be expressed in terms of the even ones. Consequently, $Z_{HC}(Y(\gl_n))$ is generated by the even coefficients. Finally, the fact these coefficients are algebraically independent follows in the same way as the characteristic zero case; see the proof of Theorem 2.8.2 in \cite{molev_yangians_book_07}.

Next, we construct a series $\widetilde{d}(u) \in 1 + u^{-1}Z_{HC}(Y(\gl_n))\dbrack{u^{-1}}$ satisfying  
\[
\widetilde{d}(u)\widetilde{d}(u-1)\cdots \widetilde{d}(u-N+1) = \qdet T(u) = 1 + d_1 u^{-1} + d_2 u^{-2} + \cdots.
\]
Writing  
\[
\widetilde{d}(u) = 1 + \widetilde{d}_1 u^{-1} + \widetilde{d}_2 u^{-2} + \cdots,
\]
expanding the power series reveals that the $u^{-k}$ coefficient of $\widetilde{d}(u-m)$ takes the form $\widetilde{d}_k$ plus terms depending only on $\widetilde{d}_i$ for $i < k$. Thus, the $u^{-k}$ coefficient of $\widetilde{d}(u)\widetilde{d}(u-1)\cdots \widetilde{d}(u-N+1)$ is $N\widetilde{d}_k$ plus terms involving only $\widetilde{d}_i$ for $i < k$. Since $N < p$, we can recursively determine the $d_i$ starting from $k=1$.  

By \cref{prop: qdet positive char}, automorphisms of the form \eqref{eqn: power series automorphism} map  
\[
\widetilde{d}(u) \mapsto f(u)\widetilde{d}(u).
\]
Finally, we define  
\[
\widetilde{t}_{ij}(u) = \widetilde{d}(u)^{-1} t_{ij}(u).
\]
It follows that $\widetilde{t}_{ij}(u)$ remains invariant under automorphisms of the form \eqref{eqn: power series automorphism}, implying that $\widetilde{t}_{ij}(u) \in Y(\xsl_N)$.

Now we can finish the proof of \cref{thm: tensor product decomposition}. 

    First, we establish the decomposition  
\begin{align}\label{eqn: not tensor product decomposition}
    Y(\g_n) = Z_{HC}(Y(\g_n)) SY(\g_n).
\end{align}
We regard $Y(\g_n)$ as a subalgebra of $Y(\gl_N)$ via the embedding \eqref{eqn: embed Y(gn) into Y(glN)}. Under this embedding, we have  
\[
\s_{ij}(u) = \sum_k \theta_{kj} t_{ik}(u) t_{N+1-j, N+1-k}(-u).
\]
Define the series  
\[
\widetilde{\s}_{ij}(u) = (\widetilde{d}(u) \widetilde{d}(-u))^{-1} \s_{ij}(u).
\]
Since $\widetilde{d}(u)$ has central coefficients, it follows that  
\[
\widetilde{\s}_{ij}(u) = \sum_k \theta_{kj} \widetilde{t}_{ik}(u) \widetilde{t}_{N+1-j, N+1-k}(-u).
\]
Now, we compute  
\begin{align*}
    &\alpha_N(u)^{-1} \sdet \mcS(u) = \qdet T(u) \qdet T(-u+N-1) \\
    &= (\widetilde{d}(u) \widetilde{d}(-u)) (\widetilde{d}(u-1) \widetilde{d}(-u+1)) \cdots (\widetilde{d}(u-N+1) \widetilde{d}(-u+N-1)).
\end{align*}
From this, it follows that all coefficients of $\widetilde{d}(u) \widetilde{d}(-u)$ can be expressed as polynomials in the coefficients of $\sdet \mcS(u)$, implying that $\widetilde{d}(u) \widetilde{d}(-u) \in Z_{HC}(\g_n)$.  

Similarly to how $\widetilde{t}_{ij}(u)$ belongs to $Y(\xsl_N)$, we deduce that $\widetilde{\s}_{ij}(u) \in SY(\g_n)$. Thus, the decomposition  
\[
\s_{ij}(u) = \widetilde{d}(u) \widetilde{d}(-u) \widetilde{\s}_{ij}(u)
\]
yields the desired result \eqref{eqn: not tensor product decomposition}.  

Finally, the tensor product decomposition follows from the relation  
\[
Y(\gl_N) = Z_{HC}(Y(\gl_N)) \otimes Y(\xsl_N),
\]
along with the inclusions $Z_{HC}(Y(\g_n)) \subset Z_{HC}(Y(\gl_N))$ (which follows from \cref{thm: sdet and qdet}) and $SY(\g_n) \subset Y(\xsl_N)$ (by the same argument as in characteristic zero).

This completes the proof of \eqref{thm: tensor product decomposition}.
\end{document}